\theoremstyle{theorem}
\newtheorem{theorem}{Theorem}
\numberwithin{theorem}{section}
\theoremstyle{lemma}
\newtheorem{lemma}[theorem]{Lemma}
\theoremstyle{remark}
\newtheorem{remark}[theorem]{Remark}
\theoremstyle{corollary}
\newtheorem{corollary}[theorem]{Corollary}
\theoremstyle{proposition}
\newtheorem{proposition}[theorem]{Proposition}
\theoremstyle{definition}
\newtheorem{definition}[theorem]{Definition}
\theoremstyle{claim}
\newtheorem{claim}[theorem]{Claim}
\DeclareMathOperator{\Tr}{Tr}
\newcommand{\GNMR}{{\texttt{GNMR}}\xspace}
\DeclareMathOperator*{\argmin}{arg\,min}
\newenvironment{psmallmatrix}
  {\bigl(\begin{smallmatrix}}
  {\end{smallmatrix}\bigr)}
\begin{document}
	
\title{{GNMR}: A provable one-line algorithm for low rank matrix recovery}	

%\title{Provable one-line Gauss-Newton algorithm for low rank matrix recovery problems with scanty observations}
\author{Pini Zilber\footnotemark[1]\thanks{Faculty of Mathematics and Computer Science, Weizmann Institute of Science \newline (\href{mailto:pini.zilber@weizmann.ac.il}{pini.zilber@weizmann.ac.il}, \href{mailto:boaz.nadler@weizmann.ac.il}{boaz.nadler@weizmann.ac.il})}
\and Boaz Nadler\footnotemark[1]}
\date{}

\maketitle
\nopagebreak

\begin{abstract}
Low rank matrix recovery problems
%, including matrix completion and matrix sensing, 
appear in a broad range of applications.
In this work we present \texttt{GNMR} % must keep it \texttt{GNMR} rather than \GNMR for correct spacing!
--- an extremely simple iterative algorithm for low rank matrix recovery,  based on a Gauss-Newton linearization. 
%Our method can be essentially described in one line, as in each iteration it only solves a linear least squares problem. 
On the theoretical front, we derive recovery guarantees for \GNMR in both matrix sensing and matrix completion settings.
%In the matrix sensing setup, we prove that under very mild conditions, \GNMR obtains perfect recovery with a {quadratic} convergence rate.
Some of these results improve upon the best currently known for other methods.
A key property of \GNMR is that it implicitly keeps the factor matrices approximately balanced
throughout its iterations. 
%This is in contrast to recent works which explicitly add a regularization term to 
%keep the factor matrices approximately balanced.
On the empirical front, we show that for matrix completion with uniform sampling, \GNMR performs better than several popular methods, especially when given very few observations close to the information limit.
%This makes \GNMR one of the only methods with state of the art performance both theoretically and empirically.
\end{abstract}

%%%%%%%%%%%%%%%%%%%%%%%%%%%%%%%%%%%%%%%%%%%%%%%%%%%%%%%%%%%%%%%%%%%%%
%%%%%%%%%%%%%%%%%%%%%%%%%%%%%%%%%%%%%%%%%%%%%%%%%%%%%%%%%%%%%%%%%%%%%
%%%%%%%%%%%%%%%%%%%%%%%%%%%%%%%%%%%%%%%%%%%%%%%%%%%%%%%%%%%%%%%%%%%%%
%%%%%%%%%%%%%%%%%%%%%%%%%%%%%%%%%%%%%%%%%%%%%%%%%%%%%%%%%%%%%%%%%%%%%
\section{Introduction}
Low rank matrices play a fundamental role in a broad range of applications in multiple scientific fields.
In many cases the matrix is not fully observed, and yet it is often possible to recover it due to its assumed low rank structure.
In this paper we propose a novel method, denoted \GNMR, to tackle this class of problems. \GNMR (Gauss-Newton Matrix Recovery) is a very simple iterative method with state-of-the-art performance, for which we also derive strong theoretical recovery guarantees.
As detailed below, some of our guarantees improve upon the best currently available for other methods.

Concretely, consider the problem of recovering a matrix $X^* \in \mathbb R^{n_1\times n_2}$ of known rank $r$ from a set of $m$ linear measurements $b \equiv \mathcal A(X^*) + \xi$ where $\mathcal A: \mathbb R^{n_1\times n_2}\to \mathbb R^m$ is a sensing operator and $\xi\in \mathbb R^m$ is additive error.
Formally, the goal is to solve the optimization problem
\begin{align}\label{eq:matrixRecovery_problem}
\min_X \, f(X) \quad \text{s.t. rank(}X) \leq r
\end{align}
where
\begin{align}\label{eq:matrixRecovery_objective}
f(X) = \|\mathcal A(X) - b\|_2^2 .
\end{align}
Two common cases of \cref{eq:matrixRecovery_problem} are matrix sensing and matrix completion.
These and related problems appear in a wide variety of applications, including collaborative filtering, manifold learning, quantum computing, image processing and computer vision, see \cite{buchanan2005damped,candes2010matrix,davenport2016overview,chi2019matrix,chi2019nonconvex} and references therein.
In the \textit{matrix sensing} problem, for well-posedness of \cref{eq:matrixRecovery_problem}
for any rank-$r$ matrix $X^*$, the sensing operator $\mathcal A$ is required to satisfy a suitable RIP (Restricted Isometry Property) \cite{candes2008restricted,recht2010guaranteed}.
In the \textit{matrix completion} problem, 
the operator $\mathcal A$ extracts $m$ entries of the underlying matrix, $X^*_{i,j}$ for $(i,j) \in \Omega$ where $\Omega \subseteq [n_1]\times [n_2]$ is of size $m$.
In this case, $\mathcal A$ does not satisfy an RIP. However, if $\Omega$ is sampled uniformly at random with large enough cardinality $|\Omega|$ and $X^*$ is incoherent, then with high probability $X^*$ is the unique solution of \cref{eq:matrixRecovery_problem}; see \cite{candes2010matrix,candes2010power,gross2011recovering,singer2010uniqueness,pimentel2016characterization}
for more details.

In general, matrix recovery problems of the form of \cref{eq:matrixRecovery_problem}
are NP-hard. Yet, due to their importance, various methods
to find approximate solutions were developed. Most of them can be assigned to one of two classes. The first consists of algorithms which optimize over the full $n_1\times n_2$ matrix. 
Some methods in this class replace the rank-$r$ constraint by a suitable matrix penalty that promotes a low rank solution.
One popular choice is the nuclear norm, which leads to a convex semi-definite program \cite{fazel2001rank}. Nuclear norm minimization enjoys strong theoretical guarantees \cite{candes2009exact,candes2010power,recht2011simpler}, but in general is computationally slow.
Hence, several works developed fast optimization methods, see \cite{rennie2005fast,ji2009accelerated,cai2010singular,mazumder2010spectral,toh2010accelerated,fornasier2011low,ma2011fixed,avron2012efficient} and references therein.
Another matrix penalty that promotes low rank solutions is the non-convex Schatten $p$-norm with $p<1$ \cite{marjanovic2012l_q,kummerle2018harmonic}.

The other class consists of methods that explicitly enforce the rank-$r$ constraint in \cref{eq:matrixRecovery_problem}. 
For example, hard thresholding methods keep at each iteration only the top $r$ principal components \cite{jain2010guaranteed,tanner2013normalized,blanchard2015cgiht,kyrillidis2014matrix}.
Other methods in this class employ the decomposition $X = UV^\top$ with $U\in\mathbb{R}^{n_1\times r}$, $V\in\mathbb{R}^{n_2\times r}$. The matrix recovery objective \cref{eq:matrixRecovery_problem} then reads
\begin{align} \label{eq:matrixRecovery_factorizedObjective}
\min_{U,V} f\left(UV^\top\right) .
\end{align}
As the factorized problem \cref{eq:matrixRecovery_factorizedObjective} involves only $(n_1+n_2)r$ variables, these methods are in general more scalable and can cope with larger matrices.
One approach to solve \cref{eq:matrixRecovery_factorizedObjective}
is alternating minimization \cite{haldar2009rank,keshavan2012efficient,wen2012solving,jain2013low}.  Another approach is gradient descent, either on the Euclidean manifold \cite{sun2016guaranteed,tu2016low} or on other Riemannian manifolds \cite{keshavan2010matrix,ngo2012scaled,vandereycken2013low,mishra2014r3mc,mishra2014fixed,boumal2015low}.
Some of the above works also derived recovery guarantees for these methods. For additional guarantees, see  \cite{hardt2014understanding,jain2015fast,yi2016fast,zheng2016convergence,wei2016guarantees,ma2019implicit, chen2020nonconvex,ma2021beyond,tong2021accelerating} and references therein.

Most matrix completion methods proposed thus far suffer from two limitations: they may fail to recover the underlying matrix $X^*$ if it is even mildly ill-conditioned, or if the number of observed entries $m$ is relatively small \cite{tanner2013normalized,bauch2021rank,kummerle2020escaping}.
This may pose a significant drawback in practical applications.
Two recent algorithms that are relatively scalable and perform well with ill-conditioning and few observations are \texttt{R2RILS} \cite{bauch2021rank} and 
\texttt{MatrixIRLS} \cite{kummerle2020escaping,kummerle2021scalable}.
However, only limited recovery guarantees are available for them.
This raises the following question: \textit{is there an algorithm that is both computationally efficient, 
succeeds on ill-conditioned matrices with few measurements, and enjoys strong theoretical recovery guarantees?}

In this work we make a step towards answering this question. 
In \cref{sec:GNMR_description} we present a novel iterative algorithm which both empirically outperforms existing methods (including \texttt{R2RILS} and \texttt{MatrixIRLS}) in the task of recovering ill-conditioned matrices from few observations, and for which we are able to derive strong recovery guarantees due to its remarkable simplicity.
Our proposed factorization-based algorithm, named \GNMR, is based on the classical Gauss-Newton method. At each iteration, \GNMR solves a simple least squares problem obtained by a linearization of the factorized objective \cref{eq:matrixRecovery_factorizedObjective}.
The resulting least squares problem can be solved efficiently by standard solvers.

On the theoretical front, in \cref{sec:theory} we present recovery guarantees for \GNMR in both 
the matrix sensing and matrix completion settings.
In matrix sensing, we prove that starting from a sufficiently accurate initial estimate, \GNMR recovers the underlying matrix with quadratic rate under the minimal RIP assumptions on the sensing operator $\mathcal A$, see \cref{thm:sens_quadConvergence}. 
To the best of our knowledge, this guarantee is among the sharpest currently available for any recovery algorithm.
Moreover, we prove that in matrix sensing, a slightly modified variant of \GNMR is stable against arbitrary additive error of bounded norm. Importantly, this type of error also captures the practical setting of approximately low rank, where $X^*$ has $r$ large singular values and the remaining ones are much smaller yet nonzero.
Next, in \cref{sec:theory_completion} we analyze \GNMR in the matrix completion setting. Here we follow the standard approach in the literature, whereby to derive recovery guarantees {for factorization-based methods}, suitable regularization terms are added to the respective algorithm, see for example \cite{keshavan2010matrix,sun2016guaranteed}. 
In \cref{thm:comp_linearConvergence} we prove that given a sufficiently accurate initialization, a regularized variant of \GNMR recovers the target matrix at a linear rate under the weakest known assumptions for non-convex optimization methods. In addition, in \cref{thm:comp_quadConvergence} we prove that 
near the global optimum, the convergence rate of \GNMR is quadratic.

Our proof technique builds upon recent works which derived guarantees for gradient descent algorithms \cite{keshavan2010matrix,tu2016low,sun2016guaranteed,zheng2016convergence,yi2016fast,ma2021beyond}.
However, as \GNMR is markedly different, deriving recovery guarantees for it required several non-trivial modifications.
In particular, while the iterates of gradient descent have a simple explicit formula, \GNMR solves a degenerate least squares problem. %and consequently its update depends on the pseudo-inverse of a suitable matrix.
In our analysis, we exploit this degeneracy in our favor, and show that by choosing the minimal norm solution the iterates of \GNMR enjoy some desirable properties such as implicit balance regularization, see \cref{sec:implicit_balance} for more details.
In the course of our proofs, we extended and improved several technical results from previous works, including \cite[Lemma~5.14]{tu2016low}, \cite[Lemma~1]{ma2021beyond} and \cite[Claim~3.1]{sun2016guaranteed}. 
Specifically, in \cref{thm:RIP} we present a novel RIP-like guarantee for matrix completion which is in several aspects sharper than \cite[Claim~3.1]{sun2016guaranteed}, especially in terms of the required number of observations. 
These improvements may be of independent interest, e.g.~for proving recovery guarantees of other algorithms.

On the empirical front, in \cref{sec:numerics} we present several simulations with ill-conditioned matrices and few observed entries chosen uniformly at random. We show that \GNMR improves upon the state of the art in these settings, outperforming several popular algorithms. 
In particular, \GNMR is able to successfully recover matrices from very few observations close to
the information limit, where all other compared methods fail. 

\vspace{1em}
\noindent
{\bf Notation.}
The $i$'th largest singular value of a matrix $X$ is denoted by $\sigma_i(X)$. %, or simply $\sigma_i$ when the meaning is clear from the context.
The condition number of a rank-$r$ matrix is denoted by $\kappa = \sigma_1/\sigma_r$.
Denote the Euclidean norm of a vector $x$ by $\|x\|$.
Denote the trace of a matrix $A$ by $\Tr(A)$, its operator norm (a.k.a.~spectral norm) by $\|A\|_2$, its Frobenius norm by $\|A\|_F$, its $i$'th row by $A^{(i)}$, and its largest row norm by $\|A\|_{2,\infty} \equiv \max_i \|A^{(i)}\|$.
The transpose of the inverse of $A$ is denoted by $A^{-\top} \equiv (A^{-1})^\top$.
In the matrix completion problem, the fraction of observed entries is denoted by $p = {|\Omega|} / {(n_1n_2)} = {m} / {(n_1 n_2)}$. The sampling operator $\mathcal P_\Omega$ extracts the entries of a matrix according to $\Omega$, such that $\mathcal P_\Omega(X)$ is a vector of size $m$ with entries $X_{ij}$ for $(i,j)\in \Omega$. Denote $\|A\|_{F(\Omega)}^2 = \|\mathcal P_\Omega(A)\|^2 = {\sum_{(i,j)\in \Omega} A_{ij}^2}$ (note this is not a norm).
Denote $n = \max\{n_1, n_2\}$.
When discussing sample or computational complexity, for simplicity we assume $n_1 \sim n_2$, namely the ratio $\min\{n_1, n_2\}/n$ is considered a constant.
Finally, unless stated otherwise, $C$, $c_e$ and $c_l$ denote absolute constants independent of the problem parameters such as $n, r, \kappa, \Omega$ etc..

%%%%%%%%%%%%%%%%%%%%%%%%%%%%%%%%%%%%%%%%%%%%%%%%%%%%%%%%%%%%%%%%%%%%%
%%%%%%%%%%%%%%%%%%%%%%%%%%%%%%%%%%%%%%%%%%%%%%%%%%%%%%%%%%%%%%%%%%%%%
\section{Description of \GNMR}\label{sec:GNMR_description}
%As mentioned in the introduction, our proposed method is based on the factorization $X = UV^\top$.
%Hence, to describe the various variants of \GNMR, we begin with the factorized objective \cref{eq:matrixRecovery_factorizedObjective}.
%Given an initial estimate $(U_0, V_0)$, we
Given an estimate $(U_0, V_0)$, factorization based methods seek an update $(\Delta U, \Delta V)$ such that $(U_1, V_1) = (U_0 + \Delta U, V_0 + \Delta V)$ minimizes \cref{eq:matrixRecovery_factorizedObjective}. The original problem \cref{eq:matrixRecovery_factorizedObjective} can be equivalently written in terms of the update $(\Delta U, \Delta V)$ as
\begin{align*}
\min_{\Delta U, \Delta V} \|\mathcal A\left(U_0V_0^\top + U_0 \Delta V^\top + \Delta U V_0^\top + \Delta U \Delta V^\top\right) - b\|^2.
\end{align*}
This problem is non-convex due to the second order term $\Delta U \Delta V^\top$.
The idea of \GNMR is to neglect this term, yielding the convex least squares scheme
\begin{subequations}
		\label{eq:updatingVariant}\begin{align}
\begin{pmatrix} \Delta U_0 \\ \Delta V_0 \end{pmatrix} &= \argmin_{\Delta U, \Delta V} \|\mathcal A\left(U_0V_0^\top + U_0 \Delta V^\top + \Delta U V_0^\top\right) - b\|^2, \label{eq:updatingVariant_LSQR}\\
\begin{pmatrix} U_1 \\ V_1 \end{pmatrix} &= \begin{pmatrix} U_0 + \Delta U_0 \\ V_0 + \Delta V_0 \end{pmatrix}.
\end{align}\end{subequations}
It is easy to see that the above is simply an instance of the Gauss-Newton method applied to matrix recovery.
%, where the residual is given by $r(U_0V_0^\top) \equiv b - \mathcal A\left(U_0V_0^\top\right)$.
This scheme, however, is not well defined since the least squares problem \cref{eq:updatingVariant_LSQR} is rank deficient, and thus has an infinite number of solutions. 
For example, if $(\Delta U, \Delta V)$ is a solution, so is $(\Delta U + U_0 R, \Delta V - V_0R^\top)$ for any $R \in \mathbb R^{r\times r}$.
%We thus need to specify the solution to \cref{eq:updatingVariant_LSQR} we choose among all the feasible ones.
We now describe several variants of \GNMR, which correspond to different solutions of \cref{eq:updatingVariant_LSQR}. 
Specifically, in the {\em updating variant} of \GNMR, we choose $(\Delta U, \Delta V)$ to be the minimal norm solution, namely the minimizer of \cref{eq:updatingVariant_LSQR} whose norm $\|\Delta U\|_F^2 + \|\Delta V\|_F^2$ is smallest.

Next, to describe the other variants of \GNMR, 
we define a one-dimensional family of solutions of
\cref{eq:updatingVariant_LSQR}, parametrized by a scalar $\alpha \in \mathbb R$.
By making a change of optimization variables $\Delta U = U-\frac{1+\alpha}{2} U_0$, $\Delta V = V-\frac{1+\alpha}{2} V_0$ in \cref{eq:updatingVariant_LSQR} we obtain
\begin{subequations}\label{eq:generalVariant}\begin{align}
\begin{pmatrix} \tilde U_0 \\ \tilde V_0 \end{pmatrix} &= \argmin_{U,V} \|\mathcal A\left(U_0 V^\top + U V_0^\top - \alpha U_0V_0^\top\right) - b\|^2, \label{eq:generalVariant_LSQR} \\
\begin{pmatrix} U_1 \\ V_1 \end{pmatrix} &= \begin{pmatrix} \frac{1-\alpha}{2}U_0 + \tilde U_0 \\ \frac{1-\alpha}{2}V_0 + \tilde V_0 \end{pmatrix}, \label{eq:generalVariant_update}
\end{align}\end{subequations}
where in \cref{eq:generalVariant_LSQR} we take 
the minimal norm solution with smallest $\|U\|_F^2+\|V\|_F^2$. 
%Note that common to all these solutions is the symmetric treatment of the factor matrices $U$ and $V$.
%
The updating variant, for example, corresponds to $\alpha = -1$ in \cref{eq:generalVariant}.
Another two variants we consider in this work are the setting and the averaging variants. 
The \textit{setting variant}, corresponds to $\alpha = 1$, 
\begin{align}\label{eq:settingVariant}
\begin{pmatrix} U_1 \\ V_1 \end{pmatrix} &= \argmin_{U,V} \|\mathcal A\left(U_0 V^\top + U V_0^\top - U_0V_0^\top\right) - b\|^2,
\end{align}
minimizes the norm of the new estimate $\|U_1\|_F^2 + \|V_1\|_F^2$. As we shall see later on, this choice encourages the iterates to have bounded imbalance $\|U_1^\top U_1 - V_1^\top V_1\|_F$. Another variant with a similar property is the \textit{averaging variant}, which corresponds to $\alpha = 0$,
\begin{subequations}\label{eq:averagingVariant}\begin{align}
\begin{pmatrix} \tilde U \\ \tilde V \end{pmatrix} &= \argmin_{U, V} \|\mathcal A\left(U_0 V^\top + U V_0^\top\right) - b\|^2, 
	\label{eq:averagingVariant_LSQR}
\\
\begin{pmatrix} U_1 \\ V_1 \end{pmatrix} &= \begin{pmatrix} U_0/2 + \tilde U \\ V_0/2 + \tilde V \end{pmatrix} .
	\label{eq:averagingVariant_update}
\end{align}\end{subequations}
%In terms of the formulation in \cref{eq:updatingVariant}, the chosen solution here minimizes the norm of $\|U_0/2 + \Delta U\|_F^2 + \|V_0/2 + \Delta V\|_F^2$.
We emphasize that each choice of $\alpha$ yields a different algorithm in the following sense: In general, starting from the same initial condition $(U_0, V_0)$, already after one iteration each value of $\alpha$ yields a different $(U_1, V_1)$ and thus a different sequence $\{(U_t, V_t)\}$.

\GNMR is sketched in \cref{alg:GNMR}.
%Early stopping criteria that we employed in practice are described in \cref{sec:numerics}.
The minimal norm solution of the least squares problem can be computed with the LSQR algorithm \cite{paige1982lsqr}, implemented in most standard packages.
One of the inputs to \GNMR is an initial guess $U_0,V_0$. In our matrix completion simulations we initialized these values by the Singular Value Decomposition (SVD) of the observed matrix (a.k.a.~the spectral method). However, \GNMR performed well also from random initializations.
Note that \GNMR returns the best rank-$r$ approximation of the linearized estimate $U_{T-1} \tilde V_{T-1}^\top + \tilde U_{T-1} V_{T-1}^\top - \alpha U_{T-1} V_{T-1}^\top$, which is the last matrix fitted to the observations. When \GNMR converges, this quantity coincides with $U_TV_T^\top$.
%Suitable initialization procedures for our theoretical guarantees are discussed in the next section.
Among the different variants,  we found that the setting one ($\alpha=1$) had the best empirical performance in matrix completion, especially at very low oversampling ratios, see \cref{sec:numerics}. This should not be surprising, as choosing the estimate with the minimal norm $\|U_1\|_F^2+\|V_1\|_F^2$ is akin to regularizing the norm of the estimate, a very common form of regularization in optimization. In matrix sensing, however, this type of regularization seems to be unnecessary, as the different variants of \GNMR have similar empirical performance.
In \cref{sec:comparison} we discuss the relation between \GNMR and three other methods: Wiberg's algorithm \cite{wiberg1976}, \texttt{PMF} \cite{paatero1994positive} and \texttt{R2RILS} \cite{bauch2021rank}.

Our \GNMR approach enjoys some appealing properties: it is easy to implement, requires no tuning parameters other than maximal number of iterations, it is computationally efficient and requires little memory.
In some sense, \GNMR combines the best of two popular approaches: it updates $U, V$ both \textit{globally}, as in alternating minimization, and \textit{simultaneously}, as in gradient descent. Finally, \GNMR exhibits excellent empirical performance, as illustrated in \cref{sec:numerics}, and also enjoys strong theoretical guarantees, as detailed in the following section.

\begin{algorithm}[t]
\caption{\GNMR} \label{alg:GNMR}
\SetKwInOut{Return}{return}
\SetKwInOut{Input}{input}
\SetKwInOut{Output}{output}
\Input{$\mathcal A: \mathbb R^{n_1\times n_2}\to \mathbb R^m$ - sensing operator ($\mathcal P_\Omega$ in the case of matrix completion) \\ 
$b \in \mathbb R^m$ - vector of observations \\
$r$ - rank of $X^*$ \\
$T$ - maximal number of iterations \\
$\alpha$ - a scalar that indicates the variant of \GNMR (e.g.~$\alpha=1$ is the setting variant) \\
$(U_0, V_0)\in \mathbb R^{n_1\times r}\times \mathbb R^{n_2\times r}$ - initialization
}
\Output{$\hat X$ - rank-$r$ (approximate) solution to $\mathcal A(X) = b$}
\For{ $t=1,\ldots,T$}{
compute $\tilde Z_t$, the minimal norm solution of
\[
\argmin_{U,V} \| \mathcal A(U_t V^\top + U V_t^\top - \alpha U_tV_t^\top) - b \|^2
\]
set $\begin{psmallmatrix} U_{t+1} \\ V_{t+1} \end{psmallmatrix} = \frac{1- \alpha}{2} \begin{psmallmatrix} U_{t} \\ V_{t} \end{psmallmatrix} + \begin{psmallmatrix} \tilde U_t \\ \tilde V_t \end{psmallmatrix}$ where $\begin{psmallmatrix} \tilde U_t \\ \tilde V_t \end{psmallmatrix} = \tilde Z_t$
}
\Return{$\hat X$, the best rank-$r$ approximation of $U_{T-1} \tilde V_{T-1}^\top + \tilde U_{T-1} V_{T-1}^\top - \alpha U_{T-1} V_{T-1}^\top$% where $\begin{psmallmatrix} \tilde U_T \\ \tilde V_T \end{psmallmatrix} = \tilde Z_T$}
}
\end{algorithm}

%%%%%%%%%%%%%%%%%%%%%%%%%%%%%%%%%%%%%%%%%%%%%%%%%%%%%%%%%%%%%%%%%%%%%
%%%%%%%%%%%%%%%%%%%%%%%%%%%%%%%%%%%%%%%%%%%%%%%%%%%%%%%%%%%%%%%%%%%%%
\section{Theoretical results for \GNMR}\label{sec:theory}
Let us start with some useful notations and definitions.
First, we recall the definition of the Restricted Isometry Property (RIP) for matrices \cite{candes2008restricted,recht2010guaranteed}.%, needed for the matrix sensing case.
\begin{definition}[Restricted Isometry Property]\label{def:RIP}
A linear map $\mathcal A: \mathbb R^{n_1\times n_2}\to \mathbb R^m$ satisfies an $r$-RIP with constant $\delta_r \in [0,1)$, if for all matrices $X \in \mathbb R^{n_1\times n_2}$ of rank at most $r$,
\begin{align*}
(1-\delta_r) \|X\|_F^2 \leq \|\mathcal A(X)\|^2 \leq (1+\delta_r) \|X\|_F^2 .
\end{align*}
\end{definition}

A common example for linear maps that satisfy the RIP are ensembles of Gaussian Matrices.
Let $\{A_i\}_{i=1}^m \subset \mathbb R^{n_1\times n_2}$ be $m$ measurement matrices, whose entries are independently drawn from a Gaussian distribution $\mathcal N(0,1)$. Then the corresponding linear map $\mathcal A$, defined by $[\mathcal A(X)]_i = \Tr(A_i^\top X)/\sqrt m$, satisfies an $r$-RIP with constant $\delta_r$ with high probability, provided that $m \gtrsim (n_1+n_2)r/\delta_r^2$ \cite{recht2010guaranteed}.
It is easy to show that if $\mathcal A$ satisfies a $2r$-RIP, then the matrix recovery problem \cref{eq:matrixRecovery_problem} is well posed, with a unique solution $X^*$. Moreover, this is the minimal sufficient condition in terms of RIP, as $(2r-1)$-RIP does not guarantee a unique solution.

In the matrix completion setup, the sampling operator $\mathcal P_\Omega$ does not satisfy an RIP. 
Instead, in our theoretical analysis, we assume that $\Omega$ is uniformly sampled at random and $|\Omega|$ is sufficiently large.
However, this assumption is insufficient to ensure well posedness of the matrix completion problem:
For example, the rank-$1$ matrix $X^*= e_i e_j^\top$ with a single non-zero value in its $(i,j)$-th entry
%\begin{align*}
%X^* = \begin{pmatrix} 1 \\ 0 \\ \vdots \end{pmatrix} \begin{pmatrix} 1 & 0 & %\cdots \end{pmatrix}^\top .
%\end{align*}
cannot be exactly recovered unless the $(i,j)$-th entry is observed. 
Hence, an additional standard assumption is incoherence of $X^*$, first introduced in \cite{candes2009exact}. In this work we adopt the following modified definition \cite{keshavan2010matrix}:
\begin{definition}[$\mu$-incoherence]\label{def:incoherence}
A matrix $X \in \mathbb R^{n_1\times n_2}$ of rank $r$ is $\mu$-incoherent if its SVD, $X = U \Sigma V^\top$ with $U \in \mathbb R^{n_1\times r}$ and $V \in \mathbb R^{n_2\times r}$, satisfies
\begin{align*}
\|U\|_{2,\infty} \leq \sqrt{\mu r/n_1}, \quad
\|V\|_{2,\infty} \leq \sqrt{\mu r/n_2}.
\end{align*}
\end{definition}
For convenience, we denote by $\mathcal M(n_1, n_2, r, \mu, \kappa)$ the set of all $\mu$-incoherent $n_1\times n_2$ matrices of rank $r$ and condition number $\kappa$.

Next, we define some relevant subsets of factor matrices $U,V$.
These or similar subsets have been considered in previous theoretical works on factorization-based matrix recovery methods, see \cite{keshavan2010matrix,sun2016guaranteed}.
First, we denote all the decompositions of rank-$r$ matrices with a bounded \textit{error} from $X^*$ by
\begin{align}\label{eq:B_e_def}
\mathcal B_\text{err}(\epsilon) &= \left\{\begin{pmatrix} U \\ V \end{pmatrix} \in \mathbb R^{(n_1+n_2)\times r} \,\mid\, \|UV^\top - X^*\|_F \leq \epsilon \sigma_r^* \right\},
\end{align}
where here and henceforth, $\sigma_r^* = \sigma_r(X^*)$.
%This subset is equivalent to $\mathcal K(\delta)$ from \cite{sun2016guaranteed}.
In particular, we denote by $\mathcal B^* = \mathcal B_\textnormal{err}(0)$ the set of all decompositions of $X^*$,
\begin{align}\label{eq:B*_def}
\mathcal B^* = \left\{ \begin{pmatrix} U \\ V \end{pmatrix} \in \mathbb R^{(n_1+n_2)\times r} \,\mid\, UV^\top = X^* \right\} .
\end{align}
Second, we say that the factors $U,V$ are balanced if $U^\top U = V^\top V$, and measure the imbalance by $\|U^\top U - V^\top V\|_F$.
We denote all the factor matrices which are approximately \textit{balanced} by
\begin{align}
\mathcal B_\textnormal{bln}(\delta) &= \left\{\begin{pmatrix} U \\ V \end{pmatrix} \in \mathbb R^{(n_1+n_2)\times r} \,\mid\, \|U^\top U - V^\top V\|_F \leq \delta \sigma_r^* \right\} .
\end{align}
Third, we denote the subset of factor matrices with bounded row norms by
\begin{align}\label{eq:B_mu_def}
\mathcal B_\mu &= \left\{\begin{pmatrix} U \\ V \end{pmatrix} \in \mathbb R^{(n_1+n_2)\times r} \,\mid\, \|U\|_{2,\infty} \leq \sqrt\frac{3\mu r \sigma_1^*}{n_1}, \quad \|V\|_{2,\infty} \leq \sqrt\frac{3\mu r \sigma_1^*}{n_2} \right\} 
\end{align}
where $\mu$ is the incoherence parameter of $X^*$.
The constant $3$ in \cref{eq:B_mu_def} is arbitrary.

Finally, we denote the stacking of factor matrices $U,V$ by $Z$, namely $Z = \begin{psmallmatrix} U \\ V \end{psmallmatrix} \in \mathbb R^{(n_1+n_2)\times r}$. In particular, $Z_0 = \begin{psmallmatrix} U_0 \\ V_0 \end{psmallmatrix}$ is the initial iterate provided as input to \GNMR.

\begin{table}[t]
 \label{table:theory_summary}
\centering
 \caption{Recovery guarantees for \GNMR. All guarantees are with constant contraction factors, independent of the incoherence parameter $\mu$, the rank $r$ and the condition number $\kappa$.}
 \begin{tabular}{|c  c  c|}
 \hline
 Assumption & Basin of attraction & Recovery rate \\
 \hline\hline
 \multicolumn{3}{|c|}{\textbf{Matrix sensing}} \\
 \hline
 $2r$-RIP with $\delta_{2r} < 1$ & $\|X_0 - X^*\|_F = \mathcal O(\sigma_r^*)$ & quadratic \\
 same, with error $\|\xi\| = \mathcal O(\sigma_r^*)$ & $\|X_0 - X^*\|_F = \mathcal O(\sigma_r^*)$ & $\|\xi\|$-dependent \\
 \hline\hline
 \multicolumn{3}{|c|}{\textbf{Matrix completion}} \\
 \hline
 $np = \Omega(\mu r \max\{\log n, \mu r \kappa^2\})$ & $\|X_0 - X^*\|_F = \mathcal O (\sigma_r^*/\sqrt{\kappa})$ & linear \\ 
 $np = \Omega(\mu r\log n)$ & $\|X_0 - X^*\|_F = \mathcal O(\sigma_r^* \sqrt{p/\kappa})$ &  quadratic \\
 \hline
 \end{tabular}
\end{table}

\subsection{Recovery guarantees for matrix sensing}\label{sec:theory_sensing}
The following theorem states that in the noiseless matrix sensing setup, starting from a sufficiently accurate balanced initialization, \GNMR recovers $X^*$ with a quadratic convergence rate.
\begin{theorem}[Matrix sensing, quadratic convergence]\label{thm:sens_quadConvergence}
Let $\delta$ be any positive constant strictly smaller than one,
and let $c_e=c_e(\delta)$ be sufficiently large.
Assume that the sensing operator $\mathcal A$ satisfies a $2r$-RIP with 
$\delta_{2r} \leq \delta$. 
Let $X^*\in \mathbb R^{n_1\times n_2}$ be a matrix of rank $r$ and $b = \mathcal A(X^*)$.
Denote $\gamma = c_e/(2\sigma_r^*)$.
Then, for any initial iterate $Z_0 \in \mathcal B_\textnormal{err}({1}/{c_e}) \cap \mathcal B_\textnormal{bln}(1/(2c_e))$,
%\begin{align}
%Z_0 \in \mathcal B_\textnormal{err}(1/c_e),  \label{eq:sens_quadConvergence_e0Assumption}
%\end{align}
the estimates $X_t = U_tV_t^\top$ of \cref{alg:GNMR} with $\alpha = -1$ (the updating variant of \GNMR) satisfy
\begin{align}\label{eq:sens_quadConvergence}
\|X_{t+1} - X^*\|_F \leq \gamma\cdot \|X_t - X^*\|_F^2, \quad \forall t=0, 1, \ldots .
\end{align}
\end{theorem}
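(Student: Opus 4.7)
My plan is to establish the quadratic contraction in a single step and then propagate it by induction, verifying along the way that both the error and the imbalance stay within the basin. The central identity is the Gauss--Newton decomposition
\[
U_{t+1} V_{t+1}^\top \;=\; L_t \;+\; \tilde U_t \tilde V_t^\top, \qquad L_t \;=\; U_t V_t^\top + U_t \tilde V_t^\top + \tilde U_t V_t^\top ,
\]
where $L_t$ is the linearized estimate that \GNMR fits to $b$ and $\tilde U_t \tilde V_t^\top$ is the omitted quadratic correction. Since $b = \mathcal A(X^*)$, the residual $\mathcal A(L_t - X^*)$ should be second-order small, and the min-norm choice of $(\tilde U_t, \tilde V_t)$ should render $\tilde U_t \tilde V_t^\top$ second order as well. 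Throughout I fix a balanced aligned factorization $(U_t^*, V_t^*)\in\mathcal B^*$ closest to $(U_t, V_t)$ over orthogonal changes of basis, set $E_U = U_t - U_t^*$, $E_V = V_t - V_t^*$, and invoke a standard perturbation lemma for balanced factorizations: for $Z_t$ close enough to $\mathcal B^*$ and sufficiently balanced, $\|E_U\|_F^2 + \|E_V\|_F^2 \leq C_1 \|X_t - X^*\|_F^2/\sigma_r^*$ and $\sigma_r(U_t), \sigma_r(V_t) \geq c_2 \sqrt{\sigma_r^*}$.

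The first step is to control $\|L_t - X^*\|_F$. A direct rearrangement using $X^* = (U_t - E_U)(V_t - E_V)^\top$ gives the identity
\[
L_t - X^* + E_U E_V^\top \;=\; U_t(\tilde V_t + E_V)^\top + (\tilde U_t + E_U) V_t^\top,
\]
whose right-hand side has rank at most $2r$. The test point $(U, V) = (-E_U, -E_V)$ in the LSQR makes its linearization equal to $X^* - E_U E_V^\top$, so by the $r$-RIP the optimal LSQR residual satisfies $\|\mathcal A(L_t - X^*)\| \leq \sqrt{1+\delta}\,\|E_U E_V^\top\|_F$. Applying the $2r$-RIP to the displayed rank-$2r$ identity and using the triangle inequality then yields $\|L_t - X^*\|_F \leq C_3 \|E_U\|_F \|E_V\|_F \leq C_3' \|X_t - X^*\|_F^2/\sigma_r^*$.

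The main obstacle is bounding $\|\tilde U_t \tilde V_t^\top\|_F$ despite the degeneracy of the LSQR. The crucial ingredient is the min-norm characterization: $(\tilde U_t, \tilde V_t)$ is orthogonal to the $r^2$-dimensional kernel $\{(U_t R, -V_t R^\top): R \in \mathbb R^{r\times r}\}$ of the linearization map, which is equivalent to the implicit balance identity $\tilde U_t^\top U_t = V_t^\top \tilde V_t$. On this orthogonal complement the cross term in $\|U_t V^\top + U V_t^\top\|_F^2$ is non-negative, so together with the lower bounds $\sigma_r(U_t), \sigma_r(V_t) \gtrsim \sqrt{\sigma_r^*}$ one obtains a coercive estimate $\|U_t V^\top + U V_t^\top\|_F^2 \gtrsim \sigma_r^*(\|U\|_F^2 + \|V\|_F^2)$. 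Combining with the $2r$-RIP gives $\|\tilde U_t\|_F^2 + \|\tilde V_t\|_F^2 \lesssim \|\mathcal A(U_t \tilde V_t^\top + \tilde U_t V_t^\top)\|^2/\sigma_r^* \lesssim \|X_t - X^*\|_F^2/\sigma_r^*$, whence $\|\tilde U_t \tilde V_t^\top\|_F \leq \tfrac12(\|\tilde U_t\|_F^2 + \|\tilde V_t\|_F^2) \lesssim \|X_t - X^*\|_F^2/\sigma_r^*$.

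Combining the two bounds through the triangle inequality yields $\|X_{t+1} - X^*\|_F \leq C_4 \|X_t - X^*\|_F^2/\sigma_r^*$, matching the claimed $\gamma = c_e/(2\sigma_r^*)$ for $c_e$ large enough. To close the induction I verify that $Z_{t+1}$ remains in the basin. The error contracts automatically. For the balance the min-norm identity $\tilde U_t^\top U_t = V_t^\top \tilde V_t$ produces the clean telescoping relation
\[
U_{t+1}^\top U_{t+1} - V_{t+1}^\top V_{t+1} \;=\; (U_t^\top U_t - V_t^\top V_t) + (\tilde U_t^\top \tilde U_t - \tilde V_t^\top \tilde V_t),
\]
so the imbalance accumulates only the second-order term $\|\tilde U_t^\top \tilde U_t - \tilde V_t^\top \tilde V_t\|_F \lesssim \|X_t - X^*\|_F^2/\sigma_r^*$. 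The resulting geometric series sums to a uniform bound $\|U_t^\top U_t - V_t^\top V_t\|_F \leq \sigma_r^*/(2c_e) + O(\sigma_r^*/c_e^2)$, which stays inside the basin for $c_e$ large enough, closing the induction.
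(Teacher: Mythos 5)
Your overall strategy is correct and matches the paper's: control the linearized residual $\|L_t - X^*\|_F$, control $\|\tilde U_t\tilde V_t^\top\|_F$ via the coercivity of $\mathcal L^{(t)}$ on the complement of its kernel (using $\tilde U_t^\top U_t = V_t^\top\tilde V_t$), and propagate the balance via the telescoping identity. The last two steps are essentially Lemmas 4.3--4.4 of the paper.

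However, there is a genuine gap in your very first step: the claimed ``standard perturbation lemma'' --- that the orthogonal (Procrustes) alignment $(U_t^*P, V_t^*P)$ of a \emph{balanced} decomposition $(U_t^*, V_t^*)$ of $X^*$ satisfies $\|E_U\|_F^2 + \|E_V\|_F^2 \leq C_1\|X_t - X^*\|_F^2/\sigma_r^*$ --- is false in the regime you are in. To see this, note that $(U_t^*P, V_t^*P)$ remains perfectly balanced for any orthogonal $P$, so
\[
U_t^\top U_t - V_t^\top V_t = (U_t^*P)^\top E_U + E_U^\top U_t^*P + E_U^\top E_U - (V_t^*P)^\top E_V - E_V^\top V_t^*P - E_V^\top E_V,
\]
which forces $\|E_U\|_F + \|E_V\|_F \gtrsim \|U_t^\top U_t - V_t^\top V_t\|_F / \sqrt{\sigma_1^*}$. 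Your own telescoping analysis shows the imbalance only accumulates (it never decreases), so it stays at a level comparable to $\sigma_r^*/c_e$ throughout the iterations, while $\|X_t - X^*\|_F\to 0$. Thus after a few iterations the Procrustes error is bounded \emph{below} by a fixed constant while your claimed upper bound goes to zero. Consequently the bound $\|E_U E_V^\top\|_F \lesssim \|X_t - X^*\|_F^2/\sigma_r^*$ fails, and $\|L_t - X^*\|_F$ does not contract quadratically. The paper's own Procrustes bound (Lemma~\ref{lem:dP_e_bound}) makes this explicit: it necessarily carries an additive $\frac14\|U_t^\top U_t - V_t^\top V_t\|_F^2$ term.

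This is precisely why the paper introduces the Q-distance (\cref{def:Q_distance}), which aligns over all \emph{invertible} $Q$ rather than just orthogonal $P$. The extra freedom lets $U^*Q$, $V^*Q^{-\top}$ absorb the imbalance of $(U_t, V_t)$, producing an aligned decomposition $(U^*Q, V^*Q^{-\top})\in\mathcal B^*$ whose distance to $Z_t$ is controlled purely by the reconstruction error, not by the imbalance: $\|U_t - U^*Q\|_F^2 + \|V_t - V^*Q^{-\top}\|_F^2 \leq \frac{25}{4\sigma_r^*}\|X_t - X^*\|_F^2$ (Lemma~\ref{lem:dQ_e_bound}). Proving that such a $Q$ exists and is well-conditioned (so that $\sigma_r(U_t),\sigma_r(V_t)$ stay bounded below) is the technical heart of the argument, and it requires the condition \cref{eq:dQ_e_bound_assumption} on the imbalance --- which holds under the theorem's $\mathcal B_\textnormal{bln}$ initialization and propagates via Lemma~\ref{lem:minNormSol_implicitBalance}. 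Replacing your orthogonal alignment with this invertible alignment repairs the argument; everything else in your proposal is sound.
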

Note that the assumption $Z_0 \in \mathcal B_\textnormal{err}({1}/{c_e})$ implies $\gamma\cdot \|X_0 - X^*\|_F \leq 1/2$. Hence, by \cref{eq:sens_quadConvergence}, \GNMR exactly recovers $X^*$, since $X_t \to X^*$ as $t\to\infty$.

Before we compare \cref{thm:sens_quadConvergence} to previous works, we make several remarks.
%First, the recovery guarantee of the theorem requires the minimal sufficient condition in terms of an RIP. Second,
The theorem is stated and proved only for the updating variant of \GNMR, which is the simplest to analyze. In simulations we noted that other \GNMR variants were also able to perfectly recover $X^*$. We thus conjecture that the theorem holds also for other variants. 
Next, assuming that the sensing operator $\mathcal A$ satisfies a $4r$-RIP with a sufficiently small constant $\delta_{4r}$, then an initialization $Z_0$ that satisfies the conditions of the theorem can be constructed in polynomial time as in \cite[Alg.~2]{tu2016low}, see \cite[proof of Eq.~(3.6) of their Theorem~3.3]{tu2016low}. 

The main ingredients in the proof of \cref{thm:sens_quadConvergence} are described in \cref{sec:sensing_proof_inText}.
A key property is that the factor matrices $U_t,V_t$ remain approximately balanced throughout the iterations of \GNMR.
We note that if the matrix to be recovered is positive semi-definite (PSD), $X^* = UU^\top$ with $U\in \mathbb R^{n\times r}$, a much simpler proof is possible for a slightly modified algorithm which explicitly enforces $U_t = V_t$, for which perfect balance holds trivially.
%Indeed, for this modified scheme, perfect balance of $U_t,V_t$ holds trivially.

In fact, the need for a balance analysis can be avoided even in the general rectangular case, % by updating $Z_t = \begin{psmallmatrix} U \Sigma^{1/2} \\ V \Sigma^{1/2} \end{psmallmatrix}$ at the beginning of each iteration of \cref{alg:GNMR}, where $U \Sigma V^\top$ is the SVD of $U_tV_t^\top$,
for a slightly modified variant of \GNMR which explicitly enforces the iterates to be perfectly balanced, see \cref{alg:GNMR_bSVD}.
Empirically, this variant of \GNMR has similar performance. Furthermore, it is provably stable against arbitrary additive error, and in particular works for approximately low rank $X^*$, as stated in the next theorem. % "in particular" since approx. low rank is a special case, in which the E is perpendicular to X_r

\begin{theorem}[Noisy matrix sensing]\label{thm:sens_noisy}
Let $\delta$ be any positive constant strictly smaller than one,
and denote $c=7(1+\delta)^\frac{3}{2}/(1-\delta)^\frac{3}{2}$.
Assume that the sensing operator $\mathcal A$ satisfies a $2r$-RIP with 
$\delta_{2r} \leq \delta$.
Let $b = \mathcal A(X^*) + \xi$ where $X^* \in \mathbb R^{n_1\times n_2}$ is of rank $r$ and $\xi\in \mathbb R^m$ satisfies
\begin{align}
\|\xi\| \leq \frac{\sigma_r^*\sqrt{1-\delta}}{6c}.
\end{align}
Denote $\gamma = c/(4\sigma_r^*)$.
Then, for any initial iterate $Z_0 \in \mathcal B_\textnormal{err}(1/c)$,
the estimates $X_t = U_tV_t^\top$ of \cref{alg:GNMR_bSVD} with $\alpha = -1$ satisfy
\begin{align}\label{eq:sens_noisy}
\|X_{t+1} - X^*\|_F \leq \gamma\cdot \|X_t - X^*\|_F^2 + \frac{3\|\xi\|}{\sqrt{1-\delta}}, \quad \forall t=0, 1, \dots .
\end{align}
As a result, $\|X_{t} - X^*\|_F \leq \sigma_r^*/ (4^{2^{t}-1} c) + 6\|\xi\|/\sqrt{1-\delta} \stackrel{t\to\infty}{\longrightarrow} 6\|\xi\|/\sqrt{1-\delta}$.
\end{theorem}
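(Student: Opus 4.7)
The strategy is a perturbation analysis around $X^*$ that exploits the two defining features of \cref{alg:GNMR_bSVD}: at each iteration $X_{t+1}$ is the best rank-$r$ approximation of the LSQR fit $\hat X_{t+1} = U_t \tilde V_t^\top + \tilde U_t V_t^\top + U_t V_t^\top$, and its factors are rebalanced so that $U_{t+1}^\top U_{t+1} = V_{t+1}^\top V_{t+1}$. Since $X^*$ has rank $r$, best rank-$r$ approximation gives $\|X_{t+1} - \hat X_{t+1}\|_F \leq \|X^* - \hat X_{t+1}\|_F$, and the triangle inequality yields $\|X_{t+1} - X^*\|_F \leq 2\|\hat X_{t+1} - X^*\|_F$. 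It thus suffices to bound $\|\hat X_{t+1} - X^*\|_F$ by a quadratic term in $\|X_t - X^*\|_F$ plus a noise term.

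Next, I would use a variational argument based on LSQR optimality. Let $(U^*, V^*)$ be the balanced decomposition of $X^*$ closest to $(U_t, V_t)$, set $E_U = U_t - U^*$, $E_V = V_t - V^*$, and plug $(\tilde U_t, \tilde V_t) = (-E_U, -E_V)$ into the LSQR to obtain the candidate $\hat X^{\mathrm{cand}}_{t+1} = U_t V^{*\top} + U^* V_t^\top - U_t V_t^\top$, for which the identity $\hat X^{\mathrm{cand}}_{t+1} - X^* = -E_U E_V^\top$ holds exactly (rank at most $r$, with Frobenius norm at most $\tfrac12\|Z_t - Z^*\|_F^2$). Letting $D = \hat X_{t+1} - \hat X^{\mathrm{cand}}_{t+1}$, which lies in the tangent space at $(U_t, V_t)$ and hence has rank at most $2r$, the optimality of $\hat X_{t+1}$ for the LSQR reads $\|\mathcal A(D)\|^2 + 2\langle \mathcal A(D), \mathcal A(-E_U E_V^\top) - \xi\rangle \leq 0$, which yields $\|\mathcal A(D)\| \leq 2\sqrt{1+\delta}\|E_U E_V^\top\|_F + 2\|\xi\|$ after invoking $2r$-RIP on the rank-$r$ matrix $-E_U E_V^\top$. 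A second application of $2r$-RIP to $D$ combined with $\|\hat X_{t+1} - X^*\|_F \leq \|D\|_F + \|E_U E_V^\top\|_F$ then produces
\begin{align*}
\|\hat X_{t+1} - X^*\|_F \leq \left(\frac{2\sqrt{1+\delta}}{\sqrt{1-\delta}} + 1\right)\|E_U E_V^\top\|_F + \frac{2\|\xi\|}{\sqrt{1-\delta}}.
\end{align*}

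To convert $\|E_U E_V^\top\|_F \leq \tfrac12\|Z_t - Z^*\|_F^2$ into a quadratic expression in $\|X_t - X^*\|_F/\sigma_r^*$, I would exploit the enforced balance of $Z_t$: adapting the balanced version of \cite[Lemma~5.14]{tu2016low}, one has $\|Z_t - Z^*\|_F^2 \leq C_{\mathrm{bln}}\|X_t - X^*\|_F^2/\sigma_r^*$ for an absolute constant $C_{\mathrm{bln}}$ whenever $X_t \in \mathcal B_\textnormal{err}(1/c)$. Substituting this bound and multiplying by the factor $2$ from the first paragraph puts the inequality into the form $\|X_{t+1} - X^*\|_F \leq \gamma \|X_t - X^*\|_F^2 + 3\|\xi\|/\sqrt{1-\delta}$ once the constants $C_{\mathrm{bln}}$ and $\delta$ are collected into $c = 7(1+\delta)^{3/2}/(1-\delta)^{3/2}$. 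The induction is then straightforward: the hypothesis $\|\xi\| \leq \sigma_r^*\sqrt{1-\delta}/(6c)$ bounds the noise term by $\sigma_r^*/(2c)$, so $Z_{t+1} \in \mathcal B_\textnormal{err}(1/c)$ whenever $Z_t$ is, and unrolling the recursion $x_{t+1} \leq \gamma x_t^2 + \beta$ produces the advertised decay $\|X_t - X^*\|_F \leq \sigma_r^*/(4^{2^t-1} c) + 6\|\xi\|/\sqrt{1-\delta}$. The main obstacle is verifying the balanced factor-to-matrix inequality with a constant $C_{\mathrm{bln}}$ small enough to fit inside the stated $c$; once this adapted lemma is in place, the RIP manipulations and the recurrence-unrolling are routine bookkeeping.
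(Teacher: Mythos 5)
Your opening move misreads \cref{alg:GNMR_bSVD}. The algorithm does \emph{not} take the best rank-$r$ approximation of the LSQR fit at each iteration; that step appears only once, at the final return line. Inside the loop (with $\alpha = -1$) the iterate is set directly as $X_{t+1} = (U_t + \tilde U_t)(V_t + \tilde V_t)^\top$, which equals $\hat X_{t+1} + \tilde U_t\tilde V_t^\top$ where $\hat X_{t+1} = U_t\tilde V_t^\top + \tilde U_t V_t^\top + U_tV_t^\top$ is the LSQR fit. Hence the Eckart--Young step $\|X_{t+1} - \hat X_{t+1}\|_F \le \|X^* - \hat X_{t+1}\|_F$, and the triangle inequality $\|X_{t+1}-X^*\|_F \le 2\|\hat X_{t+1}-X^*\|_F$ built on it, do not apply to the algorithm the theorem is about.

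Your variational argument in the second paragraph is correct as a bound on $\|\hat X_{t+1} - X^*\|_F$, and it is a nice alternative to the paper's two-sided bound on the LSQR objective. But to recover $\|X_{t+1}-X^*\|_F$ you still need to control the second-order residual $\|\tilde U_t\tilde V_t^\top\|_F \le \tfrac12\|\tilde Z_t\|_F^2$. This is precisely where the \emph{minimal-norm} property of the LSQR solution enters: the paper bounds $\|\tilde Z_t\|_F$ via the pseudoinverse $\mathcal L_A^{(t)\dagger}$ and a lower bound on $\sigma_{\min}(\mathcal L^{(t)})$ in terms of $\sigma_r(U_t), \sigma_r(V_t)$ (\cref{lem:minNormSol_normBound}, noisy version in the proof of \cref{lem:sens_noisy_singleIteration}). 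Your variational bound produces a bound on $\|D\|_F = \|\mathcal L^{(t)}(\tilde Z_t - \tilde Z^{\mathrm{cand}})\|_F$ but not on $\|\tilde Z_t\|_F$ itself, because $\mathcal L^{(t)}$ has a nontrivial kernel, and nothing in your argument uses the fact that the algorithm picks the minimal-norm element. Without that ingredient the proof does not close; if you add it, your derivation becomes a legitimate (and arguably cleaner) alternative to the paper's upper/lower sandwich on $F_t$, with your bound on $\|\hat X_{t+1}-X^*\|_F$ replacing \cref{eq:noisy_Ft_upper_bound,eq:noisy_Ft_lowerBound} and the balanced Procrustes bound (\cref{lem:dP_e_bound}) playing the role you already assigned to the adapted \cite[Lemma~5.14]{tu2016low}.
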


\begin{algorithm}[t]
\caption{\GNMR with SVD-balancing step}
\label{alg:GNMR_bSVD}
\SetKwInOut{Return}{return}
\SetKwInOut{Input}{input}
\SetKwInOut{Output}{output}
\Input{same as \cref{alg:GNMR}, with initialization $(U'_0, V'_0)$}
\Output{$\hat X$ - rank-$r$ (approximate) solution to \cref{eq:matrixRecovery_problem}} % Output
\For{$t=0, \ldots ,T-1$}{
		compute the balanced factors $\begin{psmallmatrix} U_t \\ V_t \end{psmallmatrix} = \begin{psmallmatrix} U\Sigma^\frac{1}{2} \\ V\Sigma^\frac{1}{2} \end{psmallmatrix}$ where $U\Sigma V^\top = \text{SVD}\left(U'_t {V_t'}^{\top}\right)$ \\
	compute $\tilde Z_t$, the minimal norm solution of
	$ \argmin_{U,V} \| \mathcal A(U_t V^\top + U V_t^\top - \alpha U_tV_t^\top) - b \|^2 $ \\
	set $\begin{psmallmatrix} U'_{t+1} \\ V'_{t+1} \end{psmallmatrix} = \frac{1- \alpha}{2} \begin{psmallmatrix} U_{t} \\ V_{t} \end{psmallmatrix} + \begin{psmallmatrix} \tilde U_t \\ \tilde V_t \end{psmallmatrix}$ where $\begin{psmallmatrix} \tilde U_t \\ \tilde V_t \end{psmallmatrix} = \tilde Z_t$
	}
\Return{$\hat X$, the best rank-$r$ approximation of $U_{T-1} \tilde V_{T-1}^\top + \tilde U_{T-1} V_{T-1}^\top - \alpha U_{T-1} V_{T-1}^\top$ %where $\begin{psmallmatrix} \tilde U_T \\ \tilde V_T \end{psmallmatrix} = \tilde Z_T$
}
\end{algorithm}

In the absence of noise, $\xi = 0$, the guarantee of \cref{thm:sens_noisy} for \cref{alg:GNMR_bSVD} reduces to the exact recovery with quadratic rate of \cref{alg:GNMR} guaranteed by \cref{thm:sens_quadConvergence}.
%We nonetheless presented \cref{thm:sens_quadConvergence} as it does not require the calculation of an SVD at each iteration, which may be expensive for large sized matrices.

\textbf{Comparison to previous works.}
Recht {\em et al.}~\cite{recht2010guaranteed} were the first to derive recovery guarantees in the matrix sensing setup. They proved that under suitable assumptions, nuclear norm minimization recovers the true rank-$r$
matrix $X^*$ {from an arbitrary initialization}. 
Recovery guarantees for factorization-based methods, with a linear convergence rate and assuming a sufficiently accurate initialization, were derived by various authors, see for example \cite{jain2013low,zheng2015convergent,tu2016low,ma2021beyond,tong2021accelerating}.
These works required more stringent RIP conditions than ours.
Moreover, the contraction factor in some of these works is not an absolute constant,
but rather depends on the problem parameters, such as the rank $r$ and the condition number $\kappa$.

To the best of our knowledge, only three recent works obtained results similar to our \cref{thm:sens_quadConvergence}. Yue {\em et al.}~\cite{yue2019quadratic} derived a recovery guarantee for a cubic regularization method from an arbitrary initialization, with an asymptotic quadratic convergence rate. However, they proved it only for a PSD matrix $X^*$, and required an RIP constant $\delta_{2r} < 1/10$. 
%In addition, since their convergence rate analysis is asymptotic, they did not specify the region where the quadratic convergence holds.
Charisopoulos {\em et al.}~\cite{charisopoulos2021low} proved quadratic convergence for a prox-linear algorithm whose objective is more complicated, as it involves a least squares term and an $\ell_1$ penalty term that requires delicate tuning.
Finally, Luo {\em et al.}~\cite{luo2020recursive} proved quadratic convergence for an importance sketching scheme, but required a $3r$-RIP assumption on $\mathcal A$.
Our quadratic rate guarantee, in contrast, holds in the general rectangular case for a computationally simple algorithm that solves a least squares problem at each iteration, and requires the minimal RIP condition of a $2r$-RIP with $\delta_{2r}<1$. 
As for the stability to additive error, \cref{thm:sens_noisy}, similar results were proved by \cite{charisopoulos2021low,luo2020recursive,tong2021low} for other algorithms.

%%%%%%%%%%%%%%%%%%%%%%%%%%%%%%%%%%%%%%%%%%%%%%%%%%%%%%%%
%%%%%%%%%%%%%%%%%%%%%%%%%%%%%%%%%%%%%%%%%%%%%%%%%%%%%%%%
\subsection{Recovery guarantees for matrix completion}\label{sec:theory_completion}
%As mentioned in the introduction, similar to other works on matrix completion, we derive guarantees for a constrained variant of \GNMR, described in \cref{alg:reg_GNMR}. The key difference from \GNMR is that at iteration $t$ it solves a least squares problem constrained to the subset $\mathcal B_\mu \cap \mathcal C^{(t)}$, where $\mathcal C^{(t)}$ is the following neighbourhood of the current factor matrices $U_t,V_t$,
Similar to other works on matrix completion, we derive guarantees for a constrained version of \GNMR, described in \cref{alg:reg_GNMR}. Specifically, \cref{alg:reg_GNMR} is a constrained version of the setting variant ($\alpha=1$), but as explained below, the results in this section hold for all the (constrained) variants of \GNMR.
The only difference in this version is that its least squares problem is constrained to the subset $\mathcal B_\mu \cap \mathcal C^{(t)}$, where $\mathcal C^{(t)}$ is the following neighborhood of the current factor matrices $U_t,V_t$,
\begin{align}\begin{aligned}\label{eq:Ct_def}
\mathcal C^{(t)} =& \left\{\begin{pmatrix} U \\ V \end{pmatrix} \in \mathbb R^{(n_1+n_2)\times r} \,\mid\,  \|U - U_t\|^2_F + \|V - V_t\|^2_F \leq \frac{8}{p \sigma_r^*} \|X_t-X^*\|^2_{F(\Omega)}
\right\} .
\end{aligned}\end{align}
Similar constraints/regularizations were employed in previous works, see for example \cite{keshavan2010matrix,sun2016guaranteed}.
As these constraints are quadratic, the constrained problem may be equivalently written as a regularized least squares problem with quadratic regularization terms. Hence, each iteration of the constrained \GNMR can be solved computationally efficiently.
In \cref{claim:feasible_constraints}, we prove that starting from the initialization described in \cref{rem:comp_initialization} below, then w.h.p.~the constraints are feasible at all iterations, namely $\mathcal B_u \cap \mathcal C^{(t)} \neq \emptyset$ for all $t$.
Note that \cref{alg:reg_GNMR} requires as input the incoherence $\mu$ and the smallest non-zero singular value $\sigma_r^*$ of the true matrix $X^*$. If these quantities are unknown, they may be estimated from the observed data, see \cref{rem:comp_parametersInput}.
Finally, we emphasize that these constraints serve only for technical purposes in our theoretical analysis. In practice, \GNMR works well without them, and we did not employ them in our simulations.

%While \cref{alg:reg_GNMR} is formulated as a constrained variant of \GNMR, it is equivalent to a regularized one with suitable regularization parameter, hence our analysis applies to both variants.
%Note that the regularization corresponds to $\mathcal B_u$ is of the form $\lambda_u \sum_{i=1}^{n_1} \|A_i x\|^2 + \lambda_v \sum_{i=1}^{n_2} \|A'_i x\|^2$ where $\lambda_u, \lambda_v \in \mathbb R$ are regularization parameters, $x \in \mathbb R^{(n_1+n_2)r}$ is the vectorization of the stacked factor matrices $\begin{psmallmatrix} U \\ V \end{psmallmatrix}$, and the $A_i, A_i' \in \mathbb R^{r\times (n_1+n_2)r}$ extract the rows of $U, V$. Similarly, the regularization corresponds to $\mathcal C^{(t)}$ is of the form $\lambda_t \|x - x_t\|^2$ where $\lambda_t \in \mathbb R$ is a regularization parameter and $x_t \in \mathbb R^{(n_1+n_2)r}$ is the vectorization of $\begin{psmallmatrix} U_t \\ V_t \end{psmallmatrix}$. The regularized least squares problem is thus equivalent to a standard least squares problem in extended dimensions, which can be solved efficiently.

\begin{algorithm}[t]
\caption{Constrained \GNMR for matrix completion (setting variant)}
\label{alg:reg_GNMR}
\SetKwInOut{Return}{return}
\SetKwInOut{Input}{input}
\SetKwInOut{Output}{output}
\Input{$\mathcal P_\Omega$ - sampling operator $\mathbb R^{n_1\times n_2}\to \mathbb R^m$ ($m = |\Omega|$) \\ 
	$b$ - observed entries of the underlying matrix $\mathcal P_\Omega(X^*)$ \\
	$r, \mu, \sigma_r^*$ - rank, incoherence parameter and $r$-th singular value of $X^*$ \\
	$T$ - maximal number of iterations \\
	$(U_0, V_0)\in \mathbb R^{n_1\times r}\times \mathbb R^{n_2\times r}$ - initialization}
\Output{$\hat X$ - rank-$r$ (approximate) solution to \cref{eq:matrixRecovery_problem} with $\mathcal A\to \mathcal P_\Omega$} % Output
\For{$t=0, \ldots ,T-1$}{
	compute $\begin{psmallmatrix} U_{t+1} \\ V_{t+1} \end{psmallmatrix} = \argmin \{\| \mathcal P_\Omega(U_t V^\top + U V_t^\top - U_tV_t^\top ) - b \|^2 \,\mid\, \begin{psmallmatrix} U \\ V \end{psmallmatrix} \in \mathcal B_\mu \cap \mathcal C^{(t)}\},$ \\
	\quad where $\mathcal B_\mu$ is defined in \cref{eq:B_mu_def} and $\mathcal C^{(t)}$ is defined in \cref{eq:Ct_def}
}
\Return{$\hat X$, the best rank-$r$ approximation of $U_{T-1} V_T^\top + U_T V_{T-1}^\top - U_{T-1} V_{T-1}^\top$}
\end{algorithm}

%For simplicity, our results for matrix completion assume the noise-free setting; an analysis of the noisy case is left for future work.
Below we present recovery guarantees for \GNMR in the matrix completion setting assuming ideal error-free measurements. Analyzing the stability to measurement error is left for future work.
The following theorem, proven in \cref{sec:comp_linearConvergence_proof}, states that starting from a sufficiently accurate balanced initialization with bounded row norms, \cref{alg:reg_GNMR} recovers $X^*$ with a linear convergence rate.
\begin{theorem}[Matrix completion, linear convergence]\label{thm:comp_linearConvergence}
There exist constants $C$, $c_e$, $c_l$ such that the following holds.
Let $X^* \in \mathcal M(n_1, n_2, r, \mu, \kappa)$.
Assume $\Omega \subseteq [n_1]\times [n_2]$ is randomly sampled with $np \geq C \mu r \max\{\log n, \mu r \kappa^2\}$.
Then w.p.~at least $1 - 3/n^3$, starting from any $Z_0 \in \mathcal B_\textnormal{err}(1/(c_e\sqrt\kappa)) \cap \mathcal B_\textnormal{bln}(1/c_l) \cap \mathcal B_\mu$,
the estimates $X_{t} = U_{t}V_{t}^\top$ of \cref{alg:reg_GNMR} satisfy
\begin{align*}
\|X_{t+1} - X^*\|_F \leq \tfrac{1}{2} \|X_{t} - X^*\|_F  .
\end{align*}
\end{theorem}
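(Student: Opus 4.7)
The plan is to proceed by induction on $t$, maintaining the invariant $Z_t \in \mathcal B_\textnormal{err}(\epsilon_t) \cap \mathcal B_\textnormal{bln}(\delta_t) \cap \mathcal B_\mu$ where $\epsilon_t = 2^{-t}/(c_e\sqrt\kappa)$ and $\delta_t$ stays bounded by a fixed constant multiple of $1/c_l$. Membership in $\mathcal B_\mu$ is automatic because it is hardwired into the algorithm's constraint. The workhorse throughout is the paper's sharpened matrix-completion RIP-like guarantee, which under the stated sample-complexity assumption provides $\|\cdot\|_{F(\Omega)}^2 \asymp p\|\cdot\|_F^2$ uniformly over all matrices of rank $O(r)$ whose factors have incoherent-type row norms --- in particular, over all sums and differences that arise from factors in $\mathcal B_\mu$. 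I would first verify that on the probability event of that theorem (which fails with probability at most $3/n^3$), this equivalence applies to every matrix appearing in the analysis, via a finite union over factor-matrix types.

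\textbf{Feasible comparator and optimality.} Next I would construct $(\hat U,\hat V)$ by taking an SVD-balanced decomposition of $X^*$ and aligning it to $(U_t,V_t)$ by the optimal orthogonal (Procrustes) rotation. Incoherence of $X^*$ places $(\hat U,\hat V)$ in $\mathcal B_\mu$, while the inductive balance and closeness of $(U_t,V_t)$, combined with a balance-type lemma in the spirit of \cite[Lemma~5.14]{tu2016low}, yield
\[
\|\hat U - U_t\|_F^2 + \|\hat V - V_t\|_F^2 \,\leq\, \frac{C'}{\sigma_r^*}\|X_t-X^*\|_F^2 \,\leq\, \frac{C''}{p\sigma_r^*}\|X_t-X^*\|_{F(\Omega)}^2,
\]
so $(\hat U,\hat V) \in \mathcal C^{(t)}$. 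Writing $M(U,V) = U_tV^\top + UV_t^\top - U_tV_t^\top$, optimality of $(U_{t+1},V_{t+1})$ over $\mathcal B_\mu\cap\mathcal C^{(t)}$ gives
\[
\|\mathcal P_\Omega(M(U_{t+1},V_{t+1})-X^*)\|^2 \,\leq\, \|\mathcal P_\Omega(M(\hat U,\hat V)-X^*)\|^2 \,=\, \|\mathcal P_\Omega\big((U_t-\hat U)(V_t-\hat V)^\top\big)\|^2.
\]
Applying the RIP-like bound on both sides (the left residual is rank $O(r)$ with incoherent factors), I obtain $\|M(U_{t+1},V_{t+1})-X^*\|_F \lesssim \|X_t-X^*\|_F^2/\sigma_r^*$.

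\textbf{Closing the contraction.} I then decompose $X_{t+1}-X^* = (M(U_{t+1},V_{t+1})-X^*) + (U_{t+1}-U_t)(V_{t+1}-V_t)^\top$ and bound the Gauss-Newton gap via AM-GM and the $\mathcal C^{(t)}$ constraint:
\[
\|(U_{t+1}-U_t)(V_{t+1}-V_t)^\top\|_F \,\leq\, \tfrac{1}{2}\big(\|U_{t+1}-U_t\|_F^2+\|V_{t+1}-V_t\|_F^2\big) \,\leq\, \frac{4\|X_t-X^*\|_{F(\Omega)}^2}{p\sigma_r^*} \,\lesssim\, \frac{\|X_t-X^*\|_F^2}{\sigma_r^*}.
\]
Combining, $\|X_{t+1}-X^*\|_F \leq C'''\|X_t-X^*\|_F^2/\sigma_r^*$, which together with the inductive bound $\|X_t-X^*\|_F \leq \sigma_r^*/(c_e\sqrt\kappa)$ yields a $1/2$ contraction once $c_e$ is chosen large enough (in fact a better-than-linear rate per step, consistent with the quadratic theorem that follows).

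\textbf{Preserving balance, and the main obstacle.} The $\mathcal B_\mu$ invariant is automatic. For the balance invariant, expanding $U_{t+1}^\top U_{t+1} - V_{t+1}^\top V_{t+1}$ around $(U_t,V_t)$ and using the perturbation bound $\|U_{t+1}-U_t\|_F,\|V_{t+1}-V_t\|_F \lesssim \|X_t-X^*\|_F/\sqrt{\sigma_r^*}$ from $\mathcal C^{(t)}$, together with $\|U_t\|_2,\|V_t\|_2 \lesssim \sqrt{\sigma_1^*}$ from $\mathcal B_\mu$, gives $\|U_{t+1}^\top U_{t+1} - V_{t+1}^\top V_{t+1}\|_F \leq \delta_t\sigma_r^* + O(\sqrt\kappa\,\|X_t-X^*\|_F) \leq \delta_t\sigma_r^* + O(2^{-t}\sigma_r^*/c_e)$; summing the geometric series keeps $\delta_t$ uniformly bounded by a constant multiple of $1/c_l$ provided $c_e$ is chosen sufficiently large relative to $c_l$. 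A tighter route uses the minimum-norm identity $U_t^\top U_{t+1} = V_{t+1}^\top V_t$ implied by the setting variant, which cancels the linear cross terms and leaves only quadratic ones. This balance bookkeeping is the main obstacle: the accumulated imbalance over infinitely many iterations must not exceed $\sigma_r^*/c_l$, which forces a joint choice of the constants $c_e$ and $c_l$ and must hold even on the boundary iterations where the contraction is weakest. A secondary delicate point is ensuring $\mathcal B_\mu \cap \mathcal C^{(t)} \neq \emptyset$ at every $t$ under a single $1-3/n^3$ probability event, so that the comparator construction above is genuinely admissible throughout the recursion.
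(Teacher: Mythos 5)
Your outline is structurally close to the paper's argument (induction on error, comparator feasibility, optimality, decomposition into $M(U_{t+1},V_{t+1})-X^*$ plus a Gauss-Newton gap), but the construction of the comparator contains a genuine gap that the paper works hard to avoid.

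You align the balanced SVD of $X^*$ to $(U_t,V_t)$ by an \emph{orthogonal} Procrustes rotation and then invoke a Tu-et-al.\,Lemma~5.14-type bound to claim $\|\hat U - U_t\|_F^2 + \|\hat V - V_t\|_F^2 \lesssim \|X_t-X^*\|_F^2/\sigma_r^*$, which is what you need to certify $(\hat U,\hat V)\in\mathcal C^{(t)}$. But that family of lemmas (the paper's \cref{lem:dP_e_bound} is the relevant sharpening) gives
\[
d_P^2(Z_t,Z^*) \;\leq\; \frac{1}{(\sqrt2-1)\sigma_r^*}\Bigl(\|X_t-X^*\|_F^2 + \tfrac14\|U_t^\top U_t - V_t^\top V_t\|_F^2\Bigr),
\]
with an \emph{additive} imbalance term. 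Under your own invariant, $\|U_t^\top U_t-V_t^\top V_t\|_F$ only stays bounded by $O(\sigma_r^*/c_l)$; it does not shrink (and in the paper's bookkeeping, \cref{eq:epsilon_delta_xi_t}, $\delta_t$ actually increases slightly). Meanwhile $\|X_t-X^*\|_F$ decays geometrically. So after finitely many iterations, the imbalance term dominates, $d_P^2(Z_t,Z^*) = \Theta(\sigma_r^*)$, and your orthogonally-aligned comparator lands outside $\mathcal C^{(t)}$ — the optimality comparison you base the contraction on is no longer available. Orthogonal alignment cannot absorb a persistent imbalance; that is exactly why \cref{lem:deterministic_nearby_delta_bound} and \cref{lem:dQ_e_bound} use the \emph{Q-distance} with a general invertible alignment $Q$ (built via the Ma-et-al.\ lemma, \cref{lem:lemma1_MLC21}). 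The non-orthogonal $Q$ (with $\|Q\|_2\leq 4/3$) eats the imbalance and yields the clean bound $\|U_t-U^*Q\|_F^2 + \|V_t - V^*Q^{-\top}\|_F^2 \leq \frac{25}{4\sigma_r^*}\|X_t-X^*\|_F^2$, with no imbalance term; the factor of $3$ in \cref{eq:B_mu_def} then absorbs the $4/3$ norm of $Q$ so the comparator stays in $\mathcal B_\mu$.

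Two smaller imprecisions are worth noting. First, the per-step improvement you would actually get here is linear, not quadratic: the $F(\Omega)$-bound on the cross-term $(\hat U-U_t)(\hat V-V_t)^\top$ comes from the Feige--Ofek-type bound (\cref{lem:KMO10_lemma71_consequence}), which carries an additive $t\epsilon^2$ term and therefore yields $\frac{1}{\sqrt p}\|\cdot\|_{F(\Omega)} \lesssim \|X_t-X^*\|_F$ rather than $\|X_t-X^*\|_F^2/\sigma_r^*$; genuine quadratic rate requires the tighter \cref{lem:quad_RIP} under a $\sqrt p$-scaled initialization (\cref{thm:comp_quadConvergence}). Second, the minimum-norm identity $U_t^\top U_{t+1}=V_{t+1}^\top V_t$ does not hold for the constrained \cref{alg:reg_GNMR}, since the solution is constrained to $\mathcal B_\mu\cap\mathcal C^{(t)}$ rather than being the minimum-norm minimizer; the paper instead tracks balance through the explicit perturbation bound of \cref{lem:update_balance_distance_bounds}, maintaining a bounded operator norm via an extra $\mathcal B_\textnormal{lsv}$ invariant you have omitted.
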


\Cref{thm:comp_linearConvergence}, as well as the following \cref{thm:comp_quadConvergence}, are stated for \cref{alg:reg_GNMR}, which is a constrained version of the setting variant of \GNMR ($\alpha=1$). However, they can be extended in a straightforward manner to any other variant. The technical reason is that the constraints replace the need to choose the minimal norm solution to the least squares problem in \cref{alg:reg_GNMR}, so that the proof works for any feasible solution.

\begin{remark}[Initialization for matrix completion]\label{rem:comp_initialization}
In \cref{lem:comp_initialization}, we prove that for a sufficiently large $|\Omega|$, a standard spectral-based initialization provides $Z_0 \in \mathcal B_\textnormal{err}(1/(c_e\sqrt\kappa)) \cap \mathcal B_\textnormal{bln}(1/c_l) \cap \mathcal B_\mu$. A similar initialization was employed in \cite{sun2016guaranteed,zheng2016convergence,yi2016fast}.
\end{remark}

\Cref{thm:comp_linearConvergence} guarantees a linear convergence rate.
As stated in the next theorem, once the error $\|X_t - X^*\|_F$ becomes small enough, the convergence rate becomes quadratic.
\begin{theorem}[Matrix completion, quadratic convergence]\label{thm:comp_quadConvergence}
There exist constants $C, c_e, c_l$ such that the following holds.
Let $X^* \in \mathcal M(n_1, n_2, r, \mu, \kappa)$.
Assume $\Omega \subseteq [n_1]\times [n_2]$ is randomly sampled with $np \geq C \mu r \log n$.
Then w.p.~at least $1 - 3/n^3$, starting from any initial iterate $Z_0 \in \mathcal B_\textnormal{err}(\sqrt p/(c_e\sqrt\kappa)) \cap \mathcal B_\textnormal{bln}(1/c_l) \cap \mathcal B_\mu$, the estimates $X_t = U_tV_t^\top$ of \cref{alg:reg_GNMR} satisfy
\begin{align*}
\|X_{t+1} - X^*\|_F \leq \gamma \|X_{t} - X^*\|_F^2
\end{align*}
where $\gamma \|X_{t} - X^*\|_F \leq 1/(2 \sqrt\kappa) \leq 1/2$.
\end{theorem}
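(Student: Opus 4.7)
The plan is to transport the quadratic-rate argument of matrix sensing (\cref{thm:sens_quadConvergence}) to the completion setting by substituting the authors' matrix-completion RIP-like guarantee \cref{thm:RIP} for the genuine RIP. Under $np \gtrsim \mu r \log n$, this surrogate should give, w.h.p., a two-sided bound of the form $(1-\delta)\|X\|_F^2 \le \frac{1}{p}\|\mathcal P_\Omega(X)\|^2 \le (1+\delta)\|X\|_F^2$ for every low-rank incoherent $X$. Because the constraint $\mathcal B_\mu$ in \cref{alg:reg_GNMR} keeps $(U_t, V_t)$ incoherent at every step, the rank-$2r$ matrices that arise in the contraction analysis automatically satisfy its hypotheses.

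First I would set up the induction hypothesis that $Z_t \in \mathcal B_{\textnormal{err}}(\sqrt{p}/(c_e\sqrt\kappa)) \cap \mathcal B_{\textnormal{bln}}(1/c_l) \cap \mathcal B_\mu$ at every iteration. Membership in $\mathcal B_\mu$ is enforced directly by the constraint; approximate balance is preserved because choosing the setting variant ($\alpha=1$) implicitly regularizes $\|U\|_F^2 + \|V\|_F^2$ (the implicit-balance mechanism the authors emphasize for the minimal-norm solution of the least squares); and the error bound is re-established after each contraction step. Next I would fix a balanced factorization $Z_t^* = (U_t^*, V_t^*) \in \mathcal B^*$ closest to $Z_t$, and use the identity
\begin{equation*}
U_t(V_t^*)^\top + U_t^* V_t^\top - U_t V_t^\top - X^* \;=\; -(U_t - U_t^*)(V_t - V_t^*)^\top
\end{equation*}
to show that $Z_t^*$ is (via \cref{claim:feasible_constraints}) a feasible candidate in the constrained least squares at iteration $t$, and that plugging it in produces a purely second-order residual. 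Combined with the optimality of $(U_{t+1}, V_{t+1})$ and the upper RIP-like bound, this yields
\begin{equation*}
\|\mathcal P_\Omega(U_tV_{t+1}^\top + U_{t+1}V_t^\top - U_tV_t^\top - X^*)\|^2 \;\le\; (1+\delta)\,p\,\|U_t - U_t^*\|_F^2 \,\|V_t - V_t^*\|_F^2 .
\end{equation*}
Applying the lower RIP-like bound then cancels the $p$ factor, and decomposing
\begin{equation*}
X_{t+1} - X^* \;=\; (U_tV_{t+1}^\top + U_{t+1}V_t^\top - U_tV_t^\top - X^*) + (U_{t+1} - U_t)(V_{t+1} - V_t)^\top ,
\end{equation*}
the step-size constraint $\mathcal C^{(t)}$ together with the upper RIP-like bound controls the second summand by $O(\|X_t - X^*\|_F^2/\sigma_r^*)$, while approximate balance combined with the standard bound $\|U_t - U_t^*\|_F^2 + \|V_t - V_t^*\|_F^2 \le C\|X_t - X^*\|_F^2/\sigma_r^*$ bounds the first summand by the same order. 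This produces the quadratic recursion $\|X_{t+1} - X^*\|_F \le \gamma \|X_t - X^*\|_F^2$.

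The main obstacle I expect is the propagation of approximate balance. Unlike the sensing case in which \cref{alg:GNMR_bSVD} could enforce balance exactly through an SVD step, \cref{alg:reg_GNMR} relies entirely on the implicit balance regularization of the setting variant, so I must show that the bound $\|U_{t+1}^\top U_{t+1} - V_{t+1}^\top V_{t+1}\|_F \le \sigma_r^*/c_l$ is preserved under each Gauss-Newton step, likely by a perturbation argument against the balanced candidate $Z_t^*$. A second subtlety is the precise dependence of the contraction factor on $\delta$, $\sqrt{p}$, and $\kappa$: the tighter initialization radius $\sqrt{p/\kappa}$ (versus $1/\sqrt\kappa$ in the linear-rate theorem) is exactly what makes $\gamma\|X_t - X^*\|_F \le 1/(2\sqrt\kappa)$ consistent with the relaxed sample bound $np \gtrsim \mu r \log n$, and verifying this cleanly requires careful bookkeeping of the RIP constant inherited from \cref{thm:RIP}. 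Finally, one must confirm that $\mathcal B_\mu \cap \mathcal C^{(t)}$ is non-empty at every iteration so the constrained optimization is well posed, which is precisely what \cref{claim:feasible_constraints} delivers.
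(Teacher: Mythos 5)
Your proof outline matches the paper's high-level structure — find a nearby $Z^* \in \mathcal B^* \cap \mathcal B_\mu \cap \mathcal C^{(t)}$, compare objective values at $Z^*$ and at $Z_{t+1}$, and decompose $X_{t+1}-X^*$ into a first-order and a second-order piece — but the key lemma you want to invoke is the wrong one, and the gap it leaves is not a bookkeeping matter.

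You propose to substitute \cref{thm:RIP} for the genuine RIP. But \cref{thm:RIP} requires $np \gtrsim \frac{\mu r}{\epsilon^2}\max\{\log n,\,\frac{\mu r\kappa^2}{\epsilon^2}\}$, whereas \cref{thm:comp_quadConvergence} only assumes $np \gtrsim \mu r\log n$; the extra $\mu r\kappa^2$ factor in \cref{thm:RIP} cannot be discharged under the weaker hypothesis, no matter how one tunes constants. What makes the weaker sample bound attainable is precisely the shrunken initialization radius $\|X_0 - X^*\|_F \lesssim \sigma_r^*\sqrt{p}/\sqrt{\kappa}$: the paper proves a separate local RIP, \cref{lem:quad_RIP}, which holds w.p.\ $1-3/n^3$ under $np \gtrsim \frac{\mu r\log n}{\epsilon^2}$ alone, but only on the much smaller neighborhood $\|X-X^*\|_F \lesssim \epsilon\,\sigma_r^*\sqrt{p}$. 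The reason it is cheaper is that the extra $\sqrt p$ lets one bound the cross term $\|(U-U^*)(V-V^*)^\top\|_{F(\Omega)}$ by a \emph{deterministic} Cauchy--Schwarz estimate through $\|\cdot\|_{F(\Omega)} \leq \|\cdot\|_F$, whereas proving \cref{thm:RIP} must control that term probabilistically via \cref{lem:KMO10_lemma71_consequence}, which is where $\mu r\kappa^2$ enters. Your plan never produces this distinction, and treating it as "careful bookkeeping of the RIP constant inherited from \cref{thm:RIP}" misidentifies where the argument actually breaks.

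A secondary but real slip concerns balance propagation. You attribute it to the implicit balance regularization of the minimal-norm solution for $\alpha=1$, but \cref{alg:reg_GNMR} does not return a minimal-norm solution — it returns \emph{any} minimizer over $\mathcal B_\mu\cap\mathcal C^{(t)}$, and the paper explicitly notes that the constraints replace the minimal-norm selection and that the result holds for every variant. Balance control in this setting comes from the step-size constraint $\mathcal C^{(t)}$ together with the perturbation bound \cref{lem:update_balance_distance_bounds}: $\mathcal C^{(t)}$ forces $\|Z_{t+1}-Z_t\|_F^2 \lesssim e_{\Omega,t}^2/(p\sigma_r^*)$, which via \cref{eq:update_balance_bound} yields a telescoping imbalance budget. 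To complete your proof along the paper's lines you would also need the intermediate result \cref{lem:comp_common_singleIteration}, which packages exactly these two mechanisms (error contraction via the local RIP, balance and norm control via $\mathcal C^{(t)}$), and the correct choice of nearby decomposition comes from \cref{lem:deterministic_nearby_delta_bound} (built on the Q-distance bound \cref{lem:dQ_e_bound}) rather than "the closest balanced factorization," which would not obviously land in $\mathcal B_\mu$.
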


%\Cref{thm:comp_quadConvergence}, proven in \cref{sec:comp_quadConvergence_proof}, guarantees recovery with quadratic rate for a sufficiently accurate $Z_0 \in \mathcal B_\textnormal{err}(\sqrt p/(c_e\sqrt\kappa))$.
%The other two conditions, $Z_0 \in \mathcal B_\textnormal{bln}({1}/{c_l}) \cap \mathcal B_\mu$, hold for all the iterations of \cref{alg:reg_GNMR} under the assumptions of \cref{thm:comp_linearConvergence}.
%As a result, we get
\Cref{thm:comp_quadConvergence} is proven in \cref{sec:comp_quadConvergence_proof}. Combining it with \cref{thm:comp_linearConvergence} gives the following overall behavior of \GNMR: using the initialization procedure discussed in \cref{rem:comp_initialization}, \cref{alg:reg_GNMR} converges linearly according to \cref{thm:comp_linearConvergence}. After $t \sim \mathcal O( \log 1/p )$ iterations, it converges quadratically according to \cref{thm:comp_quadConvergence}.

We remark that the first condition in \cref{thm:comp_quadConvergence}, namely the stricter accuracy requirement $\|X_0 - X^*\|_F \lesssim \sqrt p / (c_e\sqrt\kappa)$, allows a reduced number of required observations compared to \cref{thm:comp_linearConvergence}.
Moreover, with such an accurate initial estimate $X_0$, \cref{thm:comp_quadConvergence} holds for a modified variant of \cref{alg:reg_GNMR} without the additional two conditions of balance and bounded row norms, $Z_0 \in \mathcal B_\textnormal{bln}(1/c_l)\cap \mathcal B_\mu$. In the modified variant we initialize $Z_0 = \begin{psmallmatrix} U \Sigma^{1/2} \\ V \Sigma^{1/2} \end{psmallmatrix}$ where $U \Sigma V^\top$ is the SVD of the initial estimate $X_0$. In addition, we may remove the constraint $Z_t \in \mathcal B_\mu$ from the iterative least squares problem of \cref{alg:reg_GNMR}.
%We nonetheless formulated \cref{thm:comp_quadConvergence} w.r.t.~\cref{alg:reg_GNMR} as is for consistency with the linear convergence result, \cref{thm:comp_linearConvergence}.

\begin{remark}\label{rem:comp_parametersInput}
\Cref{thm:comp_linearConvergence,thm:comp_quadConvergence} assume that the parameters $\mu$ and $\sigma_r^*$ of the underlying matrix $X^*$ are known. Similar assumptions were made in previous works, e.g.~\cite{sun2016guaranteed,yi2016fast,zheng2016convergence}.
%, with the parameter $\sigma_1^*$ replacing $\sigma_r^*$
While in practice these parameters are often unknown, they can be estimated from the observed matrix.
The parameter $\sigma_r^*$, for example, can be estimated by $\hat\sigma_r = \sigma_r(X/p)$.
We show in \cref{sec:comp_estimating_proof} that if $np \geq C \mu r \kappa^2 \log n$ with a sufficiently large $C$, then with high probability $|\hat\sigma_r - \sigma_r^*|/\sigma_r^* \leq 1/10$.
Hence, \cref{alg:reg_GNMR} with $\hat\sigma_r$ in place of $\sigma_r^*$ enjoys the same recovery guarantees (with different constants).
\end{remark}

\textbf{Comparison to previous works.}
In terms of sample complexity, the best known recovery guarantee was derived by \cite{ding2020leave}, which required $np \sim \mathcal O(\mu r \log(\mu r) \log (n) )$.
However, this result holds for nuclear norm minimization, which is computationally demanding.
For factorization based methods, the recovery guarantee with the smallest sample complexity requirement was derived by \cite{zheng2016convergence} for projected gradient descent. Our \cref{thm:comp_linearConvergence} matches this result for \GNMR. The basin of attraction in our result, however, is smaller by a factor of $\sqrt\kappa$. Consequently, our initialization guarantee requires a larger sample complexity by a factor of $\kappa^2$. On the other hand, our linear convergence guarantee is amongst the first to hold with a constant contraction factor. \cite{zheng2016convergence}, for example, had a contraction factor of $1 - 1/\mathcal O(\mu^2 r^2 \kappa^2)$.
A constant contraction factor for a scaled variant of projected gradient descent was recently proved in \cite{tong2021accelerating}; however, their required sample complexity is larger than ours by a factor of $\kappa^2$.

Next, we discuss the quadratic convergence guarantee. Several Riemannian optimization methods are guaranteed an asymptotic quadratic rate of convergence, see for example \cite{mishra2013low, boumal2015low}.
These guarantees follow from general results in Riemannian optimization \cite{absil2007trust,absil2009optimization}. In these works, the basin of attraction and the required sample complexity for the quadratic convergence of their methods were not specified.
In contrast, \cref{thm:comp_quadConvergence} provides explicit expressions for the corresponding basin of attraction and for the required sample complexity.
To the best of our knowledge, the only work which obtained a similar result to \cref{thm:comp_quadConvergence} is \cite{kummerle2021scalable} for \texttt{MatrixIRLS}. However, their basin of attraction is significantly smaller: $\|X_0 - X^*\|_2/\sigma_r^* \lesssim \frac{\mu^{3/2} r^{1/2}}{n^2 (\log n)^{3/2} \kappa}$ compared to our $\|X_0 - X^*\|_F/\sigma_r^* \lesssim \sqrt\frac{\mu r \log n}{n \kappa}$.

\subsection{Uniform RIP for matrix completion}\label{sec:theory_RIP}
To prove \cref{thm:comp_linearConvergence} we first derive a novel RIP guarantee for matrix completion. This result may be of independent interest, e.g.~as a building block for proving recovery guarantees of other methods.
In contrast to the RIP assumed in the matrix sensing setup (see \cref{def:RIP}), here the RIP is \textit{local}, and applies to the difference $X-X^*$ where $X$ is a rank-$r$ matrix close to $X^*$. Formally, for a given $\epsilon \in (0, 1]$, we ask which rank-$r$ matrices $X \in \mathbb R^{n_1\times n_2}$ satisfy the following RIP inequalities,
\begin{align}\label{eq:RIP}
(1-\epsilon) \|X - X^*\|_F \leq \tfrac{1}{\sqrt p}\|\mathcal P_\Omega(X - X^*)\| \leq (1+\epsilon) \|X - X^*\|_F .
\end{align}
The local RIP guarantee we present below and prove in \cref{sec:RIP_proof} is uniform, namely it applies to all matrices $X$ in a neighborhood of $X^*$, independently of $\Omega$.
This allows us to avoid the sample splitting schemes which were employed in some early works. For a discussion on this issue see \cite[section I.B.2]{sun2016guaranteed}.
Since \GNMR is factorization-based, the RIP result we present poses requirements on the optimization variables $U,V$ such that \cref{eq:RIP} holds rather than on $X=UV^\top$.
One requirement is approximate balance of $U,V$.
This is the reason for the condition $Z_0 \in \mathcal B_\textnormal{bln}(1/c_l)$ in \cref{thm:comp_linearConvergence}: Such an initialization guarantees that the subsequent iterates of \GNMR remain approximately balanced.
%as we show in the proof of \cref{thm:comp_linearConvergence}.
%The same is true also for the other requirements in \cref{thm:comp_linearConvergence}, making the following RIP applicable in the analysis of \GNMR.

\begin{theorem}[uniform RIP for matrix completion]\label{thm:RIP}
There exist constants $C, c_l, c_e$ such that the following holds.
Let $X^* \in \mathcal M(n_1, n_2, r, \mu, \kappa)$.
Let $\epsilon \in (0,1)$, and assume $\Omega \subseteq [n_1]\times [n_2]$ is randomly sampled with $np \geq \frac{C}{\epsilon^2} \mu r \max\left\{\log n, \frac{\mu r \kappa^2}{\epsilon^2}\right\}$.
Then w.p.~at least $1 - 3/n^3$, for all
matrices $X = UV^\top$ where 
 $\begin{psmallmatrix} U \\ V \end{psmallmatrix} \in \mathcal B_\textnormal{err}(\epsilon/c_e) \cap \mathcal B_\textnormal{bln}(1/c_l) \cap \mathcal B_\mu$, the RIP \cref{eq:RIP} holds.
\end{theorem}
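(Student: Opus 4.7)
The plan is a covering-and-concentration argument on the rank-at-most-$2r$ residual matrix $M := X - X^* = UV^\top - U^*{V^*}^\top$. The row-norm bound from $\mathcal B_\mu$ together with the incoherence of $X^*$ will provide $\ell_\infty$ and row-wise control on $M$, which feeds into a Bernstein-type tail bound; the approximate-balance condition $\mathcal B_\textnormal{bln}(1/c_l)$ is what makes the row-wise control possible via a Tu-style factor alignment.

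First I would pair factorizations. Given $Z = \begin{psmallmatrix} U \\ V \end{psmallmatrix} \in \mathcal B_\textnormal{err}(\epsilon/c_e) \cap \mathcal B_\textnormal{bln}(1/c_l)$, select $Z^* = \begin{psmallmatrix} U^* \\ V^* \end{psmallmatrix}$ with $U^*{V^*}^\top = X^*$ minimizing $\|Z - Z^*\|_F$ over the rotational ambiguity. Exploiting the balance assumption, I expect to recover a bound of the form $\|Z - Z^*\|_F^2 \lesssim \|X - X^*\|_F^2 / \sigma_r^*$ in the spirit of \cite[Lemma~5.14]{tu2016low}, while the combination of $Z \in \mathcal B_\mu$ with incoherence of $X^*$ forces $Z^*$ to also lie (up to absolute constants) in $\mathcal B_\mu$. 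Writing
\begin{align*}
M = (U - U^*)\,{V^*}^\top + U\,(V - V^*)^\top,
\end{align*}
the row-norm bounds from $\mathcal B_\mu$ then bound $\|M\|_\infty^2$, $\|M\|_{2,\infty}^2$ and $\|M^\top\|_{2,\infty}^2$ in terms of $\mu r \sigma_1^*/n$ times $\|Z - Z^*\|_F^2$, which (via the alignment) gives the key estimate $\|M\|_\infty^2 \lesssim (\mu r \kappa/n)\,\|M\|_F^2$.

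Second, for each fixed $Z$ I would apply a scalar Bernstein inequality to $\tfrac{1}{p}\|\mathcal P_\Omega(M)\|^2 = \tfrac{1}{p}\sum_{(i,j)\in\Omega} M_{ij}^2$. With expectation $\|M\|_F^2$, variance proxy at most $\|M\|_\infty^2\|M\|_F^2/p$, and per-term magnitude at most $\|M\|_\infty^2/p$, Bernstein yields $|\tfrac{1}{p}\|\mathcal P_\Omega(M)\|^2 - \|M\|_F^2| \leq \epsilon\|M\|_F^2$ with failure probability at most $\exp(-c\,\min\{A,B\})$, where $A = \epsilon^2 np\|M\|_F^2/\|M\|_\infty^2$ and $B = \epsilon np\|M\|_F^2/\|M\|_\infty^2$. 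Substituting the two available $\ell_\infty$ bounds on $M$---the relative bound $\|M\|_\infty^2 \lesssim (\mu r\kappa/n)\|M\|_F^2$ from step one, and the absolute bound $\|M\|_\infty \lesssim \mu r \sigma_1^*/n$ obtained directly from the row-norm estimates on $UV^\top$ and $X^*$---into the two regimes of the Bernstein tail is what ultimately produces the $\max\{\log n,\,\mu r \kappa^2/\epsilon^2\}$ structure in the sample complexity.

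Third, I would extend to a uniform bound via an $\eta$-net on $\mathcal B_\textnormal{err}(\epsilon/c_e) \cap \mathcal B_\textnormal{bln}(1/c_l) \cap \mathcal B_\mu$, which lies in a Frobenius ball of radius $O(\sqrt{\mu r\sigma_1^*})$ in a space of dimension $(n_1+n_2)r$. Choosing $\eta$ polynomially small in $\epsilon, 1/\kappa, 1/(\mu r)$ keeps the log-cardinality at $\lesssim (n_1+n_2)r\log n$, and a union bound with the pointwise Bernstein tail above forces the stated sample complexity. The net-to-full extension uses Lipschitz continuity of both $\|\mathcal P_\Omega(UV^\top - X^*)\|^2/p$ and $\|UV^\top - X^*\|_F^2$ in $Z$ on $\mathcal B_\mu$ and loses only constant factors. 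The main obstacle I anticipate is precisely this calibration: balancing the net resolution, the Bernstein regimes, and the two $\ell_\infty$ bounds on $M$ together to avoid spurious factors of $r$ or $\kappa$ in the $\log n$ term, which is what is needed to improve upon \cite[Claim~3.1]{sun2016guaranteed}.
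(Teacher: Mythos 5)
Your first step---aligning $Z$ to some $Z^*$ factoring $X^*$, using the balance condition to get $\|Z-Z^*\|_F^2\lesssim\|X-X^*\|_F^2/\sigma_r^*$ and using $\mathcal B_\mu$ to keep $Z^*$ row-norm bounded---matches the paper's \cref{lem:deterministic_nearby_delta_bound} / \cref{lem:delta_bounds} and is the right move. But your second and third steps diverge from the paper, and as written they contain a genuine gap.

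The problem is the scalar-Bernstein-plus-net strategy. For a fixed $M$, Bernstein gives failure probability $\exp\bigl(-c\,\epsilon^2 p\,\|M\|_F^2/\|M\|_\infty^2\bigr)$, and even granting your relative bound $\|M\|_\infty^2\lesssim(\mu r\kappa/n)\|M\|_F^2$ (obtainable via $\|(U-U^*)^{(i)}\|\leq\|U-U^*\|_F$, though this discards all row-wise structure), the exponent is $\lesssim\epsilon^2 np/(\mu r\kappa)$. Your net over the factor-matrix set has $\log$-cardinality on the order of $(n_1+n_2)r\log(1/\eta)\sim nr\log(1/\eta)$, so the union bound would require $\epsilon^2 np/(\mu r\kappa)\gtrsim nr\log(1/\eta)$, i.e.\ the sampling \emph{probability} $p\gtrsim\mu r^2\kappa\log(1/\eta)/\epsilon^2$, which is not $o(1)$ and is off by a factor of roughly $n$ from the $np\gtrsim\mu r\log n/\epsilon^2$ regime the theorem targets. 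Normalizing $M$ to unit Frobenius norm does not rescue this: the rank-$2r$ unit sphere is still an $\Theta(nr)$-dimensional manifold, and the net cardinality is the same. The ``calibration'' you flag at the end is not a matter of tuning---this particular route structurally cannot reach $\mu r\log n$.

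The paper avoids covering altogether by a decomposition you don't use. Writing $X-X^*=A+B$ with $A=U^*(V-V^*)^\top+(U-U^*)V^{*\top}$ and $B=(U-U^*)(V-V^*)^\top$, the linear part $A$ lies in the tangent space $\mathcal T$ to the rank-$r$ manifold at $X^*$, a \emph{fixed} $\mathcal O(nr)$-dimensional subspace determined by $X^*$ alone. On $\mathcal T$ a scale-invariant RIP holds uniformly with $np\gtrsim\mu r\log n/\epsilon^2$ (\cref{lem:YPCC16_lemma9}), because it reduces to an operator-norm concentration $\|\Pi_{\mathcal T}\mathcal P_\Omega\Pi_{\mathcal T}/p-\Pi_{\mathcal T}\|\leq\epsilon$ whose matrix-Bernstein tail has only $\log(\dim\mathcal T)\sim\log n$ dependence, not $nr$. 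The quadratic remainder $B$ is controlled \emph{without} any concentration over a net, by the Feige--Ofek random-graph bound (\cref{lem:KMO10_lemma71,lem:KMO10_lemma71_consequence}), which holds simultaneously for all row-norm-bounded factors once the graph of $\Omega$ is pseudo-random, and which is where the $\mu r\kappa^2/\epsilon^2$ term in the sample complexity enters. Your proposal is missing this decomposition, the fixed-subspace argument for $A$, and the Feige--Ofek control of $B$; all three are essential to avoid the extra $nr$ factor in the exponent.
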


\Cref{thm:RIP} is in several aspects sharper than the RIP guarantee of \cite[Claim~3.1]{sun2016guaranteed}. 
Specifically, \cite{sun2016guaranteed} required three conditions on the factor matrices $Z = \begin{psmallmatrix} U \\ V \end{psmallmatrix}$:
(i) $Z \in \mathcal B_\epsilon(\epsilon/(c_e r^{3/2}\kappa))$, which is more restrictive than ours by a factor of $r^{3/2}\kappa$;
(ii) $Z \in \mathcal B_{\mu \sqrt r}$, which is more restrictive by a factor of $\sqrt r$;
and (iii) the balance requirement $\|U\|_F^2 + \|V\|_F^2 \lesssim r \sigma_1^*$, which is replaced in our result by $Z \in \mathcal B_\textnormal{bln}(1/c_l)$.
More importantly, their guarantee requires a sample complexity of $np \gtrsim \mu r \kappa^2 \max\{\log n, \mu r^6 \kappa^4\}$,\footnote{In fact, \cite{sun2016guaranteed} required this sample complexity for additional results. For \cite[Claim~3.1]{sun2016guaranteed} by itself, it seems that $np \gtrsim \mu r \kappa^2 \max\{\log n, \mu r^5 \kappa^2\}$ suffices, see the end of the proof of \cite[Proposition~4.3]{sun2016guaranteed}.} compared to our $np \gtrsim \mu r \max\left\{\log n, \mu r \kappa^2\right\}$.

%One of the requirements in \cref{thm:RIP} is locality, namely $X$ is close to $X^*$.
We remark that if the difference $\|X - X^*\|_F$ is of order $\mathcal O(\sqrt{\log(n)/n})$, then \cref{thm:RIP} holds without any additional requirements such as approximate balance or bounded row norms of the factor matrices, see \cref{lem:quad_RIP}.
We use this fact for our quadratic convergence guarantee (\cref{thm:comp_quadConvergence}), which requires a very accurate initialization.

A special case of \cref{thm:RIP} is a uniform RIP for the difference of incoherent matrices, as stated in the following corollary, proven in \cref{sec:RIP_proof}.
\begin{corollary}\label{corollary:RIP_incoherent}
Under the assumptions of \cref{thm:RIP} and with the same probability, for all rank-$r$, $3\mu/2$-incoherent matrices $X$ that satisfy $\|X - X^*\|_F \leq \epsilon \sigma_r^*/c_e$, the RIP \cref{eq:RIP} holds.
\end{corollary}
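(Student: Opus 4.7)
The strategy is to derive Corollary \ref{corollary:RIP_incoherent} directly from Theorem \ref{thm:RIP} by exhibiting, for each admissible $X$, a factorization that lies in $\mathcal B_\textnormal{err}(\epsilon/c_e) \cap \mathcal B_\textnormal{bln}(1/c_l) \cap \mathcal B_\mu$. Because the RIP inequalities in \cref{eq:RIP} depend only on the matrix $X$ itself and not on how it is factorized, producing any such factorization will suffice, and the desired uniformity over $X$ will follow from the uniformity of Theorem \ref{thm:RIP} over $Z = \begin{psmallmatrix} U \\ V \end{psmallmatrix}$.

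Given a rank-$r$, $3\mu/2$-incoherent $X$ with thin SVD $X = U_X \Sigma_X V_X^\top$, the canonical choice is the balanced factorization
\[
U := U_X \Sigma_X^{1/2}, \qquad V := V_X \Sigma_X^{1/2}.
\]
Then $UV^\top = X$, so $\|UV^\top - X^*\|_F = \|X - X^*\|_F \leq \epsilon \sigma_r^*/c_e$ places $Z$ in $\mathcal B_\textnormal{err}(\epsilon/c_e)$. Perfect balance is also automatic: $U^\top U = \Sigma_X = V^\top V$, so $Z \in \mathcal B_\textnormal{bln}(0) \subseteq \mathcal B_\textnormal{bln}(1/c_l)$.

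The only substantive check is the row-norm condition $Z \in \mathcal B_\mu$. For each row $i$,
\[
\|U^{(i)}\|^2 = \sum_{k=1}^r \bigl(U_X^{(i)}\bigr)_k^2 \, \sigma_k(X) \;\leq\; \sigma_1(X) \, \|U_X\|_{2,\infty}^2 \;\leq\; \frac{3\mu r}{2 n_1}\,\sigma_1(X),
\]
where the last bound uses $3\mu/2$-incoherence of $X$. By Weyl's inequality, $\sigma_1(X) \leq \sigma_1^* + \|X - X^*\|_2 \leq \sigma_1^* + \epsilon \sigma_r^*/c_e \leq 2\sigma_1^*$ once $c_e \geq 1$ (which we may always assume by enlarging the absolute constant coming from Theorem \ref{thm:RIP}). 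This yields $\|U\|_{2,\infty} \leq \sqrt{3\mu r \sigma_1^*/n_1}$, and the symmetric estimate for $V$ completes membership in $\mathcal B_\mu$.

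With all three constraints verified, Theorem \ref{thm:RIP} applied to this particular factorization produces \cref{eq:RIP} for $X$, on the same probability event and uniformly over every incoherent $X$ in the stipulated neighborhood. No serious obstacle arises: the only technical point is the Weyl bound on $\sigma_1(X)$, and the corollary is essentially a repackaging of Theorem \ref{thm:RIP} from factor pairs $Z$ to matrices $X$, with the balanced SVD serving as a canonical lift from the latter to the former.
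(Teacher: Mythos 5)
Your proof is correct and follows essentially the same route as the paper: both take the balanced SVD factorization $Z = \text{b-SVD}(X)$, verify that it lies in $\mathcal B_\textnormal{err}(\epsilon/c_e) \cap \mathcal B_\textnormal{bln}(1/c_l) \cap \mathcal B_\mu$ (the last via the incoherence of $X$ together with Weyl's bound $\sigma_1(X) \leq 2\sigma_1^*$), and then invoke Theorem~\ref{thm:RIP}. The only cosmetic difference is that you derive the row-norm estimate $\|U\|_{2,\infty}^2 \leq \tfrac{3\mu r}{2n_1}\sigma_1(X)$ inline whereas the paper cites its b-SVD lemma.
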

This corollary is not used in our proofs, but may be of independent interest.
In particular, it settles an open question posed in \cite{davenport2016overview}. 
In \cite[section V.B]{davenport2016overview} the authors wrote that an RIP holds for incoherent matrices, but "the difference between two sufficiently close incoherent matrices is not necessarily itself incoherent, which leads to some significant challenges in an RIP-based analysis." \Cref{corollary:RIP_incoherent} shows that although not incoherent, the difference between two incoherent matrices does satisfy an RIP.

%%%%%%%%%%%%%%%%%%%%%%%%%%%%%%%%%%%%%%%%%%%%%%%%%%%%%%%%%%%%%%%%%%%%%%%%%%
%%%%%%%%%%%%%%%%%%%%%%%%%%%%%%%%%%%%%%%%%%%%%%%%%%%%%%%%%%%%%%%%%%%%%%%%%%
\subsection{Stationary points analysis}\label{sec:theory_stationryPoints}
The previous subsections presented recovery guarantees for \GNMR under suitable assumptions on the initialization accuracy and on the number of observations. Without such assumptions, \GNMR is not guaranteed to converge at all. However, as it typically does converge, it is interesting to explore its set of stationary points.
In this subsection we analyze and compare the stationary points of \GNMR to those of two other methods: a regularized variant of gradient descent (\texttt{GD}), and the classical alternating least squares (\texttt{ALS}).
Specifically, as in \cite{tu2016low,zheng2016convergence,yi2016fast,park2018finding,li2020non,chen2020nonconvex}, consider \texttt{GD} applied to the regularized objective
\begin{align}
g_\lambda(Z) &= f(UV^\top) + \lambda\cdot \rho(Z), \label{eq:regularized_objective}
\end{align}
where $Z = \begin{psmallmatrix} U \\ V \end{psmallmatrix} \in \mathbb R^{(n_1+n_2)\times r}$, $\rho(Z) = \|U^\top U - V^\top V\|_F^2$ is an imbalance penalty, and $\lambda$ is a regularization parameter.
In particular, $\lambda=0$ corresponds to vanilla \texttt{GD}.
Starting from an initial $Z_0$, \texttt{GD} updates $Z_{t+1} = Z_t - \eta\cdot \nabla g_\lambda(Z_t)$ where the step-size $\eta$ may depend on $t$.

The second algorithm in the following comparison is \texttt{ALS} \cite{haldar2009rank,keshavan2012efficient,jain2013low}. Given an initial estimate $V_0 \in \mathbb R^{n_2\times r}$, \texttt{ALS} iteratively updates
\begin{align*}
U_{t+1} &= \argmin_{U} f(UV_t^\top), \quad
V_{t+1} = \argmin_{V} f(U_{t+1} V^\top) .
\end{align*}

Let $X^* \in \mathbb R^{n_1\times n_2}$ be of rank $r$, and consider the problem \cref{eq:matrixRecovery_factorizedObjective} with $b = \mathcal A(X^*)$.
Let $\mathcal F$ be the set of factor matrices at which the gradients of $f(UV^\top)$ w.r.t.~both $U$ and $V$ vanish,
\begin{align*}
\mathcal F = \left\{ \begin{pmatrix} U \\ V \end{pmatrix} \in \mathbb R^{(n_1+n_2)\times r} \,\mid\, \nabla f(UV^\top) V = 0,\, \nabla f(UV^\top)^\top U = 0 \right\},
\end{align*}
and let $\mathcal G \subseteq \mathbb R^{(n_1+n_2)\times r}$ be the set of balanced factor matrices ($U^\top U = V^\top V$).
Denote the sets of stationary points of vanilla \texttt{GD} ($\lambda=0$), regularized \texttt{GD} ($\lambda>0)$, \texttt{ALS}, the updating variant of \GNMR \cref{eq:updatingVariant} and the other variants of \GNMR (\cref{eq:generalVariant} with $\alpha\neq -1)$ by $\mathcal S_\text{GD}$, $\mathcal S_\text{reg-GD}$, $\mathcal S_\text{ALS}$, $\mathcal S_\text{updt-GNMR}$ and $\mathcal S_\text{GNMR}$, respectively.

\begin{theorem}[Stationary points]\label{thm:stationaryPoints}
The above sets of stationary points satisfy
\begin{subequations}\label{eq:stationaryPoints}\begin{align}
\mathcal S_\text{updt-GNMR} &= \mathcal S_\text{GD} = \mathcal S_\text{ALS} = \mathcal F, \label{eq:stationaryPoints_GD} \\
\mathcal S_\text{GNMR} &\subseteq \mathcal S_\text{reg-GD} = \mathcal F \cap \mathcal G. \label{eq:stationaryPoints_regGD}
\end{align}\end{subequations}
In addition, all the balanced global minima of \cref{eq:matrixRecovery_factorizedObjective} are stationary points of \GNMR, namely
\begin{align}
\left(\mathcal B^* \cap \mathcal G\right) &\subseteq \mathcal S_\text{GNMR}
\end{align}
where $\mathcal B^*$ is defined in \cref{eq:B*_def}, in the following two settings: (i)
In matrix sensing, where $\mathcal A$ satisfies a $2r$-RIP; (ii) With probability at least $1 - 3/n^3$ in matrix completion ($\mathcal A = \mathcal P_\Omega$), assuming $X^*$ is $\mu$-incoherent and the sampling pattern $\Omega \subseteq [n_1]\times [n_2]$ is randomly sampled with $np \geq C \mu r \log n$ for some constant $C$.
\end{theorem}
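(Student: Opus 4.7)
The plan is to prove the five set identities in \cref{eq:stationaryPoints} and the final inclusion $(\mathcal B^* \cap \mathcal G) \subseteq \mathcal S_\text{GNMR}$ in increasing order of difficulty. First I would establish $\mathcal S_\text{GD} = \mathcal S_\text{ALS} = \mathcal S_\text{updt-GNMR} = \mathcal F$, all of which reduce to the same first-order optimality computation. For vanilla \texttt{GD} this is immediate since $\nabla_U f(UV^\top) = \nabla f(UV^\top)V$ and $\nabla_V f(UV^\top) = \nabla f(UV^\top)^\top U$ vanish exactly on $\mathcal F$. For \texttt{ALS}, stationarity means $U$ minimizes the convex quadratic $U' \mapsto f(U'V^\top)$ and $V$ minimizes $V' \mapsto f(UV'^\top)$; the KKT conditions of these partial minimizations are the two defining equations of $\mathcal F$. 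For the updating variant of \GNMR, $(U_0,V_0)$ is stationary iff the minimum-norm solution of \cref{eq:updatingVariant_LSQR} is $(0,0)$; since $(0,0)$ has the smallest possible norm, this holds iff $(0,0)$ is any minimizer, which by first-order optimality is again $(U_0,V_0)\in\mathcal F$.

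Next, for $\mathcal S_\text{reg-GD} = \mathcal F \cap \mathcal G$, denote $G = \nabla f(UV^\top)$ and $A = U^\top U - V^\top V$, so the stationarity conditions read $GV + 4\lambda UA = 0$ and $G^\top U - 4\lambda V A = 0$. The inclusion $\supseteq$ is clear. For $\subseteq$, left-multiply the first by $U^\top$ and the transpose of the second by $V^\top$; using symmetry of $A$ and equating yields $U^\top U\, A + A\, V^\top V = 0$. Substituting $A = U^\top U - V^\top V$ collapses this to $(U^\top U)^2 = (V^\top V)^2$, and since both sides are PSD and the square map is injective on PSD matrices, $U^\top U = V^\top V$; plugging $A=0$ back in then gives $(U,V) \in \mathcal F$. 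For $\mathcal S_\text{GNMR} \subseteq \mathcal F \cap \mathcal G$ with $\alpha \neq -1$, $(U_0,V_0)$ is stationary iff $\frac{1+\alpha}{2}(U_0,V_0)$ is the minimum-norm minimizer of \cref{eq:generalVariant_LSQR}; being a minimizer again forces $\mathcal F$, while being minimum-norm means orthogonality (since $\frac{1+\alpha}{2}\neq 0$) of $(U_0,V_0)$ to the null space $\mathcal N = \{(dU,dV) : \mathcal A(U_0 dV^\top + dU V_0^\top) = 0\}$. This null space always contains the \emph{gauge} subspace $\mathcal N_0 = \{(U_0 R, -V_0 R^\top) : R \in \mathbb R^{r\times r}\}$, and orthogonality of $(U_0,V_0)$ to $\mathcal N_0$ reduces via cyclic trace identities to $\mathrm{tr}((U_0^\top U_0 - V_0^\top V_0)R) = 0$ for all $R$, i.e., $(U_0,V_0)\in \mathcal G$.

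The crux, and main obstacle, is the final inclusion $(\mathcal B^* \cap \mathcal G) \subseteq \mathcal S_\text{GNMR}$. For $(U,V) \in \mathcal B^* \cap \mathcal G$, the objective of \cref{eq:generalVariant_LSQR} at $\frac{1+\alpha}{2}(U,V)$ equals $\|\mathcal A(UV^\top) - b\|^2 = 0$, so this is a minimizer, and balance gives orthogonality to $\mathcal N_0$. What remains is to upgrade $\mathcal N \supseteq \mathcal N_0$ to an equality, so that orthogonality to $\mathcal N_0$ implies orthogonality to all of $\mathcal N$. In matrix sensing, the $2r$-RIP makes $\mathcal A$ injective on rank-$\leq 2r$ matrices, hence $(dU,dV) \in \mathcal N$ forces $U dV^\top + dU V^\top = 0$; since $U,V$ have full column rank (as $UV^\top = X^*$ has rank $r$), multiplying on the right by $V$ and solving yields $dV \in \mathrm{range}(V)$ and $(dU,dV) \in \mathcal N_0$. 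For matrix completion with only $np \geq C \mu r \log n$, a direct constant-$\epsilon$ RIP on rank-$2r$ matrices is out of reach, so I would use a perturbation argument: for $(dU,dV) \in \mathcal N$ and small $\epsilon > 0$, the rank-$\leq r$ matrix $(U + \epsilon dU)(V + \epsilon dV)^\top - X^* = \epsilon(U dV^\top + dU V^\top) + \epsilon^2 dU dV^\top$ has $\mathcal P_\Omega$-image of norm $\mathcal O(\epsilon^2)$. For $\epsilon$ small enough that this difference has Frobenius norm $\mathcal O(\sqrt{\log n/n})$, the quadratic-regime RIP \cref{lem:quad_RIP} (which holds w.p.~$\geq 1 - 3/n^3$ under this sample complexity) forces the Frobenius norm to also be $\mathcal O(\epsilon^2)$; dividing by $\epsilon$ and letting $\epsilon \to 0$ kills the linear term, leaving $U dV^\top + dU V^\top = 0$ and reducing to the sensing-case kernel computation.
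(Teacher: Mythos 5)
Your proof is essentially correct and follows the same overall skeleton as the paper: reduce stationarity of each method to the first-order condition defining $\mathcal F$, then use the minimal-norm property (equivalently, orthogonality to the gauge subspace $\mathcal K = \{(UR, -VR^\top)\}$) to extract the balance condition defining $\mathcal G$, and finally upgrade $\mathcal K \subseteq \ker\mathcal L_A^{(Z)}$ to equality for $Z \in \mathcal B^*$. Two places where you genuinely diverge from the paper's argument are worth noting. First, for $\mathcal S_\text{reg-GD} = \mathcal F \cap \mathcal G$ the paper simply cites prior work (Zhu et al.), whereas you give a short self-contained derivation: from the two stationarity equations you obtain $U^\top U\,A + A\,V^\top V = 0$ with $A = U^\top U - V^\top V$, substitute to get $(U^\top U)^2 = (V^\top V)^2$, and invoke uniqueness of the PSD square root; this is clean and arguably worth having in the text (be careful to say \emph{right}-multiply the transposed second equation by $V$, not left-multiply by $V^\top$). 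Second, for the matrix-completion case of $(\mathcal B^* \cap \mathcal G) \subseteq \mathcal S_\text{GNMR}$, you establish $\ker\mathcal L_A^{(Z)} \subseteq \ker\mathcal L^{(Z)}$ via a perturbation-and-limit argument built on the local quadratic RIP (\cref{lem:quad_RIP}): for $Z' = (dU,dV)$ in the kernel, the matrix $X_\epsilon = (U+\epsilon\, dU)(V+\epsilon\, dV)^\top$ satisfies $\mathcal P_\Omega(X_\epsilon - X^*) = \epsilon^2 \mathcal P_\Omega(dU\, dV^\top)$, and applying the RIP for small $\epsilon$ and letting $\epsilon \to 0$ forces $U\, dV^\top + dU\, V^\top = 0$. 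This works (the RIP event is uniform, so no per-$\epsilon$ union bound is needed), but it is a detour: the paper observes directly that since $U = U^*Q$ and $V = V^*Q^{-\top}$, every $\mathcal L^{(Z)}(Z')$ lies in the tangent space $\mathcal T$ of $X^*$, so the tangent-space RIP (\cref{lem:YPCC16_lemma9}) immediately gives $\|\mathcal L^{(Z)}(Z')\|_F = 0$. Since \cref{lem:quad_RIP} is itself proved via \cref{lem:YPCC16_lemma9}, the paper's route is shorter and avoids the epsilon-bookkeeping; you may want to adopt it. Beyond this, only minor slips of wording appear (e.g.~right-multiplying the sensing identity $U\,dV^\top + dU\,V^\top = 0$ by $V$ pins down $dU \in \mathrm{range}(U)$, not $dV \in \mathrm{range}(V)$; the radius for \cref{lem:quad_RIP} is $\mathcal O(\sigma_r^*\sqrt p)$, which only incidentally scales like $\sqrt{\log n/n}$), but these do not affect correctness.
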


The identities $\mathcal S_\text{GD} = \mathcal F$ and $\mathcal S_\text{reg-GD} = \mathcal F \cap \mathcal G$ were discussed in previous works. For a detailed analysis of the geometry of the regularized \texttt{GD} objective \cref{eq:regularized_objective}, see \cite{ge2016matrix,ge2017no,zhu2018global,li2019symmetry}.
\Cref{thm:stationaryPoints} shows that the updating variant of \GNMR has a different behavior from the other variants.
Specifically, a parameter value $\alpha\neq -1$ in \GNMR, analogously to $\lambda>0$ in regularized \texttt{GD}, plays a role of an implicit balance regularizer in the sense that it enforces the stationary points to be balanced.
This theoretical observation supports the empirical finding that in matrix completion, the \GNMR variants with $\alpha\neq -1$ are superior to the updating variant, see \cref{sec:numerics}.

In addition, the theorem states that the stationary points of \GNMR variants with $\alpha\neq -1$ form a subset of those of regularized \texttt{GD}, $\mathcal S_\text{GNMR} \subseteq \mathcal S_\text{reg-GD}$, but do not necessarily coincide with them. The question if this is a desirable property of \GNMR depends on whether $\mathcal S_\text{GNMR}$ 'loses' some of the global minima in $\mathcal S_\text{reg-GD}$, or just bad local minima.
This is where the second part of the theorem comes into play: it states that in the matrix sensing and matrix completion settings, $\mathcal S_\text{GNMR}$ contains \textit{all} the balanced minimizers of \cref{eq:matrixRecovery_factorizedObjective}.
%First, it states that in the matrix sensing setup, the stationary points of the algorithms do coincide. Second, it states that in the matrix completion setup, with high probability, $\mathcal S_\text{GNMR}$ contains \textit{all} the balanced minimizers of \cref{eq:matrixRecovery_factorizedObjective}.
%We conjecture, however, that also here $\mathcal S_\text{GNMR} = \mathcal S_\text{reg-GD}$ as in the matrix sensing case.

Finally, the recovery guarantees for \GNMR, \cref{thm:sens_quadConvergence,thm:comp_linearConvergence,thm:comp_quadConvergence}, required certain conditions on the initialization.
In contrast, several works \cite{ge2016matrix,ge2017no,zhu2018global,li2019symmetry} proved that regularized \texttt{GD} enjoys recovery guarantees from a random initialization due to a benign optimization landscape.
%, see e.g.~\cite[Theorem~1.2]{ge2016matrix}.
The similarity between the stationary points of the variants of \GNMR with $\alpha\neq -1$ and regularized \texttt{GD}, as implied by \cref{thm:stationaryPoints}, together with empirical evidence, suggest that an analogous result may hold also for \GNMR. Namely, even though many stationary points of \GNMR are local minima, it seems that the algorithm somehow avoids them.
We leave this open question for future research.

\subsection{Implicit balance regularization}\label{sec:implicit_balance}
Optimizing the factorized objective \cref{eq:matrixRecovery_factorizedObjective} rather than the original one \cref{eq:matrixRecovery_problem} introduces a scaling ambiguity: if $X^* = UV^\top$ where $U \in \mathbb R^{n_1\times r}$ and $V \in \mathbb R^{n_2\times r}$, then $X^* = (UQ)(VQ^{-\top})^\top$ for any invertible $r\times r$ matrix $Q$. As a result, the scales of the factor matrices $U,V$ may be highly imbalanced, e.g.~$\|U\|_F \ll \|V\|_F$. This may lead to significant challenges, involving two aspects: geometric and algorithmic.
The first aspect was discussed in the preceding subsection: in short, an imbalance penalty often leads to a benign optimization landscape.
However, while making the analysis easier, a recent work \cite{li2020global} proved that in some matrix recovery problems, including matrix sensing, the imbalance penalty is in fact unnecessary from this geometrical perspective.

Here we highlight that keeping the factors approximately balanced has important consequences from an algorithmic viewpoint.
If the factor matrices $U, V$ are not balanced, small changes in $U,V$ may lead to huge changes in the resulting estimate $UV^\top$. This ill-conditioning can lead to both computational problems as well as significant challenges in the theoretical analysis of matrix recovery algorithms.
While the iterates of (vanilla) gradient descent enjoy implicit balance regularization, as was recently shown by \cite{ma2019implicit,ma2021beyond,tong2021accelerating,ye2021global,wang2021large}, its available recovery guarantees require stringent conditions.
Several works on factorization-based methods explicitly added a balance regularization term to their algorithm to ease its analysis. The regularization term is either of the form $\|U\|_F^2 + \|V\|_F^2$ \cite{sun2016guaranteed,chen2020noisy,chen2021bridging} or $\|U^\top U - V^\top V\|_F^2$ \cite{tu2016low,zheng2016convergence,yi2016fast,park2018finding,zhang2018fast,li2020non,chen2020nonconvex}.
In contrast, \GNMR has a built-in implicit balance regularization, which manifests itself both during the iterates (\cref{lem:minNormSol_implicitBalance}) and in the set of stationary points (\cref{thm:stationaryPoints}). As our analysis shows, the underlying reason is the choice of the \textit{minimal norm solution} to the degenerate least squares problem \cref{eq:generalVariant_LSQR}.

%%%%%%%%%%%%%%%%%%%%%%%%%%%%%%%%%%%%%%%%%%%%%%%%%%%%%%%%%%%%%%%%%%%%%
%%%%%%%%%%%%%%%%%%%%%%%%%%%%%%%%%%%%%%%%%%%%%%%%%%%%%%%%%%%%%%%%%%%%%
\section{\Cref{thm:sens_quadConvergence} proof outline and key lemmas}\label{sec:sensing_proof_inText}
In this section we describe the skeleton of the proof of \cref{thm:sens_quadConvergence}. The proof relies on three key lemmas, \cref{lem:sens_et+1_bound,lem:minNormSol_implicitBalance,lem:minNormSol_normBound}, which we formally state below. The full proof appears in \cref{sec:sensing_proof_SM}.

Let $Z_t = \begin{psmallmatrix} U_t \\ V_t \end{psmallmatrix}$ be the current iterate of \GNMR, and denote the current and next estimates by $X_t = U_tV_t^\top$ and $X_{t+1} = U_{t+1}V_{t+1}^\top$, respectively.
Recall that the least squares problem \cref{eq:updatingVariant_LSQR} has an infinite number of solutions, and the updating variant of \GNMR, which corresponds to \cref{alg:GNMR} with $\alpha=-1$, chooses the one with minimal norm of the update
$\Delta Z_t$.
The proof of \cref{thm:sens_quadConvergence} proceeds as follows.
First, in \cref{lem:sens_et+1_bound} we show that if the current iterate $Z_t$ is approximately balanced and has a sufficiently small error $\|X_t - X^*\|_F$, then any feasible solution $\Delta Z_t$ to the least squares problem \cref{eq:updatingVariant_LSQR} satisfies $\|X_{t+1} - X^*\|_F \lesssim \|X_t - X^*\|_F^2/\sigma_r^* + \|\Delta Z_t\|_F^2$. 
Next, we show that by taking the minimal norm solution, the following two key properties hold:
(i) $\|\Delta Z_t\|_F^2$ is comparable to $\|X_t-X^*\|_F^2/\sigma_r^*$ (\cref{lem:minNormSol_normBound}), and (ii) the next iterate remains approximately balanced (\cref{lem:minNormSol_implicitBalance}), so we may apply \cref{lem:sens_et+1_bound}. This yields quadratic convergence of the form $\|X_{t+1} - X^*\|_F \lesssim \|X_t - X^*\|_F^2/\sigma_r^*$, thus completing the proof of \cref{thm:sens_quadConvergence}.

Let us now formally state the lemmas mentioned above.
For all three lemmas we assume $X^* \in \mathbb R^{n_1\times n_2}$ is of rank $r$ and that the sensing operator $\mathcal A$ satisfies a $2r$-RIP with constant $\delta_{2r} < 1$. Hence, it also satisfies an $r$-RIP with constant $\delta_r \leq \delta_{2r}$.

%The first lemma bounds the error of the next estimate in terms of the balance and the norm of the current iterate. Note it applies to any estimate that corresponds to a feasible solution to the least squares problem, not necessarily a minimal norm one.
\begin{lemma}[error contraction]\label{lem:sens_et+1_bound}
Let $\Delta Z_t = \begin{psmallmatrix} \Delta U_t \\ \Delta V_t \end{psmallmatrix}$ be any feasible solution to \cref{eq:updatingVariant_LSQR}, not necessarily the minimal norm one. Assume that the current estimate $X_t = U_tV_t^\top$ satisfies
\begin{align}\label{eq:dP_assumption}
\|X_t - X^*\|_F^2 + \frac 14 \|U_t^\top U_t - V_t^\top V_t\|_F^2 \leq \tfrac{\sqrt 2 - 1}{400}\sigma_r^2(X^*).
\end{align}
Then the next estimate $X_{t+1} = U_{t+1}V_{t+1}^\top = (U_t + \Delta U_t)(V_t + \Delta V_t)^\top$ satisfies
\begin{align*}
\|X_{t+1} - X^*\|_F \leq \frac 12 \sqrt\frac{1+\delta_r}{1-\delta_{2r}} \left( \frac{25}{4\sigma_r(X^*)} \|X_t - X^*\|_F^2 + \|\Delta Z_t\|_F^2 \right) .  %\label{eq:et+1_bound}
\end{align*} 
\end{lemma}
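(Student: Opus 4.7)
My plan is a competitor-based argument: I substitute a specific low-rank feasible point into the least-squares problem, bound the resulting residual via RIP, and then invoke a Procrustes-type alignment lemma.

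\emph{Key algebraic identity.} Let $(U^*, V^*) \in \mathcal B^*$ denote an optimally aligned factorization of $X^*$ with respect to $(U_t, V_t)$, i.e., a minimizer of $\|U - U_t\|_F^2 + \|V - V_t\|_F^2$ subject to $UV^\top = X^*$. Substituting $\Delta U = U^* - U_t$ and $\Delta V = V^* - V_t$ into the linearization of \cref{eq:updatingVariant_LSQR} yields the identity
\[
U_t V_t^\top + U_t(V^* - V_t)^\top + (U^* - U_t)V_t^\top - X^* \;=\; -(U^* - U_t)(V^* - V_t)^\top,
\]
a matrix of rank at most $r$. Hence, since $b = \mathcal A(X^*)$, the LS optimality of $\Delta Z_t$ combined with the $r$-RIP gives
\[
\|\mathcal A(L_t - X^*)\|^2 \;\leq\; (1 + \delta_r)\, \|(U^* - U_t)(V^* - V_t)^\top\|_F^2,
\]
where $L_t := U_t V_t^\top + U_t \Delta V_t^\top + \Delta U_t V_t^\top = X_{t+1} - \Delta U_t \Delta V_t^\top$.

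\emph{From the LS residual to the estimation error.} I write $X_{t+1} - X^* = (L_t - X^*) + \Delta U_t \Delta V_t^\top$ and apply the $2r$-RIP lower bound on the left (recall $\mathrm{rank}(X_{t+1} - X^*) \leq 2r$), together with the triangle inequality for $\|\mathcal A(\cdot)\|$ and the $r$-RIP on the rank-$r$ term $\Delta U_t \Delta V_t^\top$. Combined with the AM--GM bound $\|\Delta U_t \Delta V_t^\top\|_F \leq \tfrac{1}{2}(\|\Delta U_t\|_F^2 + \|\Delta V_t\|_F^2) = \tfrac{1}{2}\|\Delta Z_t\|_F^2$, this gives
\[
\|X_{t+1} - X^*\|_F \;\leq\; \sqrt{\tfrac{1 + \delta_r}{1 - \delta_{2r}}} \left( \|(U^* - U_t)(V^* - V_t)^\top\|_F \;+\; \tfrac{1}{2} \|\Delta Z_t\|_F^2 \right).
\]

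\emph{Alignment lemma.} It remains to bound $\|(U^* - U_t)(V^* - V_t)^\top\|_F$ in terms of $\|X_t - X^*\|_F^2/\sigma_r^*$. Using $\|(U^* - U_t)(V^* - V_t)^\top\|_F \leq \tfrac{1}{2}(\|U^* - U_t\|_F^2 + \|V^* - V_t\|_F^2)$, it suffices to control the sum of squared factor distances. I invoke an extended Procrustes-type alignment lemma along the lines of Tu \emph{et al.}~\cite[Lemma~5.14]{tu2016low}, which under \cref{eq:dP_assumption} --- a simultaneous smallness assumption on the error $\|X_t - X^*\|_F$ and the imbalance $\|U_t^\top U_t - V_t^\top V_t\|_F$ --- yields $\|U^* - U_t\|_F^2 + \|V^* - V_t\|_F^2 \leq \tfrac{25}{4\sigma_r^*}\, \|X_t - X^*\|_F^2$. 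Substituting back into the previous display gives the claim.

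The main obstacle is this alignment step: the lemma has to absorb the imbalance penalty $\tfrac{1}{4}\|U_t^\top U_t - V_t^\top V_t\|_F^2$ into a constant-factor degradation of the pure error bound $\|X_t - X^*\|_F^2/\sigma_r^*$, rather than leaving it as an additive term on the right-hand side. The specific numerical constant $(\sqrt 2 - 1)/400$ in \cref{eq:dP_assumption} is tuned precisely so that this absorption is possible and produces the factor $25/4$ appearing in the conclusion.
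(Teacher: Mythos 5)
Your proposal is correct and follows essentially the same route as the paper: a competitor argument in the linearized least-squares problem, using the algebraic identity that the linearization at any factorization $(U^*,V^*)$ of $X^*$ reduces to $-(U^*-U_t)(V^*-V_t)^\top$, RIP and AM--GM bounds to pass between Frobenius norms and $\|\mathcal A(\cdot)\|$ on rank-$\le r$ and rank-$\le 2r$ terms, and the paper's alignment bound (\cref{lem:dQ_e_bound}, which extends \cite[Lemma~5.14]{tu2016low}) to control the competitor's factor distance by $\tfrac{25}{4\sigma_r^*}\|X_t-X^*\|_F^2$ under assumption \cref{eq:dP_assumption}. The only point you leave implicit --- that the infimum defining the optimal alignment is actually attained under \cref{eq:dP_assumption} --- is precisely what \cref{lem:dQ_e_bound} establishes, so the argument is complete.
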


%The second lemma bounds the norm of the minimal norm solution to the least squares problem.
For the next two lemmas, the factor matrices $U_t, V_t$ do not have to satisfy condition \cref{eq:dP_assumption}, but are required to have full column rank.
\begin{lemma}[norm of minimal norm solution]\label{lem:minNormSol_normBound}
Let $\begin{psmallmatrix} U_t \\ V_t \end{psmallmatrix}$ be of full column rank. Then the minimal norm solution $\Delta Z_t$ to \cref{eq:updatingVariant_LSQR} satisfies
\begin{align}\label{eq:minNormSol_normBound}
\|\Delta Z_t\|^2_F \leq \frac{1+\delta_{2r}}{1-\delta_{2r}} \frac{\|X_t - X^*\|_F^2}{\min\{\sigma^2_r(U_t), \sigma^2_r(V_t)\}} .
\end{align}
\end{lemma}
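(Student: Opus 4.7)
The plan is to bound $\|\Delta Z_t\|_F^2$ through a two-sided estimate on $\|\mathcal L(\Delta Z_t)\|_F^2$, where $\mathcal L(Z) := U_t\Delta V^\top + \Delta U V_t^\top$ for $Z = \begin{psmallmatrix}\Delta U\\ \Delta V\end{psmallmatrix}$. The minimum-norm solution $\Delta Z_t$ lies by definition in $(\ker\mathcal L)^\perp$, so the first step is to derive a lower bound of the form $\|\mathcal L(Z)\|_F^2 \geq \sigma^2 \|Z\|_F^2$ valid on this subspace with $\sigma = \min\{\sigma_r(U_t),\sigma_r(V_t)\}$. The second step combines the $2r$-RIP with the normal equations of the least-squares problem \cref{eq:updatingVariant_LSQR} to obtain $\|\mathcal L(\Delta Z_t)\|_F^2 \leq \tfrac{1+\delta_{2r}}{1-\delta_{2r}}\|X_t-X^*\|_F^2$. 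Chaining the two estimates delivers the claim.

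For the lower bound, I would first identify $\ker\mathcal L = \{(U_tR,\,-V_tR^\top) : R\in\mathbb R^{r\times r}\}$ (the inclusion is immediate, and equality follows because $U_t,V_t$ have full column rank: $\Delta U V_t^\top = -U_t\Delta V^\top$ forces $\Delta U$ to lie in $\operatorname{col}(U_t)$). A short computation then identifies its Frobenius-orthogonal complement as $\{Z : U_t^\top \Delta U = \Delta V^\top V_t\}$. On this subspace, setting $C := U_t^\top \Delta U = \Delta V^\top V_t$, the expansion $\|\mathcal L(Z)\|_F^2 = \|U_t\Delta V^\top\|_F^2 + 2\langle U_t\Delta V^\top,\,\Delta U V_t^\top\rangle + \|\Delta U V_t^\top\|_F^2$ has a cross term that reduces, via cyclicity of the trace, to $2\,\Tr(C^\top C) = 2\|C\|_F^2 \geq 0$. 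Combined with the elementary bounds $\|U_t\Delta V^\top\|_F^2 \geq \sigma_r^2(U_t)\|\Delta V\|_F^2$ and $\|\Delta U V_t^\top\|_F^2 \geq \sigma_r^2(V_t)\|\Delta U\|_F^2$ (each obtained by diagonalizing $U_t^\top U_t$, respectively $V_t^\top V_t$), this yields $\|\mathcal L(Z)\|_F^2 \geq \min\{\sigma_r^2(U_t),\sigma_r^2(V_t)\}\,\|Z\|_F^2$ for every $Z \in (\ker\mathcal L)^\perp$, and in particular for $Z = \Delta Z_t$.

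For the upper bound, observe that $\mathcal L(\Delta Z_t) = U_t \tilde V^\top + \tilde U V_t^\top$ has rank at most $2r$, so the $2r$-RIP gives $\|\mathcal L(\Delta Z_t)\|_F^2 \leq \|\mathcal A(\mathcal L(\Delta Z_t))\|^2/(1-\delta_{2r})$. Using $b = \mathcal A(X^*)$, problem \cref{eq:updatingVariant_LSQR} is equivalent to $\min_Z \|\mathcal A(\mathcal L(Z)) - \mathcal A(X^*-X_t)\|^2$, and its normal equations assert that the residual $\mathcal A(\mathcal L(\Delta Z_t)) - \mathcal A(X^*-X_t)$ is orthogonal in $\mathbb R^m$ to the image of $\mathcal A\circ\mathcal L$. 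Testing this orthogonality against $\mathcal A(\mathcal L(\Delta Z_t))$ itself and applying Cauchy--Schwarz yields the clean bound $\|\mathcal A(\mathcal L(\Delta Z_t))\| \leq \|\mathcal A(X^*-X_t)\|$. A final application of the $2r$-RIP to the rank-$\leq 2r$ matrix $X^*-X_t$ gives $\|\mathcal A(X^*-X_t)\|^2 \leq (1+\delta_{2r})\|X^*-X_t\|_F^2$, completing the upper estimate.

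The main subtlety is in the lower-bound step: the quadratic form $Z \mapsto \|\mathcal L(Z)\|_F^2$ is not coercive on $\mathbb R^{(n_1+n_2)\times r}$ (it vanishes on an $r^2$-dimensional kernel), so a priori the indefinite cross term in its expansion could be sufficiently negative to ruin any bound of the form $c\|Z\|_F^2$. The saving observation is the \emph{implicit balance} identity $U_t^\top \Delta U = \Delta V^\top V_t$ enforced by the orthogonality condition $\Delta Z_t\in(\ker\mathcal L)^\perp$; it is precisely this identity that turns the cross term into a non-negative quantity $2\|C\|_F^2$, and it is what allows the constants to line up to yield exactly $(1+\delta_{2r})/(1-\delta_{2r})$ instead of a larger multiple.
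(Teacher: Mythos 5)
Your proof is correct and follows essentially the same strategy as the paper's: characterize $(\ker\mathcal L)^\perp$ via the constraint $U_t^\top\Delta U = \Delta V^\top V_t$ (the content of the paper's \cref{lem:kernel_minNormSol}), exploit that constraint to show the cross-term in $\|\mathcal L(Z)\|_F^2$ reduces to $2\|U_t^\top\Delta U\|_F^2 \geq 0$ and hence obtain the lower bound $\sigma_\text{min}^2(\mathcal L\rvert_{(\ker\mathcal L)^\perp}) \geq \min\{\sigma_r^2(U_t),\sigma_r^2(V_t)\}$, and combine with two applications of the $2r$-RIP.

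The only two (minor) deviations from the paper are cosmetic rather than substantive. First, for the upper bound on $\|\Delta Z_t\|_F$, the paper writes $\Delta Z_t = \mathcal L_A^{(t)\dagger}b_t$ and bounds $\|\Delta Z_t\|_F \leq \sigma_1(\mathcal L_A^{(t)\dagger})\|b_t\|$, then relates $\sigma_\text{min}(\mathcal L_A^{(t)})$ to $\sigma_\text{min}(\mathcal L^{(t)})$ by the RIP; you instead derive $\|\mathcal A(\mathcal L(\Delta Z_t))\| \leq \|\mathcal A(X^*-X_t)\|$ directly from the normal equations and Cauchy–Schwarz and then sandwich with the RIP on both sides. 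These are two phrasings of the same estimate — the normal-equation/Cauchy–Schwarz derivation is exactly what makes $\|\mathcal L_A^{(t)\dagger}\|_\text{op} = 1/\sigma_\text{min}(\mathcal L_A^{(t)})$ — and both yield $(1+\delta_{2r})/(1-\delta_{2r})$. Second, to establish $\ker\mathcal L = \{(U_tR,-V_tR^\top)\}$ you argue elementarily that $\Delta UV_t^\top = -U_t\Delta V^\top$ plus full column rank of $V_t$ forces $\Delta U\in\operatorname{col}(U_t)$, whereas the paper invokes a dimension count through the tangent space of the rank-$r$ manifold; your route is arguably more self-contained. One technical nit: the minimum-norm solution lies by definition in $(\ker\mathcal L_A)^\perp$, not $(\ker\mathcal L)^\perp$, and you need the inclusion $(\ker\mathcal L_A)^\perp \subseteq (\ker\mathcal L)^\perp$ (equivalently the trivial $\ker\mathcal L \subseteq \ker\mathcal L_A$) to conclude that the balance identity $U_t^\top\Delta U_t = \Delta V_t^\top V_t$ actually holds for $\Delta Z_t$; this is implicit in your write-up and worth stating explicitly.
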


%The third lemma shows that the minimal norm solution is approximately balanced, given that the current iterate is.
\begin{lemma}[balance of minimal norm solution]\label{lem:minNormSol_implicitBalance}
Let $\begin{psmallmatrix} U_t \\ V_t \end{psmallmatrix}$ be of full column rank. Then the next iterate $Z_{t+1} = \begin{psmallmatrix} U_{t+1} \\ V_{t+1} \end{psmallmatrix}$ given by \cref{eq:updatingVariant} satisfies
\begin{align*}\begin{aligned}
\|U_{t+1}^\top U_{t+1} - V_{t+1}^\top V_{t+1}\|_F &\leq \|U_{t}^\top U_{t} - V_{t}^\top V_{t}\|_F + \frac{1+\delta_{2r}}{1-\delta_{2r}} \frac{\|X_t - X^*\|_F^2}{\min\{\sigma^2_r(U_t), \sigma^2_r(V_t)\}} .
\end{aligned}\end{align*}
\end{lemma}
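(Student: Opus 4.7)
The plan is to expand the imbalance change between iterates into zeroth-, first- and second-order contributions in $\Delta Z_t = (\Delta U_t, \Delta V_t)$, show that the minimal-norm property makes the first-order part vanish identically, and bound the quadratic remainder using \cref{lem:minNormSol_normBound}. Substituting $U_{t+1} = U_t + \Delta U_t$ and $V_{t+1} = V_t + \Delta V_t$ gives
\[
U_{t+1}^\top U_{t+1} - V_{t+1}^\top V_{t+1} = (U_t^\top U_t - V_t^\top V_t) + \Sigma_1 + \Sigma_2,
\]
where $\Sigma_1 := U_t^\top \Delta U_t + \Delta U_t^\top U_t - V_t^\top \Delta V_t - \Delta V_t^\top V_t$ and $\Sigma_2 := \Delta U_t^\top \Delta U_t - \Delta V_t^\top \Delta V_t$. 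The quadratic piece is immediate: $\|\Sigma_2\|_F \leq \|\Delta U_t\|_F^2 + \|\Delta V_t\|_F^2 = \|\Delta Z_t\|_F^2$, which \cref{lem:minNormSol_normBound} already bounds by $\tfrac{1+\delta_{2r}}{1-\delta_{2r}} \|X_t-X^*\|_F^2 / \min\{\sigma_r^2(U_t), \sigma_r^2(V_t)\}$ --- exactly the error term in the statement. Hence everything reduces to proving $\Sigma_1 = 0$.

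The key step is to characterize the kernel of the linearization $L'(\Delta U,\Delta V) := U_t \Delta V^\top + \Delta U V_t^\top$ appearing inside $\mathcal A$ in \cref{eq:updatingVariant_LSQR}. Assuming both $U_t$ and $V_t$ have full column rank (the only regime in which the bound in \cref{lem:minNormSol_normBound} is finite), I claim $\ker L' = \{(U_t R, -V_t R^\top) : R \in \mathbb R^{r\times r}\}$: such pairs are visibly in the kernel, the parametrization $R\mapsto (U_t R, -V_t R^\top)$ is injective, and the resulting dimension $r^2$ matches rank--nullity since $\mathrm{Im}(L')$ is exactly the tangent space at $U_tV_t^\top$ to the rank-$r$ manifold, which has codimension $r^2$. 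Since every matrix in $\mathrm{Im}(L')$ has rank at most $2r$ and $\mathcal A$ satisfies a $2r$-RIP, $\mathcal A$ is injective on $\mathrm{Im}(L')$, so $\ker(\mathcal A \circ L') = \ker L'$. The minimal-norm solution $\Delta Z_t$ is therefore orthogonal to $\ker L'$; testing orthogonality against $(U_t R, -V_t R^\top)$ and using cyclic invariance of the trace gives $\Tr\bigl((\Delta U_t^\top U_t - V_t^\top \Delta V_t)\,R\bigr) = 0$ for every $R\in\mathbb R^{r\times r}$, whence $\Delta U_t^\top U_t = V_t^\top \Delta V_t$, and by transposition $U_t^\top \Delta U_t = \Delta V_t^\top V_t$. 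Plugging these two identities into $\Sigma_1$ cancels its four terms pairwise, and $\Sigma_1 = 0$ follows.

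I expect the main obstacle to be the clean identification of $\ker L'$ and the extraction of the ``tangential balance'' identity $\Delta U_t^\top U_t = V_t^\top \Delta V_t$; once these are in place, the rest is direct substitution together with the bound from \cref{lem:minNormSol_normBound}. Conceptually, this is precisely the implicit balance regularization advertised in \cref{sec:implicit_balance}: the minimal-norm choice annihilates exactly the gauge direction $(U_t R, -V_t R^\top)$ along which the imbalance would otherwise grow linearly in $\Delta Z_t$, leaving only an $O(\|\Delta Z_t\|_F^2)$ residual.
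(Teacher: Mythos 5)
Your proof is correct and follows the same strategy as the paper: expand the next-iterate imbalance into zeroth-, first- and second-order terms in $\Delta Z_t$, kill the first-order piece via the orthogonality of the minimal-norm solution to the gauge directions $\{(U_t R, -V_t R^\top)\}$, and bound the quadratic remainder by $\|\Delta Z_t\|_F^2$ via \cref{lem:minNormSol_normBound}. The paper packages the identity $\Delta U_t^\top U_t = V_t^\top \Delta V_t$ as the first part of \cref{lem:kernel_minNormSol} and simply cites it, whereas you re-derive it from scratch; your derivation is valid but over-engineered: you do not need the full equality $\ker(\mathcal A\circ L') = \ker L' = \mathcal K_t$ (and hence do not need the $2r$-RIP or the rank--nullity count at this point), only the inclusion $\mathcal K_t \subseteq \ker(\mathcal A\circ L')$, which holds trivially and already forces the minimal-norm solution to be orthogonal to $\mathcal K_t$ --- exactly as the paper does.
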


In the next subsections we prove the three lemmas.
The proofs provide useful insights on the inner mechanism of \GNMR, in particular on the importance of balanced factors and the role of the minimal norm solution. These proofs may be relevant to the analysis of Gauss-Newton based methods in other settings, especially for rank-deficient problems.
To prove the lemmas, in the following subsection we present a key auxiliary lemma, with implications beyond matrix sensing.
Next, in \cref{sec:Q_distance}, we introduce some definitions and related technical results.
Then, in \cref{sec:sensingLemmas_proofs_inText} we prove \cref{lem:sens_et+1_bound,lem:minNormSol_normBound,lem:minNormSol_implicitBalance}.

\subsection{A key property of \GNMR in the general matrix recovery problem}
Given $\begin{psmallmatrix} U_t \\ V_t \end{psmallmatrix} \in \mathbb R^{(n_1+n_2)\times r}$ and a sensing operator $\mathcal A: \mathbb R^{n_1\times n_2}\to \mathbb R^m$, define the linear operators $\mathcal L^{(t)}: \mathbb R^{(n_1+n_2)\times r} \to \mathbb R^{n_1\times n_2}$ and $\mathcal L_A^{(t)}: \mathbb R^{(n_1+n_2)\times r} \to \mathbb R^{m}$ as
\begin{align}\label{eq:L_LA_operators}
\mathcal L^{(t)}\left( Z \right) &= U_tV^\top + UV_t^\top, \quad\quad
\mathcal L^{(t)}_A\left(Z \right) = \mathcal A \mathcal L^{(t)}\left(Z \right) = \mathcal A\left(U_tV^\top + UV_t^\top\right),
\end{align}
where $Z = \begin{psmallmatrix} U \\ V \end{psmallmatrix}$.
Note that $\mathcal L^{(t)}_A$ is the operator of the least squares problem \cref{eq:generalVariant_LSQR} common to all variants of \GNMR.
The following lemma describes a useful property of the minimal norm solution to the least squares problem, that holds regardless of the specific setting.

\begin{lemma}\label{lem:kernel_minNormSol}
Let $\begin{psmallmatrix} U_t \\ V_t \end{psmallmatrix} \in \mathbb R^{(n_1+n_2)\times r}$. The minimal norm solution $\begin{psmallmatrix} \tilde U_t \\ \tilde V_t \end{psmallmatrix}$ of \cref{eq:generalVariant_LSQR} satisfies
\begin{align}\label{eq:minNormSol_property}
\tilde U_t^\top U_t = V_t^\top \tilde V_t .
\end{align}
Further, if $U_t, V_t$ have full column rank, then
\begin{align}\label{eq:L_perp}
\left(\ker \mathcal L^{(t)}\right)^\perp = \left\{ \begin{pmatrix} U \\ V \end{pmatrix} \in \mathbb R^{(n_1+n_2)r} \,\mid\, U^\top U_t = V_t^\top V \right\} .
\end{align}

%Further, let $\mathcal K_t = \left\{\begin{psmallmatrix} U_t R \\ -V_tR^\top\end{psmallmatrix} \,\mid\, R \in \mathbb R^{r\times r} \right\}$. \red{If $U_t, V_t$ have full column rank,} then
%\begin{align}\label{eq:Kt_in_LA_kernel}
%\mathcal K_t = \ker \mathcal L^{(t)} \subseteq \ker \mathcal L_A^{(t)}.
%\end{align}
\end{lemma}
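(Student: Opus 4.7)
The plan is to first compute the adjoint of $\mathcal L^{(t)}$ and use the finite-dimensional identity $\mathrm{range}((\mathcal L^{(t)})^*) = (\ker \mathcal L^{(t)})^\perp$ to obtain the inclusion $(\ker \mathcal L^{(t)})^\perp \subseteq \{(U,V) : U^\top U_t = V_t^\top V\}$ without any rank assumption; this inclusion will immediately imply \cref{eq:minNormSol_property}. The reverse inclusion needed for \cref{eq:L_perp} will then follow, under the full column rank hypothesis, by matching dimensions.

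A direct computation using the Frobenius inner product gives
\begin{equation*}
\langle \mathcal L^{(t)}(Z), M\rangle_F = \Tr(M^\top U_t V^\top) + \Tr(M^\top U V_t^\top) = \langle U, MV_t\rangle_F + \langle V, M^\top U_t\rangle_F,
\end{equation*}
so $(\mathcal L^{(t)})^*(M) = \begin{psmallmatrix} MV_t \\ M^\top U_t \end{psmallmatrix}$. Every $(U,V) \in (\ker \mathcal L^{(t)})^\perp = \mathrm{range}((\mathcal L^{(t)})^*)$ thus has the form $(MV_t, M^\top U_t)$ for some $M$, and consequently satisfies $U^\top U_t = V_t^\top M^\top U_t = V_t^\top V$. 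For \cref{eq:minNormSol_property}, note that the operator of the least-squares problem \cref{eq:generalVariant_LSQR} is $\mathcal L^{(t)}_A = \mathcal A \circ \mathcal L^{(t)}$ (the term $-\alpha U_t V_t^\top$ only shifts the right-hand side), so $\ker \mathcal L^{(t)} \subseteq \ker \mathcal L^{(t)}_A$ and hence $(\ker \mathcal L^{(t)}_A)^\perp \subseteq (\ker \mathcal L^{(t)})^\perp$. Since the minimum-norm solution of any least-squares problem lies in the orthogonal complement of its operator's kernel, $\tilde Z_t \in (\ker \mathcal L^{(t)}_A)^\perp$, and by the inclusion just derived it satisfies $\tilde U_t^\top U_t = V_t^\top \tilde V_t$.

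For the reverse inclusion in \cref{eq:L_perp}, I match dimensions under the full column rank hypothesis. The linear map $(U,V) \mapsto U^\top U_t - V_t^\top V$ from $\mathbb R^{(n_1+n_2)\times r}$ to $\mathbb R^{r\times r}$ is surjective (given $A \in \mathbb R^{r\times r}$, set $V = 0$ and $U = U_t(U_t^\top U_t)^{-1} A^\top$, well-defined because $U_t$ has full column rank), so its kernel $\{(U,V) : U^\top U_t = V_t^\top V\}$ has dimension $(n_1+n_2)r - r^2$. For $\ker \mathcal L^{(t)}$, I claim it equals $\{(U_t R,\, -V_t R^\top) : R \in \mathbb R^{r\times r}\}$: left-multiplying $U_t V^\top + U V_t^\top = 0$ by $(U_t^\top U_t)^{-1} U_t^\top$ gives $V^\top = -(U_t^\top U_t)^{-1} U_t^\top U V_t^\top$, and substituting back forces $(I - P_{U_t}) U V_t^\top = 0$, where $P_{U_t}$ is the projection onto the column space of $U_t$; full column rank of $V_t$ then forces the columns of $U$ into the column span of $U_t$, so $U = U_t R$ and correspondingly $V = -V_t R^\top$. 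Full column rank of $U_t, V_t$ makes this parametrization injective, giving $\dim \ker \mathcal L^{(t)} = r^2$ and hence $\dim (\ker \mathcal L^{(t)})^\perp = (n_1+n_2)r - r^2$. Matching dimensions, together with the inclusion established in the previous paragraph, yields \cref{eq:L_perp}. The main obstacle is the explicit description of $\ker \mathcal L^{(t)}$ via one-sided pseudo-inverses; the rest is routine bookkeeping with adjoints and orthogonal complements in finite-dimensional inner product spaces.
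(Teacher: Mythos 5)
Your proof is correct. The first half—computing $(\mathcal L^{(t)})^*(M) = \begin{psmallmatrix} MV_t \\ M^\top U_t \end{psmallmatrix}$ and reading off that every element of $\mathrm{range}\,(\mathcal L^{(t)})^* = (\ker \mathcal L^{(t)})^\perp$ satisfies $U^\top U_t = V_t^\top V$—is a clean reformulation of what the paper does by computing $\mathcal K_t^\perp$ directly via a trace identity, where $\mathcal K_t = \{(U_tR,\,-V_tR^\top)\}$; both lean on the chain $\ker \mathcal L^{(t)} \subseteq \ker \mathcal L_A^{(t)}$ and orthogonality of the minimal-norm solution to $\ker \mathcal L_A^{(t)}$ to deliver \cref{eq:minNormSol_property}. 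Where you genuinely diverge is in proving $\ker \mathcal L^{(t)} = \mathcal K_t$ under the full-rank hypothesis: the paper appeals to Vandereycken's identification of $\mathrm{range}\,\mathcal L^{(t)}$ with the tangent space of the rank-$r$ manifold, obtains $\dim\ker \mathcal L^{(t)} = r^2$ from the manifold's dimension, and concludes by observing $\mathcal K_t \subseteq \ker \mathcal L^{(t)}$ has the same dimension. You instead solve $U_tV^\top + UV_t^\top = 0$ explicitly—left-multiplying by $(U_t^\top U_t)^{-1}U_t^\top$, then using full column rank of $V_t$ to force $(I - P_{U_t})U = 0$—which gives the kernel directly as $\mathcal K_t$ without any external geometric input. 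Your route is more elementary and self-contained; the paper's is shorter given the cited tangent-space fact, which it needs elsewhere anyway. Both dimension counts agree, and your surjectivity check for the map $(U,V) \mapsto U^\top U_t - V_t^\top V$ is a correct, if not strictly necessary, second way to pin down $\dim(\ker \mathcal L^{(t)})^\perp$.
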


\begin{proof}
Let $\mathcal K_t = \left\{\begin{psmallmatrix} U_t R \\ -V_tR^\top\end{psmallmatrix} \,\mid\, R \in \mathbb R^{r\times r} \right\}$. Observe that
\begin{align}\label{eq:Kt_inclusion_LA_kernel}
\mathcal K_t \subseteq \ker \mathcal L^{(t)} \subseteq \ker \mathcal L_A^{(t)},
\end{align}
where the first inclusion follows since $U_t V^\top + UV_t^\top$ vanishes for any $\begin{psmallmatrix} U \\ V \end{psmallmatrix} \in \mathcal K_t$, and the second due to the linearity of $\mathcal A$. %Let us show that this implies \cref{eq:minNormSol_property}.
By definition, the minimal norm solution $\begin{psmallmatrix} \tilde U_t \\ \tilde V_t \end{psmallmatrix}$ is orthogonal to $\ker \mathcal L_A^{(t)}$, and in particular to $\mathcal K_t \subseteq \ker \mathcal L_A^{(t)}$. Equation~\eqref{eq:minNormSol_property} will thus follow if we show
\begin{align}\label{eq:Kt_perp}
\mathcal K_t^\perp = \left\{ \begin{pmatrix} U \\ V \end{pmatrix} \in \mathbb R^{(n_1+n_2)r} \,\mid\, U^\top U_t = V_t^\top V \right\} .
\end{align}
Let $\begin{psmallmatrix} U \\ V \end{psmallmatrix} \in \mathbb R^{(n_1+n_2)r}$. Then $\begin{psmallmatrix} U \\ V \end{psmallmatrix} \perp \mathcal K_t$ if and only if
\begin{align*}
0 &= \Tr\left( U^\top U_t R - V^\top V_t R^\top \right)= \Tr\left[ \left( U^\top U_t - V_t^\top V\right) R\right], \quad \forall R\in \mathbb R^{r\times r},
\end{align*}
which in turn holds if and only if $U^\top U_t = V_t^\top V$. This proves \cref{eq:Kt_perp}.

Next, we prove \cref{eq:L_perp} assuming $U_t, V_t$ have full column rank. In view of \cref{eq:Kt_perp}, it is sufficient to show that in this case $\mathcal K_t = \ker \mathcal L^{(t)}$.
Let $\mathcal M_r$ denote the manifold of $\mathbb R^{n_1\times n_2}$ matrices of rank $r$. By \cite[Proposition~2.1]{vandereycken2013low}, the range of $\mathcal L^{(t)}$ is the tangent space to $\mathcal M_r$ at $U_t Q V_t$ where $Q \in \mathbb R^{r\times r}$ is any invertible matrix. Hence its dimension is the same as that of $\mathcal M_r$ \cite[section 1.2]{guillemin2010differential}, which is $(n_1+n_2-r)r$. The dimension of $\ker \mathcal L^{(t)}$ is therefore $(n_1+n_2)r - (n_1+n_2-r)r = r^2$. Since $U_t, V_t$ have full column rank, then $\text{dim } \mathcal K_t = r^2$. Combined with $\mathcal K_t \subseteq \ker \mathcal L^{(t)}$ \cref{eq:Kt_inclusion_LA_kernel}, we conclude $\mathcal K_t = \ker \mathcal L^{(t)}$. This completes the proof.
\end{proof}

\subsection{The Q-distance}\label{sec:Q_distance}
%In order to prove \cref{lem:sens_et+1_bound,lem:minNormSol_normBound,lem:minNormSol_implicitBalance}, we need a suitable distance measure between stacked pairs of factor matrices $\begin{psmallmatrix} U \\ V \end{psmallmatrix}$.
Consider the following distance measure between pairs of factor matrices, introduced by Ma \textit{et al.} \cite{ma2021beyond}.
\begin{definition}\label{def:Q_distance}
Let $Z_i = \begin{psmallmatrix} U_i \\ V_i \end{psmallmatrix}$ where $U_i \in \mathbb R^{n_1\times r}$ and $V_i \in \mathbb R^{n_2\times r}$ for $i=1,2$. Then the Q-distance between $Z_1$ and $Z_2$ is defined as
\begin{align*} %\label{eq:Q_distance}
d_Q^2(Z_1, Z_2) = \inf \left\{ \|U_1-U_2Q\|_F^2 + \|V_1-V_2Q^{-\top}\|_F^2 \,\mid\, Q \in \mathbb R^{r\times r} \text{ is invertible} \right\} . 
\end{align*}
\end{definition}
%As mentioned in the introduction, there is an inherent scaling ambiguity in matrix factorization.
%The Q-distance accounts for this ambiguity in the sense that it measures the distance between the closest decompositions of \red{the products} $U_1V_1^\top$ and $U_2V_2^\top$.
Let us present bounds on the Q-distance.
To this end, we first introduce the definition of balanced-SVD (b-SVD), which is quite natural in light of the discussion in \cref{sec:implicit_balance}.
%As discussed in \cref{sec:implicit_balance}, the balance of the factor matrices plays a key role in our analysis. This fact naturally leads to the following definition of a pair of perfectly balanced factor matrices.
\begin{definition}[balanced-SVD (b-SVD)]\label{def:bSVD}
Let $X \in \mathbb R^{n_1\times n_2}$ be a matrix of rank $r$ with SVD $X = \bar U \Sigma \bar V^\top$.
Then
\begin{align*}%\label{eq:bSVD}
\text{b-SVD}(X) = \begin{pmatrix} \bar U \Sigma^\frac{1}{2} \\ \bar V \Sigma^\frac{1}{2} \end{pmatrix} \in \mathbb R^{(n_1+n_2)\times r}.
\end{align*}
\end{definition}
Note that $\begin{psmallmatrix} U \\ V \end{psmallmatrix} = \text{b-SVD}(X)$ implies $UV^\top = X$ and $U^\top U = V^\top V$.

The next lemma bounds the Q-distance between a pair of factor matrices $Z$ and the b-SVD $Z^*$ of a rank $r$ matrix $X^*$.
Since the Q-distance is asymmetric w.r.t.~its arguments, we present different bounds for $d_Q(Z^*, Z)$ and $d_Q(Z, Z^*)$.
There is a substantial difference in the difficulty of bounding each case:
As the right argument of $d_Q$ is multiplied by an invertible matrix $Q$ (\cref{def:Q_distance}), we can assume w.l.o.g.~that it is also a b-SVD, and hence $d_Q(Z^*, Z)$ is easier to analyze.
The more challenging bound of $d_Q(Z, Z^*)$ requires an additional condition as stated in the following lemma, proven in \cref{sec:technical}.

\begin{lemma}\label{lem:dQ_e_bound}
Let $Z^* = \text{b-SVD}(X^*)$ where $X^* \in \mathbb R^{n_1\times n_2}$ is of rank $r$. Then
\begin{align}\label{eq:dQ_leftbSVD_e_bound}
d_Q^2(Z^*, Z) \leq \frac{\|UV^\top - X^*\|_F^2}{(\sqrt 2 - 1)\sigma_r(X^*)}
\end{align}
for any $Z = \begin{psmallmatrix} U \\ V \end{psmallmatrix} \in \mathbb R^{(n_1+n_2)\times r}$.
Further, if
\begin{align}
\|UV^\top - X^*\|_F^2 + \frac 14 \|U^\top U - V^\top V\|_F^2 \leq \tfrac{\sqrt 2 - 1}{400}\sigma_r^2(X^*), \label{eq:dQ_e_bound_assumption}
\end{align}
then there exists an invertible matrix $Q \in \mathbb R^{r\times r}$ with $\|Q\|_2 \leq 4/3$ such that
\begin{align}\label{eq:dQ_e_bound}
d_Q^2(Z, Z^*) \leq \|U - U^*Q\|_F^2 + \|V - V^* Q^{-\top}\|_F^2 &\leq \frac{25}{4\sigma_r(X^*)} \|UV^\top - X^*\|_F^2.
\end{align}
\end{lemma}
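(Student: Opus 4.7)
The plan is to prove the two inequalities separately, exploiting the structural fact that $Z^{*} = \text{b-SVD}(X^*)$ is balanced, i.e.~$U^{*\top} U^{*} = V^{*\top} V^{*} = \Sigma$, the diagonal of the nonzero singular values of $X^*$. Throughout, it is useful to interpret $d_Q^2(Z_1, Z_2)$ as the squared Euclidean distance from the stacked factors $Z_1$ to the variety $\{(U',V') : U' V'^{\top} = U_2 V_2^{\top}\}$ of all rank-$r$ factorizations of $U_2 V_2^{\top}$, since $(U_2 Q, V_2 Q^{-\top})$ parametrizes this variety as $Q$ ranges over invertible $r\times r$ matrices.

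For the first inequality, which holds for arbitrary $Z$, the first observation is that $d_Q(Z^*, \cdot)$ is invariant under the reparameterization $(U,V) \mapsto (UQ_0, VQ_0^{-\top})$ of its right argument, since such a $Q_0$ can be absorbed into the infimum variable. Hence, when $X := UV^\top$ has rank $r$, we may replace $Z$ by $\text{b-SVD}(X)$ without changing $d_Q(Z^*, Z)$; the degenerate case $\mathrm{rank}(X) < r$ follows by a standard perturbation limit. With both arguments now balanced, I restrict the infimum over invertible $Q$ to the subset of orthogonal matrices $Q = R \in \mathcal O(r)$, where the Q-distance reduces to an ordinary Procrustes-type quantity $\|Z^{*} - Z R\|_F^2$. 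A Wedin-style sin-$\Theta$ argument comparing the b-SVDs of two close rank-$r$ matrices, combined with Weyl's inequality $\sigma_r(X) \ge \sigma_r(X^*) - \|X - X^*\|_F$, produces the desired bound. The constant $\sqrt{2}-1$ emerges from optimizing the worst-case trade-off between the denominator $\sigma_r(X) + \sigma_r(X^*)$ appearing in the sin-$\Theta$ bound and the Weyl degradation of $\sigma_r(X)$.

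For the second inequality, which requires the stronger conclusion that the specific $Q$ satisfies $\|Q\|_2 \le 4/3$, the smallness hypothesis \cref{eq:dQ_e_bound_assumption} is essential. First, combining the assumed bound on $\|U^\top U - V^\top V\|_F$ with the bound on $\|UV^\top - X^*\|_F$ through Weyl's inequality, I would show $\sigma_r(U), \sigma_r(V) \gtrsim \sqrt{\sigma_r(X^*)}$, so $Z$ is well conditioned and not far from balanced. Next, I construct $Q$ explicitly: a natural candidate is a first-order critical point of $F(Q) := \|U - U^{*}Q\|_F^2 + \|V - V^{*} Q^{-\top}\|_F^2$, whose stationarity conditions simplify using $U^{*\top}U^{*} = V^{*\top}V^{*} = \Sigma$. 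The operator-norm bound $\|Q\|_2 \le 4/3$ is then proved by a fixed-point/contraction argument leveraging the smallness of $\|UV^\top - X^*\|_F$ relative to $\sigma_r(X^*)$ provided by \cref{eq:dQ_e_bound_assumption}: one shows $Q$ lies within a small perturbation of the orthogonal optimizer from Part~1. Plugging this $Q$ back into $F$, together with the lower bound on $\sigma_r(U),\sigma_r(V)$, yields the explicit constant $25/4$.

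The main obstacle is the simultaneous control of the residual and $\|Q\|_2$ in Part~2. Unlike Part~1, which only requires bounding an infimum, we must exhibit a single $Q$ satisfying both the Frobenius-norm bound on the residual and the operator-norm bound. The balance assumption in \cref{eq:dQ_e_bound_assumption} is precisely what prevents $Q$ from being far from orthogonal: if $Z$ were highly imbalanced, the optimal $Q$ would have to rescale $U^*$ and $V^*$ in opposite directions, violating $\|Q\|_2 \le 4/3$. The factor $25/4$ reflects the interplay between the slack encoded in the constant $400$ in the hypothesis and the Weyl lower bound $\sigma_r(UV^\top) \ge \sigma_r(X^*) - \|UV^\top - X^*\|_F$; tracking it requires careful but routine book-keeping through the reparameterization from the orthogonal bound of Part~1 to the Q-distance bound.
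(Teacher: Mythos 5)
Your Part~1 mirrors the paper's reduction at a structural level — replacing $Z$ by $\text{b-SVD}(UV^\top)$, noting that $d_Q(Z^*,\cdot)$ is invariant under right reparameterization, and then relaxing to orthogonal $Q$ to land on a Procrustes-type quantity — but the mechanism you propose for actually bounding that quantity does not match what is needed. The paper does not invoke Wedin / $\sin\Theta$ at all. Instead it passes through \cref{lem:dP_e_bound}, which rests on \cite[Lemma~5.4]{tu2016low} (restated here as \cref{lem:lemma5.4_inTBSSR16}): $d_P^2(Z,Z^*) \le \|ZZ^\top - Z^*Z^{*\top}\|_F^2 / \bigl(2(\sqrt{2}-1)\sigma_r^2(Z^*)\bigr)$. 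The constant $\sqrt{2}-1$ is inherited directly from that eigenvalue-comparison lemma; it is not produced by trading off a $\sin\Theta$ gap against Weyl degradation of $\sigma_r(X)$. A $\sin\Theta$ theorem bounds subspace angles, whereas $d_P$ of b-SVD factors also carries the $\Sigma^{1/2}$ scaling, so the two quantities are not interchangeable and it is not clear your proposed route produces the advertised constant — or indeed a finite bound without additional work. So Part~1 of your proposal, as written, has a real gap: the key inequality is asserted by analogy but not supplied.

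Part~2 glosses over the central difficulty the paper flags explicitly: the infimum in $d_Q(Z,Z^*)$ is over the non-compact set of invertible $Q$, so an optimal alignment matrix need not exist, and "take a first-order critical point of $F(Q)$" is not a priori licensed. The paper circumvents this by working first with $d_Q(Z^*,Z)$ (note the reversed arguments) and invoking \cite[Lemma~1]{ma2021beyond} (\cref{lem:lemma1_MLC21}), which guarantees, under the nearness hypothesis supplied by \cref{lem:dP_e_bound} plus \cref{eq:dQ_e_bound_assumption}, that the optimal aligner $\tilde Q$ for $d_Q(Z^*,Z)$ exists and satisfies $\|\tilde Q - P\|_2 \le 1/4$ for the orthogonal Procrustes minimizer $P$; the matrix $Q$ in the lemma is then $\tilde Q^{-1}$, with $\|Q\|_2 \le 4/3$ from Weyl, and the constant $25/4$ comes from passing $\|\mathcal E_U\|_F, \|\mathcal E_V\|_F$ (bounded by Part~1) through $\|Q\|_2, \|Q^{-1}\|_2$. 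Your fixed-point / contraction sketch is in the same spirit as the Ma et al.\ lemma and could plausibly be made rigorous, but as stated it neither establishes existence of the critical point nor tracks the constant. Also, the lower bounds $\sigma_r(U),\sigma_r(V) \gtrsim \sqrt{\sigma_r^*}$ that you flag as essential are not actually used in the paper's Part~2; the operator-norm bounds on $Q$ and $Q^{-1}$ suffice.
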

As the Q-distance takes an infimum over a non-compact set, its corresponding optimal alignment matrix does not always exist. However, in the proof of \cref{lem:dQ_e_bound} we show that \cref{eq:dQ_e_bound_assumption} is a sufficient condition for the existence of the optimal alignment matrix.

\subsection{Proofs of \cref{lem:sens_et+1_bound,lem:minNormSol_normBound,lem:minNormSol_implicitBalance} \nopunct}\label{sec:sensingLemmas_proofs_inText}
\begin{proof}[Proof of \cref{lem:sens_et+1_bound}]
Let $F_t^2$ be the objective function of the least squares problem,
\begin{align*}
F_t^2\left(\begin{pmatrix} \Delta U \\ \Delta V \end{pmatrix}\right) = \left\| \mathcal A\left(U_tV_t^\top + U_t \Delta V^\top + \Delta U V_t^\top - X^*\right) \right\|^2.
\end{align*}
The proof consists of two parts.
First, we show that there exists $\Delta Z \in \mathbb R^{(n_1+n_2)\times r}$ such that
\begin{align}\label{eq:sens_Ft_upperBound}
F_t(\Delta Z) \leq \frac{\sqrt{1+\delta_r}}{2} \frac{25}{4\sigma_r(X^*)} \|X_t - X^*\|_F^2.
\end{align}
Second, we show that any feasible solution $\Delta Z_t = \begin{psmallmatrix} \Delta U_t \\ \Delta V_t \end{psmallmatrix}$ to the least squares problem satisfies
\begin{align}\label{eq:sens_Ft_lowerBound}
F_t(\Delta Z_t) &\geq \sqrt{1-\delta_{2r}} \|X_{t+1} - X^*\|_F - \tfrac{1}{2} \sqrt{1+\delta_r} \|\Delta Z_t\|_F^2 
\end{align}
where $X_{t+1} = (U_t + \Delta U_t)(V_t + \Delta V_t)^\top$ is the corresponding new estimate.
Since $\Delta Z_t$ minimizes $F_t$ by construction, then $F_t(\Delta Z_t) \leq F_t(\Delta Z)$ for any $\Delta Z$, from which the lemma follows.

For the first part, let $Z = \begin{psmallmatrix} U \\ V \end{psmallmatrix} \in \mathcal B^*$, where $\mathcal B^*$ is defined in \cref{eq:B*_def}, be any decomposition of the underlying matrix $X^*$, and denote $\Delta Z = \begin{psmallmatrix} \Delta U \\ \Delta V \end{psmallmatrix} = Z - Z_t$. Since $U_tV_t + U_t \Delta V^{\top} + \Delta U V_t^\top - UV^{\top} = -\Delta U \Delta V^{\top}$, then
\begin{align*}
F_t(\Delta Z) &= \|\mathcal A\left(U_tV_t + U_t \Delta V^{\top} + \Delta U V_t^\top - UV^{\top} \right)\| = \|\mathcal A\left(\Delta U \Delta V^{\top}\right)\|.
\end{align*}
Recall that $\mathcal A$ satisfies an $r$-RIP with a constant $\delta_r \leq \delta_{2r}$. Combining this with the Cauchy-Schwarz inequality and the fact that $ab \leq (a^2+b^2)/2$ we obtain
\begin{align}\label{eq:sens_Ft_UpperBound_temp}
F_t(\Delta Z) &\leq \sqrt{1 + \delta_r} \|\Delta U\|_F \|\Delta V\|_F \leq \tfrac{1}{2} \sqrt{1 + \delta_r} \|\Delta Z\|_F^2 .
\end{align}
%Let us now specialize the choice of $Z$, namely pick a specific decomposition of $X^*$.
Let us now pick a specific decomposition $Z \in \mathcal B^*$.
%By assumption \cref{eq:dP_assumption} of the lemma,
By \cref{eq:dP_assumption},
$Z_t$ satisfies \cref{eq:dQ_e_bound_assumption}.
\Cref{lem:dQ_e_bound} thus guarantees an invertible $Q \in \mathbb R^{r\times r}$ such that \cref{eq:dQ_e_bound} holds.
Define $Z = \begin{psmallmatrix} U^*Q \\ V^*Q^{-\top} \end{psmallmatrix}$ where $\begin{psmallmatrix} U^* \\ V^* \end{psmallmatrix} = \text{b-SVD}(X^*)$.
Then $\|\Delta Z\|_F^2 = \|U_t - U^*Q\|_F^2 + \|V_t - V^* Q^{-\top}\|_F^2$.
Hence, by \cref{eq:dQ_e_bound} of \cref{lem:dQ_e_bound}, $\|\Delta Z\|_F^2 \leq \frac{25}{4\sigma_r(X^*)} \|X_t - X^*\|_F^2$. Plugging this into \cref{eq:sens_Ft_UpperBound_temp} yields \cref{eq:sens_Ft_upperBound}.

Next, we prove \cref{eq:sens_Ft_lowerBound}. It is easy to show that any feasible solution $\Delta Z_t$ satisfies
\begin{align*}
F_t(\Delta Z_t) &= \|\mathcal A\left(U_tV_t + U_t \Delta V_t^\top + \Delta U_t V_t^\top - X^* \right)\|
= \|\mathcal A\left(X_{t+1} - \Delta U_t \Delta V_t^\top - X^*\right)\|.
\end{align*}
%as $U_tV_t + U_t \Delta V_t^\top + \Delta U_t V_t^\top = X_{t+1} - \Delta U_t \Delta V_t^\top$
By the triangle inequality, $F_t(\Delta Z_t) \geq \|\mathcal A\left(X_{t+1} - X^*\right)\| - \|\mathcal A\left( \Delta U_t \Delta V_t^\top \right)\|$.
As noted before, $\mathcal A$ satisfies an $r$-RIP with a constant $\delta_r \leq \delta_{2r}$. Combining this with the Cauchy-Schwarz inequality and the fact that $ab \leq (a^2+b^2)/2$ yields \cref{eq:sens_Ft_lowerBound}.
%\begin{align*}
%F_t(\Delta Z_t) &\geq \sqrt{1 - \delta_{2r}} \|X_{t+1} - X^*\|_F - \sqrt{1+\delta_r} \|\Delta U_t\|_F \|\Delta V_t^%\top\|_F \\
%&\geq \sqrt{1-\delta_{2r}} \|X_{t+1} - X^*\|_F - \frac{\sqrt{1+\delta_r}}{2} \|\Delta Z_t\|^2 .
%\end{align*}
\end{proof}

To prove \cref{lem:minNormSol_normBound} we shall use the following proposition \cite[Proposition~B.4]{sun2016guaranteed}.
\begin{proposition}\label{prop:propB4_SL16}
For any $U\in \mathbb R^{n_1\times r}$ and $V\in \mathbb R^{n_2\times r}$,
$
\|UV^\top\|_F \leq \sigma_1(U)\|V\|_F.
$
Further, if $n_1 \geq r$, then
$
\sigma_r(U) \|V\|_F \leq \|UV^\top\|_F .
$
\end{proposition}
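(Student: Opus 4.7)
The plan is to reduce both inequalities to column-wise bounds on the action of $U$ as a linear map from $\mathbb R^r$ to $\mathbb R^{n_1}$. Writing $v^{(1)}, \ldots, v^{(n_2)} \in \mathbb R^r$ for the rows of $V$, the $j$-th column of $UV^\top$ is exactly $U v^{(j)}$, so
\[
\|UV^\top\|_F^2 = \sum_{j=1}^{n_2} \|U v^{(j)}\|^2, \qquad \|V\|_F^2 = \sum_{j=1}^{n_2} \|v^{(j)}\|^2.
\]
Both claimed inequalities then follow by summing the pointwise estimates $\sigma_r(U)\|v^{(j)}\| \leq \|Uv^{(j)}\| \leq \sigma_1(U)\|v^{(j)}\|$.

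For the upper bound, I would simply invoke the standard fact that the spectral norm is the operator norm induced by the Euclidean norm, giving $\|Uv\| \leq \|U\|_2\|v\| = \sigma_1(U)\|v\|$ for every $v \in \mathbb R^r$. Squaring, summing over $j$, and identifying the resulting sums with the Frobenius norms above yields the first claim, with no assumption on $n_1$.

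For the lower bound, I would use the thin SVD $U = P\Sigma Q^\top$ with $P \in \mathbb R^{n_1\times r}$ having orthonormal columns, $\Sigma \in \mathbb R^{r\times r}$ diagonal with entries $\sigma_1(U) \geq \cdots \geq \sigma_r(U) \geq 0$, and $Q \in \mathbb R^{r\times r}$ orthogonal; this is exactly where the assumption $n_1 \geq r$ enters, since it guarantees that $\Sigma$ is $r\times r$ (so $\sigma_r(U)$ is defined as the smallest singular value) and that $Q$ is a genuine orthogonal matrix. Then for any $v \in \mathbb R^r$,
\[
\|Uv\|^2 = \|P\Sigma Q^\top v\|^2 = \|\Sigma Q^\top v\|^2 \geq \sigma_r^2(U)\|Q^\top v\|^2 = \sigma_r^2(U)\|v\|^2,
\]
where the second equality uses that $P$ has orthonormal columns and the last uses orthogonality of $Q$. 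Summing over $j = 1, \ldots, n_2$ and taking square roots gives the second inequality (and when $\sigma_r(U) = 0$ the inequality is trivial).

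There is no genuine obstacle; the statement is a routine consequence of the SVD and the column decomposition of the Frobenius norm. The only subtlety worth flagging is the role of $n_1 \geq r$, which is needed so that the smallest singular value $\sigma_r(U)$ is a bona fide stretching factor of $U$ on $\mathbb R^r$, and so that the norm-preserving step $\|Q^\top v\| = \|v\|$ in the SVD reduction is valid.
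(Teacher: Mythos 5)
Your argument is correct. Note that the paper does not actually prove this proposition; it simply cites it from \cite[Proposition~B.4]{sun2016guaranteed}, so there is no in-paper proof to compare against. Your column-wise decomposition of the Frobenius norm, together with the operator-norm bound $\|Uv\|\leq\sigma_1(U)\|v\|$ for the upper estimate and the thin SVD for the lower estimate, is the standard route and establishes both inequalities cleanly. Your remark on the role of $n_1\geq r$ is the right one: it is exactly what makes the inner factor $Q$ a square orthogonal matrix so that $\|Q^\top v\|=\|v\|$; when $n_1<r$, the map $Q^\top$ acquires a kernel and the pointwise lower bound fails (though in that degenerate case $\sigma_r(U)=0$ and the inequality holds vacuously).
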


\begin{proof}[Proof of \cref{lem:minNormSol_normBound}]
Let $E_t = X^* - X_t$ and $b_t = \mathcal A (E_t)$.
By construction, $\Delta Z_t = \mathcal L_A^{(t)\dagger} b_t$ where $\mathcal L_A^{(t)\dagger}$ is the Moore-Penrose pseudoinverse of $\mathcal L_A^{(t)}$.
In addition, by the $2r$-RIP property of $\mathcal A$, $\|b_t\|^2 \leq (1+\delta_{2r}) \|E_t\|^2_F$. Hence,
\begin{align*}
\|\Delta Z_t\|_F^2 &= \|\mathcal L_A^{(t)\dagger} b_t\|_F^2 \leq (1+\delta_{2r}) \sigma^2_1 \left(\mathcal L_A^{(t)\dagger}\right) \|E_t\|_F^2 
= (1+\delta_{2r}) {\|X_t - X^*\|_F^2} / {\sigma^{2}_\text{min}\left(\mathcal L_A^{(t)}\right)}
\end{align*}
where $\sigma_\text{min}(\mathcal L_A^{(t)})$ is the smallest nonzero singular value of $\mathcal L_A^{(t)}$.
%In fact, by \cref{lem:LA_kernel}, $\sigma_\text{min}(\mathcal L_A^{(t)}) = \sigma_{(n_1+n_2-r)r}(\mathcal L_A^{(t)})$.
By the $2r$-RIP of $\mathcal A$ we have $\sigma^2_\text{min}(\mathcal L_A^{(t)}) \geq (1-\delta_{2r}) \sigma^2_\text{min}(\mathcal L^{(t)})$.
Proving inequality \cref{eq:minNormSol_normBound} thus reduces to showing that
\begin{align}\label{eq:sigmaMinL_sigmaMinsUV}
\sigma^2_\text{min}\left(\mathcal L^{(t)}\right) \geq \min\{\sigma^2_r(U_t), \sigma^2_r(V_t)\}.
\end{align}
Since $U_t, V_t$ have full column rank, \cref{eq:L_perp} of \cref{lem:kernel_minNormSol} implies
\begin{align}
\sigma^{2}_\text{min}\left(\mathcal L^{(t)}\right) &= \min_{\Delta U, \Delta V}\left\{ \frac{1}{\|\Delta U\|_F^2 + \|\Delta V\|_F^2} \left\|\mathcal L^{(t)}\left( \begin{pmatrix} \Delta U \\ \Delta V \end{pmatrix} \right) \right\|_F^2 \,\mathrel{\Big|}\, \begin{pmatrix} \Delta U \\ \Delta V \end{pmatrix} \perp \ker \mathcal L^{(t)} \right\} \nonumber \\
&= \min_{\Delta U, \Delta V}\left\{ \frac{1}{\|\Delta U\|_F^2 + \|\Delta V\|_F^2} \|U_t \Delta V^\top + \Delta U V_t^\top\|_F^2 \,\mid\, U_t^\top \Delta U = \Delta V^\top V_t \right\}. \label{eq:sens_sigmaMin_tempBound}
\end{align}
Let us lower bound $\|U_t \Delta V^\top + \Delta U V_t^\top\|_F^2$ under the constraint $U_t^\top \Delta U = \Delta V^\top V_t$.
For any $\Delta U, \Delta V$ that satisfy this constraint, using the trace property $\Tr(AB) = \Tr(BA)$,
\begin{align}
\Tr\left( V_t \Delta U^\top U_t \Delta V^\top \right) &= \Tr\left(\Delta V^\top V_t \Delta U^\top U_t \right) = \Tr\left( U_t^\top \Delta U \Delta U^\top U_t \right) = \|\Delta U^\top U_t\|_F^2 \geq 0 . \label{eq:temp_trace_inequality}
\end{align}
Combining \cref{eq:temp_trace_inequality} and the second part of \cref{prop:propB4_SL16} yields the bound
\begin{align*}
\|U_t \Delta V^\top + \Delta U V_t^\top\|_F^2 &= \|U_t \Delta V^\top\|_F^2 + \|\Delta U V_t^\top\|_F^2 + 2\Tr\left( V_t \Delta U^\top U_t \Delta V^\top \right) \\
&\geq \|U_t \Delta V^\top\|_F^2 + \|V_t \Delta U^\top\|_F^2
\geq \sigma^2_r(U_t) \|\Delta V\|_F^2 + \sigma^2_r(V_t) \|\Delta U\|_F^2 \\
&\geq \min\{\sigma^2_r(U_t),\, \sigma^2_r(V_t)\} \cdot \left(\|\Delta U\|_F^2 + \|\Delta V\|_F^2\right) .
\end{align*}
Plugging this bound into \cref{eq:sens_sigmaMin_tempBound} yields \cref{eq:sigmaMinL_sigmaMinsUV}.
\end{proof}

\begin{proof}[Proof of \cref{lem:minNormSol_implicitBalance}]
By the triangle inequality,
\begin{align*}
\|U_{t+1}^\top U_{t+1} - V_{t+1}^\top V_{t+1}\|_F &\leq \|U_t^\top U_{t} - V_{t}^\top V_t\|_F + 2\|U_t^\top \Delta U_{t} - \Delta V_{t}^\top V_t\|_F \\ &+ \|\Delta U_t^\top \Delta U_{t} - \Delta V_{t}^\top \Delta V_t\|_F.
\end{align*}
The second term on the RHS vanishes due to the first part of \cref{lem:kernel_minNormSol}.
The third term can be bounded by combining the triangle and the Cauchy-Schwarz inequalities as
\begin{align*}
\|\Delta U_t^\top \Delta U_{t} - \Delta V_{t}^\top \Delta V_t\|_F &\leq \|\Delta U_t^\top \Delta U_{t}\|_F + \|\Delta V_{t}^\top \Delta V_t\|_F
\leq \|\Delta U_t\|_F^2 + \|\Delta V_t\|_F^2 .
\end{align*}
The lemma thus follows by employing \cref{lem:minNormSol_normBound}.
\end{proof}

%%%%%%%%%%%%%%%%%%%%%%%%%%%%%%%%%%%%%%%%%%%%%%%%%%%%%%%%%%%%%%%%%%%%%
%%%%%%%%%%%%%%%%%%%%%%%%%%%%%%%%%%%%%%%%%%%%%%%%%%%%%%%%%%%%%%%%%%%%%
\section{Numerical results}\label{sec:numerics}
We illustrate the performance of different variants of \GNMR via several simulations.\footnote{Additional technical details on the experimental setups appear in \cref{sec:experimental_details}. Matlab and Python implementations of \GNMR for matrix completion and matrix sensing are available at \href{https://github.com/pizilber/GNMR}{github.com/pizilber/GNMR}.}
Each experiment consists of generating a random matrix $X^* \in \mathbb R^{n_1\times n_2}$ of a given rank $r$ and singular values $\sigma_i^*$, as well as a sampling pattern $\Omega \subseteq [n_1]\times [n_2]$ of a given size.
To generate $X^*$, we construct $U\in \mathbb R^{n_1\times r}$, $V\in \mathbb R^{n_2\times r}$ with entries i.i.d.~from the standard normal distribution, orthonormalize their columns, and set $X^* = U \Sigma V^\top$ where $\Sigma \in \mathbb R^{r\times r}$ is diagonal with entries $\sigma_i^*$.
%\cite[2.2]{candes2009exact} ensures that the resulting matrix $X^*$ is incoherent with high probability.
Next, we generate $\Omega$ using the procedure from \cite{kummerle2020escaping}, which samples $\Omega$ randomly without replacement, and verifies that there are at least $r$ visible entries in each column and row of $X^*$.
Since a rank-$r$ matrix $X^* \in \mathbb R^{n_1\times n_2}$ has $(n_1+n_2-r)r$ degrees of freedom, we denote the oversampling ratio $\rho = \frac{|\Omega|}{(n_1+n_2-r)r}$. As $\rho$ decreases towards the information limit value of $1$, the harder the problem becomes.

In the experiments, we compare \GNMR, as sketched in \cref{alg:GNMR}, to the following algorithms:
\texttt{LRGeomCG} \cite{vandereycken2013low},
\texttt{RTRMC} \cite{boumal2015low},
\texttt{ScaledASD} \cite{tanner2016low}, \texttt{R2RILS} \cite{bauch2021rank}, and \texttt{MatrixIRLS} \cite{kummerle2020escaping}.
We used the Matlab implementations of these algorithms with default parameters as supplied by the respective authors, with the following exceptions: (i) Following \cite{kummerle2020escaping}, we set $\lambda = 10^{-8}$ in \texttt{RTRMC}, as it allows it to handle low oversampling ratios; (ii) In \texttt{MatrixIRLS}, the tol-CG-fac parameter was modified from its default value $10^{-5}$ to $10^{-5} \kappa^{-1}$ as in the experiments in \cite{kummerle2020escaping}, leading to improved results; (iii) For fair comparison, we unified the stopping criteria of all algorithms, as detailed in \cref{sec:experimental_details}.
%The two stopping criteria used are small observed relative RMSE, $\frac{\|\mathcal P_\Omega (X^* - \hat X_t)\|}{\|\mathcal P_\Omega (X^*)\|} \leq 10^{-16}$, or small relative estimate change, $\frac{\|\hat X_{t+1} - \hat X_t\|_F}{\|\hat X_t\|_F} \leq 10^{-16}$. These small threshold values were chosen to allow the various methods to either converge or fully exploit their maximal number of iterations. All other stopping criteria defined by the algorithms were disabled.
Finally, all algorithms were initialized by the same spectral initialization, which is also their default initialization scheme. An exception is \texttt{MatrixIRLS} which is not factorization based.

\iffalse
We divide the algorithms into two groups.
The first group consists of methods which employ simple operations at each iteration, such as gradient descent: \texttt{LRGeomCG} and \texttt{ScaledASD}. Each iteration of these methods is in general extremely fast. For these methods we thus allow a relatively large value for the maximal number of iterations, which we denote by $N^{(1)}$. 
The second group contains \texttt{RTRMC}, \texttt{R2RILS}, \texttt{MatrixIRLS} and \GNMR.
These methods are more complicated, in the sense that at each outer iteration they solve an inner optimization sub-problem, which by itself is solved iteratively.
Hence, these methods require two parameters: $N^{(2)}_\text{outer}$ and $N^{(2)}_\text{inner}$ for the maximal number of outer and inner iterations, respectively.
Finally, since one iteration of \texttt{R2RILS} and \GNMR is significantly slower than that of \texttt{RTRMC} and \texttt{MatrixIRLS}, we give them a smaller value of $N^{(2)}_\text{slow-outer} < N^{(2)}_\text{outer}$ outer iterations.
\fi

Similar to previous works \cite{tanner2016low,bauch2021rank}, we use two quantitative measures to evaluate the success of the algorithms.
The first is the median of the relative RMSE, where the latter is defined as
\begin{align}\label{eq:relRMSE}
\texttt{rel-RMSE} = \frac{\|\hat X - X^*\|_F}{\|X^*\|_F} .
\end{align}
The second is the recovery probability, defined as $\text{Pr}[\texttt{rel-RMSE} \leq 10^{-4}]$. %\red{In the following we present only median \texttt{rel-RMSE} results; recovery probability results appear in \cref{sec:experimental_additional_results}.}

We compared the performance of the algorithms via three different experiments.
The goal of the first experiment, similar to \cite{bauch2021rank,kummerle2020escaping}, is to examine the ability to recover the underlying matrix under a constraint on the runtime or number of iterations.
Specifically, the maximal number of iterations was set such that the runtimes of all algorithms are bounded by approximately one minute (see \cref{sec:experimental_details} for more details).
The target matrix $X^*$ is of size $n_1\times n_2 = 1000\times 1000$, rank $r = 5$, and condition number $\kappa = 10$ with singular values equispaced between $1$ and $\kappa$.
The oversampling ratio $\rho$ covers the range $[1.35, 2.8]$.
%The maximal number of iterations are $N^{(1)} = 5000$, $N^{(2)}_\text{inner} = 1500$, $N^{(2)}_\text{outer} = 500$, and $N^{(2)}_\text{outer,GNMR} = 100$.
The results, depicted in \cref{fig:highOversampling}, show a clear performance gap between different algorithms. In particular, as noted by \cite{kummerle2020escaping}, only methods that solve an inner problem at each iteration recover the matrix at low oversampling ratios. Specifically, the setting variant \cref{eq:settingVariant} of \GNMR shows favourable performance at low oversampling ratios compared to the other algorithms.

Interesting to note in \cref{fig:highOversampling} is the clear inferiority of the updating variant of \GNMR compared to the setting and the averaging ones. This phenomenon, which repeats itself in the next results, may be at least partially explained by our theoretical findings in \cref{sec:theory_stationryPoints}, which discriminate between these variants.

\begin{figure}[t]
	\centering
	\subfloat{
		\includegraphics[width=0.495\linewidth]{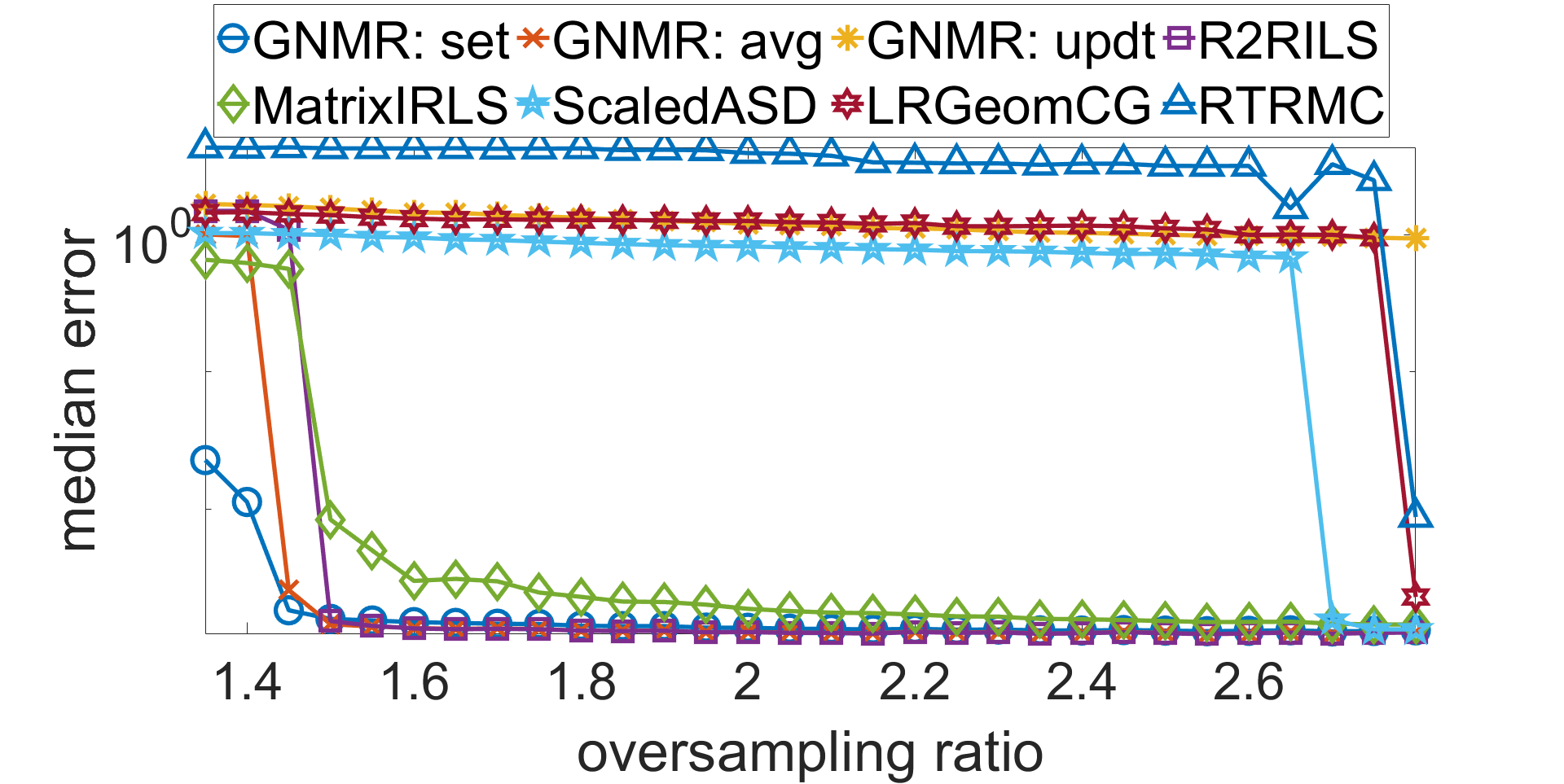}
		\label{fig:highOversampling_median}
		}
	\subfloat{
		\includegraphics[width=0.495\linewidth]{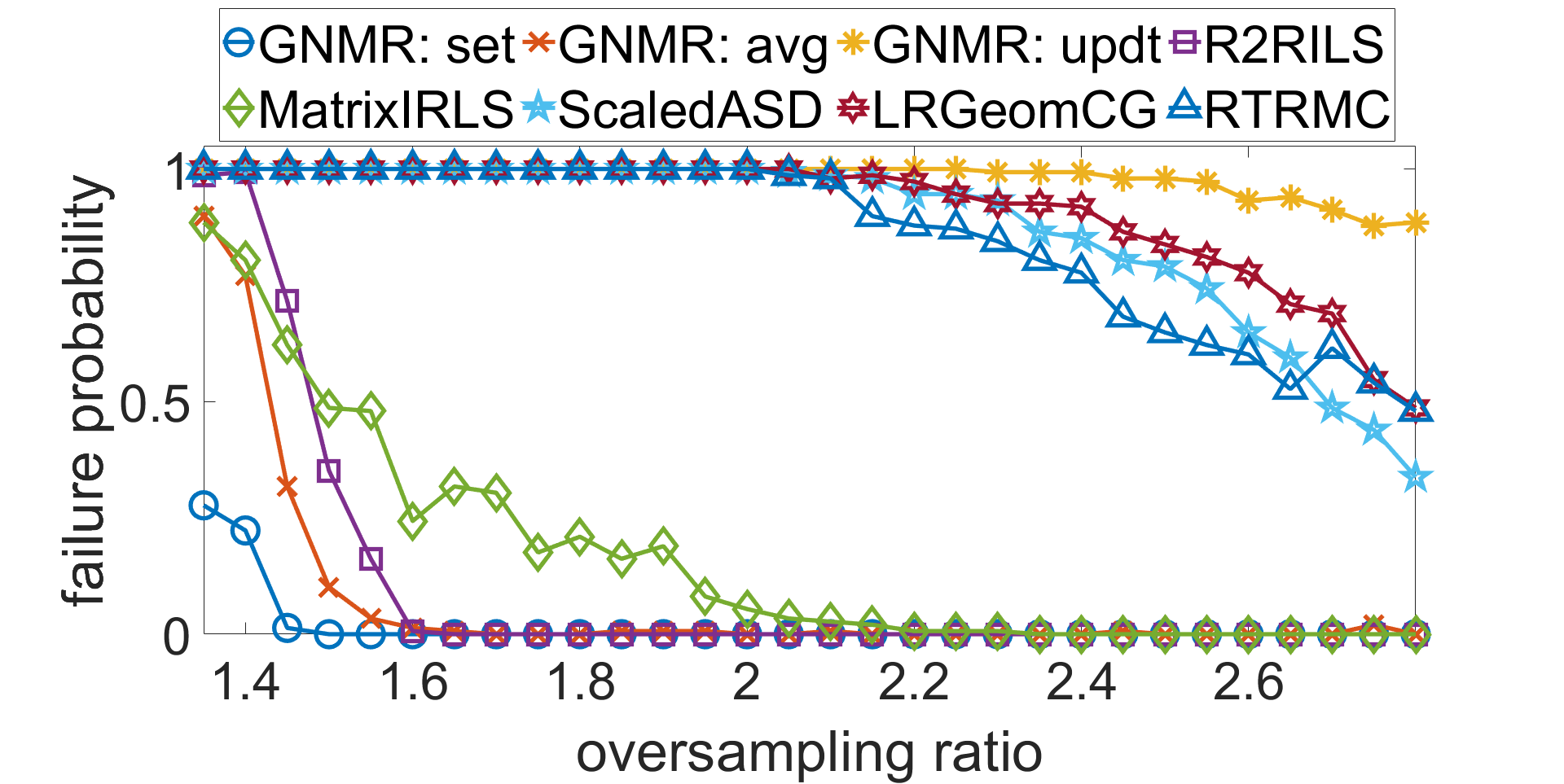}
		\label{fig:highOversampling_prob}
	}
	\caption{Comparison of several matrix completion algorithms with $X^*$ of size
		$1000\times 1000$, rank $r=5$ and condition number $\kappa=10$. Left panel: median of \texttt{rel-RMSE} \cref{eq:relRMSE}; Right panel: failure probability, defined as $\text{Pr}[\texttt{rel-RMSE} >10^{-4}]$. Each point corresponds to $150$ independent realizations.}
	\label{fig:highOversampling}
\end{figure}

The second experiment compares \texttt{R2RILS}, \texttt{MatrixIRLS} and \GNMR, which performed best in the first experiment, in a more challenging setting, where the number of observations is close to the information limit. Here $X^*$ is of size $n_1\times n_2 = 600\times 600$, rank $r = 7$, and condition number $\kappa = 100$ with singular values equispaced between $1$ and $\kappa$. The oversampling ratio $\rho$ ranges between $1.1$ and $1.5$.
Note that for $\rho$ close to one, even if $\Omega$ contains at least $r$ entries in each row and column, the solution to the completion problem may not be unique with a non-negligible probability.
Our goal in this experiment is to explore which of the algorithms can recover the matrix with essentially unlimited number of iterations. %To this end, we set $N^{(2)}_\text{inner} = 7000$, $N^{(2)}_\text{outer} = 25000$, and $N^{(2)}_\text{outer,GNMR} = 700$.
The results are depicted in \cref{fig:lowOversampling_median}.
Strengthening the conclusion from the previous experiment, the setting variant \cref{eq:settingVariant} of \GNMR outperforms the other algorithms, and succeeds in completing the matrix already at an oversampling ratio of $\rho = 1.1$.

\begin{figure}[t]
	\centering
	\subfloat[Challenging setting]{
		\includegraphics[width=0.495\linewidth]{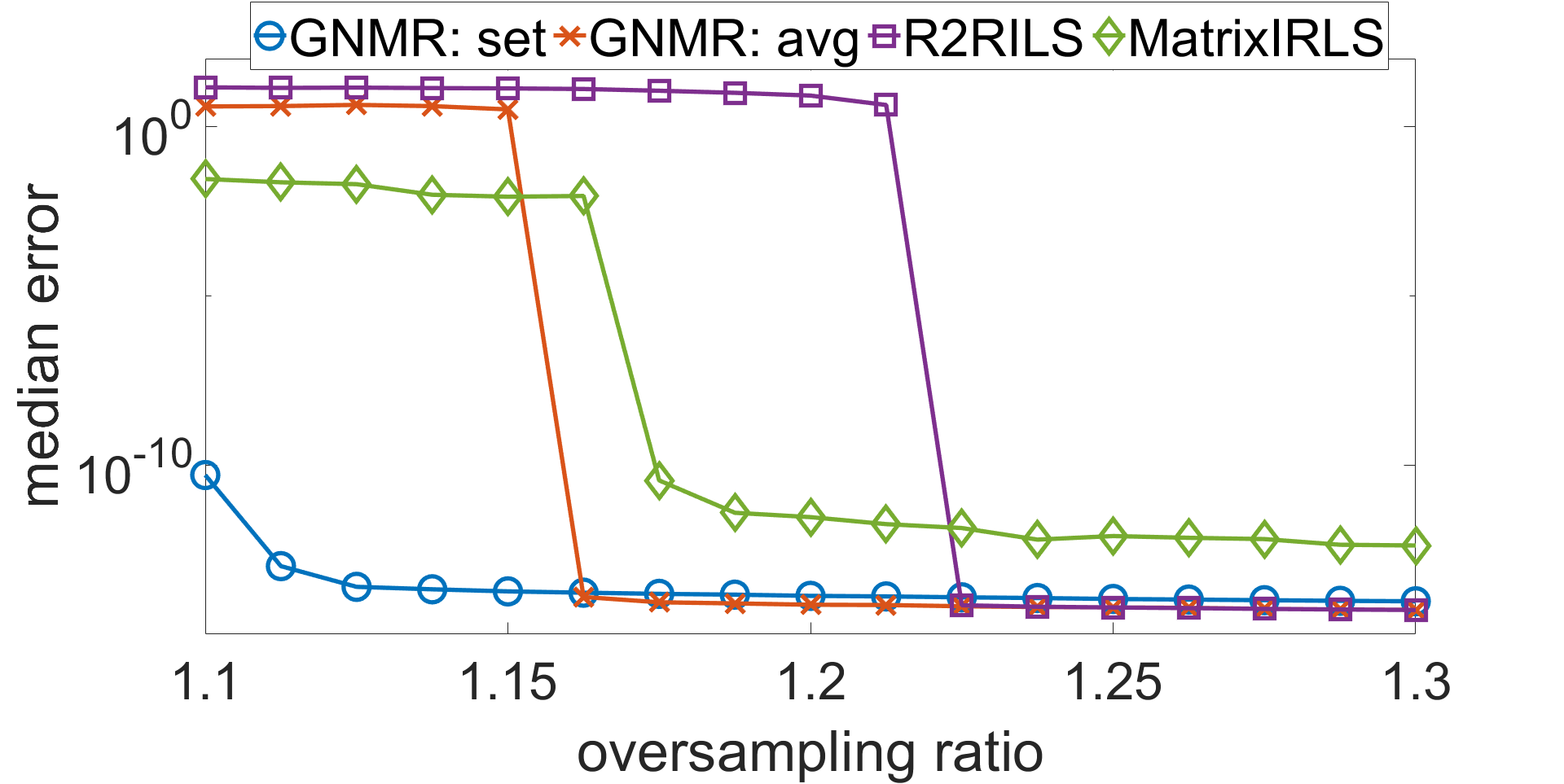}
		\label{fig:lowOversampling_median}
	}
	\subfloat[Increasing matrix dimension]{
		\includegraphics[width=0.495\linewidth]{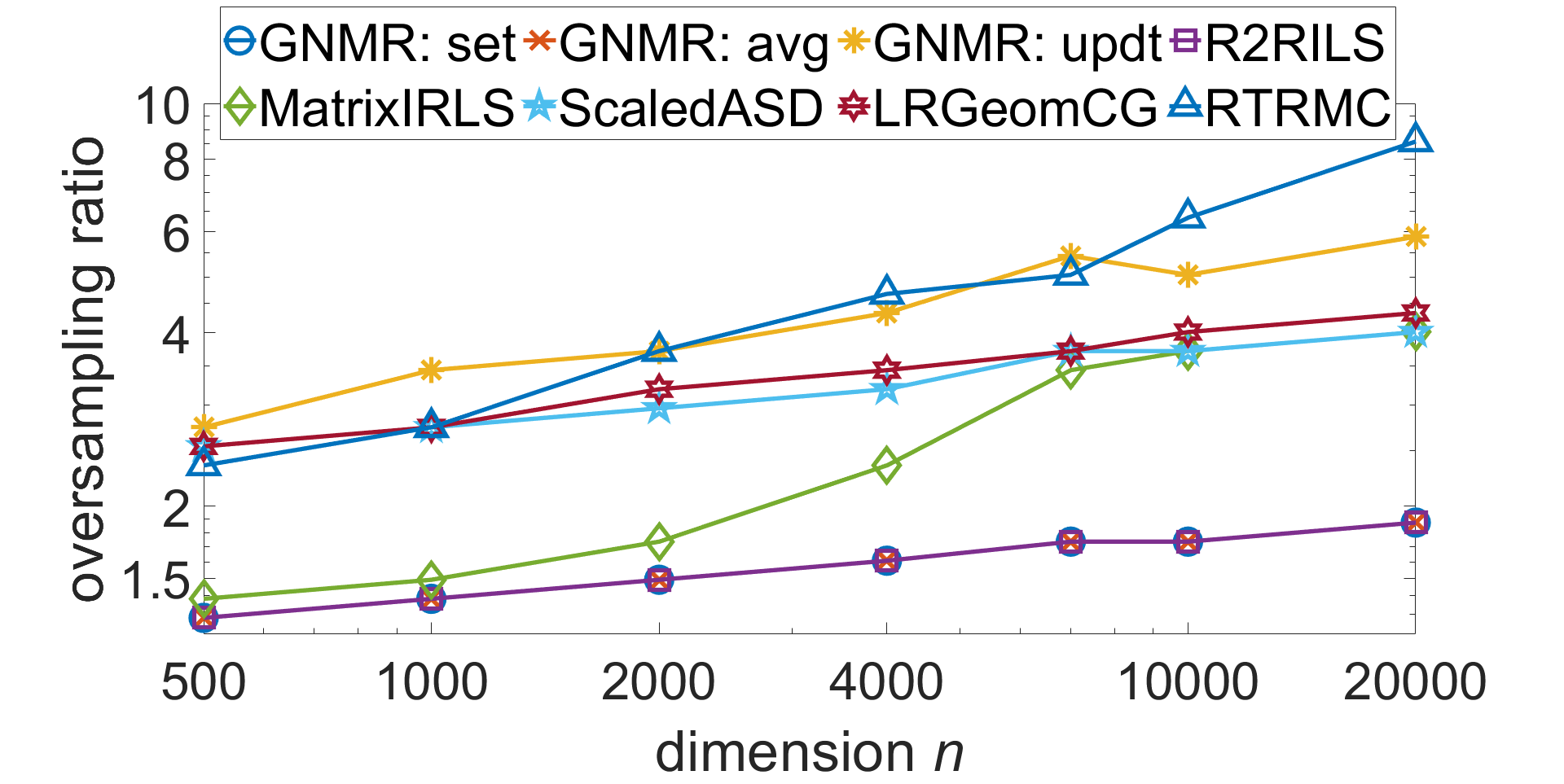}
		\label{fig:varyingDim_median}
	}
	\caption{(a) Comparison of the algorithms which succeeded in the previous experiment. Here $X^*$ is of size $600\times 600$, rank $r=7$ and condition number $\kappa=100$. Each point corresponds to 150 independent realizations.
	(b) Comparison of several matrix completion algorithms with $X^*$ of varying size
		$n\times n$, rank $r=5$ and condition number $\kappa=10$. Y-axis is the lowest oversampling ratio from which the median of \texttt{rel-RMSE} is smaller than $10^{-4}$. Each point corresponds to at least 50 independent realizations.}
	 \label{fig:lowOversampling_varyingDim_median}
\end{figure}

The previous two experiments demonstrated the recovery abilities of the algorithms for matrices of relatively small dimensions. In the third experiment, our goal is to examine how increasing the dimensions affects each of the algorithms.
Here $X^*$ is of varying size $n\times n$, fixed rank $r = 5$ and condition number $\kappa = 10$ with singular values equispaced between $1$ and $\kappa$. For each value of $n$, we report the lowest oversampling ratio from which the algorithm successfully recovers $X^*$, out of a grid of $30$ values logarithmically interpolated between $1.1$ and $4$. As seen in \cref{fig:varyingDim_median}, only \GNMR and \texttt{R2RILS} scale well with the dimension $n$. In fact, the results of these algorithms are 'optimal' in the following sense. As the dimension increases, higher oversampling ratios are required to ensure that a random subset $\Omega$ satisfies the necessary condition of $r$ observed entries in each row and column of $X^*$ with non-negligible probability. \GNMR and \texttt{R2RILS} successfully recovered $X^*$ from the lowest oversampling ratios at which this necessary condition held (see \cref{sec:experimental_details} for more details).

Next, we explore how sensitive \GNMR is to the condition number of $X^*$. As depicted in \cref{fig:OSR_vs_CN,fig:setavg_timeVsCN}, the setting and the averaging variants of \GNMR are generally robust to the condition number in two different aspects. First, the obtained error is almost unaffected by the condition number; there is only a little sensitivity to extreme condition numbers and only at very low oversampling ratios. Second, the runtime until $\texttt{rel-RMSE} \leq 10^{-4}$ shows only little sensitivity to the condition number. The updating variant of \GNMR, on the other hand, is sensitive to the condition number even at small values and at relatively large oversampling ratios.

\begin{figure}[t]
	\centering
	\subfloat{
		\includegraphics[width=0.495\linewidth]{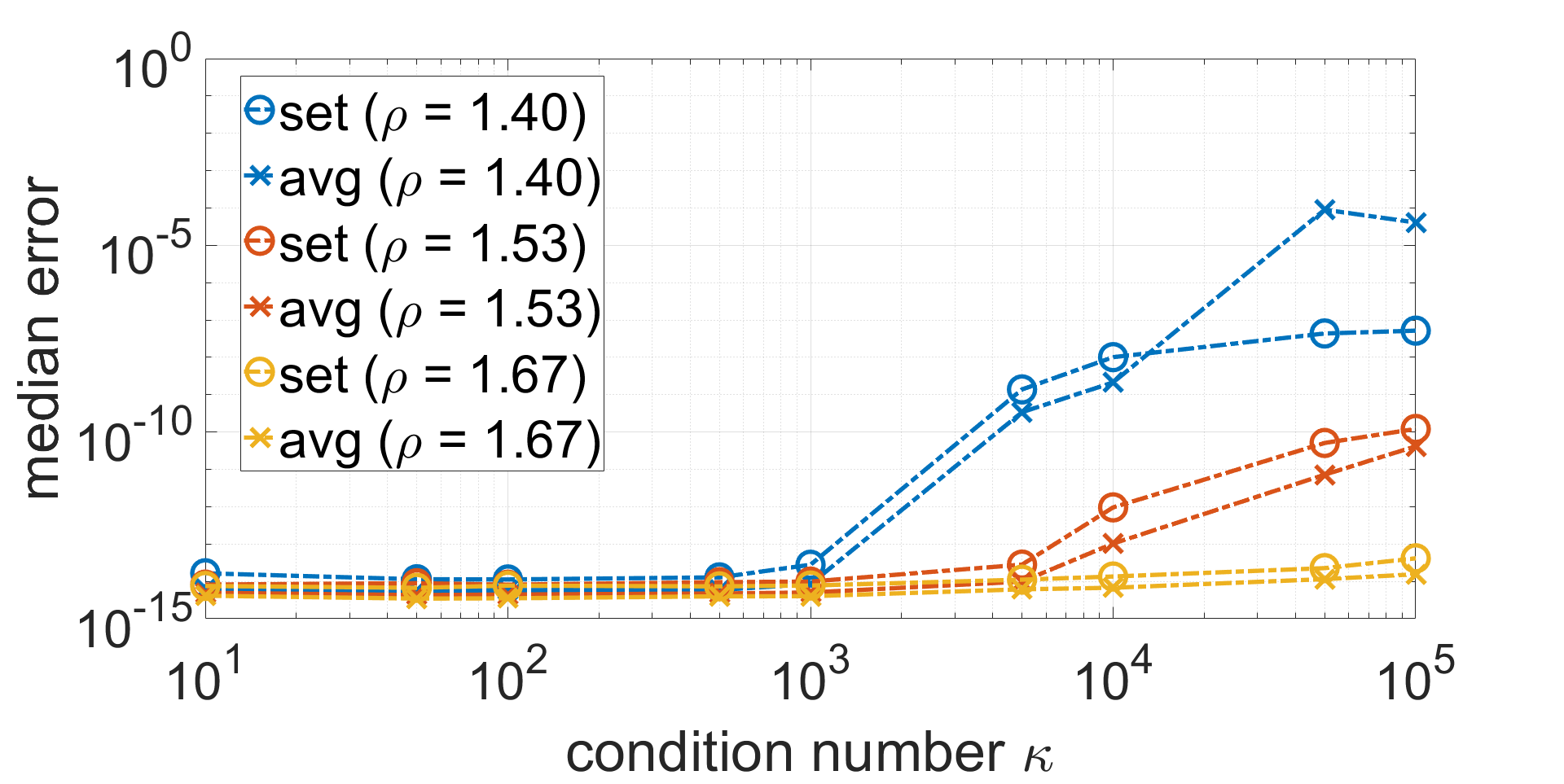}
	}
	\subfloat{
		\includegraphics[width=0.495\linewidth]{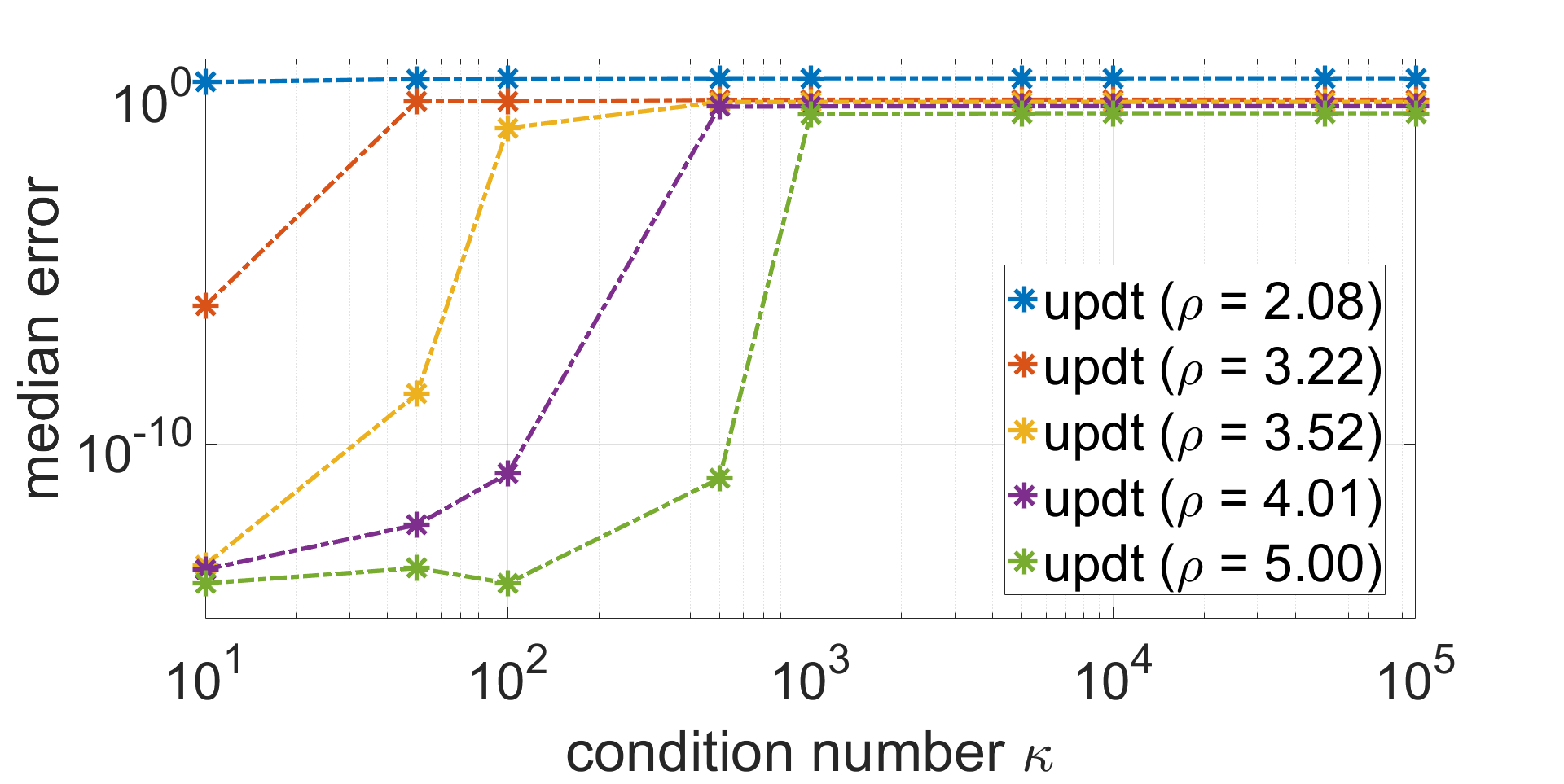}
	}
	\caption{Median \texttt{rel-RMSE} of \GNMR for a $1000\times 1000$ matrix $X^*$ of rank 5 as function of its condition number, for few values of the oversampling ratio $\rho$. Each point corresponds to 50 independent realizations.}
	\label{fig:OSR_vs_CN}
\end{figure}

Finally, \cref{fig:setavg_noise_median} illustrates the stability to noise of \GNMR. In this experiment, the observed entries are corrupted by additive white Gaussian noise of standard deviation $\sigma$. As seen in the figure, both the setting \cref{eq:settingVariant} and the averaging \cref{eq:averagingVariant} variants of \GNMR are robust to low noise levels, but interestingly, now it is the latter which performs better at higher noise levels.

\begin{figure}[t]
	\centering
	\subfloat[\GNMR runtime as function of $\kappa$]{
		\includegraphics[width=0.495\linewidth]{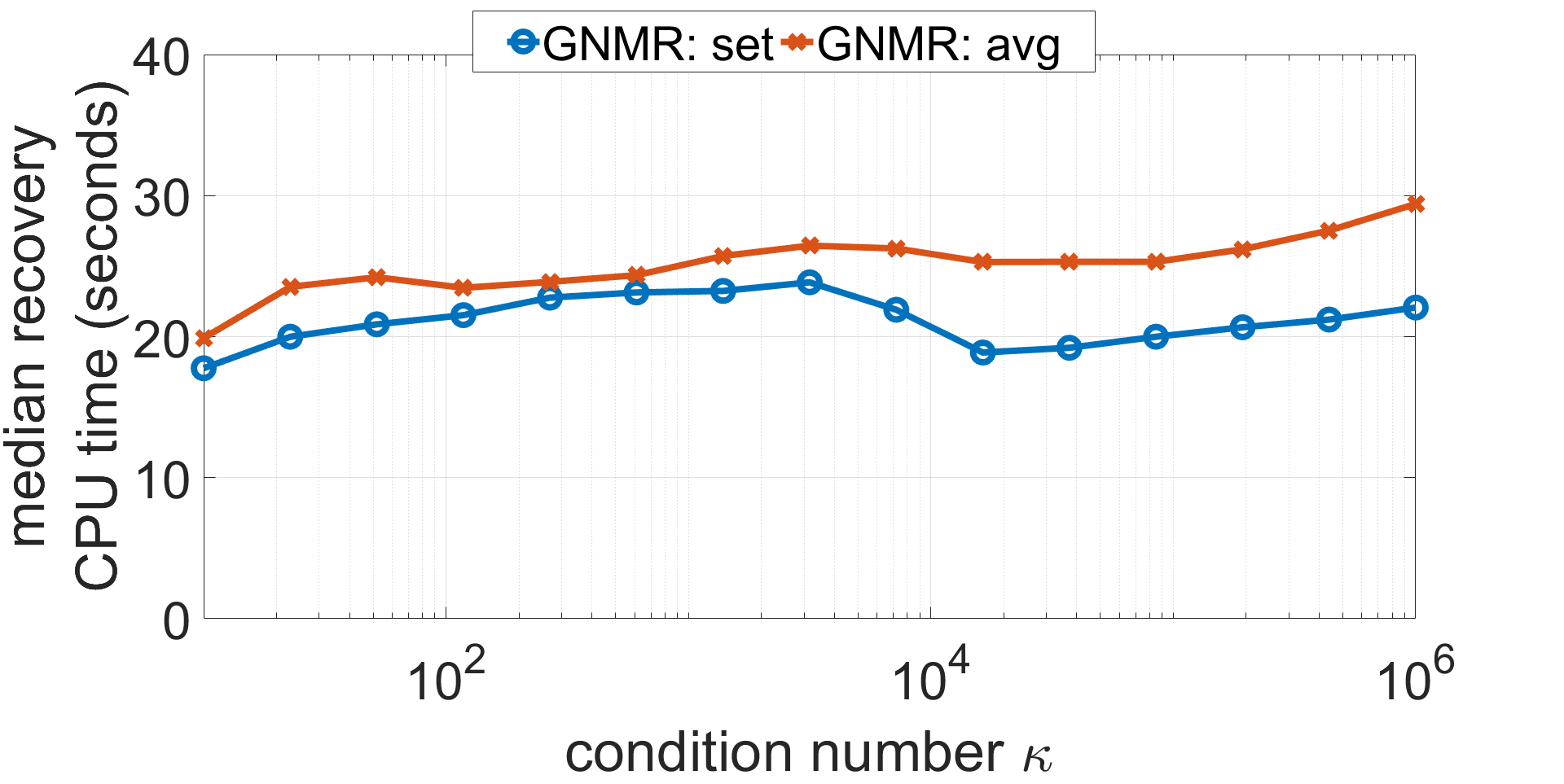}
		\label{fig:setavg_timeVsCN}
	}
	\subfloat[Noisy setting]{
		\includegraphics[width=0.495\linewidth]{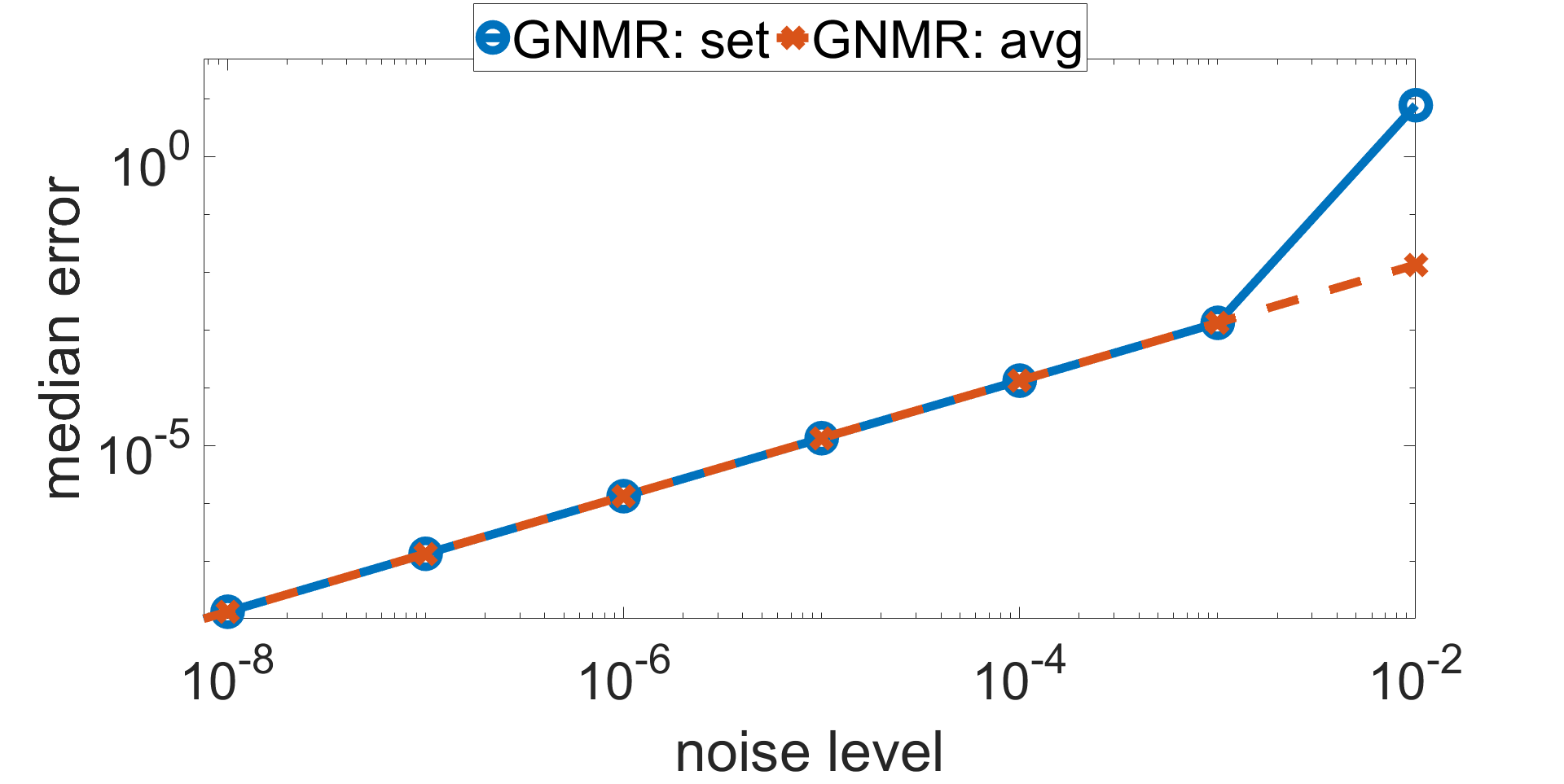}
		\label{fig:setavg_noise_median}
	}
	\caption{(a) Runtime of \GNMR as function of the condition number (with $\rho = 3$). (b) Stability of \GNMR to additive white Gaussian noise (with $\rho = 1.5$). The noise level on the x-axis corresponds to the standard deviation of the noise, and the y-axis to the median \texttt{rel-RMSE}. In both panels $X^*$ is a $1000\times 1000$ matrix of rank $5$, and each point corresponds to 150 independent realizations.}
	\label{fig:setavg_timeVsCN_noise}
\end{figure}

%\textbf{TODO}: Is it interesting to show the balance of the iterates of \GNMR? Do we have space for this?

%%%%%%%%%%%%%%%%%%%%%%%%%%%%%%%%%%%%%%%%%%%%%%%%%%%%%%%%%%%%%%%%%%%%%
%%%%%%%%%%%%%%%%%%%%%%%%%%%%%%%%%%%%%%%%%%%%%%%%%%%%%%%%%%%%%%%%%%%%%
\section{Discussion and future work}
We proposed an extremely simple Gauss-Newton algorithm, \GNMR, to solve the matrix recovery problem \cref{eq:matrixRecovery_problem}.
We derived theoretical guarantees for our method, and demonstrated its state of the art empirical performance in matrix completion.
In our analysis, we showed that due to the choice of the minimal norm solution to a degenerate least squares problem, the iterates of \GNMR enjoy an implicit balance regularization.
%We believe that using the minimal norm solution instead of an explicit regularizer may be useful for the analysis of other algorithms as well, both in matrix recovery and beyond.
%
%We also made a preliminary analysis of the stationary points of \GNMR.
%Our main finding here is that the stationary points are balanced due to an implicit balance regularization. While this implies a benign optimization landscape for gradient descent, the concrete implications of this fact on the recovery abilities of \GNMR from an arbitrary initialization remained unclear.
Similarly, we proved that the stationary points of \GNMR are perfectly balanced.

%The empirical results of \GNMR deepen the existing gap between available theory and practice in matrix completion. In particular, nearly all guarantees for factorization based methods, including ours, depend on the condition number $\kappa$.
%One exception is \cite{hardt2014fast}, which proved recovery guarantee with only logarithmic dependence on $\kappa$ for a variant of alternating minimization, but required polynomial dependence on the rank $r$.
%Our simulations, however, show that several factorization based algorithms, including \GNMR, are able to recover matrices with very little sensitivity to $\kappa$.
%The simplicity of \GNMR opens several future research directions. One is a derivation of improved theoretical guarantees, possibly via a leave-one-out perturbation analysis as in \cite{ma2019implicit}.
%Another direction is developing application-specific variants of \GNMR, e.g.~ones which use additional prior knowledge or incorporate suitable regularizations.
%Finally, it may be beneficial to improve the scalability and runtime of \GNMR, so it will be able to handle large scale matrices.

The simplicity of \GNMR opens several future research directions. One is related to a current gap in the literature between available guarantees for factorization-based methods and their performance in practice: Nearly all available guarantees, including ours, scale at least quadratically with the condition number $\kappa$; our simulations, however, show that \GNMR is able to recover matrices with very little sensitivity to $\kappa$.
As far as we are aware, the only works with $\kappa$-independent (or logarithmically scaled) guarantees are \cite{hardt2014fast,cherapanamjeri2017nearly} and \cite{jain2015fast}. The latter is not factorization based, but its computational complexity is similar to factorization-based methods. However, these works employed sample splitting in their algorithm, which is never used in practice. This raises the question: is the quadratic dependence on $\kappa$ necessary for factorization-based methods that do not employ sample splitting? Our novel \GNMR method, which is both simple and empirically insensitive to $\kappa$, may help in providing a negative answer to this question, possibly via a leave-one-out perturbation analysis as in \cite{ma2019implicit}.

Another research direction is exploiting the simplicity of \GNMR to develop application-specific variants, which use additional prior knowledge or incorporate suitable regularizations. For example, in an ongoing work \cite{zilber2022inductive} we developed a variant of \GNMR for the inductive matrix completion problem \cite{jain2013provable,xu2013speedup}, in which one has prior knowledge that the rows and columns of $X^*$ belong to certain subspaces of $\mathbb R^{n_2}$ and $\mathbb R^{n_1}$, namely that $X^* = AM^*B^\top$ for some known matrices $A \in \mathbb R^{n_1\times d_1}$, $B\in \mathbb R^{n_2\times d_2}$.
Another example is an observed matrix corrupted by outliers, in which case the current form of \GNMR is unsuitable. However, an appealing property of our Gauss-Newton framework is that the inner problem solved in each iteration is convex for any convex loss function. Hence, a robust variant of \GNMR may be obtained by replacing the $\ell_2$ norm in \cref{alg:GNMR} by a robust one.
Finally, it may be beneficial to improve the runtime of \GNMR, so it will be able to handle large scale matrices.

\section*{Acknowledgments}
The research of P.Z.~was partially supported by a fellowship for data science from the Israeli Council for Higher Education (CHE).
B.N.~is the incumbent of the William Petschek professorial chair of mathematics.
We thank Yuval Kluger, Nati Srebro, Eric Chi, Yuejie Chi, Tian Tong, Tal Amir, Christian K\"{u}mmerle and Claudio Verdun for interesting discussions.

\bibliographystyle{alpha}
\bibliography{./GNMR}

\appendix
%\counterwithin{lemma}{section}
%\renewcommand{\thesubsection}{\thesection.\roman{subsection}}
%\renewcommand{\thesubsection}{\!\!\!\!\!\!}

%%%%%%%%%%%%%%%%%%%%%%%%%%%%%%%%%%%%%%%%%%%%%%%%%%%%%%%%%%%%%%%%%%%%%
%%%%%%%%%%%%%%%%%%%%%%%%%%%%%%%%%%%%%%%%%%%%%%%%%%%%%%%%%%%%%%%%%%%%%
\section{Comparison of \GNMR to Wiberg's method, \texttt{PMF} and \texttt{R2RILS}}\label{sec:comparison}
In this section we compare \GNMR to three other iterative matrix completion methods. 
In the 1970's, several authors devised schemes to efficiently solve separable non-linear least squares problems, whose unknown variables are not necessarily matrices,
see \cite{ruhe1980algorithms,golub2003separable} and references therein.
The idea is to divide the optimization variables to two subsets, such that solving the problem for one subset while keeping the other fixed is easy. The remaining problem for the other subset is then of reduced dimensionality. 
Wiberg \cite{wiberg1976} adapted this idea to matrix completion 
as follows: Denote by 
$\tilde U(V) = \argmin_U \|\mathcal P_\Omega(UV^\top) - b\|^2$
the solution of the factorized objective \cref{eq:matrixRecovery_factorizedObjective} for $U$ given $V$. Then \cref{eq:matrixRecovery_factorizedObjective} can be written as an optimization problem over a single matrix $V$:
\begin{align} \label{eq:wiberg}
\min_{V} \left\|\mathcal P_\Omega \left[\tilde U(V) V^\top \right] - b\right\|^2.
\end{align}
At iteration $t$, Wiberg's algorithm approximately solves \cref{eq:wiberg} by the Gauss-Newton method, namely by 
linearizing \cref{eq:wiberg} around the current estimate $V_t$. 
Similar to \GNMR, the resulting least-squares problem is rank deficient, and the solution with minimal norm $\|V\|_F$ is chosen. Denoting this solution by 
${V}_{t+1}$, Wiberg's method then updates $U_{t+1} = \tilde U(V_{t+1})$.
{A regularized version of Wiberg's method, which avoids the rank deficiency, became popular in the computer vision community \cite{okatani2007wiberg,okatani2011efficient}.}

Wiberg's algorithm is similar to \GNMR, but differs from it in how $(U,V)$ are updated.
Specifically, Wiberg's algorithm applies the Gauss-Newton method to only one of the variables. Hence, {in particular} it treats the factor matrices $U,V$ in an asymmetric way. 
Empirically, in our simulations Wiberg's method performs worse than \GNMR. 
Also, to the best of our knowledge, no theoretical recovery guarantees have been derived for {it}.

The Gauss-Newton approximation for matrix completion was employed in yet another algorithm, named \texttt{PMF} \cite{paatero1994positive}. However, the setting in \cite{paatero1994positive} is slightly different from ours: instead of \cref{eq:matrixRecovery_factorizedObjective}, their goal is to minimize a weighted objective $\sum_{(i,j)\in \Omega} \left( \sum_{k=1}^r U_{ik}V_{jk} - X^*_{ij}\right)^2/\sigma_{ij}^2$ for some known weights $\sigma_{ij}$. As a result, the iterative Gauss-Newton approximation yields a full-rank least squares problem with a unique solution, and there is no need to choose a specific solution as in \GNMR. In addition, to the best of our knowledge, no theoretical recovery guarantees have been derived for this algorithm either.

Finally, we compare \GNMR to the \texttt{R2RILS} algorithm \cite{bauch2021rank}. 
Given an estimate $(U_t, V_t)$, 
the first step of \texttt{R2RILS} computes the minimal norm solution $(\tilde U, \tilde V)$ of \cref{eq:averagingVariant_LSQR} as in the averaging variant of \GNMR. 
However, instead of the update \cref{eq:averagingVariant_update}, it performs two column normalizations as follows: 
\begin{align}
%\begin{pmatrix} \tilde U \\ \tilde V \end{pmatrix} &= \argmin_{U, V} \|\mathcal P_\Omega\left(U_t V^\top + U V_t^\top\right) - b \|^2, \label{eq:R2RILS_LSQR} \\
\begin{pmatrix} U_{t+1} \\ V_{t+1} \end{pmatrix} &= \begin{pmatrix} \text{ColNorm}\left[U_t + \text{ColNorm}[\tilde U]\right] \\ \text{ColNorm}\left[V_t + \text{ColNorm}[\tilde V]\right] \end{pmatrix},
	\label{eq:R2RILS_update}
\end{align}
where $\text{ColNorm}[A]$ normalizes the columns of $A$ to have unit norm.
To understand the relation between the averaging variant of \GNMR
and \texttt{R2RILS}, it is instructive to analyze the latter near convergence.
As \texttt{R2RILS} converges, $U_{t+1}\approx U_t$, which implies that 
$\text{ColNorm}[\tilde U]\approx U_t$. Hence, 
the update in \cref{eq:R2RILS_update} can approximately be written
as $U_{t+1}\approx \tfrac 12 (U_t + \text{ColNorm}[\tilde U])$, 
which bears resemblance to \cref{eq:averagingVariant_update}.
Empirically, the setting and the averaging variants of \GNMR achieve superior performance over \texttt{R2RILS}, see \cref{fig:highOversampling,fig:lowOversampling_median,fig:lowOversampling_varyingDim_prob}.
In addition, the column normalizations make the theoretical analysis of \texttt{R2RILS} more difficult, and currently there are no recovery guarantees for it.

%Let $\bar U = \text{ColNorm}[\tilde U]$. Then, as \texttt{R2RILS} converges, $U_1 = \text{ColNorm}\left[U_0 + \bar U\right] \to U_0$, which implies $\bar U \to U_0 D$ for some diagonal $D \in \mathbb R^{r\times r}$. However, since both $U_0$ and $\bar U$ have unit norm columns, we have $D = I_r$ and $\bar U \to U_0$. In other words, the outer ColNorm asymptotically performs simple average $U_1 \to \frac 12 \left(U_0 + \bar U\right)$. Empirically, the inner ColNorm asymptotically performs $\hat U \to \frac{1}{\bar \sigma} \tilde U$, where $\bar \sigma$ is the mean singular value of $X^*$. Plugging these linear operations instead of the ColNorms yields the averaging variant of \GNMR, up to a constant factor shrinkage $X^* \leftrightarrow \frac{1}{\bar\sigma}X^*$.
%Apparently, the normalization steps of \texttt{R2RILS} hurt the empirical performance compared to \GNMR, see \cref{fig:highOversampling,fig:lowOversampling}.

%%%%%%%%%%%%%%%%%%%%%%%%%%%%%%%%%%%%%%%%%%%%%%%%%%%%%%%%%%%%%%%%%%%%%
%%%%%%%%%%%%%%%%%%%%%%%%%%%%%%%%%%%%%%%%%%%%%%%%%%%%%%%%%%%%%%%%%%%%%
\section{Technical results}\label{sec:technical}
In this section we present some useful definitions and few technical results.
We start by recalling the classical Weyl's inequality, which states that for any two matrices $A,B$ of the same dimensions,
\begin{align}\label{eq:Weyl}
|\sigma_i(A) - \sigma_i(B)| \leq \|A-B\|_2 \quad \forall i.
\end{align}

\subsection*{Properties of balanced SVD \nopunct}\label{sec:bSVD}
\begin{lemma}\label{lem:bSVD_properties}
Let $X \in \mathbb R^{n_1\times n_2}$ be a matrix of rank $r$.
Let $Z = \begin{psmallmatrix} U \\ V \end{psmallmatrix} = \text{b-SVD}(X)$ and $\Sigma \in \mathbb R^{r\times r}$ be the diagonal matrix with the singular values of $X$. Then $Z$ is also of rank $r$ and 
\begin{equation}
\sigma_r^2(Z) = 2 \sigma_r(X). \label{eq:bSVD_sigma_r}
\end{equation}
In addition, 
\begin{equation}
U^\top U = V^\top V = \Sigma. \label{eq:bSVD_is_balanced}
\end{equation}
Finally, if $X$ is $\mu$-incoherent (see \cref{def:incoherence}), then
\begin{align}
\|U\|_{2,\infty} \leq \sqrt{\mu r \sigma_1(X)/n_1}, \quad
\|V\|_{2,\infty} \leq \sqrt{\mu r \sigma_1(X)/n_2}. \label{eq:bSVD_row_norms}
\end{align}
\end{lemma}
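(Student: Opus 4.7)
The plan is to unpack the definition of b-SVD and then verify each of the three claimed properties directly. Write $X = \bar U \Sigma \bar V^\top$ for the SVD, so that by \cref{def:bSVD} we have $U = \bar U \Sigma^{1/2}$ and $V = \bar V \Sigma^{1/2}$, where $\bar U \in \mathbb R^{n_1\times r}$ and $\bar V \in \mathbb R^{n_2\times r}$ have orthonormal columns and $\Sigma$ is an $r\times r$ positive diagonal matrix (since $X$ has rank $r$).

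For the balance identity \cref{eq:bSVD_is_balanced}, I would simply compute $U^\top U = \Sigma^{1/2} \bar U^\top \bar U \Sigma^{1/2} = \Sigma^{1/2} I_r \Sigma^{1/2} = \Sigma$, and analogously $V^\top V = \Sigma$. For the spectral statement \cref{eq:bSVD_sigma_r}, observe that $Z^\top Z = U^\top U + V^\top V = 2\Sigma$, so the eigenvalues of $Z^\top Z$ are exactly $2\sigma_i(X)$. In particular, all of them are strictly positive, which shows $Z$ has rank $r$, and the smallest one gives $\sigma_r^2(Z) = 2\sigma_r(X)$.

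For the row-norm bounds \cref{eq:bSVD_row_norms}, I would write the $i$-th row of $U$ as $U^{(i)} = \bar U^{(i)} \Sigma^{1/2}$ and bound its Euclidean norm by $\|\bar U^{(i)}\| \cdot \|\Sigma^{1/2}\|_2 \leq \sqrt{\mu r/n_1} \cdot \sqrt{\sigma_1(X)}$, where the first factor uses the $\mu$-incoherence assumption on $X$ via \cref{def:incoherence} and the second is just the operator norm of $\Sigma^{1/2}$. Taking the maximum over $i$ gives the stated bound on $\|U\|_{2,\infty}$, and the same argument applied to $V$ gives the bound on $\|V\|_{2,\infty}$.

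None of the steps should pose a genuine obstacle: the lemma is essentially an unwinding of the definitions, and the only mild care needed is to note that $\Sigma^{1/2}$ is well defined and has operator norm $\sqrt{\sigma_1(X)}$ because $\Sigma$ is positive diagonal of size $r \times r$ with the nonzero singular values of $X$ on its diagonal.
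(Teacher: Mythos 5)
Your proposal is correct and follows essentially the same route as the paper's proof: compute $U^\top U = V^\top V = \Sigma$ directly, read off $\sigma_r^2(Z)$ from $Z^\top Z = 2\Sigma$, and bound $\|U\|_{2,\infty}$ by the incoherence of $\bar U$ times $\|\Sigma^{1/2}\|_2$. Your explicit remark that all eigenvalues of $Z^\top Z$ are strictly positive (giving rank $r$) is a minor clarification the paper leaves implicit.
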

\begin{proof}
Denote by $\bar U \Sigma \bar V^\top$ the SVD of $X$.
Since $\bar U^\top U = \bar V^\top V = I$, then
\begin{align*}
U^\top U &= \Sigma^{\frac 12} \bar U^{\top} \bar U \Sigma^{\frac 12} = \Sigma = \Sigma^{\frac 12} \bar V^{\top} \bar V \Sigma^{\frac 12} = V^{\top} V .
\end{align*}
This proves \cref{eq:bSVD_is_balanced}.
This also implies \cref{eq:bSVD_sigma_r}, since  
\begin{align*}
\sigma^2_r(Z) &= \sigma_r(Z^\top Z) = \sigma_r(U^\top U + V^\top V) = 2\sigma_r(\Sigma) = 2\sigma_r(X).
\end{align*}
Finally, using $\|AB\|_{2,\infty} \leq \|A\|_{2,\infty} \|B\|_2$ and the $\mu$-incoherence assumption,
\begin{align*}
\|U\|_{2,\infty} = \|\bar U \Sigma^\frac{1}{2}\|_{2,\infty} \leq \sqrt{\sigma_1(X)} \|\bar U\|_{2,\infty} \leq \sqrt{\mu r \sigma_1(X)/n_1},
\end{align*}
and similarly $\|V\|_{2,\infty} \leq \sqrt{\mu r \sigma_1(X)/n_2}$. This proves \cref{eq:bSVD_row_norms}.
\end{proof}

\subsection*{A novel result on the Procrustes distance}\label{sec:Procrustes}
In \cref{sec:Q_distance} we presented the Q-distance between factor matrices.
Another distance measure, which was used in several previous works \cite{zheng2015convergent,chen2015completing,tu2016low,yi2016fast,zheng2016convergence}, is the Procrustes distance.
In what follows we shall use it to bound the Q-distance.
\begin{definition}\label{def:Procrustes_distance}
The Procrustes distance between $Z_1, Z_2 \in \mathbb R^{n\times r}$ is defined as
\begin{align*}%\label{eq:Procrustes_distance}
d_P(Z_1, Z_2) &= \min \left\{ \|Z_1 - Z_2 P\|_F \,\mid\, P \in \mathbb R^{r\times r} \text{ is orthogonal} \right\} .
\end{align*}
\end{definition}
This distance is closely related to the Wahba's problem \cite{wahba1965least}; the latter, however, allows different weights to the column norms of the difference $Z_1 - Z_2P$ instead of the (uniform) Frobenius norm, but on the other hand constrains $P$ to be a rotation matrix with unit determinant.
In contrast to the Q-distance, the Procrustes distance is symmetric in its arguments, and its minimizer always exists. Moreover, it can be explicitly written in terms of the SVD of $Z_1^\top Z_2$: The minimizer of $d_P(Z_1,Z_2)$ is $\bar U \bar V^\top$ where $\bar U \Sigma \bar V$ is the SVD of $Z_1^\top Z_2$. %Another difference with respect to the Q-distance is that the Procrustes distance is symmetric in its arguments.
A simple yet useful inequality which involves the Procrustes distance is the following one.
\begin{proposition}\label{prop:sigmaMin_U}
Let $Z_i = \begin{psmallmatrix} U_i \\ V_i \end{psmallmatrix}$ where $U_i \in \mathbb R^{n_1\times r}$, $V_i \in \mathbb R^{n_2\times r}$ and $r\leq \min\{n_1,n_2\}$ for $i=1,2$.
Then
\begin{align*}
\min\left\{\left|\sigma_i(U_1) - \sigma_i(U_2)\right|, \left|\sigma_i(V_1) - \sigma_i(V_2)\right|\right\} &\leq d_P(Z_1, Z_2) \quad \forall i \in [r].
\end{align*}
\end{proposition}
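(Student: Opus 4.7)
The plan is a direct application of Weyl's inequality \cref{eq:Weyl} combined with the orthogonal invariance of singular values. The proposition is really a statement about how well the Procrustes alignment controls discrepancies in the individual block factors.

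First I would observe that for any orthogonal matrix $P \in \mathbb R^{r \times r}$, the singular values of $U_2 P$ and $V_2 P$ are exactly those of $U_2$ and $V_2$, respectively. Hence, by Weyl's inequality applied separately to the top and bottom blocks,
\begin{align*}
|\sigma_i(U_1) - \sigma_i(U_2)| &= |\sigma_i(U_1) - \sigma_i(U_2 P)| \leq \|U_1 - U_2 P\|_2 \leq \|U_1 - U_2 P\|_F, \\
|\sigma_i(V_1) - \sigma_i(V_2)| &= |\sigma_i(V_1) - \sigma_i(V_2 P)| \leq \|V_1 - V_2 P\|_2 \leq \|V_1 - V_2 P\|_F,
\end{align*}
for every $i \in [r]$. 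Taking the minimum of the two left-hand sides and bounding it by the smaller of the two Frobenius norms on the right, I would then use the trivial inequality $\min\{a,b\} \leq \sqrt{a^2 + b^2}$ to conclude
\begin{align*}
\min\left\{|\sigma_i(U_1) - \sigma_i(U_2)|,\, |\sigma_i(V_1) - \sigma_i(V_2)|\right\} \leq \sqrt{\|U_1 - U_2 P\|_F^2 + \|V_1 - V_2 P\|_F^2} = \|Z_1 - Z_2 P\|_F,
\end{align*}
using the block identity for the Frobenius norm in the final equality.

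Since the left-hand side is independent of $P$ and the inequality holds for every orthogonal $P$, taking the infimum over all such $P$ on the right yields the claimed bound $\min\{|\sigma_i(U_1) - \sigma_i(U_2)|,\, |\sigma_i(V_1) - \sigma_i(V_2)|\} \leq d_P(Z_1, Z_2)$. There is no real obstacle here; the only subtle point is remembering to align the indices of singular values in Weyl's inequality, which requires the matrices being compared to have the same dimensions -- this is automatic here since $U_2 P$ has the same shape as $U_1$ and likewise for $V_2 P$.
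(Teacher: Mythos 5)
Your proof is correct and uses essentially the same approach as the paper: Weyl's inequality combined with orthogonal invariance of singular values. The paper's version is a hair stronger in passing — it fixes $P$ as the Procrustes minimizer for the top block alone, notes $d_P(U_1,U_2) \le d_P(Z_1,Z_2)$, and thereby bounds each of $|\sigma_i(U_1)-\sigma_i(U_2)|$ and $|\sigma_i(V_1)-\sigma_i(V_2)|$ individually by $d_P(Z_1,Z_2)$ (so in fact the $\max$, not just the $\min$, is controlled), whereas your argument uses a single $P$ together with $\min\{a,b\}\le\sqrt{a^2+b^2}$ and yields exactly the stated $\min$ bound; both are valid for the proposition as written.
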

\begin{proof}%[Proof of \cref{prop:sigmaMin_U}]
Let $P \in \mathbb R^{r\times r}$ be the (orthogonal) minimizer of the Procrustes distance between $U_1$ and $U_2$.
Then Weyl's inequality \cref{eq:Weyl} implies
\begin{align*}
\left|\sigma_i(U_1) - \sigma_i(U_2)\right| &= \left|\sigma_i(U_1) - \sigma_i(U_2 P)\right| \leq \|U_1 - U_2 P \|_2 \leq \|U_1 - U_2 P\|_F \leq d_P(Z_1, Z_2),
\end{align*}
and similarly for $|\sigma_i(V_1) - \sigma_i(V_2)|$.
\end{proof}

The next result bounds the Procrustes distance between a general pair of factor matrices and a b-SVD. Note that the following is a stronger version of \cite[Lemma~5.14]{tu2016low}.
\begin{lemma}[Procrustes distance bound]\label{lem:dP_e_bound}
Let $X^* \in \mathbb R^{n_1\times n_2}$ be a matrix of rank $r$, and denote $Z^* = \text{b-SVD}(X^*)$.
Then for any $Z = \begin{psmallmatrix} U \\ V \end{psmallmatrix} \in \mathbb R^{(n_1+n_2)\times r}$,
\begin{align*}%\label{eq:dP_e_bound}
d_P^2(Z, Z^*) \leq \frac{1}{(\sqrt 2 - 1)\sigma_r\left(X^*\right)} \left(\|UV^\top - X^*\|_F^2 + \frac 14 \|U^\top U - V^\top V\|_F^2\right) . 
\end{align*}
\end{lemma}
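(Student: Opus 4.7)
The plan is to combine a classical Procrustes-to-lifted-difference bound of Tu et al.~\cite{tu2016low} with a sharper block-wise estimate that exploits the perfect balance $U^{*\top}U^* = V^{*\top}V^*$ of the balanced SVD $Z^*$ (see \cref{lem:bSVD_properties}). The central observation is that the imbalance term $\|U^\top U - V^\top V\|_F^2$ arises naturally and with the right constant once one tracks the diagonal-block contributions to $\|ZZ^\top - Z^*Z^{*\top}\|_F^2$.

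First I would invoke \cite[Lemma~5.14]{tu2016low}, which controls the Procrustes distance via the lifted Frobenius difference,
$$d_P^2(Z, Z^*) \le \frac{\|ZZ^\top - Z^*Z^{*\top}\|_F^2}{2(\sqrt 2 - 1)\,\sigma_r^2(Z^*)}.$$
Using $\sigma_r^2(Z^*) = 2\sigma_r(X^*)$ from \cref{lem:bSVD_properties}, the lemma reduces to establishing the sharper purely algebraic inequality
\begin{equation*}
\|ZZ^\top - Z^*Z^{*\top}\|_F^2 \;\le\; 4\|UV^\top - X^*\|_F^2 + \|U^\top U - V^\top V\|_F^2. \tag{$\star$}
\end{equation*}

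To prove $(\star)$, I would use the $2\times 2$ block expansion
$$\|ZZ^\top - Z^*Z^{*\top}\|_F^2 = \|UU^\top - U^*U^{*\top}\|_F^2 + 2\|UV^\top - X^*\|_F^2 + \|VV^\top - V^*V^{*\top}\|_F^2,$$
which reduces the task to bounding the two diagonal-block terms by $2\|UV^\top - X^*\|_F^2 + \|U^\top U - V^\top V\|_F^2$. Expanding each diagonal block via $\|AA^\top\|_F^2 = \|A^\top A\|_F^2$, and using both $\|U^{*\top}U^*\|_F^2 = \|V^{*\top}V^*\|_F^2 = \|X^*\|_F^2$ (from \cref{lem:bSVD_properties}) and the identity
$$\|U^\top U\|_F^2 + \|V^\top V\|_F^2 = \|U^\top U - V^\top V\|_F^2 + 2\|UV^\top\|_F^2,$$
together with $\|UV^\top - X^*\|_F^2 = \|UV^\top\|_F^2 - 2\,\Tr(U^{*\top}UV^\top V^*) + \|X^*\|_F^2$, the inequality $(\star)$ collapses after cancellation to the single estimate
$$2\,\Tr(U^{*\top}U V^\top V^*) \;\le\; \|U^\top U^*\|_F^2 + \|V^\top V^*\|_F^2.$$
Since $\Tr(U^{*\top}U V^\top V^*) = \langle U^\top U^*,\, V^\top V^*\rangle$, this follows at once from the AM-GM bound $2\langle A, B\rangle \le \|A\|_F^2 + \|B\|_F^2$ applied in the Frobenius inner product.

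The main obstacle I anticipate is the careful bookkeeping of trace identities required to collapse $(\star)$ onto a single application of AM-GM; the sharp constants are delicate and easily lost if the algebra is done loosely. The constants rest essentially on two features of the b-SVD $Z^*$: (i) its perfect balance $U^{*\top}U^* = V^{*\top}V^*$, which eliminates an otherwise unwanted $\|U^{*\top}U^* - V^{*\top}V^*\|_F^2$ contribution on the right-hand side of $(\star)$; and (ii) the fact that only one cross-term $\Tr(U^{*\top}UV^\top V^*)$ survives after expansion, making AM-GM exactly sharp. Both features rely crucially on working with the balanced SVD rather than with an arbitrary decomposition of $X^*$.
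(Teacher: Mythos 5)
Your proposal is correct and follows essentially the same route as the paper: reduce via the Tu et al.~Procrustes-to-lifted-difference bound (the paper cites this as \cite[Lemma~5.4]{tu2016low}; the Lemma~5.14 you cite is the weaker result this lemma improves) together with $\sigma_r^2(Z^*) = 2\sigma_r(X^*)$ to the algebraic inequality $(\star)$, then expand the diagonal blocks and cancel, leaving exactly the estimate $2\langle U^\top U^*, V^\top V^*\rangle \le \|U^\top U^*\|_F^2 + \|V^\top V^*\|_F^2$. The paper writes this final step equivalently as $\|U^\top U^* - V^\top V^*\|_F^2 \ge 0$, a cosmetic difference.
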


To prove \cref{lem:dP_e_bound} we shall use the following auxiliary lemma \cite[Lemma~5.4]{tu2016low}.
\begin{lemma}\label{lem:lemma5.4_inTBSSR16}
For any $Z, Z^* \in \mathbb R^{(n_1+n_2)\times r}$,
\begin{align}\label{eq:lemma5.4_inTBSSR16}
d^2_P(Z, Z^*) \leq \frac{1}{2(\sqrt 2 - 1) \sigma^2_r(Z^*)} \|ZZ^\top - Z^* Z^{*\top}\|^2_F .
\end{align}
\end{lemma}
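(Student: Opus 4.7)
The plan is to chain \cref{lem:lemma5.4_inTBSSR16} (the $ZZ^\top$ bound) with the balanced-SVD structure of $Z^*$, and reduce the claim to a clean Frobenius identity that falls out by expanding blocks and using the elementary inequality $2\langle A,B\rangle \leq \|A\|_F^2 + \|B\|_F^2$.

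First I would invoke \cref{lem:lemma5.4_inTBSSR16} together with the identity $\sigma_r^2(Z^*) = 2\sigma_r(X^*)$ from \cref{lem:bSVD_properties} (since $Z^* = \text{b-SVD}(X^*)$), yielding
\[
d_P^2(Z,Z^*) \;\leq\; \frac{1}{4(\sqrt{2}-1)\sigma_r(X^*)} \,\|ZZ^\top - Z^*Z^{*\top}\|_F^2 .
\]
Therefore it suffices to establish the key inequality
\[
\|ZZ^\top - Z^*Z^{*\top}\|_F^2 \;\leq\; 4\,\|UV^\top - X^*\|_F^2 + \|U^\top U - V^\top V\|_F^2.
\]

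Next I would expand the left-hand side blockwise using $Z = \begin{psmallmatrix}U\\V\end{psmallmatrix}$ and $Z^* = \begin{psmallmatrix}U^*\\V^*\end{psmallmatrix}$, giving
\[
\|ZZ^\top - Z^*Z^{*\top}\|_F^2 = \|UU^\top - U^*U^{*\top}\|_F^2 + 2\|UV^\top - X^*\|_F^2 + \|VV^\top - V^*V^{*\top}\|_F^2.
\]
Subtracting the middle summand from both sides, the target reduces to
\[
\|UU^\top - U^*U^{*\top}\|_F^2 + \|VV^\top - V^*V^{*\top}\|_F^2 \;\leq\; 2\|UV^\top - X^*\|_F^2 + \|U^\top U - V^\top V\|_F^2.
\]

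The main (but routine) step is to massage both sides using the cyclic trace property together with the balance identity $U^{*\top}U^* = V^{*\top}V^* = \Sigma$ from \cref{lem:bSVD_properties}. Applying $\|UU^\top\|_F^2 = \|U^\top U\|_F^2$, $\|UV^\top\|_F^2 = \langle U^\top U, V^\top V\rangle$, $\|U^*U^{*\top}\|_F^2 = \|\Sigma\|_F^2 = \|X^*\|_F^2$, and $\langle UU^\top, U^*U^{*\top}\rangle = \|U^\top U^*\|_F^2$ (and similarly for $V$), both sides expand to $\|U^\top U\|_F^2 + \|V^\top V\|_F^2 + 2\|\Sigma\|_F^2$ plus a cross term. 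After this cancellation, the inequality reduces to
\[
2\,\Tr\!\bigl(U^\top U^* V^{*\top} V\bigr) \;\leq\; \|U^\top U^*\|_F^2 + \|V^\top V^*\|_F^2 .
\]
Setting $A = U^\top U^* \in \mathbb{R}^{r\times r}$ and $B = V^{*\top}V \in \mathbb{R}^{r\times r}$, the trace equals $\langle A^\top, B\rangle_F$, and this is the elementary bound $2\langle A^\top, B\rangle \leq \|A\|_F^2 + \|B\|_F^2$ (equivalent to $\|A^\top - B\|_F^2 \geq 0$). Combining this with the first display completes the proof.

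There is no genuine obstacle; the only care needed is in the block expansion and in keeping track of the cyclic trace identities, which is where the factor of $2$ in front of $\|UV^\top - X^*\|_F^2$ (as opposed to $4$) and the factor $1/4$ on the imbalance term emerge naturally.
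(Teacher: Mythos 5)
There is a genuine gap: your argument is circular with respect to the statement you were asked to prove. Your very first step is to \emph{invoke} \cref{lem:lemma5.4_inTBSSR16} — the target lemma itself — and then combine it with $\sigma_r^2(Z^*) = 2\sigma_r(X^*)$ and the block expansion of $\|ZZ^\top - Z^*Z^{*\top}\|_F^2$. What you actually end up proving is the downstream bound
\begin{align*}
d_P^2(Z, Z^*) \leq \frac{1}{(\sqrt 2 - 1)\sigma_r(X^*)} \left(\|UV^\top - X^*\|_F^2 + \tfrac 14 \|U^\top U - V^\top V\|_F^2\right),
\end{align*}
i.e.\ \cref{lem:dP_e_bound}, and your derivation of it essentially coincides with the paper's proof of that lemma (same block decomposition, same cyclic-trace manipulations, same final step $2\Tr(U^\top U^* V^{*\top} V) \leq \|U^\top U^*\|_F^2 + \|V^\top V^*\|_F^2$). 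But that is a different statement from the one at hand, and it takes \cref{eq:lemma5.4_inTBSSR16} as an input rather than establishing it.

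The target lemma is a general fact about the Procrustes distance for \emph{arbitrary} $Z, Z^* \in \mathbb R^{(n_1+n_2)\times r}$ — no balanced-SVD structure on $Z^*$ is assumed, and the denominator is $\sigma_r^2(Z^*)$, not $\sigma_r(X^*)$. Your proposal never engages with this content: nothing in it connects $d_P^2(Z,Z^*)$ to $\|ZZ^\top - Z^*Z^{*\top}\|_F^2$ from first principles. In the paper this lemma is imported verbatim from \cite[Lemma~5.4]{tu2016low} without proof, so there is no in-paper argument to match; a self-contained proof would have to start from the variational characterization of the Procrustes distance (for the optimal orthogonal alignment $P$ one has $d_P^2(Z,Z^*) = \|Z\|_F^2 + \|Z^*\|_F^2 - 2\sum_i \sigma_i(Z^{*\top}Z)$) and carry out a spectral comparison against $\|ZZ^\top - Z^*Z^{*\top}\|_F^2$ using $\sigma_r(Z^*)$. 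None of that machinery appears in your write-up.
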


\begin{proof}[Proof of \cref{lem:dP_e_bound}]
Let $Z^* = \text{b-SVD}(X^*)$. By \cref{eq:bSVD_sigma_r} of \cref{lem:bSVD_properties}, we have $\sigma^2_r(Z^*) = 2\sigma_r(X^*)$.
In view of \cref{eq:lemma5.4_inTBSSR16} of \cref{lem:lemma5.4_inTBSSR16}, it thus suffices to show that
\begin{equation}
\|ZZ^\top - Z^* Z^{*\top}\|^2_F \leq 4\|X-X^*\|^2_F + \|U^\top U - V^\top V\|_F^2 \label{eq:ZZ_XX_bound}
\end{equation}
where $X = UV^\top$. Note that \cref{eq:ZZ_XX_bound},
which we shall now prove, is a stronger version of \cite[Lemma~4]{zheng2016convergence}. 
Denote $\begin{psmallmatrix} U^* \\ V^* \end{psmallmatrix} = Z^*$. Then, by definition,
\begin{align*}
\|ZZ^\top - Z^* Z^{*\top}\|^2_F = \|UU^\top - U^* U^{*\top}\|^2_F + \|VV^\top- V^*V^{*\top}\|^2_F + 2\|X - X^*\|_F^2.
\end{align*}
We first simplify some of the terms above.
By $\|AA^\top\|^2_F = \|A^\top A\|^2_F$ and \cref{eq:bSVD_is_balanced} of \cref{lem:bSVD_properties}, 
\begin{subequations}\begin{align}
\|UU^\top - U^* U^{*\top}\|^2_F &= \|UU^\top\|^2_F + \|U^* U^{*\top}\|^2_F - 2\Tr\left(U^* U^{*\top} UU^\top\right), \nonumber \\
&=  \|U^\top U\|^2_F + \|\Sigma^*\|^2_F - 2\Tr\left(U^* U^{*\top} UU^\top\right), \label{eq:UUT_U*U*T}
\end{align}
and similarly
\begin{align}
\|VV^\top - V^*V^{*\top}\|^2_F &= \|V^\top V\|^2_F + \|\Sigma^*\|^2_F - 2\Tr\left(V^* V^{*\top} VV^\top\right). \label{eq:VVT_V*V*T}
\end{align}\end{subequations}
Let $\mathcal E = U^\top U - V^\top V$. By its symmetry, $\mathcal E$ satisfies $\Tr(\mathcal E^2) = \Tr(\mathcal E^\top \mathcal E) = \|\mathcal E\|_F^2$.
Using the trace property $\Tr(AB) = \Tr(BA)$ for square matrices $A,B$, the first term on the RHS of \cref{eq:UUT_U*U*T} can be rewritten as
\begin{align*}
\|U^\top U\|^2_F &= \Tr\left((V^\top V + \mathcal E)(V^\top V + \mathcal E) \right)
= \|V^\top V\|^2_F + 2\Tr\left( V^\top V \mathcal E\right) + \|\mathcal E\|_F^2.
\end{align*}
Combining the above three equations gives that
\begin{align}
\|ZZ^\top - Z^*Z^{*\top}\|^2_F & =
2 \| V^\top V\|_F^2 +2 \Tr(V^\top V\mathcal E) + 
\|\mathcal E\|_F^2 + 2 \|\Sigma^*\|_F^2 + 2 \|X-X^*\|_F^2 \nonumber \\
&- 2 \Tr\left(U^* U^{*\top} UU^\top\right) -
2 \Tr\left(V^* V^{*\top} VV^\top\right) . \label{eq:ZZ_minus_Z*Z*}
\end{align}
Next, by the trace property $\Tr(AB) = \Tr(BA)$ for $A\in \mathbb R^{n\times r}$, $B \in \mathbb R^{r\times n}$ we have
\begin{align*}
\|X\|_F^2 &= \Tr\left(VU^\top UV^\top\right)
= \Tr\left(V^\top V U^\top U\right) 
= \Tr\left(V^\top V (V^\top V + \mathcal E)\right) \\
= \|V^\top V\|_F^2 + \Tr\left( V^\top V \mathcal E \right),
\end{align*}
Since $\|X-X^*\|^2_F = \|X\|_F^2 + \|X^*\|_F^2 - 2\Tr(X^{*\top}X) = \|X\|_F^2 + \|\Sigma^*\|_F^2 - 2\Tr(V^* U^{*\top} UV^\top)$, we obtain that  
\begin{align*}
\|V^\top V\|_F^2 = \|X-X^*\|_F^2 - \Tr\left( V^\top V \mathcal E \right) - \|\Sigma^*\|_F^2 + 2\Tr\left( V^* U^{*\top} UV^\top\right) .
\end{align*}
Inserting this into \cref{eq:ZZ_minus_Z*Z*} gives that
\begin{align*}
\|ZZ^\top - Z^*Z^{*\top}\|^2_F &= 4 \|X-X^*\|_F^2 + \|\mathcal E\|_F^2 \\
&+ 2\Tr\left( 2V^* U^{*\top} UV^\top - U^* U^{*\top} UU^\top - V^* V^{*\top} VV^\top\right).
\end{align*}
Finally, using again the trace property $\Tr(AB) = \Tr(BA)$ for $A\in \mathbb R^{n\times r}$, $B \in \mathbb R^{r\times n}$, the lemma follows since
\begin{align*}
&\Tr\left( U^* U^{*\top} UU^\top + V^* V^{*\top} VV^\top - 2 V^* U^{*\top} UV^\top \right) \\
&= \Tr\left( U^{*\top} UU^\top U^* + V^{*\top} VV^\top V^* - 2 U^{*\top} UV^\top V^* \right) = \|U^\top U^* - V^\top V^*\|_F^2 \geq 0.
\end{align*}
\end{proof}

\subsection*{Proof of \cref{lem:dQ_e_bound} (bounds on the Q-distance)}
To prove \cref{lem:dQ_e_bound} we shall use the following auxiliary lemma \cite[Lemma~1]{ma2021beyond}.
\begin{lemma}\label{lem:lemma1_MLC21}
Let $Z^* = \begin{psmallmatrix} U^* \\ V^* \end{psmallmatrix} = \text{b-SVD}(X^*)$ where $X^* \in \mathbb R^{n_1\times n_2}$ is of rank $r$.
Let $Z = \begin{psmallmatrix} U \\ V \end{psmallmatrix} \in \mathbb R^{(n_1+n_2)\times r}$, and suppose that there exists an invertible matrix $P \in \mathbb R^{r\times r}$ with $1/2 \leq \sigma_r(P) \leq \sigma_1(P) \leq 3/2$ such that
\begin{align}\label{eq:lemma1_MLC21_assumption}
\max\{\|U^* - UP\|_F, \|V^* - VP^{-\top}\|_F\} \leq \tfrac{1}{20} \sqrt{\sigma_r(X^*)}.
\end{align}
Then the optimal alignment matrix $Q \in \mathbb R^{r\times r}$ that minimizes the Q-distance $d_Q(Z^*, Z)$, exists and satisfies
\begin{align*}
\|P-Q\|_2 \leq \|P - Q\|_F \leq \frac{5}{\sqrt{\sigma_r(X^*)}} \max\{\|U^* - UP\|_F, \|V^* - VP^{-\top}\|_F\}.
\end{align*}
\end{lemma}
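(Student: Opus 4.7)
Since $d_Q$ is an infimum over the non-compact set of invertible $r\times r$ matrices, existence must be proved via coercivity. Write $F(Q):=\|U^*-UQ\|_F^2+\|V^*-VQ^{-\top}\|_F^2$, and let $\varepsilon:=\max\{\|U^*-UP\|_F,\|V^*-VP^{-\top}\|_F\}$, so $F(P)\le 2\varepsilon^2$. The hypothesis~\eqref{eq:lemma1_MLC21_assumption} combined with Weyl's inequality~\eqref{eq:Weyl} and $\sigma_r(U^*)=\sqrt{\sigma_r(X^*)}$ (from \cref{lem:bSVD_properties}) yields $\sigma_r(UP)\ge\sqrt{\sigma_r(X^*)}-\varepsilon\ge\tfrac{19}{20}\sqrt{\sigma_r(X^*)}$, and since $P$ is invertible with $\|P\|_2\le 3/2$, one obtains $\sigma_r(U)\ge\sigma_r(UP)/\|P\|_2\ge\tfrac{19}{30}\sqrt{\sigma_r(X^*)}>0$; the same reasoning gives $\sigma_r(V)>0$. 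Hence $\|UQ\|_F\ge\sigma_r(U)\|Q\|_F\to\infty$ as $\|Q\|_F\to\infty$, while $\|VQ^{-\top}\|_F\ge\sigma_r(V)/\sigma_r(Q)\to\infty$ as $\sigma_r(Q)\to 0$. In either limit $F(Q)\to\infty$, so $F$ is coercive on the open manifold of invertibles and attains its infimum on a compact sublevel set.

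\textbf{Plan for the bound $\|P-Q\|_F\le 5\varepsilon/\sqrt{\sigma_r(X^*)}$.} The idea is to convert $\|U(P-Q)\|_F$ into $\|P-Q\|_F$ using the lower bound on $\sigma_r(U)$. By optimality of $Q$, $F(Q)\le F(P)\le 2\varepsilon^2$, so $\|U^*-UQ\|_F\le\sqrt 2\,\varepsilon$. The triangle inequality then gives
\[
\|U(P-Q)\|_F=\|UP-UQ\|_F\le\|UP-U^*\|_F+\|U^*-UQ\|_F\le(1+\sqrt 2)\varepsilon.
\]
Since $U$ has full column rank, $\|U(P-Q)\|_F\ge\sigma_r(U)\|P-Q\|_F$, and combining with $\sigma_r(U)\ge\tfrac{19}{30}\sqrt{\sigma_r(X^*)}$ from the previous paragraph yields
\[
\|P-Q\|_F\le\frac{(1+\sqrt 2)\varepsilon}{(19/30)\sqrt{\sigma_r(X^*)}}=\frac{30(1+\sqrt 2)}{19}\cdot\frac{\varepsilon}{\sqrt{\sigma_r(X^*)}}\le\frac{5\varepsilon}{\sqrt{\sigma_r(X^*)}},
\]
where the final step is the numerical check $30(1+\sqrt 2)/19\approx 3.81<5$. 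The companion bound $\|P-Q\|_2\le\|P-Q\|_F$ is trivial.

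\textbf{Main obstacle.} The most delicate step is the coercivity argument for existence, since the domain is non-compact and $F$ must be shown to blow up both when $\|Q\|$ is unbounded and when $\sigma_r(Q)\to 0$. The hypothesis plays a dual role: providing a finite test value $F(P)\le 2\varepsilon^2$, and (via Weyl and the bounds on $P$) forcing $U$ and $V$ to have full column rank $r$, which is precisely what drives coercivity. Once existence is secured the quantitative bound is a clean triangle inequality, and notably uses only the $U$-side of the hypothesis in the quantitative estimate.
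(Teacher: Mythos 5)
Your proof is correct, and it is genuinely different from what the paper does. The paper does not prove this lemma at all: it cites \cite[Lemma~1]{ma2021beyond} and asserts in a one-line remark that the original proof can be traced to yield the sharper constant $1/20$ in place of $1/80$. You instead give a self-contained argument. For existence, you verify coercivity of $F(Q)=\|U^*-UQ\|_F^2+\|V^*-VQ^{-\top}\|_F^2$ on the open set of invertible matrices: the hypothesis and Weyl force $\sigma_r(U)\geq\tfrac{19}{30}\sqrt{\sigma_r^*}$ and $\sigma_r(V)>0$, so $F\to\infty$ both as $\|Q\|_F\to\infty$ and as $\sigma_r(Q)\to 0$, and the sublevel set $\{F\leq F(P)\}$ is a compact subset of the invertibles containing $P$. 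For the quantitative bound, you combine $F(Q)\leq F(P)\leq 2\varepsilon^2$ with the triangle inequality and the second part of \cref{prop:propB4_SL16} to get $\|P-Q\|_F\leq\|U(P-Q)\|_F/\sigma_r(U)\leq\tfrac{30(1+\sqrt 2)}{19}\cdot\varepsilon/\sqrt{\sigma_r^*}\approx 3.81\,\varepsilon/\sqrt{\sigma_r^*}$, which is comfortably below the claimed $5$. All individual steps check out, including $\sigma_r(UP)\leq\sigma_r(U)\sigma_1(P)$ (the standard $\sigma_{i+j-1}(AB)\leq\sigma_i(A)\sigma_j(B)$ with $i=r$, $j=1$). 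This approach is arguably cleaner than tracing the argument of \cite{ma2021beyond} (which relies on a local fixed-point analysis of the first-order optimality system), and it notably uses only the $U$-side of the hypothesis in the quantitative estimate, with some slack to spare; if one instead optimizes constants by also using the $V$-side, one could push below $5$ further, but that is not needed here.
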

We remark that the original version of \cite[Lemma~1]{ma2021beyond} required $\max\{\|U^* - UP\|_F, \|V^* - VP^{-\top}\|_F\} \leq {\sqrt{\sigma_r(X^*)}}/{80}$. However, by tracing its proof, it is straightforward to see that our version with a sharper constant also holds.
%In fact, since we use it only for $P$ with $\sigma_r(P) = \sigma_1(P) = 1$, it is possible to make our results even stronger in terms of numerical constants.

As discussed in the main text, bounding $d_Q(Z, Z^*)$ is more challenging than bounding $d_Q(Z^*, Z)$.
Our strategy to prove the bound \cref{eq:dQ_e_bound} of the first quantity is as follows.
\cref{lem:lemma1_MLC21} states that when the Procrustes distance $d_P(Z,Z^*)$ is not too large, the optimal alignment matrix between $Z$ and $Z^*$ is nearly orthogonal. Intuitively, this implies that $d_Q(Z, Z^*)$ is close to $d_Q(Z^*, Z)$, and thus similarly bounded. In the second part of the following proof we formalize this argument.

\begin{proof}[Proof of \cref{lem:dQ_e_bound}]
Let $\tilde Z = \begin{psmallmatrix} \tilde U \\ \tilde V \end{psmallmatrix} = \text{b-SVD}(UV^\top)$.
First, we show that $d_Q(Z^*,Z) = d_Q(Z^*,\tilde Z)$.
Indeed, since $\tilde U \tilde V^\top = UV^\top$, there exists an invertible matrix $Q\in\mathbb R^{r\times r}$ such that $U = \tilde UQ$ and $V = \tilde VQ^{-\top}$. By definition of the Q-distance (\cref{def:Q_distance}), this implies $d_Q(Z^*, Z) = d_Q(Z^*, \tilde Z)$.
%We may thus analyse the Q-distance of the first part of the lemma when both of the arguments are b-SVDs.

Next, we bound the Q-distance $d_Q^2(Z^*, \tilde Z)$ via the Procrustes distance.
Note that for any fixed $Z_1, Z_2$, it holds that $d_Q(Z_1, Z_2) \leq d_P(Z_1, Z_2)$ since the former involves minimization over any invertible matrix $Q$, whereas the latter involves minimization over a smaller subset of orthogonal matrices $P$.
In particular, $d_Q(Z^*, \tilde Z) \leq d_P(Z^*, \tilde Z) = d_P(\tilde Z, Z^*)$. Invoking \cref{lem:dP_e_bound} thus yields
\begin{align*}
d_Q^2(Z^*, Z) = d_Q^2(Z^*, \tilde Z) \leq \frac{1}{(\sqrt 2 - 1)\sigma_r^*} \left(\|UV^\top - X^*\|_F^2 + \frac 14 \|\tilde U^\top \tilde U - \tilde V^\top \tilde V\|_F^2 \right) .
\end{align*}
Equation~\cref{eq:dQ_leftbSVD_e_bound} of the lemma follows since the second term on the RHS vanishes due to \cref{eq:bSVD_is_balanced} of \cref{lem:bSVD_properties}.

Next, assume \cref{eq:dQ_e_bound_assumption} holds.
Combining \cref{eq:dQ_e_bound_assumption} and \cref{lem:dP_e_bound} yields that there exists an orthogonal $P \in \mathbb R^{r\times r}$ such that
\begin{align}\label{eq:dP_bound}
\|U^* - UP\|_F^2 + \|V^* - VP^{-\top}\|_F^2 = \|U^* - UP\|_F^2 + \|V^* - VP\|_F^2 \leq \frac{\sigma_r^*}{400}.
\end{align}
Hence $P$, whose all singular values are $1$, satisfies \cref{eq:lemma1_MLC21_assumption}. Invoking \cref{lem:lemma1_MLC21} implies that the optimal alignment matrix $\tilde{Q}$ between $Z^*$ and $Z$ exists and satisfies $\|\tilde{Q} - P\|_2 \leq 1/4$. By the unitarity of $P$ this implies
\begin{align}\label{eq:Q_norm_bound}
\|\tilde{Q}\|_2 \leq \|\tilde{Q}-P\|_2 + \|P\|_2 \leq \frac 54.
\end{align}
Next, we bound $\|\tilde{Q}^{-1}\|_2$.
By Weyl's inequality \cref{eq:Weyl}, $|\sigma_r(\tilde{Q}) - \sigma_r(P)| \leq \|\tilde{Q} - P\|_2 \leq 1/4$. Since $\sigma_r(P) = 1$,
\begin{align}\label{eq:Qinverse_norm_bound}
\|\tilde{Q}^{-1}\|_2 = \frac{1}{\sigma_r(\tilde{Q})} \leq \frac{1}{1 - 1/4} = \frac 43 .
\end{align}
Finally, let $\mathcal E_U = U^* - U \tilde{Q}$, $\mathcal E_V = V^* - V \tilde{Q}^{-\top}$ and $X = UV^\top$. Then, by the first part of the lemma \cref{eq:dQ_leftbSVD_e_bound},
\begin{align}\label{eq:EU_EV_bound}
\max\{\|\mathcal E_U\|^2_F,\, \|\mathcal E_V\|^2_F\} \leq \frac{\|X-X^*\|_F^2}{(\sqrt{2} - 1)\sigma_r^*} .
\end{align}
Let $Q = \tilde{Q}^{-1}$. Then $\|Q\|_2 \leq 4/3$ by \cref{eq:Qinverse_norm_bound}. In addition, putting everything together yields
\begin{align*}
\|U - U^*Q\|_F^2 + \|V - V^* Q^{-\top}\|_F^2 &= \|\mathcal E_U Q^{-1}\|_F^2 + \|\mathcal E_V Q^\top\|_F^2 \\
&\stackrel{(a)}{\leq} \|Q^{-1}\|^2_2 \|\mathcal E_U\|_F^2 + \|Q\|^2_2 \|\mathcal E_V\|_F^2 \\
&\stackrel{(b)}{\leq} \frac{\|X - X^*\|_F^2}{(\sqrt{2} - 1)\sigma_r^*} \left(\frac 43 + \frac 54 \right)  \\
&\leq \frac{25}{4} \frac{\|X - X^*\|_F^2}{\sigma_r^*},
\end{align*}
where (a) follows from the first part of \cref{prop:propB4_SL16} and (b) from \cref{eq:Q_norm_bound}, \cref{eq:Qinverse_norm_bound} and \cref{eq:EU_EV_bound}.
\end{proof}

\subsection*{Bounds for pairs of factor matrices}%\label{sec:bSVD}
Given a pair of factor matrices $\begin{psmallmatrix} U \\ V \end{psmallmatrix}$, the following lemma bounds the balance of a new pair $\begin{psmallmatrix} U' \\ V' \end{psmallmatrix}$ and the distance of its corresponding matrix $U'V'^\top$ from $UV^\top$, in terms of the Procrustes distance.

\begin{lemma}\label{lem:update_balance_distance_bounds}
Let $Z = \begin{psmallmatrix} U \\ V \end{psmallmatrix}, Z' = \begin{psmallmatrix} U' \\ V' \end{psmallmatrix} \in \mathbb R^{(n_1+n_1)\times r}$. Denote $d = d_P(Z,Z')$ and
\begin{align*}
a = \sqrt 2 \max\{\sigma_1(U), \sigma_1(V)\} d + \tfrac 12 d^2 .
\end{align*}
Then
\begin{subequations}\label{eq:update_balance_distance_bounds}\begin{align}
\|U'^\top U' - V'^\top V'\|_F &\leq \|U^\top U - V^\top V\|_F + 2a,  \label{eq:update_balance_bound} \\
\|U'V'^\top - UV^\top\|_F &\leq a \label{eq:update_distance_bound} .
\end{align}\end{subequations}
\end{lemma}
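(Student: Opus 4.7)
\textbf{Proof proposal for Lemma~\ref{lem:update_balance_distance_bounds}.}

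The plan is to parametrize $Z'$ by the optimal orthogonal alignment from the Procrustes distance, and then expand the two quantities in~\cref{eq:update_balance_distance_bounds} around $Z$, using triangle inequality together with $\|A^\top B\|_F\le \sigma_1(A)\|B\|_F$ and a Cauchy--Schwarz step to combine the two block errors.

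First, let $P\in\mathbb R^{r\times r}$ be an orthogonal matrix attaining $d_P(Z,Z')=\|Z-Z'P\|_F=d$ (existence is standard). Write $Z'P=Z+E$ with $E=\begin{psmallmatrix}E_U\\E_V\end{psmallmatrix}$, so that $U'P=U+E_U$, $V'P=V+E_V$, and $\|E_U\|_F^2+\|E_V\|_F^2=d^2$. Denote $M=\max\{\sigma_1(U),\sigma_1(V)\}$. Two elementary inequalities I will use repeatedly: $\|E_U\|_F+\|E_V\|_F\le \sqrt 2\,d$ by Cauchy--Schwarz, and $\|E_U\|_F\|E_V\|_F\le \tfrac12 d^2$ by AM--GM.

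For \cref{eq:update_distance_bound}, the key observation is that the orthogonal $P$ cancels in the outer product: $U'V'^\top=(U+E_U)P^\top P(V+E_V)^\top=(U+E_U)(V+E_V)^\top$. Subtracting $UV^\top$ and applying the triangle inequality gives
\begin{align*}
\|U'V'^\top-UV^\top\|_F\le \|UE_V^\top\|_F+\|E_UV^\top\|_F+\|E_UE_V^\top\|_F\le \sigma_1(U)\|E_V\|_F+\sigma_1(V)\|E_U\|_F+\|E_U\|_F\|E_V\|_F.
\end{align*}
Bounding the first two terms by $M(\|E_U\|_F+\|E_V\|_F)\le \sqrt 2\, Md$ and the third by $\tfrac12 d^2$ yields exactly $a$.

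For \cref{eq:update_balance_bound}, substitute $U'=(U+E_U)P^\top$ and $V'=(V+E_V)P^\top$ into $U'^\top U'-V'^\top V'$. The outer factors $P$, $P^\top$ survive as conjugation and thus preserve the Frobenius norm, leaving
\begin{align*}
\|U'^\top U'-V'^\top V'\|_F \le \|U^\top U-V^\top V\|_F + \|U^\top E_U+E_U^\top U-V^\top E_V-E_V^\top V\|_F + \|E_U^\top E_U-E_V^\top E_V\|_F.
\end{align*}
The first-order cross term is bounded by $2\sigma_1(U)\|E_U\|_F+2\sigma_1(V)\|E_V\|_F\le 2M(\|E_U\|_F+\|E_V\|_F)\le 2\sqrt 2\,Md$, and the second-order term by $\|E_U\|_F^2+\|E_V\|_F^2=d^2$ (using $\|A^\top A\|_F\le\|A\|_F^2$). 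Adding gives $2\sqrt 2 Md+d^2=2a$, as desired.

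I do not expect a serious obstacle here; the only place requiring a little care is isolating $\sqrt 2$ vs.\ $2$ constants so that the final bound matches $a=\sqrt 2 Md+\tfrac12 d^2$ exactly, which is why the cross term is split into its two symmetric halves before invoking $\|E_U\|_F+\|E_V\|_F\le\sqrt 2\,d$.
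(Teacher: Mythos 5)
Your proof is correct and follows essentially the same route as the paper's: align $Z'$ to $Z$ by the optimal orthogonal $P$, expand the outer product and the Gram difference in the perturbation $(E_U,E_V)$, control first-order terms via $\|AB\|_F\le\sigma_1(A)\|B\|_F$ and $\|E_U\|_F+\|E_V\|_F\le\sqrt 2\,d$, and control second-order terms via AM--GM. The only cosmetic difference is that the paper groups the first-order cross term as $2\|U^\top\Delta U-\Delta V^\top V\|_F$ using its symmetry before bounding, whereas you apply the triangle inequality to all four summands directly; both yield the same bound.
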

\begin{proof}
Let $P \in \mathbb R^{r\times r}$ be the minimizer of the Procrustes distance between $Z$ and $Z'$, and denote $\Delta U = U'P - U$ and $\Delta V = V'P - V$. Then $\|\Delta U\|_F^2 + \|\Delta V\|_F^2 = d^2$. In addition, since $P$ is unitary, $U'V'^\top = (U'P)(V'P)^\top = (U+\Delta U)(V+\Delta V)^\top$.
Equation~\cref{eq:update_distance_bound} holds since
\begin{align*}
\|U'V'^\top - UV^\top\|_F &\leq \|U\Delta V^\top\|_F + \|\Delta U V^\top\|_F + \|\Delta U \Delta V^\top\|_F \\
&\stackrel{(a)}{\leq} \sigma_1(U) \|\Delta V\|_F + \sigma_1(V) \|\Delta U\|_F + \|\Delta U \|_F \|\Delta V \|_F \\
&\stackrel{(b)}{\leq} \max\{\sigma_1(U), \sigma_1(V)\} \left( \|\Delta V\|_F + \|\Delta U\|_F\right) + \tfrac 12 d^2 \\
&\stackrel{(c)}{\leq} \sqrt 2 \max\{\sigma_1(U), \sigma_1(V)\} d + \tfrac 12 d^2,
\end{align*} 
where (a) follows from the first part of \cref{prop:propB4_SL16} and the Cauchy-Schwarz inequality, (b) from the inequality $ab \leq (a^2 + b^2)/2$, and (c) from the inequality $a + b \leq \sqrt{2(a^2+b^2)}$.
Next, by the triangle inequality,
\begin{align}\label{eq:update_balance_temp_bound}
\|U'^\top U' - V'^\top V'\|_F &= \|P^\top \left(U'^\top U' - V'^\top V'\right)P \|_F \nonumber \\
&= \|(U+\Delta U)^\top (U+\Delta U) - (V+\Delta V)^\top (V+\Delta V) \|_F \nonumber \\
&\leq \|U^\top U - V^\top V\|_F 
+ 2\|U^\top \Delta U - \Delta V^\top V\|_F + \|\Delta U^\top \Delta U - \Delta V^\top \Delta V\|_F.
\end{align}
The last term of the RHS above can be bounded by the Cauchy-Schwarz inequality as
\begin{align*}
\|\Delta U^\top \Delta U\|_F + \|\Delta V^\top \Delta V\|_F \leq \|\Delta U\|_F^2 + \|\Delta V\|_F^2 = d^2 .
\end{align*}
As for the second term, by combining \cref{prop:propB4_SL16} and the inequality $a + b \leq \sqrt{2(a^2+b^2)}$,
\begin{align*}
\|U^\top \Delta U - \Delta V^\top V\|_F &\leq \sigma_1(U)\|\Delta U\|_F + \sigma_1(V)\|\Delta V\|_F \\
&\leq \max\{\sigma_1(U), \sigma_1(V)\} \left(\|\Delta U\|_F + \|\Delta V\|_F \right) \\
&\leq \sqrt 2 \max\{\sigma_1(U), \sigma_1(V)\} d.
\end{align*}
Inserting these bounds into \cref{eq:update_balance_temp_bound} yields \cref{eq:update_balance_bound}.
\end{proof}

%%%%%%%%%%%%%%%%%%%%%%%%%%%%%%%%%%%%%%%%%%%%%%%%%%%%%%%%%%%%%%%%%%%%%
%%%%%%%%%%%%%%%%%%%%%%%%%%%%%%%%%%%%%%%%%%%%%%%%%%%%%%%%%%%%%%%%%%%%%
\section{Proof of \cref{thm:sens_quadConvergence} (matrix sensing)}\label{sec:sensing_proof_SM}
The proof is based on the following lemma which considers a single iteration of \cref{alg:GNMR}.
%It is proved in \cref{sec:sens_singleIteration_proof}.
\begin{lemma}\label{lem:sens_singleIteration}
Let $\delta$ be a positive constant strictly smaller than one.
Let $c_e=c_e(\delta)$ be sufficiently large.
Assume the sensing operator $\mathcal A$ satisfies a $2r$-RIP with a constant $\delta_{2r} \leq \delta$.
Let $X^* \in \mathbb R^{n_1\times n_2}$ be a matrix of rank $r$, and denote $\gamma = {c_e}/{(2\sigma_r^*)}$.
Let $Z_t = \begin{psmallmatrix} U_t \\ V_t \end{psmallmatrix}$ be the current iterate, and denote the estimation error $e_t = \|X_t - X^*\|_F$. Also denote the minimal singular value of the factor matrices $s_t = \min\{\sigma_r(U_t),\sigma_r(V_t)\}$ and the imbalance $l_t = \|U_t^\top U_t - V_t^\top V_t\|_F$.
Assume that the current iterate satisfies the following three conditions:
\begin{subequations}\label{eq:sens_assumptions}\begin{align}
e_t &\leq \frac{\sigma_r^*}{c_e}, \label{eq:sens_et_assumption}\\
s_t &\geq \frac 12 \sqrt{\sigma_r^*} + \sqrt\frac{1+\delta}{1-\delta} \frac{4e_t}{\sqrt{\sigma_r^*}}, \label{eq:sens_st_assumption} \\
l_t &\leq \frac{1}{c_e} \sigma_r^* - \frac{1+\delta}{1-\delta} \frac{8e_t^2}{\sigma_r^*} . \label{eq:sens_lt_assumption}
\end{align}\end{subequations}
Then the next iterate of \cref{alg:GNMR} with $\alpha=-1$ satisfies
\begin{subequations}\begin{align}
e_{t+1} &\leq \gamma e^2_t, \label{eq:sens_et+1_guarantee} \\
s_{t+1} &\geq \frac 12 \sqrt{\sigma_r^*} + \sqrt\frac{1+\delta}{1-\delta} \frac{4e_{t+1}}{\sqrt{\sigma_r^*}}, \label{eq:sens_st+1_guarantee} \\
l_{t+1} &\leq \frac{1}{c_e} \sigma_r^* - \frac{1+\delta}{1-\delta} \frac{8e_{t+1}^2}{\sigma_r^*}. \label{eq:sens_lt+1_guarantee}
\end{align}\end{subequations}
\end{lemma}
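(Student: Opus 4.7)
The plan is to chain the three key lemmas \cref{lem:sens_et+1_bound,lem:minNormSol_normBound,lem:minNormSol_implicitBalance} and then propagate the three invariants \cref{eq:sens_assumptions} forward one step. First, I would verify that assumption \cref{eq:dP_assumption} of \cref{lem:sens_et+1_bound} holds: using $e_t\leq \sigma_r^*/c_e$ and $l_t\leq \sigma_r^*/c_e$, one gets $e_t^2+\tfrac14 l_t^2\leq (1+\tfrac14)\sigma_r^{*2}/c_e^2$, which is below $\tfrac{\sqrt2-1}{400}\sigma_r^{*2}$ once $c_e$ is sufficiently large. Since \cref{eq:sens_st_assumption} gives $s_t\geq \tfrac12\sqrt{\sigma_r^*}>0$, the matrices $U_t,V_t$ have full column rank, so \cref{lem:minNormSol_normBound,lem:minNormSol_implicitBalance} apply to the minimal norm solution $\Delta Z_t$.

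Next, I would prove \cref{eq:sens_et+1_guarantee}. Plugging $s_t^2\geq \sigma_r^*/4$ into \cref{lem:minNormSol_normBound} yields $\|\Delta Z_t\|_F^2\leq \tfrac{1+\delta}{1-\delta}\cdot\tfrac{4e_t^2}{\sigma_r^*}$. Substituting this into \cref{lem:sens_et+1_bound} gives
\begin{align*}
e_{t+1} \;\leq\; \tfrac12\sqrt{\tfrac{1+\delta}{1-\delta}}\left(\tfrac{25}{4\sigma_r^*}+\tfrac{4(1+\delta)}{(1-\delta)\sigma_r^*}\right) e_t^2 \;=\; \tfrac{C(\delta)}{\sigma_r^*}\, e_t^2 ,
\end{align*}
for an absolute constant $C(\delta)$ depending only on $\delta$. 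Choosing $c_e=c_e(\delta)\geq 2C(\delta)$ gives $e_{t+1}\leq \gamma e_t^2$ with $\gamma=c_e/(2\sigma_r^*)$, as desired.

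For \cref{eq:sens_st+1_guarantee}, I would use Weyl's inequality applied to $U_{t+1}=U_t+\Delta U_t$ and $V_{t+1}=V_t+\Delta V_t$ to get $s_{t+1}\geq s_t-\|\Delta Z_t\|_F$. Combined with the norm bound, $\|\Delta Z_t\|_F\leq 2\sqrt{\tfrac{1+\delta}{1-\delta}}\,e_t/\sqrt{\sigma_r^*}$. Using \cref{eq:sens_st_assumption} this yields $s_{t+1}\geq \tfrac12\sqrt{\sigma_r^*}+2\sqrt{\tfrac{1+\delta}{1-\delta}}\,e_t/\sqrt{\sigma_r^*}$. The desired bound then follows if $e_t\geq 2 e_{t+1}$, which holds because $e_{t+1}\leq \gamma e_t^2 \leq \tfrac12 e_t$ whenever $e_t\leq \sigma_r^*/c_e$, i.e.\ exactly under \cref{eq:sens_et_assumption}.

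Finally, for \cref{eq:sens_lt+1_guarantee}, \cref{lem:minNormSol_implicitBalance} together with $s_t^2\geq \sigma_r^*/4$ gives $l_{t+1}\leq l_t+\tfrac{1+\delta}{1-\delta}\cdot\tfrac{4e_t^2}{\sigma_r^*}$. Inserting \cref{eq:sens_lt_assumption} yields $l_{t+1}\leq \sigma_r^*/c_e-\tfrac{1+\delta}{1-\delta}\cdot\tfrac{4e_t^2}{\sigma_r^*}$, and the target bound then reduces to $8e_{t+1}^2\leq 4e_t^2$, i.e.\ $e_{t+1}\leq e_t/\sqrt 2$, which again follows from $e_{t+1}\leq \tfrac12 e_t$ established above. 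The main obstacle, and what dictates how large $c_e$ must be taken, is the delicate bookkeeping that synchronizes these three invariants: the absolute constant buffers in \cref{eq:sens_st_assumption} and \cref{eq:sens_lt_assumption} (the factor $4$ in front of $e_t/\sqrt{\sigma_r^*}$ and the factor $8$ in front of $e_t^2/\sigma_r^*$) must dominate both the one-step perturbations of $s_t$ and $l_t$ caused by $\Delta Z_t$ \emph{and} the rescaling needed to re-express them in terms of $e_{t+1}$, which is only possible because the contraction $e_{t+1}\leq \tfrac12 e_t$ is geometric.
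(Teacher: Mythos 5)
Your proposal is correct and follows essentially the same route as the paper's proof: verify condition (4.2) of Lemma 4.1, feed the $s_t$ lower bound into Lemma 4.2 to control $\|\Delta Z_t\|_F$, chain Lemmas 4.1–4.3, and then close the three invariants using the elementary contraction $e_{t+1}\le \tfrac12 e_t$ implied by $\gamma e_t\le \tfrac12$. The constants and the bookkeeping—in particular obtaining $s_{t+1}\ge \tfrac12\sqrt{\sigma_r^*}+2\sqrt{\tfrac{1+\delta}{1-\delta}}\,e_t/\sqrt{\sigma_r^*}$ and $l_{t+1}\le \sigma_r^*/c_e-\tfrac{1+\delta}{1-\delta}\cdot\tfrac{4e_t^2}{\sigma_r^*}$ before re-expressing in terms of $e_{t+1}$—match the paper's argument exactly.
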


\begin{proof}[Proof of \cref{thm:sens_quadConvergence}]
Let $Z_0$ be an initial guess which satisfies the conditions of the theorem.
Let us show that it satisfies assumptions \cref{eq:sens_assumptions} of \cref{lem:sens_singleIteration} at $t=0$.
First of all, \cref{eq:sens_et_assumption} holds by the assumption $Z_0 \in \mathcal B_\text{err}(1/c_e)$.
Next, since $Z_0 \in \mathcal B_\text{err}(1/c_e) \cap \mathcal B_\text{bln}(1/(2c_e))$, \cref{lem:dP_e_bound} implies
\begin{align*}
d^2_P\left(Z_0, Z^* \right) \leq \frac{1}{(\sqrt 2 - 1) \sigma_r^*} \left(e^2_0 + \frac 14 \frac{\sigma_r^{*2}}{4c_e^2} \right) \leq \frac{3\sigma_r^*}{c_e^2} .
\end{align*}
Hence $d^2_P(U_0, U^*) \leq {3\sigma_r^*}/{c_e^2}$.
Combining this with \cref{prop:sigmaMin_U} yields
\begin{align*}
\sigma_r(U_0) &\geq \sigma_r(U^*) - d_P(Z_0, Z^*) \geq \sqrt{\sigma_r^*} - \frac{\sqrt{3\sigma_r^*}}{c_e} .
\end{align*}
Together with a similar bound for $\sigma_r(V_0)$ we obtain
\begin{align*}
s_0 \geq \left(1 - \frac{\sqrt 3}{c_e} \right) \sqrt{\sigma_r^*} .
\end{align*}
In addition, $e_0 \leq \frac{\sigma_r^*}{c_e}$ by the assumption $Z_0 \in \mathcal B_\text{err}(1/c_e)$. Hence, for \cref{eq:sens_st_assumption} to hold at $t=0$, we need $1 - \frac{\sqrt 3}{c_e} \geq \frac 12 + \frac{4}{c_e}\sqrt\frac{1+\delta}{1-\delta}$. For any fixed $\delta < 1$, this holds for large enough $c_e \equiv c_e(\delta)$.

%Finally, by the proof of \cref{lem:singleIteration}, the assumption $l_t \leq 8c_e^2 \sigma_r^*$ rather than \cref{eq:lt_assumption} is sufficient to prove result \cref{eq:et+1_guarantee} (while \cref{eq:lt_assumption} is required only for result \cref{eq:lt+1_guarantee}). Since by assumption $l_0 = 0$, we have \cref{eq:et+1_guarantee} which implies \cref{eq:et+1_temp_etBound}, namely $e_1 \leq \frac 12 e_0 \leq \frac 12 \frac{\sigma_r^*}{c_e}$. Since the initialization $Z_0$ satisfies $l_0 = 0$ by assumption, \cref{eq:lt_assumption} also holds at $t=0$.
Finally, we prove \cref{eq:sens_lt_assumption} at $t=0$. By the assumption $Z_0 \in \mathcal B_\text{bln}(1/(2c_e))$, the LHS of \cref{eq:sens_lt_assumption}, $l_0$, is upper bounded by $\sigma_r^*/(2c_e)$. In addition, by $Z_0 \in \mathcal B_\text{err}(1/c_e)$, the RHS of \cref{eq:sens_lt_assumption} is lower bounded by
\begin{align*}
\frac{1}{c_e}\sigma_r^* - \frac{1+\delta}{1-\delta}\frac{8e_0^2}{\sigma_r^*} \geq \left(1 - \frac{1+\delta}{1-\delta} \frac{8}{c_e}\right)\frac{\sigma_r^*}{c_e} \geq \frac 12 \frac{\sigma_r^*}{c_e}
\end{align*}
where the last inequality follows for large enough $c_e \equiv c_e(\delta)$.
Hence \cref{eq:sens_lt_assumption} holds at $t=0$.
The theorem thus follows by iteratively applying \cref{lem:sens_singleIteration}.
\end{proof}

%\subsection*{Proof of \cref{lem:sens_singleIteration}}\label{sec:sens_singleIteration_proof}
\begin{proof}[Proof of \cref{lem:sens_singleIteration}]
Let us begin with \cref{eq:sens_et+1_guarantee}.
Combining assumptions \cref{eq:sens_et_assumption} and \cref{eq:sens_lt_assumption} yields
\begin{align*}
e_t^2 + \frac 14 \|U_t^\top U_t - V_t^\top V_t\|_F^2
&\leq \frac{5\sigma_r^{*2}}{4c_e^2}.
\end{align*}
Hence, for large enough $c_e$, the current iterate $Z_t$ satisfies condition \cref{eq:dP_assumption} of \cref{lem:sens_et+1_bound}.
By \cref{lem:sens_et+1_bound},
\begin{align*}
e_{t+1} \leq \frac 12 \sqrt\frac{1+\delta}{1-\delta} \left(\frac{25}{4 \sigma_r^{*}} e_t^2 + \Delta_t^2\right),
\end{align*}
where $\Delta^2_t = \|U_{t+1} - U_t\|^2_F + \|V_{t+1} - V_t\|^2_F$.
By assumption \cref{eq:sens_st_assumption}, $U_t$ and $V_t$ have full column rank. Hence, $\Delta^2_t$ can be bounded by combining \cref{lem:minNormSol_normBound} and \cref{eq:sens_st_assumption} as
\begin{align}
\Delta_t^2 \leq \frac{1+\delta}{1-\delta} \frac{e_t^2}{s_t^2} \leq \frac{1+\delta}{1-\delta} \frac{4e_t^2}{ \sigma_r^*}. \label{eq:Deltat_bound}
\end{align}
We thus conclude
\begin{align*}
e_{t+1} &\leq \frac 12 \sqrt\frac{1+\delta}{1-\delta}\left[\frac{25}{4} + 4\frac{1+\delta}{1-\delta} \right] \frac{e_t^2}{\sigma_r^{*}} .
\end{align*}
Hence \cref{eq:sens_et+1_guarantee} holds for a large enough $c_e$.

Next, we prove \cref{eq:sens_st+1_guarantee}.
By \cref{eq:Deltat_bound} we have $\|U_{t+1} - U_t\|_2 \leq \Delta_t \leq \sqrt\frac{1+\delta}{1-\delta} \frac{2e_t}{\sqrt{\sigma_r^*}}$. Combined with Weyl's inequality \cref{eq:Weyl} and assumption \cref{eq:sens_st_assumption}, this implies
\begin{align*}
\sigma_r(U_{t+1}) &\geq \sigma_r(U_{t}) - \sqrt\frac{1+\delta}{1-\delta} \frac{2e_t}{\sqrt{\sigma_r^*}} \geq \frac 12 \sqrt{\sigma_r^*} + \sqrt\frac{1+\delta}{1-\delta} \frac{2e_t}{\sqrt{\sigma_r^*}} .
\end{align*}
Together with a similar bound on $\sigma_r(V_{t+1})$ we obtain
\begin{align}\label{eq:sens_st_temp_bound}
s_{t+1} \geq \frac 12 \sqrt{\sigma_r^*} + \sqrt\frac{1+\delta}{1-\delta} \frac{2e_t}{\sqrt{\sigma_r^*}}.
\end{align}
In addition, combining \cref{eq:sens_et+1_guarantee} and \cref{eq:sens_et_assumption} yields
\begin{align} \label{eq:et+1_temp_etBound}
e_{t+1} \leq \tfrac 12 e_t .
\end{align}
Inequality \cref{eq:sens_st+1_guarantee} follows by inserting \cref{eq:et+1_temp_etBound} into \cref{eq:sens_st_temp_bound}.

Finally, we prove \cref{eq:sens_lt+1_guarantee}.
By assumption \cref{eq:sens_st_assumption}, $U_t$ and $V_t$ have full column rank.
Invoking \cref{lem:minNormSol_implicitBalance} thus gives that
\begin{align*}
l_{t+1} = \|U_{t+1}^\top U_{t+1} - V_{t+1}^\top V_{t+1}\|_F &\leq \|U_t^\top U_t - V_t^\top V_t\|_F + \frac{1+\delta}{1-\delta} \frac{e_t^2}{s_t^2} .
\end{align*}
Bounding the first term on the RHS by assumption \cref{eq:sens_lt_assumption} and the second term by \cref{eq:Deltat_bound} yields
\begin{align*}
l_{t+1} &\leq \frac{\sigma_r^*}{c_e} - \frac{1+\delta}{1-\delta}\frac{8e_t^2}{\sigma_r^*} + \frac{1+\delta}{1-\delta} \frac{4e_t^2}{\sigma_r^*} = \frac{\sigma_r^*}{c_e} - \frac{1+\delta}{1-\delta} \frac{4e_t^2}{\sigma_r^*} .
\end{align*}
Inequality \cref{eq:sens_lt+1_guarantee} follows by combining this with \cref{eq:et+1_temp_etBound}.
\end{proof}

%%%%%%%%%%%%%%%%%%%%%%%%%%%%%%%%%%%%%%%%%%%%%%%%%%%%%%%%%%%%%%%%%%%%%
%%%%%%%%%%%%%%%%%%%%%%%%%%%%%%%%%%%%%%%%%%%%%%%%%%%%%%%%%%%%%%%%%%%%%
\section{Proof of \cref{thm:sens_noisy} (noisy matrix sensing)}
The proof is based on the following lemma which considers a single iteration of \cref{alg:GNMR_bSVD}.
\begin{lemma}\label{lem:sens_noisy_singleIteration}
Let $\delta$ be a positive constant strictly smaller than one,
and denote $c=7(1+\delta)^\frac{3}{2}/(1-\delta)^\frac{3}{2}$.
Assume the sensing operator $\mathcal A$ satisfies a $2r$-RIP with a constant $\delta_{2r} \leq \delta$.
Let $b = \mathcal A(X^*) + \xi$ where $X^* \in \mathbb R^{n_1\times n_2}$ is of rank $r$ and $\xi \in \mathbb R^m$ satisfies
\begin{align}\label{eq:sens_noisy_xi_assumption}
\|\xi\| \leq \frac{\sigma_r^*\sqrt{1-\delta}}{6c} .
\end{align}
Denote $\gamma = c/(4\sigma_r^*)$.
Let $Z_t = \begin{psmallmatrix} U_t \\ V_t \end{psmallmatrix}$ be the current iterate. Assume that its estimation error $e_t = \|X_t - X^*\|_F$ satisfies
\begin{align}
e_t &\leq \frac{\sigma_r^*}{c} + \frac{3\|\xi\|}{\sqrt{1-\delta}}. \label{eq:sens_noisy_et_assumption}
\end{align}
Then the next iterate of \cref{alg:GNMR_bSVD} with $\alpha=-1$ satisfies
\begin{align}
e_{t+1} &\leq \gamma e^2_t + \frac{3\|\xi\|}{\sqrt{1-\delta}} . \label{eq:sens_noisy_et+1_guarantee}
\end{align}
\end{lemma}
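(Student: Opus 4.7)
The strategy is to mimic the proof of \cref{lem:sens_singleIteration} (the noiseless single-iteration lemma), carrying the noise term $\xi$ through each estimate. A crucial simplification is that \cref{alg:GNMR_bSVD} begins every iteration with an explicit SVD-balancing step, so $U_t^\top U_t = V_t^\top V_t$ holds exactly, which means $\sigma_r(U_t) = \sigma_r(V_t) = \sqrt{\sigma_r(X_t)}$ and condition \cref{eq:dQ_e_bound_assumption} of \cref{lem:dQ_e_bound} collapses to a requirement on $e_t$ alone. I would first combine the hypothesis \cref{eq:sens_noisy_et_assumption} with the noise bound \cref{eq:sens_noisy_xi_assumption} to deduce $e_t \leq \tfrac{3}{2}\sigma_r^*/c$, and then use Weyl's inequality to conclude $\sigma_r(X_t) \geq \sigma_r^*(1 - \tfrac{3}{2c})$; for $c$ as defined this is bounded below by a constant multiple of $\sigma_r^*$.

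Next I would prove a noisy analogue of \cref{lem:sens_et+1_bound}. Writing $F_t^2(\Delta Z) = \|\mathcal A(U_t V_t^\top + U_t \Delta V^\top + \Delta U V_t^\top) - b\|^2$ and using $b = \mathcal A(X^*)+\xi$, the upper bound is obtained by choosing $\Delta Z$ that corresponds to a decomposition $(U^*Q, V^*Q^{-\top})$ of $X^*$ supplied by \cref{lem:dQ_e_bound}, yielding
\begin{align*}
F_t(\Delta Z) \leq \tfrac{\sqrt{1+\delta}}{2} \tfrac{25}{4\sigma_r^*} e_t^2 + \|\xi\|.
\end{align*}
The lower bound, from the triangle inequality combined with RIP on the rank-$2r$ matrix $X_{t+1}-X^*$ and the rank-$r$ cross term $\Delta U_t \Delta V_t^\top$, is
\begin{align*}
F_t(\Delta Z_t) \geq \sqrt{1-\delta}\,e_{t+1} - \tfrac{\sqrt{1+\delta}}{2}\|\Delta Z_t\|_F^2 - \|\xi\|.
\end{align*}
Since $\Delta Z_t$ minimizes $F_t$, combining the two gives $\sqrt{1-\delta}\,e_{t+1} \leq \tfrac{\sqrt{1+\delta}}{2}\bigl[\tfrac{25}{4\sigma_r^*}e_t^2 + \|\Delta Z_t\|_F^2\bigr] + 2\|\xi\|$.

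To bound $\|\Delta Z_t\|_F^2$ I would adapt \cref{lem:minNormSol_normBound}. The minimal norm solution satisfies $\Delta Z_t = \mathcal L_A^{(t)\dagger}(b - \mathcal A(X_t))$, so $\|\Delta Z_t\|_F \leq \|b - \mathcal A(X_t)\|/\sigma_{\min}(\mathcal L_A^{(t)})$; the numerator is at most $\sqrt{1+\delta}\,e_t + \|\xi\|$ by RIP and the triangle inequality, and the denominator is at least $\sqrt{(1-\delta)\sigma_r(X_t)}$ by RIP together with inequality \cref{eq:sigmaMinL_sigmaMinsUV} applied in the perfectly balanced case. Substituting the lower bound on $\sigma_r(X_t)$ from the first paragraph controls $\|\Delta Z_t\|_F^2$ in terms of $e_t^2$ and $\|\xi\|^2$.

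Finally I would assemble the pieces. Splitting $(\sqrt{1+\delta}\,e_t+\|\xi\|)^2$ via Young's inequality (with parameter chosen so the $e_t^2$ prefactor collapses into $\gamma = c/(4\sigma_r^*)$), the $e_t^2$ contributions combine to $\gamma e_t^2$, while the remaining noise terms $c_1\|\xi\|^2/\sigma_r^* + c_2 \|\xi\|$, using the noise bound \cref{eq:sens_noisy_xi_assumption} to absorb the quadratic piece into the linear one, collapse to at most $3\|\xi\|/\sqrt{1-\delta}$. The main obstacle is the bookkeeping of constants: verifying that the specific choice $c = 7(1+\delta)^{3/2}/(1-\delta)^{3/2}$ is simultaneously large enough that (i) \cref{lem:dQ_e_bound}'s hypothesis holds in the balanced regime, (ii) $\sigma_r(X_t)$ stays comfortably positive, and (iii) the accumulated numerical factors in the $e_t^2$ and $\|\xi\|$ terms compress into exactly the claimed $\gamma$ and $3/\sqrt{1-\delta}$. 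This tight constant-matching at the end is where the proof is most delicate; the structural inequalities themselves are essentially noisy re-runs of the lemmas developed in \cref{sec:sensingLemmas_proofs_inText}.
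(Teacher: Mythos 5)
Your high-level template is right---derive a noisy analogue of the error-contraction lemma, bound $\|\Delta Z_t\|_F^2$ via the minimal-norm-solution lemma, assemble---but the key lemma you propose to invoke is the wrong one, and this is a real gap, not just constant bookkeeping. You propose to take $Z = \begin{psmallmatrix} U^*Q \\ V^*Q^{-\top}\end{psmallmatrix}$ from \cref{lem:dQ_e_bound}, giving $\|\Delta Z\|_F^2 \leq \frac{25}{4\sigma_r^*}e_t^2$. But \cref{lem:dQ_e_bound} only yields this under hypothesis \cref{eq:dQ_e_bound_assumption}, which with perfect balance collapses to $e_t \leq \frac{\sqrt{\sqrt 2-1}}{20}\sigma_r^* \approx 0.032\,\sigma_r^*$. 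The hypothesis \cref{eq:sens_noisy_et_assumption} combined with \cref{eq:sens_noisy_xi_assumption} only gives $e_t \leq \tfrac{3}{2}\sigma_r^*/c$; since $c = 7(1+\delta)^{3/2}/(1-\delta)^{3/2}$ approaches $7$ as $\delta\to 0$, this allows $e_t$ up to $\approx 0.21\,\sigma_r^*$, which is an order of magnitude too large. So \cref{lem:dQ_e_bound} simply does not apply under the stated hypotheses, and the upper bound $F_t(\Delta Z) \leq \tfrac{\sqrt{1+\delta}}{2}\tfrac{25}{4\sigma_r^*}e_t^2 + \|\xi\|$ is not justified.

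The paper avoids this by using the \emph{Procrustes} distance bound (\cref{lem:dP_e_bound}) rather than the Q-distance. It picks $Z = \begin{psmallmatrix} U^*P \\ V^*P\end{psmallmatrix}$ with $P$ the orthogonal Procrustes minimizer. Since $Z^* = \text{b-SVD}(X^*)$, $Z$ is still a decomposition of $X^*$, and \cref{lem:dP_e_bound} holds with \emph{no condition on $e_t$} and gives $\|\Delta Z\|_F^2 \leq \frac{1}{(\sqrt 2-1)\sigma_r^*}\left(e_t^2 + \tfrac 14\|U_t^\top U_t - V_t^\top V_t\|_F^2\right)$. The explicit SVD-balancing step of \cref{alg:GNMR_bSVD} kills the second term, leaving $\|\Delta Z\|_F^2 \leq \frac{e_t^2}{(\sqrt 2-1)\sigma_r^*}$. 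This constant, $\frac{1}{\sqrt 2-1}\approx 2.41$, is markedly smaller than your $\frac{25}{4}=6.25$, which matters: following your numbers through, the coefficient of $e_t^2$ would accumulate to roughly $\left(\tfrac{25}{8}+\tfrac{7}{11}\right)\frac{(1+\delta)^{3/2}}{(1-\delta)^{3/2}}\approx 3.76\frac{(1+\delta)^{3/2}}{(1-\delta)^{3/2}}$, which strictly exceeds the target $\gamma\sigma_r^* = c/4 = 1.75\frac{(1+\delta)^{3/2}}{(1-\delta)^{3/2}}$. So even ignoring the applicability of \cref{lem:dQ_e_bound}, the constants would not compress. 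Replace the Q-distance step with \cref{lem:dP_e_bound} exploiting exact balance, and the rest of your outline (lower bound on $F_t(\Delta Z_t)$, bounding $\|\Delta Z_t\|_F^2$ via $\sigma_{\min}(\mathcal L_A^{(t)})^2 \geq (1-\delta)\sigma_r(X_t)$ with $\sigma_r(X_t)$ controlled by Weyl, then assembling) does close with the stated $c$ and $\gamma$.
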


\begin{proof}[Proof of \cref{thm:sens_noisy}]
By assumption, at $t=0$ the error satisfies
\begin{align}\label{eq:sens_noisy_initAssumption}
\|X_0 - X^*\|_F \leq \frac{\sigma_r^*}{c}.
\end{align}
Hence \cref{eq:sens_noisy_et_assumption} holds at $t=0$.
The proof follows by induction:
We show that if \cref{eq:sens_noisy_et_assumption} holds at iteration $t$, then it holds at $t$. By combining \cref{eq:sens_noisy_xi_assumption,eq:sens_noisy_et_assumption},
\begin{align*}
\gamma e^2_{t} &\leq \frac{c}{4\sigma_r^*} \left(\frac{\sigma_r^*}{c} + \frac{3\|\xi\|}{\sqrt{1-\delta}}\right)^2 \leq \left(1 + \frac{3}{6}\right)^2 \frac{\sigma_r^*}{4c} \leq \frac{\sigma_r^*}{c} .
\end{align*}
Plugging this into \cref{eq:sens_noisy_et+1_guarantee} of \cref{lem:sens_noisy_singleIteration} yields $e_{t+1} \leq \frac{\sigma_r^*}{c} + \frac{3\|\xi\|}{\sqrt{1-\delta}}$, namely \cref{eq:sens_noisy_et_assumption} holds at iteration $t+1$.
Equation~\cref{eq:sens_noisy} of the theorem follows by iteratively applying \cref{lem:sens_noisy_singleIteration}.

Next, let $x = 3\|\xi\|/\sqrt{1-\delta}$.
We shall prove by induction that
\begin{align}\label{eq:sens_noisy_induction}
\|X_{t} - X^*\|_F \leq \frac{1}{4^{2^t-1}} \frac{\sigma_r^*}{c} + 2x.
\end{align}
%In particular, this implies that $\|X_{t} - X^*\|_F \leq 2x$ as $t\to \infty$.
At $t=0$, \cref{eq:sens_noisy_induction} follows by the initialization assumption \cref{eq:sens_noisy_initAssumption}. By combining \cref{eq:sens_noisy_initAssumption,lem:sens_noisy_singleIteration}, \cref{eq:sens_noisy_induction} holds also at $t=1$, since
\begin{align*}
\|X_1 - X^*\|_F \leq \gamma \|X_0-X^*\|_F^2 + x \leq \frac{\sigma_r^*}{4c} + x \leq \frac{\sigma_r^*}{4c} + 2x .
\end{align*}
Next, assume \cref{eq:sens_noisy_induction} holds at some $t\geq 1$. Then
\begin{align*}
\|X_{t+1} - X^*\|_F &\leq \gamma \|X_{t} - X^*\|_F^2 + x \leq \frac{c}{4\sigma_r^*} \left(\frac{1}{4^{2^{t}-1}} \frac{\sigma_r^*}{c} + 2x\right)^2 + x \\
&= \frac{1}{4^{2^{{t}+1}-1}} \frac{\sigma_r^*}{c} + \frac{1}{4^{2^{t}-1}} x + \frac{c}{\sigma_r^*}x^2 + x
\leq \frac{1}{4^{2^{{t}+1}-1}} \frac{\sigma_r^*}{c} + \left(\frac{1}{4^{2^{t}-1}} + \frac{3}{2}\right) x,
\end{align*}
where the last inequality follows by the assumption $x \leq \sigma_r^*/(2c)$ \cref{eq:sens_noisy_xi_assumption}. Since $\frac{1}{4^{2^{t}-1}} + \frac{3}{2} \leq 2$ for any $t\geq 1$, \cref{eq:sens_noisy_induction} holds at $t+1$. This completes the proof.
\end{proof}

\begin{proof}[Proof of \cref{lem:sens_noisy_singleIteration}]
The proof consists of two parts. First, we show that the next error is bounded as
\begin{align}\label{eq:sens_noisy_et+1_bound}
e_{t+1} &\leq \frac{1}{2(\sqrt 2 - 1)} \sqrt\frac{1+\delta}{1-\delta} \frac{e_t^2}{\sigma_r^*} + \frac 12 \sqrt\frac{1+\delta}{1-\delta} \|\Delta Z_t\|_F^2+ \frac{2}{\sqrt{1-\delta}}\|\xi\|
\end{align}
where $\Delta Z_t$ is the minimal norm solution to the least squares problem of \cref{eq:updatingVariant_LSQR}.
Second, we show that
\begin{align}\label{eq:sens_noisy_DeltaZt}
\|\Delta Z_t\|_F^2 \leq \frac{1+\delta}{1-\delta} \frac{e_t^2}{\sigma_r^*} + \frac{1}{2\sqrt{1+\delta}} \|\xi\|.
\end{align}
Combining these bounds gives
\begin{align*}
e_{t+1} &\leq \frac{1}{2(\sqrt 2 - 1)} \sqrt\frac{1+\delta}{1-\delta} \frac{e_t^2}{\sigma_r^*} + \frac 12 \sqrt\frac{1+\delta}{1-\delta} \left( \frac{1+\delta}{1-\delta} \frac{e_t^2}{\sigma_r^*} + \frac{1}{2\sqrt{1+\delta}} \|\xi\| \right) + \frac{2}{\sqrt{1-\delta}} \|\xi\| \\
&= \frac 12 \sqrt\frac{1+\delta}{1-\delta} \left(\frac{1}{\sqrt 2 - 1} + \frac{1+\delta}{1-\delta}\right) \frac{e_t^2}{\sigma_r^*} + \frac{9/4}{\sqrt{1-\delta}}\|\xi\|.
\end{align*}
Plugging the definitions $c=7(1+\delta)^\frac{3}{2}/(1-\delta)^\frac{3}{2}$ and $\gamma = c/(4\sigma_r^*)$ yields \cref{eq:sens_noisy_et+1_guarantee}, as
\begin{align*}
e_{t+1} &\leq \frac{c}{14} \left(\frac{1}{\sqrt 2 - 1} + 1\right) \frac{e_t^2}{\sigma_r^*} + \frac{9/4}{\sqrt{1-\delta}}\|\xi\| 
\leq \gamma e_t^2 + \frac{3}{\sqrt{1-\delta}}\|\xi\| .
\end{align*}

The proof of \cref{eq:sens_noisy_et+1_bound} follows the lines of the proof of \cref{lem:sens_et+1_bound}, but uses the Procrustes distance instead of the Q-distance.
Let $P$ be the minimizer of the Procrustes distance between $Z_t$ and $Z^*$, and denote $Z = \begin{psmallmatrix} U \\ V \end{psmallmatrix} = \begin{psmallmatrix} U^*P \\ V^*P \end{psmallmatrix}$ where $\begin{psmallmatrix} U^* \\ V^* \end{psmallmatrix} = \text{b-SVD}(X^*)$. Then, by \cref{lem:dP_e_bound}, $\Delta Z = Z_t - Z$ satisfies
\begin{align}
\|\Delta Z\|_F^2 \leq \frac{1}{(\sqrt 2 - 1)\sigma_r^*} \left(e_t^2 + \frac 14 \|U_t^\top U_t - V_t^\top V_t\|_F^2 \right) = \frac{e_t^2}{(\sqrt 2 - 1)\sigma_r^*} , \label{eq:sens_noisy_dP_bound}
\end{align}
where the equality follows by the fact that $U_t, V_t$ are balanced due to the additional SVD step of the algorithm, see \cref{eq:bSVD_is_balanced} of \cref{lem:bSVD_properties}.
Let $F_t^2$ be the objective function of the least squares problem. Denote $\begin{psmallmatrix} \Delta U \\ \Delta V \end{psmallmatrix} = \Delta Z$. Using $UV^\top = X^*$,
\begin{align*}
F_t\left(\Delta Z\right) &= \left\| \mathcal A\left(U_tV_t^\top + U_t \Delta V^\top + \Delta U V_t^\top \right) - b \right\| \\
&= \left\| \mathcal A\left(U_tV_t^\top + U_t \Delta V^\top + \Delta U V_t^\top - UV^\top \right) - \xi \right\|.
\end{align*}
Since $U_tV_t + U_t \Delta V^{\top} + \Delta U V_t^\top - UV^{\top} = -\Delta U \Delta V^{\top}$,
\begin{align}\label{eq:noisy_Ft_temp_bound}
F_t(\Delta Z) &= \|\mathcal A\left(\Delta U \Delta V^{\top} \right) + \xi\| 
\leq \|\mathcal A\left(\Delta U \Delta V^{\top}\right)\| + \|\xi\|
\leq \sqrt{1+\delta_r} \|\Delta U \Delta V^\top\|_F + \|\xi\|, 
\end{align}
where the last inequality follows by the fact that $\mathcal A$ satisfies an r-RIP with a constant $\delta_r$. By the Cauchy-Schwarz inequality and the fact that $ab \leq (a^2+b^2)/2$, we have $\|\Delta U \Delta V^\top\|_F \leq (\|\Delta U\|_F^2+\|\Delta V\|_F^2)/2 = \|\Delta Z\|_F^2/2$. Combining this with \cref{eq:sens_noisy_dP_bound,eq:noisy_Ft_temp_bound} yields
\begin{equation}\label{eq:noisy_Ft_upper_bound}
F_t(\Delta Z) \leq \frac{\sqrt{1+\delta_r}}{2(\sqrt 2 - 1)\sigma_r^*}e_t^2 + \|\xi\| .
\end{equation}

Next, we lower bound $F_t$ at the minimal norm solution $\Delta Z_t$. Similar to the proof of \cref{eq:sens_Ft_lowerBound}, any feasible solution to the least squares problem, including $\Delta Z_t$, satisfies
\begin{align*}
F_t(\Delta Z_t) &= \|\mathcal A\left(U_tV_t + U_t \Delta V_t^\top + \Delta U_t V_t^\top - X^*\right) - \xi\|
= \|\mathcal A\left(X_{t+1} - \Delta U_t \Delta V_t^\top - X^*\right) - \xi\| \\
&\geq \|\mathcal A(X_{t+1} - X^*)\| - \|\mathcal A(\Delta U_t \Delta V_t^\top)\| - \|\xi\| .
\end{align*}
Using again the RIP of $\mathcal A$, the Cauchy-Schwarz inequality and $ab \leq (a^2+b^2)/2$ yields
\begin{align}\label{eq:noisy_Ft_lowerBound}
F_t(\Delta Z_t) &\geq \sqrt{1-\delta_{2r}} e_{t+1} - \frac{\sqrt{1+\delta_r}}{2} \|\Delta Z_t\|_F^2 - \|\xi\| .
\end{align}
Since $\Delta Z_t$ minimizes $F_t$ by construction, in particular $F_t(\Delta Z_t) \leq F_t(\Delta Z)$. Hence, combining \cref{eq:noisy_Ft_upper_bound,eq:noisy_Ft_lowerBound} with the assumption $\delta_{2r} \leq \delta$ yields \cref{eq:sens_noisy_et+1_bound}.

Next, we prove \cref{eq:sens_noisy_DeltaZt}.
By tracing the proof of \cref{lem:minNormSol_normBound}, it is easy to verify that in the noisy case,
\begin{align}\label{eq:sens_noisy_DeltaZt_temp}
\|\Delta Z_t\|_F^2 &\leq \frac{1+\delta_{2r}}{1-\delta_{2r}} \frac{(e_t+\|\xi\|)^2}{\min\{\sigma_r^2(U_t), \sigma_r^2(V_t)\}} \leq \frac{1+\delta}{1-\delta} \frac{(e_t+\|\xi\|)^2}{\min\{\sigma_r^2(U_t), \sigma_r^2(V_t)\}} .
\end{align}
By combining \cref{prop:sigmaMin_U} with \cref{eq:sens_noisy_dP_bound} we obtain
\begin{align*}
\sigma_r(U_t) \geq \sigma_r(U^*) - d_P(U_t, U^*) \geq \sqrt{\sigma_r^*} - \|\Delta Z\|_F \geq \sqrt{\sigma_r^*} - \frac{e_t}{\sqrt{(\sqrt 2 - 1)\sigma_r^*}} .
\end{align*}
Employing assumptions \cref{eq:sens_noisy_xi_assumption,eq:sens_noisy_et_assumption} yields
\begin{align*}
\sigma_r(U_t) \geq \left(1 - \frac{1}{c} - \frac{3}{6c}\right) \frac{\sqrt{\sigma_r^*}}{\sqrt{\sqrt 2 - 1}} \geq \frac{\sqrt{\sigma_r^*}}{\sqrt{2(\sqrt 2 - 1)}} ,
\end{align*}
where the last inequality follows since $c = 7(1+\delta)^\frac{3}{2}/(1-\delta)^\frac{3}{2} \geq 7$.
Together with a similar bound on $\sigma_r(V_t)$ and employing again assumptions \cref{eq:sens_noisy_xi_assumption,eq:sens_noisy_et_assumption}, \cref{eq:sens_noisy_DeltaZt_temp} gives that
\begin{align*}
\|\Delta Z_t\|_F^2 &\leq 2(\sqrt 2 - 1) \frac{1+\delta}{1-\delta} \frac{(e_t+\|\xi\|)^2}{\sigma_r^*} = 2(\sqrt 2 - 1) \frac{1+\delta}{1-\delta} \left( \frac{e_t^2}{\sigma_r^*} + \frac{2e_t + \|\xi\|}{\sigma_r^*}\|\xi\| \right) \\
&\leq 2(\sqrt 2 - 1) \frac{1+\delta}{1-\delta} \left( \frac{e_t^2}{\sigma_r^*} + \frac{2\cdot (1+3/6) + 1/6}{c}\|\xi\| \right) \leq \frac{1+\delta}{1-\delta} \frac{e_t^2}{\sigma_r^*} + \frac{1}{2\sqrt{1+\delta}} \|\xi\| .
\end{align*}
This completes the proof of \cref{eq:sens_noisy_DeltaZt}.
\end{proof}

%%%%%%%%%%%%%%%%%%%%%%%%%%%%%%%%%%%%%%%%%%%%%%%%%%%%%%%%%%%%%%%%%%%%%
%%%%%%%%%%%%%%%%%%%%%%%%%%%%%%%%%%%%%%%%%%%%%%%%%%%%%%%%%%%%%%%%%%%%%
\section{Proof of \cref{thm:RIP}\label{sec:RIP_proof} (uniform RIP for matrix completion)}
Before we present the proofs for the matrix completion setting, we make the following two remarks.
\begin{remark}\label{rem:probability_model}
Our results for matrix completion (\cref{thm:comp_linearConvergence,thm:comp_quadConvergence,thm:RIP}) are stated with respect to a uniform random model of the sampling pattern, namely $\Omega$ is drawn uniformly from $2^{[n_1]\times [n_2]}$ with fixed size $|\Omega|$.
Similar to previous works, our analysis assumes the more convenient Bernoulli model, in which each entry of $X^*$ is observed with probability $p$.
These models are equivalent in the sense that under the Bernoulli model, $pn_1n_2 - C \sqrt{n_2\log n_2} \leq |\Omega| \leq pn_1n_2 + C \sqrt{n_2\log n_2}$ with probability~$1-1/n^{10}$ where $C$ is a constant \cite[section I.D]{keshavan2010matrix}.
Consequently, a result that holds w.p.~$1-\mathcal O(1/n^c)$ for some $c\leq 10$ under the uniform random model with a certain $|\Omega|$, holds with similar probability under the Bernoulli model with $p \equiv \frac{|\Omega|}{n_1 n_2}$.
\end{remark}

\begin{remark}\label{rem:absolute_constants}
In our results, if an argument holds for some constants (such as $C, c_e, c_l$, etc.), then it also holds for larger constants.
As a consequence, if, for example, Lemma A claims that there exists a constant $c_1$ such that argument A$(c_1)$ holds, and Lemma B claims that there exists an a constant $c_2$ such that argument B$(c_2)$ holds, then there exists a constant $c_3$ such that arguments A$(c_3) \land$B$(c_3)$ hold, as we can always choose $c_3 \geq \max\{c_1,c_2\}$.
\end{remark}

The proof of \cref{thm:RIP} relies on the following two technical lemmas.
The first lemma provides bounds on two distance measures between $Z \in \mathcal B_\textnormal{err}({\epsilon}/{c_e}) \cap \mathcal B_\textnormal{bln}({1}/{c_l}) \cap \mathcal B_\mu$ and a nearby $Z^* \in \mathcal \mathcal B^* \cap \mathcal B_\mu$.
%The lemma is proved in \cref{sec:delta_bounds_proof}.
The second lemma states that theses bounds are sufficient for the RIP \cref{eq:RIP} to hold.
%The lemma is proved in \cref{sec:RIP_fromDeltaProductBounds_proof}.
These lemmas are also used in the proof of \cref{thm:comp_linearConvergence,thm:comp_quadConvergence}.
\begin{lemma}\label{lem:delta_bounds}
There exist constants $C, c_l, c_e$ such that the following holds.
Let $X^* \in \mathcal M(n_1, n_2, r, \mu, \kappa)$ and $\epsilon \in (0,1)$, and assume $\Omega \subseteq [n_1]\times [n_2]$ is randomly sampled with $np \geq C \max\{\log n, {\mu^2 r^2 \kappa^2}/{\epsilon^4}\}$.
Then w.p.~at least $1 - {2}/{n^5}$, for any $\begin{psmallmatrix} U \\ V \end{psmallmatrix} \in \mathcal B_\textnormal{err}({\epsilon}/{c_e}) \cap \mathcal B_\textnormal{bln}({1}/{c_l}) \cap \mathcal B_\mu$ with $X = UV^\top$ there exists $\begin{psmallmatrix} U^* \\ V^* \end{psmallmatrix} \in \mathcal B^* \cap \mathcal B_\mu$ such that
\begin{subequations}\label{eq:delta_bounds}\begin{align}
\|U-U^*\|^2_F + \|V-V^*\|^2_F &\leq \frac{\epsilon}{2} \|X-X^*\|_F, \label{eq:delta_F_bound} \\
\frac{1}{\sqrt p}\|(U - U^*)(V - V^*)^\top\|_{F(\Omega)} &\leq \frac{\epsilon}{6} \|X-X^*\|_F . \label{eq:deltaProduct_FOmega_bound}
\end{align}\end{subequations}
\end{lemma}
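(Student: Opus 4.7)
The plan is to construct the required $Z^*$ via the alignment matrix supplied by Lemma \ref{lem:dQ_e_bound}, obtain (\ref{eq:delta_F_bound}) essentially for free, and then reduce (\ref{eq:deltaProduct_FOmega_bound}) to a uniform one-sided concentration bound for the weighted Frobenius norm $\|\cdot\|_{F(\Omega)}$.

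First, let $Z_0^* = \text{b-SVD}(X^*)$, so that $Z_0^* \in \mathcal{B}^* \cap \mathcal{B}_\mu$ by Lemma \ref{lem:bSVD_properties}. For any $Z \in \mathcal{B}_\text{err}(\epsilon/c_e) \cap \mathcal{B}_\text{bln}(1/c_l)$, choosing $c_e,c_l$ sufficiently large makes the hypothesis (\ref{eq:dQ_e_bound_assumption}) of Lemma \ref{lem:dQ_e_bound} hold. That lemma then produces an invertible $Q$ with $\|Q\|_2 \leq 4/3$ (the same bound holding for $\|Q^{-1}\|_2$ via the Weyl-type estimate in its proof) such that $Z^* := \begin{psmallmatrix} U_0^* Q \\ V_0^* Q^{-\top} \end{psmallmatrix}$ satisfies $\|\Delta U\|_F^2 + \|\Delta V\|_F^2 \leq \tfrac{25}{4\sigma_r^*}\|X-X^*\|_F^2$, where $\Delta U := U-U^*$ and $\Delta V := V-V^*$. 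Since $\|Q\|_2,\|Q^{-1}\|_2 \leq 4/3 \leq \sqrt{3}$, the $\mathcal{B}_\mu$ row-norm bounds on $U_0^*,V_0^*$ transfer to $U^*,V^*$, so $Z^* \in \mathcal{B}^* \cap \mathcal{B}_\mu$. Combining with $\|X-X^*\|_F \leq (\epsilon/c_e)\sigma_r^*$ gives $\|\Delta U\|_F^2 + \|\Delta V\|_F^2 \leq (25\epsilon/4c_e)\|X-X^*\|_F$, which is below $(\epsilon/2)\|X-X^*\|_F$ once $c_e$ is large enough, yielding (\ref{eq:delta_F_bound}). By selecting $c_e$ even larger, we in fact obtain $\|\Delta U\|_F^2 + \|\Delta V\|_F^2 \leq \tau \epsilon \|X-X^*\|_F$ for any prescribed absolute constant $\tau > 0$, a slack that will be used below.

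For (\ref{eq:deltaProduct_FOmega_bound}), set $M := \Delta U \Delta V^\top$. From $\|\Delta U\|_F \|\Delta V\|_F \leq \tfrac{1}{2}(\|\Delta U\|_F^2 + \|\Delta V\|_F^2)$ we get $\|M\|_F \leq (\tau\epsilon/2)\|X-X^*\|_F$, while $Z,Z^* \in \mathcal{B}_\mu$ plus the triangle inequality give $\|\Delta U\|_{2,\infty}^2 \lesssim \mu r \sigma_1^*/n_1$ and $\|\Delta V\|_{2,\infty}^2 \lesssim \mu r \sigma_1^*/n_2$, and hence the entry-wise control $\|M\|_\infty \lesssim \mu r \sigma_1^*/n$. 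Given these bounds, it suffices to establish a uniform one-sided RIP $\tfrac{1}{\sqrt p}\|M\|_{F(\Omega)} \leq 2\|M\|_F$ over the admissible family of such $M$, and then pick $\tau = 1/6$. For a fixed $M$, Bernstein applied under the Bernoulli model (Remark \ref{rem:probability_model}) to $\|M\|_{F(\Omega)}^2 - p\|M\|_F^2 = \sum_{i,j}(\delta_{ij}-p)M_{ij}^2$ yields
$$\bigl|\|M\|_{F(\Omega)}^2 - p\|M\|_F^2\bigr| \lesssim \sqrt{p \log n}\,\|M\|_\infty \|M\|_F + \log n \cdot \|M\|_\infty^2$$
with probability $1 - \mathcal{O}(n^{-10})$, and the hypothesis $np \geq C\max\{\log n, \mu^2 r^2 \kappa^2/\epsilon^4\}$ drives the right-hand side below $3 p \|M\|_F^2$. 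To extend this uniformly over all $M = \Delta U \Delta V^\top$ of the required form, I would discretise the set of $(\Delta U,\Delta V)$ by an $\varepsilon$-net of cardinality $\exp(\mathcal{O}(rn))$, apply the Bernstein estimate on the net via a union bound, and bound the discretisation error using the Frobenius and entry-wise controls above; this is the mechanism underlying analogous estimates in \cite{sun2016guaranteed,chen2015completing}.

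The main obstacle is precisely this last step: the admissible set of $M$ is non-convex and is parameterised by factors carrying three simultaneous constraints (Frobenius norm, row norms, and rank). Constructing an $\varepsilon$-net whose log-cardinality stays at $\mathcal{O}(rn)$ while respecting all three constraints, and then ensuring the net error does not force the sample complexity to exceed $\mu^2 r^2 \kappa^2/\epsilon^4$, is the delicate combinatorial point. The exponents in the hypothesis on $np$ are precisely what make the Bernstein-plus-covering estimate self-consistent, and tightening them further (as one would need for an $\epsilon^{-2}$ rather than $\epsilon^{-4}$ dependence) is what will drive the stronger companion result \cref{thm:RIP} in the subsequent section.
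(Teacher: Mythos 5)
Your construction of $Z^*$ via \cref{lem:dQ_e_bound} and the derivation of \cref{eq:delta_F_bound} are correct and track the paper (which packages this alignment step in \cref{lem:deterministic_nearby_delta_bound}). The gap is entirely in \cref{eq:deltaProduct_FOmega_bound}.

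The one-sided RIP $\tfrac{1}{\sqrt p}\|M\|_{F(\Omega)} \le 2\|M\|_F$ you reduce to cannot be established by Bernstein plus covering, and in fact it fails for the kind of $M$ your hypotheses allow. Nothing you impose rules out $\Delta U = \delta e_1 f^\top$, $\Delta V = \delta e_1 f^\top$ with $\|f\|=1$ and $\delta$ small: then $M = \delta^2 e_1 e_1^\top$ has a single nonzero entry, $\|M\|_\infty = \|M\|_F = \delta^2$, and the Frobenius and row-norm controls you list are all satisfied for $\delta$ small enough. For this $M$ the right-hand side of your Bernstein estimate is $(\sqrt{p\log n} + \log n)\|M\|_F^2$, which already exceeds $3p\|M\|_F^2$ as soon as $\log n \ge 1$, regardless of how large $np$ is — so the claim that the hypothesis on $np$ ``drives the right-hand side below $3p\|M\|_F^2$'' is wrong pointwise. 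Worse, if $(1,1)\in\Omega$ (an event of probability $p \gg n^{-5}$), the ratio $\tfrac{1}{\sqrt p}\|M\|_{F(\Omega)}/\|M\|_F$ equals $1/\sqrt p$, not $\le 2$, so no deterministic RIP of this form can be salvaged. The $\varepsilon$-net step compounds the problem: a net of cardinality $\exp(\mathcal O(rn))$ would require failure probability $\exp(-\Omega(rn))$ per point in Bernstein, which is incompatible with a constant multiplicative factor.

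The paper's route is structurally different. It never asserts an RIP on $M$; instead, on a single good event it applies \cref{lem:KMO10_lemma71_consequence} (built from the Feige--Ofek / Keshavan--Montanari--Oh random-graph bound, \cref{lem:KMO10_lemma71}, which holds \emph{uniformly over all vectors} $x, y$ — hence no covering is needed), with $t\to\sigma_1^*$, $\epsilon\to\epsilon/(11\sqrt\kappa)$. This yields
\begin{align*}
\tfrac 1p \|\Delta U^* \Delta V^{*\top}\|^2_{F(\Omega)} \le \tfrac{\|\Delta U^*\|_F^2 + \|\Delta V^*\|_F^2}{2}\Bigl(c\bigl(\|\Delta U^*\|_F^2 + \|\Delta V^*\|_F^2\bigr) + \tfrac{\epsilon^2\sigma_r^*}{121}\Bigr),
\end{align*}
a bound that is allowed to be much larger than $\|M\|_F^2$ — the additive $t\epsilon^2$ term absorbs exactly the sparse-$M$ regime that kills your RIP. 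Combining this with the smallness of $\|\Delta U^*\|_F^2 + \|\Delta V^*\|_F^2$ from \cref{lem:deterministic_nearby_delta_bound} and $Z\in\mathcal B_\textnormal{err}(\epsilon/c_e)$ then gives \cref{eq:deltaProduct_FOmega_bound} directly. You should replace your entire concentration step with \cref{lem:KMO10_lemma71_consequence}; the $\tau$-slack you carefully arranged is not needed for the published argument.
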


In what follows, we denote by $\mathcal M(n_1, n_2, r, \mu)$ the set of $n_1\times n_2$ $\mu$-incoherent matrices of rank $r$, without specifying the condition number.
\begin{lemma}\label{lem:RIP_fromDeltaProductBounds}
There exists a constant $C$ such that the following holds.
Let $X^* \in \mathcal M(n_1, n_2, r, \mu)$ and $\epsilon \in (0,1)$, and assume $\Omega \subseteq [n_1]\times [n_2]$ is randomly sampled with $np \geq \frac{C}{\epsilon^2}\mu r \log n$.
Then w.p.~at least $1 - {3}/{n^3}$, for any $\begin{psmallmatrix} U \\ V \end{psmallmatrix} \in \mathbb R^{(n_1+n_2)\times r}$ for which there exists $\begin{psmallmatrix} U^* \\ V^* \end{psmallmatrix} \in \mathcal B^*$ that satisfies \cref{eq:delta_bounds}, the RIP \cref{eq:RIP} holds w.r.t.~$X = UV^\top$.
\end{lemma}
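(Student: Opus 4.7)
The plan is to decompose the residual $X - X^* = T + W$ where $T := \Delta U\,V^{*\top} + U^*\,\Delta V^\top$ and $W := \Delta U\,\Delta V^\top$, with $\Delta U := U - U^*$ and $\Delta V := V - V^*$, and control each piece separately. The key observation is that $T$ lies in the tangent space to the rank-$r$ manifold at $X^*$, which depends only on the column and row spaces of $X^*$ (not on the particular factorization $U^*,V^*\in\mathcal B^*$), while $W$ is a purely second-order term.

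First I would bound $W$ in Frobenius norm. By AM--GM combined with hypothesis \cref{eq:delta_F_bound},
\[
\|W\|_F \;\le\; \|\Delta U\|_F\,\|\Delta V\|_F \;\le\; \tfrac12\bigl(\|\Delta U\|_F^2+\|\Delta V\|_F^2\bigr) \;\le\; \tfrac{\epsilon}{4}\|X-X^*\|_F.
\]
The triangle inequality then sandwiches $\|T\|_F$ between $(1\pm\epsilon/4)\|X-X^*\|_F$.

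The main technical step is a uniform RIP on the tangent space at $X^*$: since $X^*$ is $\mu$-incoherent, a classical matrix-completion concentration argument (non-commutative Bernstein applied to the rank-one indicator matrices projected onto the tangent space, in the spirit of \cite{candes2009exact,recht2011simpler}) yields that for $np \ge (C/\epsilon^2)\,\mu r \log n$ with $C$ a sufficiently large absolute constant, with probability at least $1-3/n^3$,
\[
(1-\epsilon/6)\|M\|_F^2 \;\le\; \tfrac{1}{p}\|\mathcal P_\Omega M\|^2 \;\le\; (1+\epsilon/6)\|M\|_F^2 \quad \text{for every $M$ in the tangent space at $X^*$.}
\]
Because $T$ lies in that tangent space, the estimate applies with $M=T$; for $W$, hypothesis \cref{eq:deltaProduct_FOmega_bound} already provides $\tfrac{1}{\sqrt p}\|\mathcal P_\Omega W\| \le (\epsilon/6)\|X-X^*\|_F$ directly.

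Combining the pieces by the triangle inequality,
\[
\tfrac{1}{\sqrt p}\|\mathcal P_\Omega(X-X^*)\| \;\le\; \sqrt{1+\epsilon/6}\,(1+\tfrac{\epsilon}{4})\|X-X^*\|_F + \tfrac{\epsilon}{6}\|X-X^*\|_F \;\le\; (1+\epsilon)\|X-X^*\|_F
\]
for $C$ large enough, and symmetrically the matching lower bound holds. The main obstacle will be verifying the uniform tangent-space RIP with the desired dependence on $\epsilon$ and a failure probability compatible with the $3/n^3$ budget; while this is a standard ingredient in the matrix-completion literature, one must ensure the constants propagate correctly through the three error terms above. Everything else in the argument is elementary algebra once that concentration is established.
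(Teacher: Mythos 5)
Your proposal is correct and follows essentially the same path as the paper's proof: the decomposition $X-X^* = T + W$ with $T = \Delta U\,V^{*\top} + U^*\,\Delta V^\top$ and $W = \Delta U\,\Delta V^\top$ is exactly the paper's $A+B$, the AM--GM bound on $\|W\|_F$ and the direct use of \cref{eq:deltaProduct_FOmega_bound} for $\|W\|_{F(\Omega)}$ match, and the "key obstacle" you flag --- a uniform RIP on the tangent space to $X^*$ --- is precisely the paper's \cref{lem:YPCC16_lemma9}, which it imports from \cite{yi2016fast,zheng2016convergence} (tracing back to the concentration result in \cite{candes2009exact} you cite). The only differences are cosmetic: you work with the squared form of the tangent-space RIP and carry $\sqrt{1\pm\epsilon/6}$ through, while the paper takes a slightly more roundabout route via the ratios $(1\pm\epsilon/2)/(1\pm\epsilon/6)$; both bookkeepings close.
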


\begin{proof}[Proof of \cref{thm:RIP}]
By \cref{lem:delta_bounds}, there exists $\begin{psmallmatrix} U^* \\ V^* \end{psmallmatrix} \in \mathcal B^*$ such that \cref{eq:delta_bounds} holds.
The theorem thus follows by \cref{lem:RIP_fromDeltaProductBounds}.
\end{proof}

\subsection*{Proofs of \cref{lem:delta_bounds,lem:RIP_fromDeltaProductBounds}}%\label{sec:delta_bounds_proof}
Let us begin with two auxiliary lemmas. %, which are proved in \cref{sec:delta_bounds_auxiliary_proof}.
The first lemma provides a (deterministic) bound on the Frobenius distance between $Z \in \mathcal B_\textnormal{err}({1}/{c_e}) \cap \mathcal B_\textnormal{bln}({1}/{c_l})$ and a nearby $Z^* \in \mathcal B^* \cap \mathcal B_\mu$.
%It is also used in the context of the quadratic convergence (in the proof of\cref{lem:comp_quadConvergence_singleIteration,lem:quad_RIP}).

\begin{lemma}\label{lem:deterministic_nearby_delta_bound}
There exist constants $c_l, c_e$ such that the following holds.
Let $X^* \in \mathcal M(n_1, n_2, r, \mu)$.
Then for any $\begin{psmallmatrix} U \\ V \end{psmallmatrix} \in \mathcal B_\textnormal{err}({1}/{c_e}) \cap \mathcal B_\textnormal{bln}({1}/{c_l})$ there exists $\begin{psmallmatrix} U^* \\ V^* \end{psmallmatrix} \in \mathcal B^* \cap \mathcal B_\mu$ such that
\begin{align}
\|U - U^*\|_F^2 + \|V - V^*\|_F^2 &\leq \frac{25}{4 \sigma_r^*} \|UV^\top - X^*\|_F^2 . \label{eq:deterministic_nearby_delta_bound}
\end{align}
\end{lemma}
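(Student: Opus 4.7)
The plan is to define the desired factorization $Z^*$ directly from the b-SVD of $X^*$ aligned by the near-optimal matrix $Q$ produced by \cref{lem:dQ_e_bound}, and then verify separately that (a) the required Frobenius-norm inequality is immediate from \cref{lem:dQ_e_bound}, and (b) the incoherence-type membership $Z^* \in \mathcal B_\mu$ follows from the row-norm bounds on b-SVD factors (\cref{lem:bSVD_properties}) together with operator-norm control on $Q$ and $Q^{-1}$.

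First I would check the hypothesis of \cref{lem:dQ_e_bound}. Since $Z \in \mathcal B_\textnormal{err}(1/c_e) \cap \mathcal B_\textnormal{bln}(1/c_l)$, one has
\begin{align*}
\|UV^\top - X^*\|_F^2 + \tfrac{1}{4}\|U^\top U - V^\top V\|_F^2 \leq \left(\tfrac{1}{c_e^2} + \tfrac{1}{4 c_l^2}\right)\sigma_r^{*2},
\end{align*}
so for $c_e,c_l$ large enough this is bounded by $\tfrac{\sqrt 2 -1}{400}\sigma_r^{*2}$, which is exactly condition \cref{eq:dQ_e_bound_assumption}. Let $\begin{psmallmatrix} \bar U \\ \bar V \end{psmallmatrix} = \text{b-SVD}(X^*)$. \Cref{lem:dQ_e_bound} yields an invertible $Q$ with $\|Q\|_2 \leq 4/3$ such that
\begin{align*}
\|U - \bar U Q\|_F^2 + \|V - \bar V Q^{-\top}\|_F^2 \leq \tfrac{25}{4\sigma_r^*}\|UV^\top - X^*\|_F^2.
\end{align*}
Tracing the proof of \cref{lem:dQ_e_bound}, the same $Q$ additionally satisfies $\|Q^{-1}\|_2 \leq 5/4$: indeed there $Q = \tilde Q^{-1}$ with $\|\tilde Q - P\|_2 \leq 1/4$ for an orthogonal $P$, so by Weyl's inequality $\|\tilde Q\|_2 \leq 5/4$, i.e.\ $\|Q^{-1}\|_2 \leq 5/4$.

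I would then set $U^* = \bar U Q$ and $V^* = \bar V Q^{-\top}$. By construction $U^* V^{*\top} = \bar U Q Q^{-1} \bar V^\top = X^*$, so $Z^* \in \mathcal B^*$, and the desired inequality \cref{eq:deterministic_nearby_delta_bound} is exactly the bound displayed above. It remains to check $Z^* \in \mathcal B_\mu$. Using $\|AB\|_{2,\infty} \leq \|A\|_{2,\infty}\|B\|_2$ together with \cref{eq:bSVD_row_norms} of \cref{lem:bSVD_properties},
\begin{align*}
\|U^*\|_{2,\infty} \leq \|\bar U\|_{2,\infty}\|Q\|_2 \leq \tfrac{4}{3}\sqrt{\tfrac{\mu r \sigma_1^*}{n_1}}, \qquad
\|V^*\|_{2,\infty} \leq \|\bar V\|_{2,\infty}\|Q^{-\top}\|_2 \leq \tfrac{5}{4}\sqrt{\tfrac{\mu r \sigma_1^*}{n_2}}.
\end{align*}
Since $(4/3)^2 = 16/9 < 3$ and $(5/4)^2 = 25/16 < 3$, both row norms are bounded by $\sqrt{3\mu r\sigma_1^*/n_i}$, proving $Z^* \in \mathcal B_\mu$.

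The only non-routine point in the argument is the need for a two-sided operator-norm control on the alignment matrix $Q$. The statement of \cref{lem:dQ_e_bound} explicitly records only $\|Q\|_2 \leq 4/3$, but the bound on $\|Q^{-1}\|_2$ is implicit in its proof via the application of \cref{lem:lemma1_MLC21} to an orthogonal reference matrix $P$; I expect this is the only subtlety and it is handled by simply quoting Weyl's inequality as above. Everything else is a direct assembly of previously proved facts, and the numerical constants $16/9, 25/16 < 3$ precisely justify the constant $3$ appearing in the definition of $\mathcal B_\mu$.
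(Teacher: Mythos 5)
Your proof is correct and follows essentially the same route as the paper: apply \cref{lem:dQ_e_bound} to obtain the alignment matrix $Q$, define $Z^* = \begin{psmallmatrix} \bar U Q \\ \bar V Q^{-\top} \end{psmallmatrix}$, and verify $Z^* \in \mathcal B_\mu$ via row-norm bounds. Your explicit derivation of $\|Q^{-1}\|_2 \leq 5/4$ by tracing the proof of \cref{lem:dQ_e_bound} (where $Q=\tilde Q^{-1}$ and $\|\tilde Q\|_2 \leq 5/4$) is actually more careful than the paper, which dispatches the $\|V^*\|_{2,\infty}$ bound with a "similarly" that implicitly uses the same two-sided control on $Q$.
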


The second lemma is a direct consequence of \cite[Lemma~7.1]{keshavan2010matrix}.
\begin{lemma}\label{lem:KMO10_lemma71_consequence}
There exist constants $C,c$ such that the following holds for any $\mu, t, \epsilon > 0$.
Assume $\Omega \subseteq [n_1]\times [n_2]$ is randomly sampled with $np \geq C \max\{\log n, \mu^2 r^2/\epsilon^4\}$. Then w.p.~at least $1 - {2}/{n^5}$, for any $\begin{psmallmatrix} U \\ V \end{psmallmatrix} \in \mathbb R^{(n_1+n_2)\times r}$ such that
\begin{align}\label{eq:KMO10_lemma71_consequence_assumption}
\|U\|_{2,\infty} \leq 4\sqrt{\mu r t/n_1}, \quad
\|V\|_{2,\infty} \leq 4\sqrt{\mu r t/n_2},
\end{align}
we have
\begin{align}\label{eq:KMO10_lemma71_consequence}
\frac 1p \|U V^\top\|^2_{F(\Omega)} \leq \frac{\|U\|_F^2 + \|V\|_F^2}{2} \left( c \left(\|U\|_F^2 + \|V\|_F^2\right) + t \epsilon^2 \right) .
\end{align}
\end{lemma}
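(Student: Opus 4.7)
The plan is to derive the inequality as an essentially immediate consequence of \cite[Lemma~7.1]{keshavan2010matrix}, which is a uniform deviation estimate for the $\Omega$-restricted squared Frobenius norm of $UV^\top$ over all factor pairs with bounded row incoherence. Concretely, absorbing the factor $4$ into an effective incoherence parameter $\mu' = 16\mu t$, the hypothesis \cref{eq:KMO10_lemma71_consequence_assumption} takes the shape $\|U\|_{2,\infty}^2 \leq \mu' r/n_1$, $\|V\|_{2,\infty}^2 \leq \mu' r/n_2$, which is exactly the regime covered by KMO's result. So the first step is just to make this bookkeeping translation of incoherence parameters.

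The second step is to apply KMO's lemma directly. Under the translation above it produces, with probability at least $1 - 2/n^5$, the uniform fluctuation bound
\[
\Bigl|\tfrac{1}{p}\|UV^\top\|_{F(\Omega)}^2 - \|UV^\top\|_F^2 \Bigr| \leq \tfrac{t \epsilon^2}{2}\bigl(\|U\|_F^2 + \|V\|_F^2\bigr),
\]
provided $np \geq C\max\{\log n,\ \mu^2 r^2/\epsilon^4\}$ for $C$ sufficiently large; the quadratic dependence on incoherence in the sample-complexity threshold reflects the $\mu'{}^2$-dependence in KMO's uniform bound, and the $\epsilon^{-4}$ factor arises because $\epsilon$ enters squared in the deviation. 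For the expectation piece, I would invoke the deterministic submultiplicativity estimate $\|UV^\top\|_F^2 \leq \|U\|_F^2 \|V\|_F^2$ (a special case of \cref{prop:propB4_SL16}), followed by AM--GM, to obtain $\|UV^\top\|_F^2 \leq \tfrac{1}{4}(\|U\|_F^2 + \|V\|_F^2)^2$.

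Adding the deterministic and probabilistic estimates via the triangle inequality yields
\[
\tfrac{1}{p}\|UV^\top\|_{F(\Omega)}^2 \;\leq\; \tfrac{\|U\|_F^2+\|V\|_F^2}{2}\Bigl(c\bigl(\|U\|_F^2+\|V\|_F^2\bigr) + t\epsilon^2\Bigr)
\]
for any $c \geq 1/2$, which is exactly \cref{eq:KMO10_lemma71_consequence}. The main obstacle I anticipate is not the algebra but confirming that \cite[Lemma~7.1]{keshavan2010matrix}, as stated there in slightly different notation, indeed delivers a \emph{uniform-in-$(U,V)$} fluctuation bound with the precise dependence on $t$, $\epsilon$, and the sample-complexity threshold used here, rather than merely a pointwise-in-$(U,V)$ bound. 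In the latter case one would need an extra covering argument over the admissible set of factor pairs, which typically only inflates $C$ by a logarithmic factor and leaves the threshold $np \gtrsim \max\{\log n,\ \mu^2 r^2/\epsilon^4\}$ intact.
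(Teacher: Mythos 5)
Your proof rests on a mischaracterization of \cite[Lemma~7.1]{keshavan2010matrix}, and that mischaracterization is where the argument breaks. That lemma (reproduced in the paper as \cref{lem:KMO10_lemma71}) is the Feige--Ofek random-graph bound: it is a \emph{one-sided} inequality on the bilinear form $\sum_{(i,j)\in\Omega} x_i y_j$ for vectors $x\in\mathbb R^{n_1}$, $y\in\mathbb R^{n_2}$, namely $\sum_{(i,j)\in\Omega} x_i y_j \leq c\left(p\|x\|_1\|y\|_1 + \sqrt{np}\,\|x\|_2\|y\|_2\right)$. It is not a uniform deviation estimate for $\tfrac1p\|UV^\top\|^2_{F(\Omega)}$, and it does not produce the two-sided fluctuation bound $\bigl|\tfrac1p\|UV^\top\|^2_{F(\Omega)} - \|UV^\top\|_F^2\bigr|\lesssim t\epsilon^2(\|U\|_F^2+\|V\|_F^2)$ that your argument hinges on. That fluctuation bound is an unproven intermediate claim: your Step~2 does not follow from the cited result, and nothing in your proposal supplies the covering/chaining argument (or any other mechanism) needed to establish it. The arithmetic of your Step~3 is fine, but it is built on an assertion you never derived.

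The actual proof applies the bilinear-form bound to the vectors of squared row norms, $x_i = \|U^{(i)}\|^2$ and $y_j=\|V^{(j)}\|^2$, after observing by Cauchy--Schwarz that $[UV^\top]_{ij}^2 \leq x_i y_j$. This gives $\tfrac1p\|UV^\top\|^2_{F(\Omega)} \leq c\bigl(\|x\|_1\|y\|_1 + \sqrt{n/p}\,\|x\|_2\|y\|_2\bigr)$, and the rest is deterministic bookkeeping: $\|x\|_1\|y\|_1 = \|U\|_F^2\|V\|_F^2 \leq \tfrac12(\|U\|_F^2+\|V\|_F^2)^2$; $\|x\|_2^2 \leq \|U\|_{2,\infty}^2\|U\|_F^2$ (and similarly for $y$), so $\|x\|_2\|y\|_2 \leq \tfrac12\|U\|_{2,\infty}\|V\|_{2,\infty}(\|U\|_F^2+\|V\|_F^2)$; and the hypotheses $np\gtrsim\mu^2r^2/\epsilon^4$ together with the row-norm bounds \cref{eq:KMO10_lemma71_consequence_assumption} give $\sqrt{n/p}\lesssim t\epsilon^2/(\|U\|_{2,\infty}\|V\|_{2,\infty})$, so the second term becomes $\lesssim t\epsilon^2(\|U\|_F^2+\|V\|_F^2)$. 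Notice this route never produces $\|UV^\top\|_F^2$ on the right-hand side; it produces $\|U\|_F^2\|V\|_F^2$, which is exactly why the lemma's conclusion is stated in terms of factor Frobenius norms and why your detour through an RIP-style centering is not the intended (or an available) path here.
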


\begin{proof}[Proof of \cref{lem:delta_bounds}]
%Denote by $\sigma_1^*$ and $\sigma_r^*$ the maximal and minimal singular values of $X^*$, respectively.
Given $Z = \begin{psmallmatrix} U \\ V \end{psmallmatrix}$, let $Z^* = \begin{psmallmatrix} U^* \\ V^* \end{psmallmatrix} \in \mathcal B^* \cap \mathcal B_\mu$ be the corresponding factor matrices from \cref{lem:deterministic_nearby_delta_bound}.
We shall prove that $Z^*$ satisfies \cref{eq:delta_bounds} w.p.~at least $1 - {1}/{n^5}$.
First, by $Z \in \mathcal B_\textnormal{err}({\epsilon}/{c_e})$ we have $\|X - X^*\|_F \leq {\epsilon \sigma_r^*}/{c_e}$.
Equation~\cref{eq:delta_F_bound} follows for large enough $c_e$ by combining this with \cref{eq:deterministic_nearby_delta_bound} of \cref{lem:deterministic_nearby_delta_bound}.

Next, we prove \cref{eq:deltaProduct_FOmega_bound}.
Since both $Z, Z^* \in \mathcal B_\mu$, the difference $\Delta U^* = U - U^*$ satisfies
\begin{align*}
\|\Delta U^*\|_{2,\infty} &\leq \|U\|_{2,\infty} + \|U^*\|_{2,\infty} \leq 2\sqrt{3\mu r \sigma_1^*/n_1},
\end{align*}
and similarly $\|\Delta V^*\|_{2,\infty} \leq 2\sqrt{3\mu r \sigma_1^*/n_2}$ where $\Delta V^* = V - V^*$.
Invoking \cref{lem:KMO10_lemma71_consequence} with $t\to \sigma_1^*$ and $\epsilon \to \epsilon/(11\sqrt\kappa)$ yields
\begin{align}\label{eq:deltaProduct_p_boundTemp}
\frac 1p \|\Delta U^* \Delta V^{*\top}\|^2_{F(\Omega)} &\leq \frac{\|\Delta U^*\|_F^2 + \|\Delta V^*\|_F^2}{2} \left(c\left(\|\Delta U^*\|_F^2 + \|\Delta V^*\|_F^2\right) + \frac{\epsilon^2 \sigma_r^*}{121} \right) .
\end{align}
Next, we bound $\|\Delta U^*\|_F^2 + \|\Delta V^*\|_F^2$. To this end, recall that $Z^*$ are the factor matrices given by \cref{lem:deterministic_nearby_delta_bound}. Combining \cref{eq:deterministic_nearby_delta_bound} and the assumption $Z \in \mathcal B_\textnormal{err}({\epsilon}/{c_e})$ gives
\begin{align*}
\|\Delta U^*\|_F^2 + \|\Delta V^*\|_F^2 &\leq \frac{25}{8\sigma_r^*}\|UV^\top - X^*\|_F^2 \leq \frac{25\epsilon^2 \sigma_r^*}{8c_e^2}.
\end{align*}
By plugging this result into \cref{eq:deltaProduct_p_boundTemp} we obtain
\begin{align*}
\frac 1p \|\Delta U^* \Delta V^{*\top}\|^2_{F(\Omega)} &\leq \frac{25}{16\sigma_r^*} \|UV^\top - X^*\|_F^2 \left(\frac{25c\epsilon^2 \sigma_r^*}{8c_e^2} + \frac{\epsilon^2 \sigma_r^*}{120}\right) \\
&= \frac{25}{64} \left(\frac{25c}{c_e^2} + \frac{1}{15}\right)\epsilon^2 \|UV^\top - X^*\|_F^2,
\end{align*}
from which \cref{eq:deltaProduct_FOmega_bound} follows for large enough $c_e$.
\end{proof}

%\subsection*{Proof of \cref{lem:RIP_fromDeltaProductBounds}}%\label{sec:RIP_fromDeltaProductBounds_proof}
To prove \cref{lem:RIP_fromDeltaProductBounds} we shall use the following auxiliary result (see \cite[Lemma~9]{yi2016fast}, \cite[Lemma~10]{zheng2016convergence}), based on \cite[Theorem~4.1]{candes2009exact}. The lemma exploits the fact that $\Omega$ is random and independent of $X^*$ to \textit{uniformly} bound w.h.p.~the $F(\Omega)$-magnitude of matrices with the same row space and column space as $X^*$.
\begin{lemma}\label{lem:YPCC16_lemma9}
There exists a constant $C$ such that the following holds.
Let $X^* \in \mathcal M(n_1, n_2, r, \mu)$, and denote its SVD $X^* =  \bar U \Sigma \bar V^{\top}$.
%Define the operator \linebreak $T: \mathbb R^{(n_1+n_2)\times r}\to \mathbb R^{n_1\times n_2}$ as
%\begin{align*}
%T\left( \begin{pmatrix} U \\ V \end{pmatrix} \right) = \bar U V^\top + U \bar V^{\top} .
%\end{align*}
Define the subspace $\mathcal T \subset \mathbb R^{n_1\times n_2}$ as
\begin{align*}
\mathcal T = \left\{\bar U V^\top + U \bar V^\top \,\mid\, U\in \mathbb R^{n_1\times r}, V \in \mathbb R^{n_2\times r} \right\}.
\end{align*}
Let $\epsilon \in (0,1)$, and assume $\Omega \subseteq [n_1]\times [n_2]$ is randomly sampled with $np \geq \frac{C}{\epsilon^2} \mu r \log n$.
Then w.p.~at least $1 - 2/n^3$, any $Z \in \mathcal T$ satisfies the RIP
\begin{align*}\begin{aligned}
(1-\epsilon) \|Z\|_F^2 &\leq \frac 1p \|Z\|^2_{F(\Omega)} \leq (1+\epsilon) \|Z\|^2_F .
\end{aligned}\end{align*}
\end{lemma}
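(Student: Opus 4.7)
\textbf{Proof proposal for Lemma~\ref{lem:YPCC16_lemma9}.}
The plan is to recast the claimed two-sided inequality as an operator norm bound and then apply the non-commutative Bernstein inequality, following the original argument of Candes--Recht (which is exactly what Yi \emph{et al.} and Zheng--Lafferty also do). First I will switch from the uniform without-replacement model for $\Omega$ to the Bernoulli model with parameter $p$, at the cost of an absorbable constant in the probability of failure, as permitted by \cref{rem:probability_model}. Let $\delta_{ij}$ denote the resulting i.i.d.~$\mathrm{Bernoulli}(p)$ indicators. Let $\mathcal{P}_{\mathcal{T}}:\mathbb{R}^{n_1\times n_2}\to \mathcal{T}$ be the orthogonal projector onto the tangent space; an easy computation shows
\[
\mathcal{P}_{\mathcal{T}}(M) \;=\; \bar U\bar U^{\top} M + M\bar V\bar V^{\top} - \bar U\bar U^{\top}M\bar V\bar V^{\top}.
\]
Writing the sampling operator as $\mathcal{P}_\Omega^{*}\mathcal{P}_\Omega(M)=\sum_{(i,j)}\delta_{ij}\langle M,e_ie_j^{\top}\rangle e_ie_j^{\top}$, the desired two-sided bound is equivalent to
\[
\Bigl\|\mathcal{P}_{\mathcal{T}}\bigl(\tfrac{1}{p}\mathcal{P}_\Omega^{*}\mathcal{P}_\Omega-\mathcal{I}\bigr)\mathcal{P}_{\mathcal{T}}\Bigr\|_{\mathrm{op}}\;\leq\;\epsilon,
\]
since every $Z\in\mathcal{T}$ satisfies $\mathcal{P}_{\mathcal{T}}(Z)=Z$ and the quadratic form $\langle Z, \mathcal{P}_{\mathcal{T}}(\tfrac{1}{p}\mathcal{P}_\Omega^{*}\mathcal{P}_\Omega-\mathcal{I})\mathcal{P}_{\mathcal{T}}(Z)\rangle$ equals $\tfrac{1}{p}\|Z\|_{F(\Omega)}^{2}-\|Z\|_F^{2}$.

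Next I would decompose the operator of interest as a sum of independent, mean-zero self-adjoint random operators on $\mathbb{R}^{n_1\times n_2}$:
\[
\mathcal{S}\;=\;\sum_{(i,j)} \mathcal{X}_{ij}, \qquad \mathcal{X}_{ij}\;=\;\Bigl(\tfrac{\delta_{ij}}{p}-1\Bigr)\,\mathcal{P}_{\mathcal{T}}(e_ie_j^{\top})\otimes\mathcal{P}_{\mathcal{T}}(e_ie_j^{\top}),
\]
where $A\otimes A$ denotes the rank-one operator $M\mapsto\langle A,M\rangle A$. Each $\mathcal{X}_{ij}$ has operator norm at most $\tfrac{1}{p}\|\mathcal{P}_{\mathcal{T}}(e_ie_j^{\top})\|_F^{2}$, and the key quantity to control is therefore $\max_{i,j}\|\mathcal{P}_{\mathcal{T}}(e_ie_j^{\top})\|_F^{2}$. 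Using the explicit formula for $\mathcal{P}_{\mathcal{T}}$ together with the $\mu$-incoherence of $X^{*}$ (i.e.~$\|\bar U\|_{2,\infty}^{2}\leq \mu r/n_1$ and $\|\bar V\|_{2,\infty}^{2}\leq \mu r/n_2$), a short direct calculation yields
\[
\|\mathcal{P}_{\mathcal{T}}(e_ie_j^{\top})\|_F^{2}\;=\;\|\bar U^{\top} e_i\|^{2}+\|\bar V^{\top} e_j\|^{2}-\|\bar U^{\top} e_i\|^{2}\|\bar V^{\top} e_j\|^{2}\;\leq\;\tfrac{2\mu r}{n}.
\]
This gives the uniform bound $\|\mathcal{X}_{ij}\|_{\mathrm{op}}\leq \tfrac{2\mu r}{np}$ (discarding the lower-order $1$ from $\tfrac{\delta_{ij}}{p}-1$).

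For the matrix variance I would compute $\bigl\|\sum_{(i,j)}\mathbb{E}[\mathcal{X}_{ij}^{2}]\bigr\|_{\mathrm{op}}$. Since $\mathbb{E}[(\delta_{ij}/p-1)^{2}]=(1-p)/p\leq 1/p$ and $\mathcal{X}_{ij}^{2}=(\delta_{ij}/p-1)^{2}\|\mathcal{P}_{\mathcal{T}}(e_ie_j^{\top})\|_F^{2}\cdot\mathcal{P}_{\mathcal{T}}(e_ie_j^{\top})\otimes\mathcal{P}_{\mathcal{T}}(e_ie_j^{\top})$, one gets
\[
\Bigl\|\sum_{(i,j)}\mathbb{E}[\mathcal{X}_{ij}^{2}]\Bigr\|_{\mathrm{op}}\;\leq\;\tfrac{1}{p}\max_{i,j}\|\mathcal{P}_{\mathcal{T}}(e_ie_j^{\top})\|_F^{2}\cdot\Bigl\|\sum_{(i,j)}\mathcal{P}_{\mathcal{T}}(e_ie_j^{\top})\otimes\mathcal{P}_{\mathcal{T}}(e_ie_j^{\top})\Bigr\|_{\mathrm{op}}\;\leq\;\tfrac{2\mu r}{np},
\]
using that $\sum_{ij}\mathcal{P}_{\mathcal{T}}(e_ie_j^{\top})\otimes\mathcal{P}_{\mathcal{T}}(e_ie_j^{\top})=\mathcal{P}_{\mathcal{T}}$ as operators, whose norm is $1$.

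Finally, applying the non-commutative Bernstein inequality (e.g.~Tropp's version, which gives $\Pr[\|\mathcal{S}\|_{\mathrm{op}}>\epsilon]\leq 2(n_1+n_2)\exp\!\bigl(-\tfrac{\epsilon^{2}/2}{\sigma^{2}+R\epsilon/3}\bigr)$ with $R=\tfrac{2\mu r}{np}$ and $\sigma^{2}=\tfrac{2\mu r}{np}$) shows that for a sufficiently large absolute constant $C$, the hypothesis $np\geq \tfrac{C\mu r\log n}{\epsilon^{2}}$ forces this probability to drop below $2/n^{3}$. The main technical obstacle is simply verifying the incoherence-based bound on $\|\mathcal{P}_{\mathcal{T}}(e_ie_j^{\top})\|_F$ and plugging the numerics through Bernstein cleanly; there is no further conceptual difficulty, since uniformity over $\mathcal{T}$ is automatic once $\|\mathcal{S}\|_{\mathrm{op}}$ is controlled.
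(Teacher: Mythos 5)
Your proof is correct and is precisely the standard Cand\`es--Recht tangent-space concentration argument; the paper itself does not prove this lemma but imports it from \cite[Lemma~9]{yi2016fast} and \cite[Lemma~10]{zheng2016convergence}, both of which rest on exactly the reduction to $\|\mathcal P_{\mathcal T}(\tfrac1p\mathcal P_\Omega^*\mathcal P_\Omega-\mathcal I)\mathcal P_{\mathcal T}\|_{\mathrm{op}}\le\epsilon$ plus matrix Bernstein that you carry out. The only cosmetic points are that $\|\mathcal P_{\mathcal T}(e_ie_j^\top)\|_F^2\le \mu r/n_1+\mu r/n_2$ is $2\mu r/\min\{n_1,n_2\}$ rather than $2\mu r/n$ (absorbed into $C$ under the paper's convention $n_1\sim n_2$), and the dimension factor in Bernstein should be the ambient (or tangent-space) dimension, which again only changes constants inside the $\log n$.
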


\begin{proof}[Proof of \cref{lem:RIP_fromDeltaProductBounds}]
Let $Z = \begin{psmallmatrix} U \\ V \end{psmallmatrix} \in \mathbb R^{(n_1+n_2)\times r}$ be such that there exists $Z^* = \begin{psmallmatrix} U^* \\ V^* \end{psmallmatrix} \in \mathcal B^*$ that satisfies \cref{eq:delta_bounds} w.r.t.~$X = UV^\top$.
To bound the norm of $X - X^*$, we decompose it as $X - X^* = A + B$ where
\begin{align*}\begin{aligned}
A &= U^* (V - V^*)^\top + (U - U^*)V^{*\top}, \quad
B = (U - U^*)(V - V^*)^\top.
\end{aligned}\end{align*}
Let $e_F = \|X - X^*\|_F$ and $e_{F(\Omega)} = \|X - X^*\|_{F(\Omega)}$.
By \cref{eq:deltaProduct_FOmega_bound}, $\frac{1}{\sqrt p}\|B\|_{F(\Omega)} \leq \frac{\epsilon}{2}e_F$. Since $e_{F(\Omega)} = \|A+B\|_{F(\Omega)}$, this implies
\begin{align*}
\frac{1}{\sqrt p}\|A\|_{F(\Omega)} - \frac{\epsilon}{2}e_F \leq \frac{1}{\sqrt p}e_{F(\Omega)} \leq  \frac{1}{\sqrt p}\|A\|_{F(\Omega)} + \frac{\epsilon}{2}e_F .
\end{align*}
Hence, the RIP \cref{eq:RIP} will follow from the bounds
\begin{align}
\left(1 - \frac{\epsilon}{2}\right) e_F &\leq \frac{1}{\sqrt p} \|A\|_{F(\Omega)} \leq \left(1 + \frac{\epsilon}{2}\right) e_F. \label{eq:a_bounds}
\end{align}
We prove \cref{eq:a_bounds} by first bounding $\|A\|_F$, and then invoking \cref{lem:YPCC16_lemma9} to bound $\|A\|_{F(\Omega)}$.
Since $e_F = \|A + B\|_F$,
\begin{align}
e_F - \|B\|_F \leq \|A\|_F \leq e_F + \|B\|_F. \label{eq:aF_eF_b_bound}
\end{align}
Combining the Cauchy-Schwarz inequality, the fact $ab \leq (a^2+b^2)/2$ and \cref{eq:delta_F_bound} gives that
\begin{align*}
\|B\|_F &\leq \|U-U^*\|_F \|V-V^*\|_F \leq \frac 12 \left(\|U-U^*\|_F^2 + \|V-V^*\|_F^2 \right) \leq \frac{\epsilon}{4} e_F.
\end{align*}
Plugging this into \cref{eq:aF_eF_b_bound} yields
\begin{align*}
\left(1 - \frac{\epsilon}{4}\right) e_F \leq \|A\|_F \leq \left(1 + \frac{\epsilon}{4}\right)e_F.
\end{align*}
Since $\epsilon \in (0,1)$, it is easy to verify that
$
\frac{1 - \epsilon/2}{1 - \epsilon/6}
%\leq \frac{1 - \epsilon/2}{1 - \epsilon/4} = 1 - \frac{\epsilon/4}{1-\epsilon/4}
\leq 1 - \frac{\epsilon}{4}
$
and
$
\frac{1 + \epsilon/2}{1 + \epsilon/6}
%= 1 + \frac{\epsilon/3}{1 + \epsilon/6} \geq 1 + \frac{\epsilon/3}{7/6}
\geq 1 + \frac{\epsilon}{4}.
$ Hence
\begin{align*}
\frac{1 - \epsilon/2}{1 - \epsilon/6} e_F \leq \|A\|_F \leq \frac{1 + \epsilon/2}{1 + \epsilon/6} e_F.
\end{align*}
In addition, since $A \in \mathcal T$, invoking \cref{lem:YPCC16_lemma9} implies that for large enough $C$,
\begin{align*}
%\left(1 - \frac{\epsilon}{6}\right)\|A\|_F \leq \sqrt{1 - \frac{\epsilon}{6}}\|A\|_F \leq \frac{1}{\sqrt p}\|A\|_{F(\Omega)} \leq \sqrt{1 + \frac{\epsilon}{6}} \|A\|_F \leq \left(1 + \frac{\epsilon}{6}\right) \|A\|_F ,
\left(1 - \frac{\epsilon}{6}\right)\|A\|_F \leq \frac{1}{\sqrt p}\|A\|_{F(\Omega)} \leq \left(1 + \frac{\epsilon}{6}\right) \|A\|_F .
\end{align*}
Combining the last two equations yields \cref{eq:a_bounds}.
\end{proof}

\subsection*{Proofs of auxiliary \cref{lem:deterministic_nearby_delta_bound,lem:KMO10_lemma71_consequence}\nopunct}%\label{sec:delta_bounds_auxiliary_proof}
\begin{proof}[Proof of \cref{lem:deterministic_nearby_delta_bound}]
Let $\tilde Z = \begin{psmallmatrix} \tilde U \\ \tilde V \end{psmallmatrix} = \text{b-SVD}(X^*)$.
By the assumption $\begin{psmallmatrix} U \\ V \end{psmallmatrix} \in \mathcal B_\textnormal{err}(\frac{1}{c_e})$ $\cap \mathcal B_\textnormal{bln}(\frac{1}{c_l})$ we have $\|UV^\top - X^*\|_F \leq {\sigma_r^*}/{c_e}$ and $\|U^\top U - V^\top V\|_F \leq {\sigma_r^*}/{c_l}$.
Hence, for large enough constants $c_e, c_l$, condition \cref{eq:dQ_e_bound_assumption} of \cref{lem:dQ_e_bound} holds, which implies the existence of an invertible matrix $Q\in \mathbb R^{r\times r}$ that satisfies $\|Q\|_2 \leq  4/3$ and
\begin{align}\label{eq:delta_temp_bound}
\|U - \tilde U Q\|_F^2 + \|V - \tilde V Q^{-\top}\|_F^2 \leq \frac{25}{4\sigma_r^*} \|UV^\top - X^*\|_F .
\end{align}
Let $Z^* = \begin{psmallmatrix} U^* \\ V^* \end{psmallmatrix} = \begin{psmallmatrix} \tilde U Q \\ \tilde V Q^{-\top} \end{psmallmatrix}$.
Clearly $Z^* \in \mathcal B^*$, and \cref{eq:deterministic_nearby_delta_bound} of the lemma follows from \cref{eq:delta_temp_bound}. It thus remains to show that $Z^* \in \mathcal B_\mu$.
Combining $\|AB\|_{2,\infty} \leq \|A\|_{2,\infty} \|B\|_2$, the bound $\|Q\|_2 \leq 4/3$ and \cref{eq:bSVD_row_norms} of \cref{lem:bSVD_properties} gives
\begin{align*}
\|U^*\|_{2,\infty} &= \|\tilde U Q\|_{2,\infty} \leq \frac 43 \sqrt\frac{\mu r \sigma_1(X^*)}{n_1}, 
\end{align*}
and similarly  $\|V^*\|_{2,\infty} \leq \frac 43 \sqrt\frac{\mu r \sigma_1(X^*)}{n_2}$.
This completes the proof.
\end{proof}

To prove \cref{lem:KMO10_lemma71_consequence}, we shall use the following result \cite[Lemma~7.1]{keshavan2010matrix}, based on a random graph lemma due to Feige and Ofek \cite{feige2005spectral}.
\begin{lemma}\label{lem:KMO10_lemma71}
There exist constants $C, c$ such that the following holds.
Assume $\Omega \subseteq [n_1]\times [n_2]$ is randomly sampled with $np \geq C \log n$. Then w.p.~at least $1 - {1}/{n^5}$, for any $x \in \mathbb R^{n_1}, y \in \mathbb R^{n_2}$,
\begin{align*} %\label{eq:KMO_lemma71}
\sum_{(i,j)\in \Omega} x_i y_j \leq c \left( p \|x\|_1 \|y\|_1 + \sqrt{np} \|x\|_2\|y\|_2 \right) .
\end{align*}
\end{lemma}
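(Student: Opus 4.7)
The key observation is that the sum $\sum_{(i,j)\in \Omega} x_i y_j$ is the bilinear form $x^\top A_\Omega y$, where $A_\Omega \in \{0,1\}^{n_1\times n_2}$ is the indicator matrix of the sampling pattern $\Omega$. Working under the Bernoulli sampling model (equivalent to the uniform model, see \cref{rem:probability_model}), the mean is $\mathbb E[A_\Omega] = p\,\mathbf 1_{n_1}\mathbf 1_{n_2}^\top$, so the natural plan is a mean/fluctuation split:
\begin{align*}
x^\top A_\Omega y \;=\; p(\mathbf 1^\top x)(\mathbf 1^\top y) + x^\top E y, \qquad E := A_\Omega - \mathbb E[A_\Omega].
\end{align*}
The rank-one mean term is bounded by $|p(\mathbf 1^\top x)(\mathbf 1^\top y)|\leq p\|x\|_1\|y\|_1$ via $|\mathbf 1^\top x|\leq \|x\|_1$, matching the first term of the desired inequality. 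The fluctuation term satisfies $|x^\top E y|\leq \|E\|_2\|x\|_2\|y\|_2$ by definition of the operator norm, which also disposes of the ``for any $x,y$'' uniformity automatically. Hence the entire statement reduces to the high-probability spectral estimate
\begin{align*}
\|A_\Omega - \mathbb E[A_\Omega]\|_2 \;\leq\; c\sqrt{np} \quad \text{with probability at least } 1-1/n^5.
\end{align*}

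Establishing this spectral bound on the centered Erd\H{o}s--R\'enyi bipartite adjacency matrix is the technical core. In the dense regime $np \gg \log n$ the bound is obtained by an $\varepsilon$-net discretization of the unit spheres in $\mathbb R^{n_1}$ and $\mathbb R^{n_2}$ combined with a scalar Bernstein-type tail bound applied to each pair of net points and a union bound over the exponentially many net vectors. The main obstacle is the sparse regime $np \asymp \log n$, where applying matrix Bernstein directly gives only $O(\sqrt{np\,\log n})$, losing a $\sqrt{\log n}$ factor. The standard remedy, due to Feige and Ofek, is a degree-regularization step: split vertices into ``heavy'' ones with observed degree exceeding, say, $2np$, and ``light'' ones. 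A Chernoff bound together with a union bound shows that for $np\geq C\log n$ with $C$ sufficiently large, the heavy set is empty with probability at least $1-n^{-5}$, so one may restrict attention to the light submatrix. That submatrix has uniformly bounded row and column sums, and a trace-moment (equivalently Kahn--Szemer\'edi) argument that explicitly exploits this boundedness yields the $O(\sqrt{np})$ spectral-norm bound without the spurious logarithm. Intersecting the polynomial-tail events from the Chernoff step and the trace-moment step yields the claimed overall probability.
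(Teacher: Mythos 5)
The paper does not prove this lemma itself---it quotes \cite[Lemma~7.1]{keshavan2010matrix}, which rests on the random-graph result of Feige and Ofek \cite{feige2005spectral}. Your skeleton---write $\sum_{(i,j)\in\Omega} x_i y_j = x^\top A_\Omega y$, peel off the mean $p\mathbf{1}\mathbf{1}^\top$ to get the $p\|x\|_1\|y\|_1$ term, and reduce the fluctuation $x^\top E y$ to the spectral estimate $\|A_\Omega - \mathbb E A_\Omega\|_2 = O(\sqrt{np})$---is correct and is in fact the structure of the KMO10/Feige--Ofek argument, so your reduction is sound.

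One step of your sketch is misstated, however. You claim that when $np\gg\log n$, an $\epsilon$-net over the two unit spheres together with a scalar Bernstein bound at each pair of net points already gives $\|E\|_2=O(\sqrt{np})$, with the Kahn--Szemer\'edi light/heavy-couple decomposition needed only in the borderline regime $np\asymp\log n$. This is not so. For a fixed pair of unit vectors $x,y$ the summands $x_iy_j(a_{ij}-p)$ can have uniform bound of order $1$ (take $x=e_1$, $y=e_1$), so Bernstein gives only the sub-exponential tail $\exp\bigl(-c\min\{t^2/p,\,t\}\bigr)$; at $t\asymp\sqrt{np}$ this is $\exp(-c\sqrt{np})$, which is overwhelmed by the $e^{\Theta(n)}$ net cardinality for \emph{every} admissible $p\leq 1$, not merely near the sparsity threshold. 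The split into light couples ($|x_iy_j|\lesssim\sqrt{p/n}$, where Bernstein becomes effectively sub-Gaussian at the scale $\sqrt{np}$) and heavy couples (controlled via the bounded row/column sums produced by the degree-regularization step) is needed in all regimes, not only at $np\asymp\log n$. Separately, the trace-moment method and the Kahn--Szemer\'edi argument are distinct techniques, not equivalent arguments, though they produce the same bound. These are expository inaccuracies in the middle step; the mean/fluctuation decomposition, the reduction to a spectral-norm bound, and the degree-regularization claim (under $np\geq C\log n$, all degrees are $\leq 2np$ with probability $\geq 1-2/n^5$ by Chernoff and a union bound) are all correct.
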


\begin{proof}[Proof of \cref{lem:KMO10_lemma71_consequence}]
Define two vectors $x\in \mathbb R^{n_1}$ and $y\in \mathbb R^{n_2}$ as follows:
For $i\in [n_1]$ and $j\in [n_2]$, let $x_i = \|U^{(i)}\|^2$ and $y_j = \|V^{(j)}\|^2$ where $U^{(i)}$ denotes the $i$-th row of $U$.
By the Cauchy-Schwarz inequality and \cref{lem:KMO10_lemma71},
\begin{align*}
\frac 1p \|U V^\top\|^2_{F(\Omega)} &= \frac 1p \sum_{(i,j)\in \Omega} [UV^\top]_{ij}^2 \leq \frac 1p \sum_{(i,j)\in \Omega} x_i y_j \leq c \left(\|x\|_1 \|y\|_1 + \sqrt\frac{n}{p} \|x\|_2\|y\|_2 \right) .
\end{align*}
Let us bound each of the two terms on the RHS.
First, since $ab \leq (a^2 + b^2)/2$, then
\begin{align*}
\|x\|_1 \|y\|_1 &= \|U\|_F^2 \|V\|_F^2 \leq \frac 12 \left(\|U\|_F^2 + \|V\|_F^2\right)^2 .
\end{align*}
Next, let us bound $\|x\|_2\|y\|_2$. Observe that
\begin{align*}
\|x\|_2^2 &= \sum_i \|U^{(i)}\|^4 \leq \max_i \|U^{(i)}\|^2 \sum_i \|U^{(i)}\|^2 = \|U\|_{2,\infty}^2 \|U\|_F^2,
\end{align*}
and similarly $\|y\|_2^2 \leq \|V\|_{2,\infty}^2 \|V\|_F^2$.
Again by the inequality $ab \leq (a^2 + b^2)/2$ we obtain
\begin{align*}
\|x\|_2 \|y\|_2 &\leq \frac 12 \|U\|_{2,\infty} \|V\|_{2,\infty} \left(\|U\|_F^2 + \|V\|_F^2\right).
\end{align*}
In addition, by the assumptions $np \geq C\mu^2 r^2/\epsilon^4$ and \cref{eq:KMO10_lemma71_consequence_assumption},
\begin{align*}
\sqrt\frac{n}{p} \leq \frac{n \epsilon^2}{\sqrt{C} \mu r} \leq \frac{16n}{\sqrt{C n_1n_2}} \frac{t \epsilon^2}{\|U\|_{2,\infty} \|V\|_{2,\infty}} \leq \frac{1}{c} \frac{t \epsilon^2}{\|U\|_{2,\infty} \|V\|_{2,\infty}} ,
\end{align*}
where the last inequality holds for large enough $C$.
Putting everything together completes the proof.
\end{proof}

\subsection*{Proof of \cref{corollary:RIP_incoherent}\nopunct}
%\begin{proof}[Proof of \cref{corollary:RIP_incoherent}]
\begin{proof}
%Denote by $\sigma_1^*$ and $\sigma_r^*$ the maximal and minimal singular values of $X^*$, respectively.
Let $Z = \text{b-SVD}(X)$.
In view of \cref{thm:RIP}, it suffices to show that $Z \in \mathcal B_\textnormal{err}({\epsilon}/{c_e}) \cap \mathcal B_\textnormal{bln}({1}/{c_l}) \cap \mathcal B_\mu$.
%, where $c_l$ is at least as large as the constant $c_l$ of \cref{thm:RIP}.
The first condition, $Z \in \mathcal B_\textnormal{err}({\epsilon}/{c_e})$, follows from the assumption $\|X - X^*\|_F \leq {\epsilon \sigma_r^*}/{c_e}$.
Next, since $Z$ is a b-SVD of a matrix, it is perfectly balanced, see \cref{eq:bSVD_is_balanced} of \cref{lem:bSVD_properties}. Hence $Z \in \mathcal B_\textnormal{bln}({1}/{c_l})$ for any $c_l>0$.
It is thus left to show that $Z \in \mathcal B_\mu$.
Since $X$ is ${3\mu}/{2}$-incoherent, by \cref{eq:bSVD_row_norms} of \cref{lem:bSVD_properties} we have
\begin{align}\label{eq:comp_UV2infty_temp_bound}
\|U\|_{2,\infty} \leq \sqrt{3\mu r \sigma_1(X)/(2n_1)}, \quad
\|V\|_{2,\infty} \leq \sqrt{3\mu r \sigma_1(X)/(2n_2)}.
\end{align}
Let us bound $\sigma_1(X)$. W.l.o.g.~we can assume that $c_e \geq 1$ (see \cref{rem:absolute_constants}). Thus, by assumption, $\|X - X^*\|_2 \leq \|X-X^*\|_F \leq {\epsilon \sigma_r^*}/{c_e} \leq \sigma_1^*$.
%, where in the last inequality we used the fact that $\epsilon \leq 1$.
Hence
\begin{align*}
\sigma_1(X) &\leq \sigma_1^* + \|X - X^*\|_2 \leq 2\sigma_1^* .
\end{align*}
Together with \cref{eq:comp_UV2infty_temp_bound} we obtain $\|U\|_{2,\infty} \leq \sqrt{3\mu r \sigma_1^*/n_1}$, and similarly $\|V\|_{2,\infty} \leq \sqrt{3\mu r \sigma^*/n_2}$, which implies $Z \in \mathcal B_\mu$.
\end{proof}

%%%%%%%%%%%%%%%%%%%%%%%%%%%%%%%%%%%%%%%%%%%%%%%%%%%%%%%%%%%%%%%%%%%%%
%%%%%%%%%%%%%%%%%%%%%%%%%%%%%%%%%%%%%%%%%%%%%%%%%%%%%%%%%%%%%%%%%%%%%
\section{Proof of \cref{thm:comp_linearConvergence}\label{sec:comp_linearConvergence_proof} (matrix completion, linear convergence)}
The proof of \cref{thm:comp_linearConvergence} relies on the property that the iterates $U_t, V_t$ remain approximately balanced. In particular, their largest singular value remains bounded. To this end, we introduce the following definition of subset of factor matrices with bounded \textit{largest singular value},
\begin{align}\label{eq:B_s_def}
\mathcal B_\textnormal{lsv}(\nu) &= \left\{\begin{pmatrix} U \\ V \end{pmatrix} \in \mathbb R^{(n_1+n_2)\times r} \,\mid\, \max\{\sigma_1(U), \sigma_1(V)\} \leq \nu \sqrt{\sigma_1^*} \right\} .
\end{align}

Denote the current and next iterates of \cref{alg:reg_GNMR} by $Z_t = \begin{psmallmatrix} U_t \\ V_t \end{psmallmatrix}$ and $Z_{t+1} = \begin{psmallmatrix} U_{t+1} \\ V_{t+1} \end{psmallmatrix}$, respectively, and let $X_t = U_tV_t^\top$ and $X_{t+1} = U_{t+1}V_{t+1}^\top$ be their corresponding estimates.
The following lemma states that the estimation error contracts geometrically at each iteration, while the iterates remain balanced, with bounded row norms, and with bounded largest singular value.
%It is proved in \cref{sec:comp_linearConvergence_singleIteration_proof}.

\begin{lemma}\label{lem:comp_linearConvergence_singleIteration}
There exist constants $C, c_e, c_l$ such that the following holds.
Let $X^* \in \mathcal M(n_1, n_2, r, \mu, \kappa)$.
Define
\begin{align}
\epsilon_t &= \frac{1}{2^t c_e \sqrt\kappa}, \quad
\delta_t = \frac{1}{c_l} + \frac{36(1-2^{-t})}{c_e}, \quad
\nu_t = 2 + \frac{6(1-2^{-t})}{c_e} , \label{eq:epsilon_delta_xi_t}
\end{align}
and
\begin{align*}
\mathcal B(t) = \mathcal B_\textnormal{err}\left(\epsilon_t\right) \cap \mathcal B_\textnormal{bln}\left(\delta_t\right) \cap \mathcal B_\mu \cap \mathcal B_\text{lsv}\left(\nu_t\right).
\end{align*}
Assume $\Omega \subseteq [n_1]\times [n_2]$ is randomly sampled with $np \geq C \mu r \max\{\log n, \mu r \kappa^2\}$.
Further assume that at some iteration $t$, 
\begin{align}
Z_t \in \mathcal B(t). \label{eq:comp_linearConvergence_singleIteration_assumption_B}
\end{align}
Then w.p.~at least $1 - {3}/{n^3}$, for all iterates $t' \geq t$,
\begin{subequations}\label{eq:comp_linearConvergence_singleIteration_results}\begin{align}
Z_{t'+1} &\in\, \mathcal B(t'+1), \label{eq:comp_linearConvergence_singleIteration_result_B} \\
\|X_{t'+1} - X^*\|_F &\leq \tfrac 12 \|X_{t'} - X^*\|_F . \label{eq:comp_linearConvergence_singleIteration_result_e}
\end{align}\end{subequations}
\end{lemma}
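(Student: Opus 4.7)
The plan is to prove the statement by induction on $t' \geq t$. The base case $t'=t$ is the hypothesis \cref{eq:comp_linearConvergence_singleIteration_assumption_B}; for the inductive step I assume $Z_{t'} \in \mathcal{B}(t')$ and establish both conclusions \cref{eq:comp_linearConvergence_singleIteration_results}. At the outset I would invoke \cref{thm:RIP} with a small absolute $\epsilon$ (say $\epsilon = 1/6$). The sample complexity $np \gtrsim \mu r \max\{\log n, \mu r \kappa^2\}$ is chosen exactly so that the RIP \cref{eq:RIP} holds simultaneously for every matrix $X = UV^\top$ with $\begin{psmallmatrix} U \\ V \end{psmallmatrix} \in \mathcal B_\textnormal{err}(1/(c_e\sqrt\kappa)) \cap \mathcal B_\textnormal{bln}(1/c_l) \cap \mathcal B_\mu$. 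Since $\epsilon_{t'} \leq \epsilon_t \leq 1/(c_e\sqrt\kappa)$ and the sequence $\delta_{t'}$ stays bounded by a constant in $1/c_l + \mathcal O(1/c_e)$, the set $\mathcal B(t')$ is contained in the RIP region for all $t' \geq t$, and the $1-3/n^3$ event is a single event used throughout the induction. At the same time I would apply \cref{lem:delta_bounds} (on the same event) to produce, for every $Z_{t'} \in \mathcal B(t')$, a nearby decomposition $Z^*=\begin{psmallmatrix} U^* \\ V^* \end{psmallmatrix}\in \mathcal B^* \cap \mathcal B_\mu$ satisfying \cref{eq:delta_bounds}.

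The error contraction will come from comparing $Z_{t'+1}$ with $Z^*$ as a feasibility certificate. Feasibility $Z^* \in \mathcal B_\mu \cap \mathcal C^{(t')}$ follows from $Z^*\in \mathcal B_\mu$ and from \cref{eq:delta_F_bound} together with the lower RIP $\|X_{t'}-X^*\|_{F(\Omega)}^2 \geq (1-\epsilon)^2 p\|X_{t'}-X^*\|_F^2$, which transforms $\|U^*-U_{t'}\|_F^2 + \|V^*-V_{t'}\|_F^2 \leq \tfrac{25}{4\sigma_r^*}\|X_{t'}-X^*\|_F^2$ into the bound required by \cref{eq:Ct_def}. Using the identity
\[
U_{t'}V^{*\top} + U^*V_{t'}^\top - U_{t'}V_{t'}^\top - U^*V^{*\top} = -(U_{t'}-U^*)(V_{t'}-V^*)^\top
\]
together with \cref{eq:deltaProduct_FOmega_bound}, the LS objective evaluated at $Z^*$ is at most $\tfrac{\epsilon^2 p}{36}\|X_{t'}-X^*\|_F^2$. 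Hence $Z_{t'+1}$ inherits the same objective bound. I would then decompose $X_{t'+1} - X^* = (Y_{t'+1}-X^*) + (U_{t'+1}-U_{t'})(V_{t'+1}-V_{t'})^\top$, where $Y_{t'+1} = U_{t'}V_{t'+1}^\top + U_{t'+1}V_{t'}^\top - U_{t'}V_{t'}^\top$, and expand $Y_{t'+1}-X^*$ around $Z^*$ into a tangent-space piece $U^*(V_{t'+1}-V^*)^\top + (U_{t'+1}-U^*)V^{*\top} \in \mathcal T$ plus quadratic remainders of the form $(U_{t'}-U^*)(V_{t'+1}-V^*)^\top$, etc. The tangent-space piece is controlled by the uniform RIP of \cref{lem:YPCC16_lemma9}, while the quadratic remainders and the extra $(U_{t'+1}-U_{t'})(V_{t'+1}-V_{t'})^\top$ term are controlled by the constraint $Z_{t'+1}\in \mathcal C^{(t')}$ (giving $\|U_{t'+1}-U_{t'}\|_F^2+\|V_{t'+1}-V_{t'}\|_F^2 \lesssim \|X_{t'}-X^*\|_F^2/\sigma_r^*$) together with \cref{lem:KMO10_lemma71_consequence}. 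These are higher-order-small because $\|X_{t'}-X^*\|_F \leq \epsilon_{t'}\sigma_r^*$ with $\epsilon_{t'}$ a small constant, yielding $\|X_{t'+1}-X^*\|_F \leq \tfrac 12\|X_{t'}-X^*\|_F$ for $c_e$ large.

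For the invariant $Z_{t'+1}\in \mathcal B(t'+1)$, membership in $\mathcal B_\text{err}(\epsilon_{t'+1})$ is immediate from the contraction and $\epsilon_{t'+1}=\epsilon_{t'}/2$, and $Z_{t'+1}\in \mathcal B_\mu$ is enforced directly by the constraint in \cref{alg:reg_GNMR}. For the balance invariant I would apply \cref{lem:update_balance_distance_bounds} to the pair $(Z_{t'}, Z_{t'+1})$, bounding $d_P(Z_{t'}, Z_{t'+1})$ via the constraint set (or, more sharply, via \cref{lem:dP_e_bound} applied to $Z_{t'}$ and $Z_{t'+1}$ separately w.r.t.\ $Z^*$); the increment to $\|U^\top U - V^\top V\|_F$ is of order $\sigma_1(U_{t'})\cdot \|X_{t'}-X^*\|_F/\sqrt{\sigma_r^*}$, i.e.\ $\mathcal O(\sqrt{\kappa\sigma_1^*}\cdot \epsilon_{t'}\sigma_r^*/\sqrt{\sigma_r^*}) = \mathcal O(\sigma_r^*/(c_e 2^{t'}))\cdot \nu_{t'}\sqrt\kappa$; summing the geometric series $\sum 2^{-t'}$ yields the additive constant $36/c_e$ in the definition of $\delta_t$ in \cref{eq:epsilon_delta_xi_t}. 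The largest-singular-value invariant $Z_{t'+1}\in \mathcal B_\text{lsv}(\nu_{t'+1})$ follows from Weyl's inequality $\sigma_1(U_{t'+1}) \leq \sigma_1(U^*) + \|U_{t'+1}-U^*\|_2 \leq \sqrt{\sigma_1^*}+ \mathcal O(\|X_{t'+1}-X^*\|_F/\sqrt{\sigma_r^*})$, with the geometric decay of $\|X_{t'+1}-X^*\|_F$ again ensuring a bounded additive contribution $6/c_e$.

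The main obstacle will be tracking constants through the error contraction: the decomposition of $Y_{t'+1}-X^*$ loses factors $(1+\epsilon)$ from the RIP, and the quadratic remainders must be absorbed inside the contraction factor $1/2$, which forces $\epsilon_{t'}$ and $1/c_e$ to be simultaneously small; moreover the Procrustes-based balance increment must sum to a constant independent of $t$, which is exactly why $\delta_t$ and $\nu_t$ in \cref{eq:epsilon_delta_xi_t} take the form of a fixed initial slack plus a geometric series. A secondary subtlety is that \cref{thm:RIP} is stated for matrices coming from balanced and incoherent factor pairs, while the second-order analysis requires bounds on $(U_{t'}-U^*)(V_{t'+1}-V^*)^\top$-type matrices whose factors are differences rather than iterates; these will be absorbed into $\mathcal T$-based bounds via \cref{lem:YPCC16_lemma9} after a further decomposition, together with the $\mathcal B_\mu$ constraint ensuring row-norm control for direct application of \cref{lem:KMO10_lemma71_consequence}.
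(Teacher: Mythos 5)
Your proposal is correct in all essentials and follows the same skeleton as the paper: a single high-probability event from \cref{thm:RIP}/\cref{lem:YPCC16_lemma9}/\cref{lem:KMO10_lemma71_consequence}, an inductive invariant $Z_{t'}\in\mathcal B(t')$, feasibility of a nearby $Z^*\in\mathcal B^*\cap\mathcal B_\mu\cap\mathcal C^{(t')}$ from \cref{lem:delta_bounds}, the objective comparison $F_{t'}(Z_{t'+1})\le F_{t'}(Z^*)$, quadratic cross-term control via $\mathcal B_\mu$ and \cref{lem:KMO10_lemma71_consequence}, and the balance/singular-value invariants maintained through \cref{lem:update_balance_distance_bounds} and Weyl with a geometric series in $2^{-t'}$. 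The one real organizational difference is how the $F(\Omega)\to F$ step is made for the new iterate: the paper first upgrades the membership of $Z_{t'+1}$ (a weaker $\mathcal B_\textnormal{err}$ bound plus $\mathcal B_\textnormal{bln},\mathcal B_\mu,\mathcal B_\textnormal{lsv}$, packaged in the shared \cref{lem:comp_common_singleIteration}) and then lower-bounds $e_{\Omega,t'+1}$ by invoking \cref{thm:RIP} \emph{at} $Z_{t'+1}$, routed through the clean identity $F_{t'}(Z_{t'+1})\ge e_{\Omega,t'+1}-\|\Delta U_{t'}\Delta V_{t'}^\top\|_{F(\Omega)}$; you instead re-derive the tangent-plus-quadratic decomposition of $X_{t'+1}-X^*$ inline and apply \cref{lem:YPCC16_lemma9} to the tangent piece. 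Your route works and uses the same underlying technology, but it has to control extra mixed cross-terms such as $(U_{t'}-U^*)(V_{t'+1}-V_{t'})^\top$ that the paper's formulation never produces; the paper's version is also reused verbatim for the quadratic-rate lemma. One small slip: in your balance bookkeeping the intermediate expression ``$\mathcal O(\sigma_r^*/(c_e 2^{t'}))\cdot\nu_{t'}\sqrt\kappa$'' carries a spurious $\sqrt\kappa$; in fact $\sigma_1(U_{t'})\lesssim\sqrt{\sigma_1^*}$ and $d_P(Z_{t'},Z_{t'+1})\lesssim\epsilon_{t'}\sqrt{\sigma_r^*}$ give an increment $\lesssim\nu_{t'}\sqrt{\sigma_1^*\sigma_r^*}\,\epsilon_{t'}=\nu_{t'}\sigma_r^*/(2^{t'}c_e)$ with the $\sqrt\kappa$ exactly cancelling against $\epsilon_{t'}=1/(2^{t'}c_e\sqrt\kappa)$, which is what makes the geometric sum $\kappa$-free and yields the stated $36/c_e$; your final conclusion on this point is right even though the intermediate formula is not.
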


\begin{proof}[Proof of \cref{thm:comp_linearConvergence}]
Let $Z_0 = \begin{psmallmatrix} U_0 \\ V_0 \end{psmallmatrix}$ be an initial guess which satisfies the conditions of the theorem.
Let us show that it satisfies conditions \cref{eq:comp_linearConvergence_singleIteration_assumption_B} of \cref{lem:comp_linearConvergence_singleIteration} at $t=0$.
Since $Z_0 \in \mathcal B_\textnormal{err}({1}/{(c_e\sqrt\kappa)}) \cap \mathcal B_\textnormal{bln}({1}/{c_l}) \cap \mathcal B_\mu$, we only need to show that $Z_0 \in \mathcal B_\textnormal{lsv}(\nu_0)$ with $\nu_0 = 2$.
Let $Z^* = \text{b-SVD}(X^*)$.
We then have
\begin{align*}
d_P^2(Z_0, Z^*) &\stackrel{(a)}{\leq} \frac{1}{(\sqrt 2 - 1) \sigma_r^*} \left(\|U_0V_0^\top - X^*\|_F^2 + \frac 14 \|U_0^\top U_0 - V_0^\top V_0\|_F\right) \\ 
&\stackrel{(b)}{\leq} \frac{1}{(\sqrt 2 - 1) \sigma_r^*} \left( \frac{\sigma_r^{*2}}{c_e^2 \kappa} + \frac{\sigma_r^{*2}}{4c_l^2} \right) 
= \frac{1}{(\sqrt 2 - 1) \kappa} \left( \frac{1}{c_e^2 \kappa} + \frac{1}{4c_l^2} \right) \sigma_1^* \stackrel{(c)}{\leq} \sigma_1^*,
\end{align*}
where (a) follows by \cref{lem:dP_e_bound}, (b) by the assumption $Z_0 \in \mathcal B_\textnormal{err}({1}/{(c_e \sqrt\kappa)}) \cap \mathcal B_\textnormal{bln}({1}/{c_l})$, and (c) follows for large enough $c_e, c_l$.
In addition, $d_P(U_0, U^*) \leq d_P(Z_0, Z^*)$ by definition.
\Cref{prop:sigmaMin_U} thus implies
\begin{align*}
\sigma_1(U_0) &\leq \sigma_1(U^*) + d_P(Z_0, Z^*) \leq 2\sqrt{\sigma_1^*}.
\end{align*}
Similarly, $\sigma_1(V_0) \leq 2\sqrt{\sigma_1^*}$. Together we obtain $Z_0 \in \mathcal B_\textnormal{lsv}(\nu_0)$, which completes the proof of \cref{eq:comp_linearConvergence_singleIteration_assumption_B} at $t=0$.
The theorem now follows by applying \cref{lem:comp_linearConvergence_singleIteration} at $t=0$.
\end{proof}

\subsection*{Proof of \cref{lem:comp_linearConvergence_singleIteration}}%\label{sec:comp_linearConvergence_singleIteration_proof}
To prove the lemma we shall use the following auxiliary result, which is a partial (deterministic) version of \cref{lem:comp_linearConvergence_singleIteration}.
It is presented as a separate lemma as it is also used in the context of quadratic convergence.
%The lemma is proved in \cref{sec:comp_common_singleIteration_proof}.

\begin{lemma}\label{lem:comp_common_singleIteration}
%There exist constants $c_e, c_l$ such that the following holds.
Let $X^* \in \mathcal M(n_1, n_2, r, \mu, \kappa)$.
Denote the current and next iterates of \cref{alg:reg_GNMR} by $Z_t = \begin{psmallmatrix} U_t \\ V_t \end{psmallmatrix}$ and $Z_{t+1} = \begin{psmallmatrix} U_{t+1} \\ V_{t+1} \end{psmallmatrix}$, respectively.
Let $\Omega \subseteq [n_1]\times [n_2]$ be such that the current iterate satisfies an RIP under $\mathcal P_\Omega$,
\begin{align}
\frac 78 e_t^2 &\leq \frac 1p e_{\Omega,t}^2 \leq \frac 98 e_t^2, \label{eq:comp_common_Xt_RIP}
\end{align}
where $e_t = \|U_tV_t^\top - X^*\|_F$ and $e_{\Omega,t} = \|U_tV_t^\top - X^*\|_{F(\Omega)}$ are the true and observed errors at iteration $t$, respectively.
Further, assume the current iterate satisfies \cref{eq:comp_linearConvergence_singleIteration_assumption_B} with constants $\epsilon_t, \delta_t, \nu_t$ given in \cref{eq:epsilon_delta_xi_t}.
Then the next iterate satisfies
\begin{subequations}\label{eq:comp_common_singleIteration_results}\begin{align}
Z_{t+1} \in \mathcal B_\textnormal{err}\left({10}/{(2^t c_e)}\right) \cap \mathcal B_\textnormal{bln}(\delta_{t+1}) \cap \mathcal B_\mu \cap \mathcal B_\textnormal{lsv}(\nu_{t+1}), \label{eq:comp_common_singleIteration_result_B} \\
\|U_{t+1} - U_t\|_F^2 + \|V_{t+1} - V_t\|_F^2 \leq \frac{9}{\sigma_r^*}e_t^2 \leq \frac{9 \sigma_r^*}{2^{2t} c_e^2 \kappa}. \label{eq:comp_common_singleItertion_result_Delta}
\end{align}\end{subequations}
In addition, if $\mathcal B^* \cap \mathcal B_\mu \cap \mathcal C^{(t)} \neq \emptyset$ where $\mathcal C^{(t)}$ is defined in \cref{eq:Ct_def}, then for any $\begin{psmallmatrix} U^* \\ V^* \end{psmallmatrix} \in \mathcal B^* \cap \mathcal B_\mu \cap \mathcal C^{(t)}$,
\begin{align}
e_{\Omega,t+1} &\leq \|(U^*-U_t)(V^*-V_t)^\top\|_{F(\Omega)} + \|(U_{t+1} - U_t)(V_{t+1} - V_t)^\top\|_{F(\Omega)} . \label{eq:comp_common_singleIteration_result_e}
\end{align}
\end{lemma}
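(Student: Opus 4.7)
The proof exploits the two defining features of Algorithm~\ref{alg:reg_GNMR}: $Z_{t+1}$ minimises a least-squares objective over the feasible set $\mathcal B_\mu\cap \mathcal C^{(t)}$, and by construction $Z_{t+1}$ lies in this set. I would establish the conclusions in the order \eqref{eq:comp_common_singleItertion_result_Delta}$\to$\eqref{eq:comp_common_singleIteration_result_e}$\to$\eqref{eq:comp_common_singleIteration_result_B}, since each step feeds the next.

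The step-size bound \eqref{eq:comp_common_singleItertion_result_Delta} is immediate from the definitions: $Z_{t+1}\in\mathcal C^{(t)}$ together with \eqref{eq:Ct_def} gives $\|U_{t+1}-U_t\|_F^2+\|V_{t+1}-V_t\|_F^2\leq 8 e_{\Omega,t}^2/(p\sigma_r^*)$; the RIP upper bound in \eqref{eq:comp_common_Xt_RIP} converts this to $9e_t^2/\sigma_r^*$, and $Z_t\in\mathcal B_\textnormal{err}(\epsilon_t)$ yields the second inequality. For the observed-error bound \eqref{eq:comp_common_singleIteration_result_e}, when $\mathcal B^*\cap\mathcal B_\mu\cap\mathcal C^{(t)}\neq\emptyset$ any $Z^*$ in this set is a feasible candidate, so optimality of $Z_{t+1}$ implies that the $F(\Omega)$-norm of the linearised residual at $Z_{t+1}$ is at most the same quantity evaluated at $Z^*$. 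Substituting $X^*=U^*V^{*\top}$ and the two algebraic identities $U_t V^{*\top}+U^* V_t^\top-U_t V_t^\top - X^* = -(U^*-U_t)(V^*-V_t)^\top$ and $U_t V_{t+1}^\top+U_{t+1} V_t^\top-U_t V_t^\top = X_{t+1}-(U_{t+1}-U_t)(V_{t+1}-V_t)^\top$, then applying the triangle inequality to the $F(\Omega)$-norm, delivers \eqref{eq:comp_common_singleIteration_result_e}.

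For the set memberships in \eqref{eq:comp_common_singleIteration_result_B}: $Z_{t+1}\in\mathcal B_\mu$ is enforced directly by the constraint. For the remaining three, I would apply Lemma~\ref{lem:update_balance_distance_bounds} to the pair $(Z_t,Z_{t+1})$, using the crude bound $d_P(Z_t,Z_{t+1})\leq \|Z_{t+1}-Z_t\|_F$ (take $P=I$). Combined with $\max\{\sigma_1(U_t),\sigma_1(V_t)\}\leq \nu_t\sqrt{\sigma_1^*}$ from $Z_t\in\mathcal B_\textnormal{lsv}(\nu_t)$ and the step bound from the first conclusion, the lemma controls the growth of both the imbalance $\|U^\top U-V^\top V\|_F$ and of $\|X_{t+1}-X_t\|_F$ by a quantity of order $\nu_t\sigma_r^*/(2^t c_e)$. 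Coupling these with $\|U_t^\top U_t-V_t^\top V_t\|_F\leq\delta_t\sigma_r^*$, $\|X_t-X^*\|_F\leq e_t$, and Weyl's inequality \eqref{eq:Weyl} applied to $U_{t+1}=U_t+(U_{t+1}-U_t)$ (and likewise for $V$) then delivers the three remaining inclusions.

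The main obstacle is not any single computation but the constant bookkeeping dictated by the telescoping constants in \eqref{eq:epsilon_delta_xi_t}: each increment $\delta_{t+1}-\delta_t=18/(2^t c_e)$, $\nu_{t+1}-\nu_t=3/(2^t c_e)$, and the overall factor $10/(2^t c_e)$ must absorb its respective per-step growth. The binding condition is the balance increment, where the growth is roughly $6\sqrt{2}\,\nu_t\,\sigma_r^*/(2^t c_e)$; since $\nu_t\leq 2+6/c_e$ and $12\sqrt{2}\approx 16.97<18$, the budget suffices for any sufficiently large $c_e$, and the same asymptotics close the error and singular-value recursions uniformly in $t$.
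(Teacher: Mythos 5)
Your proposal is correct and follows essentially the same route as the paper: the step bound comes directly from $Z_{t+1}\in\mathcal C^{(t)}$ plus the RIP upper bound, the observed-error inequality follows from feasibility of $Z^*$ together with optimality of $Z_{t+1}$ using exactly the two algebraic identities you cite, and the set memberships for $\mathcal B_\textnormal{bln}$ and $\mathcal B_\textnormal{err}$ are obtained from \cref{lem:update_balance_distance_bounds} with the crude bound $d_P(Z_t,Z_{t+1})\le\|Z_{t+1}-Z_t\|_F$ while the $\mathcal B_\textnormal{lsv}$ membership uses Weyl's inequality; your constant bookkeeping (the balance increment budget $18/(2^tc_e)$ versus the realized growth $\approx 12\sqrt 2/(2^tc_e)$) also matches the paper's telescoping constants. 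The only small inaccuracy is presentational: you motivate the ordering by saying "each step feeds the next," but in fact \eqref{eq:comp_common_singleIteration_result_e} and \eqref{eq:comp_common_singleIteration_result_B} are proved independently of one another (both rely only on \eqref{eq:comp_common_singleItertion_result_Delta}), and the paper indeed proves (B) before (e).
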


%Equation \cref{eq:comp_common_singleIteration_result_e} of the above lemma requires $\mathcal B^* \cap \mathcal B_\mu \cap \mathcal C^{(t)} \neq \emptyset$. Where the lemma is used, we prove that this condition indeed holds.

\begin{proof}[Proof of \cref{lem:comp_linearConvergence_singleIteration}]
In the following, we prove that if a certain random event occurs, then \cref{eq:comp_linearConvergence_singleIteration_results} holds for $t'=t$.
Since this event does not depend on $t$ and occurs w.p.~at least $1 - {3}/{n^3}$, the lemma follows for any $t'\geq t$ by induction.

By assumption \cref{eq:comp_linearConvergence_singleIteration_assumption_B} for large enough $c_e$, $Z_t$ satisfies the conditions of \cref{thm:RIP} with $\epsilon = 1/8$. This guarantees an RIP for the current estimate \cref{eq:comp_common_Xt_RIP}.
In conjunction with \cref{eq:comp_linearConvergence_singleIteration_assumption_B}, the conditions of \cref{lem:comp_common_singleIteration} hold.
The next iterate $Z_{t+1}$ thus satisfies \cref{eq:comp_common_singleIteration_result_B} of \cref{lem:comp_common_singleIteration}.
Hence, in order to complete the proof, we need to show that $Z_{t+1} \in \mathcal B_\textnormal{err}(\epsilon_{t+1})$ and that \cref{eq:comp_linearConvergence_singleIteration_result_e} holds.
However, since $Z_t \in \mathcal B(\epsilon_t)$ by assumption \cref{eq:comp_linearConvergence_singleIteration_assumption_B}, the required $Z_{t+1} \in \mathcal B_\textnormal{err}(\epsilon_{t+1})$ will follow from \cref{eq:comp_linearConvergence_singleIteration_result_e}.
It is thus sufficient to prove \cref{eq:comp_linearConvergence_singleIteration_result_e}.

To use \cref{eq:comp_common_singleIteration_result_e} of \cref{lem:comp_common_singleIteration}, we need to find some $Z^* \in \mathcal B^* \cap \mathcal B_\mu \cap \mathcal C^{(t)}$.
Assumption \cref{eq:comp_linearConvergence_singleIteration_assumption_B} with large enough $c_e$ implies that \cref{lem:delta_bounds} holds w.r.t.~$\begin{psmallmatrix} U \\ V \end{psmallmatrix} \to Z_t$ and $\epsilon = 1/4$. 
Let $Z^* = \begin{psmallmatrix} U^* \\ V^* \end{psmallmatrix} \in \mathcal B^* \cap \mathcal B_\mu$ be the corresponding matrix given by \cref{lem:delta_bounds}.
By \cref{eq:deterministic_nearby_delta_bound}, which is established during the proof of \cref{lem:delta_bounds}, $Z^*$ satisfies
\begin{align*}
\|U^* - U_t\|_F^2 &+ \|V^*-V_t\|_F^2 \leq \frac{25}{4\sigma_r^*} e^2_t.
\end{align*}
Combining this with the RIP lower bound of the current estimate \cref{eq:comp_common_Xt_RIP} yields that $Z^* \in \mathcal B^* \cap \mathcal B_\mu \cap \mathcal C^{(t)}$.
\cref{lem:comp_common_singleIteration} thus guarantees that $Z^*$ satisfies \cref{eq:comp_common_singleIteration_result_e}.
In the following, we shall prove that the LHS of \cref{eq:comp_common_singleIteration_result_e} is lower bounded by $\tfrac{9}{10}{\sqrt p} e_{t+1}$, and that its RHS is upper bounded by $ \tfrac{9}{20}{\sqrt p} e_t$. Together, these bounds yield the required \cref{eq:comp_linearConvergence_singleIteration_result_e}.

Let us begin with the RHS of \cref{eq:comp_common_singleIteration_result_e}.
First, by \cref{eq:deltaProduct_FOmega_bound} of \cref{lem:delta_bounds} we have
\begin{align}
\|(U^*-U_t)(V^*-V_t)^\top\|_{F(\Omega)} &\leq \frac{\sqrt p}{24} e_t. \label{eq:Z*_Zt_et_bound}
\end{align}
Second, we bound $\|\Delta U_t \Delta V_t^\top\|_{F(\Omega)}$ where $\Delta U_t = U_{t+1} - U_t$ and $\Delta V_t = V_{t+1} - V_t$.
The assumption $Z_t \in \mathcal B_\mu$ \cref{eq:comp_linearConvergence_singleIteration_assumption_B} implies $\|U_t\|_{2,\infty} \leq \sqrt{3\mu r\sigma_1^*/n_1}$ and $\|V_t\|_{2,\infty} \leq \sqrt{3\mu r\sigma_1^*/n_2}$.
%where $\sigma_1^*$ is the maximal singular value of $X^*$.
Similarly, $Z_{t+1} \in \mathcal B_\mu$ implies $\|U_{t+1}\|_{2,\infty} \leq \sqrt{3\mu r\sigma_1^*/n_1}$ and $\|V_{t+1}\|_{2,\infty} \leq \sqrt{3\mu r\sigma_1^*/n_2}$.
Hence,
\begin{align*}
\|\Delta U_t\|_{2,\infty} \leq 2\sqrt{3\mu r\sigma_1^*/n_1},\quad
\|\Delta V_t\|_{2,\infty} \leq 2\sqrt{3\mu r\sigma_1^*/n_2}.
\end{align*}
Invoking \cref{lem:KMO10_lemma71_consequence} with $t\to \sigma_1^*$ and $\epsilon \to 1/(8\sqrt{\kappa})$ thus yields
\begin{align*}
\frac 1p \|\Delta U_t \Delta V_t^\top\|^2_{F(\Omega)} &\leq \frac{\|\Delta U_t\|_F^2 + \|\Delta V_t\|_F^2}{2} \left[c\left(\|\Delta U_t\|_F^2 + \|\Delta V_t\|_F^2\right) + \frac{\sigma_r^*}{64} \right] .
%\nonumber \\ &\leq \frac 14 \left(\|\Delta U_t\|_F^2 + \|\Delta V_t\|^2_F\right)\left(\left(\|\Delta U_t\|^2_F + \|\Delta V_t\|^2_F\right) + \frac{\sigma_r^*}{32} \right).
\end{align*}
Together with \cref{eq:comp_common_singleItertion_result_Delta} of \cref{lem:comp_common_singleIteration} we obtain
\begin{align*}
\|\Delta U_t \Delta V_t^\top\|^2_{F(\Omega)} &\leq \frac{9}{2} \left(\frac{9c}{2^{2t} c_e^2} + \frac{1}{32}\right) pe_{t}^2 \leq \frac{p}{6}e^2_t,
%\|\Delta U_t \Delta V_t^\top\|^2_{F(\Omega)} &\leq \frac{9}{4} \left(\frac{9}{2^{2t} c_e^2 \kappa} + \frac{1}{32}\right) pe_{t}^2 \leq \frac{p}{6}e^2_t,
\end{align*}
where the last inequality follows for large enough $c_e$.
This, together with \cref{eq:Z*_Zt_et_bound}, shows that the RHS of \cref{eq:comp_common_singleIteration_result_e} is upper bounded by $\tfrac{9}{20}\sqrt p e_t$.

Finally, we prove that the LHS of \cref{eq:comp_common_singleIteration_result_e} is lower bounded by $\tfrac{9}{10}\sqrt p e_t$.
In light of \cref{eq:comp_common_singleIteration_result_B}, $Z_{t+1}$ satisfies the conditions of \cref{thm:RIP} with $\epsilon = 1/10$ for large enough $c_e$. The required lower bound thus follows by \cref{thm:RIP}.
\end{proof}
\begin{proof}[Proof of \cref{lem:comp_common_singleIteration}]
Let us begin by proving \cref{eq:comp_common_singleItertion_result_Delta}.
Denote $\Delta U_t = U_{t+1} - U_t$ and $\Delta V_t = V_{t+1} - V_t$.
By $Z_{t+1} \in \mathcal C^{(t)}$,
\begin{align}\label{eq:Deltat_temp_bound}
\|\Delta U_t\|_F^2 + \|\Delta V_t\|_F^2 \leq \frac{8}{p\sigma_r^*} e^2_{\Omega,t} .
\end{align}
The term $e^2_{\Omega,t}$ can be bounded by combining the RIP assumption \cref{eq:comp_common_Xt_RIP} with $Z_t \in \mathcal B_\textnormal{err}(\epsilon_t)$ \cref{eq:comp_linearConvergence_singleIteration_assumption_B}, as
\begin{align}\label{eq:comp_common_singleIteration_eBound} %the label is needed
\frac{1}{\sqrt p} e_{\Omega,t} \leq \frac{3}{\sqrt 8} e_t \leq \frac{3\sigma_r^*}{2^t c_e \sqrt{8\kappa}}  .
\end{align}
Plugging these bounds back into \cref{eq:Deltat_temp_bound} yields \cref{eq:comp_common_singleItertion_result_Delta}.

Next, we prove \cref{eq:comp_common_singleIteration_result_B} using \cref{lem:update_balance_distance_bounds}.
Let $a$ be defined as in \cref{lem:update_balance_distance_bounds},
\begin{align*}
a &= \left( \sqrt{2} \max\{\sigma_1(U_t), \sigma_1(V_t)\} + \tfrac 12 d_P(Z_t, Z_{t+1}) \right) d_P(Z_t, Z_{t+1}) \\
&\leq \left( \sqrt{2} \max\{\sigma_1(U_t), \sigma_1(V_t)\} + \tfrac 12 \sqrt{\|\Delta U_t\|_F^2 + \|\Delta V_t\|_F^2}\right) \sqrt{\|\Delta U_t\|_F^2 + \|\Delta V_t\|_F^2} .
\end{align*}
Combining the assumption $Z_t \in \mathcal B_\textnormal{lsv}(\nu_t)$ \cref{eq:comp_linearConvergence_singleIteration_assumption_B}, the definition of $\nu_t$ \cref{eq:epsilon_delta_xi_t} and \cref{eq:comp_common_singleItertion_result_Delta} yields
\begin{align*}
a &\leq \left(\nu_t \sqrt{2\sigma_1^*} + \frac{3 \sqrt{\sigma_r^*}}{2^{t+1} c_e \sqrt\kappa} \right) \frac{3\sqrt{\sigma_r^*}}{2^t c_e \sqrt\kappa} \leq \left(2\sqrt 2 + \frac{6\sqrt 2}{c_e}  + \frac{3}{c_e \kappa} \right) \frac{3\sigma_r^*}{2^t c_e } \leq \frac{9\sigma_r^*}{2^t c_e},
\end{align*}
where the last inequality follows for large enough $c_e$.
Employing \cref{eq:update_balance_bound} of \cref{lem:update_balance_distance_bounds} and the assumption $Z_t \in \mathcal B_\textnormal{bln}(\delta_t)$ \cref{eq:comp_linearConvergence_singleIteration_assumption_B} with $\delta_t$ given in \cref{eq:epsilon_delta_xi_t} yields
\begin{align*}
\|U_{t+1}^\top U_{t+1} - V_{t+1}^\top V_{t+1}\|_F &\leq \|U_t^\top U_t - V_t^\top V_t\|_F + 2a
\leq \frac{\sigma_r^*}{c_l} + \frac{36(1-2^{-t})\sigma_r^*}{c_e} + \frac{18\sigma_r^*}{2^{t} c_e} = \frac{\delta_{t+1} \sigma_r^*}{c_l}. %\label{eq:lt+1_temp_bound}
\end{align*}
Also, by the triangle inequality, the assumption  $Z_t \in \mathcal B_\textnormal{err}(\epsilon_t)$ \cref{eq:comp_linearConvergence_singleIteration_assumption_B} and \cref{eq:update_distance_bound} of \cref{lem:update_balance_distance_bounds},
\begin{align*}
e_{t+1} \leq e_t + \|U_{t+1}V_{t+1}^\top &- U_tV_t^\top\|_F \leq \frac{\sigma_r^*}{2^t c_e } + a \leq \frac{10\sigma_r^*}{2^t c_e} . %\label{eq:et+1_temp_bound}
\end{align*}
In other words, $Z_{t+1} \in \mathcal B_\textnormal{err}({10}/{(2^t c_e)}) \cap \mathcal B_\textnormal{bln}(\delta_{t+1})$.

Next, by the definition of \cref{alg:reg_GNMR}, $Z_{t+1} \in \mathcal B_\mu$.
Hence, to complete the proof of \cref{eq:comp_common_singleIteration_result_B}, we need to show that $Z_{t+1} \in \mathcal B_\textnormal{lsv}(\nu_{t+1})$.
Observe that, by the assumption $Z_{t} \in \mathcal B_\textnormal{lsv}(\nu_{t})$ \cref{eq:comp_linearConvergence_singleIteration_assumption_B}, we have $\sigma_1(U_t) \leq [1 + {6(1 - 2^{-t})}/{c_e}]\sqrt{\sigma_1^*}$.
In addition, combining $Z_{t+1} \in \mathcal C^{(t)}$ with the bound \cref{eq:comp_common_singleIteration_eBound} on $e_{\Omega,t}$ yields $\|U_{t+1} - U_t\|_F \leq \sqrt\frac{8}{p \sigma_r^*} e_{\Omega,t} \leq \frac{3\sqrt{\sigma_1^*}}{2^t c_e}$. Hence,
\begin{align*}
\sigma_1(U_{t+1}) &\leq \sigma_1(U_t) + \|U_{t+1} - U_t\|_2 \leq \left(2 + \frac{6\left(1 - 2^{-t}\right)}{c_e} + \frac{3}{2^t c_e}\right) \sqrt{\sigma_1^*} 
= \nu_{t+1} \sqrt{\sigma_1^*} ,
\end{align*}
and similarly $\sigma_1(V_{t+1}) \leq \nu_{t+1} \sqrt{\sigma_1^*}$. This completes the proof of \cref{eq:comp_common_singleIteration_result_B}.

Finally, to prove \cref{eq:comp_common_singleIteration_result_e}, assume $\mathcal B^* \cap \mathcal B_\mu \cap \mathcal C^{(t)} \neq \emptyset$, and let $Z^* = \begin{psmallmatrix} U^* \\ V^* \end{psmallmatrix} \in \mathcal B^* \cap \mathcal B_\mu \cap \mathcal C^{(t)}$.
%Denote $\Delta Z^* = Z^* - Z_t$ and $\Delta Z_t = Z_{t+1} - Z_t$.
Let $F_t^2(Z) = \|\mathcal L_A^{(t)}( Z ) - b_t\|^2$ be the objective of the least squares problem in \cref{alg:reg_GNMR}.
Since $Z^* \in \mathcal B^*$, namely $U^*V^{*\top} = X^*$, we have
\begin{align*}
F_t(Z^*) &= \|U_t V^{*\top} + U^* V_t^\top - U_t V_t - U^* V^{*\top}\|_{F(\Omega)}
= \|(U^* - U_t)(V^* - V_t)^\top\|_{F(\Omega)} .
\end{align*}
In addition, the objective at the new iterate is lower bounded as
\begin{align*}
F_t(Z_{t+1}) &= \|U_t V_{t+1}^{\top} + U_{t+1} V_t^\top - U_t V_t - X^*\|_{F(\Omega)} \nonumber \\
&= \|U_{t+1} V_{t+1}^\top - \left(U_{t+1} - U_{t}\right)\left(V_{t+1} - V_t\right)^\top - X^* \|_{F(\Omega)} \nonumber\\
&\geq e_{\Omega,t+1} - \|\left(U_{t+1} - U_t\right)\left(V_{t+1} - V_t\right)^\top\|_{F(\Omega)}.
\end{align*}
Since $Z^* \in \mathcal B_\mu \cap \mathcal C^{(t)}$, it is a feasible point of the least squares problem in \cref{alg:reg_GNMR}.
As $Z_{t+1}$ is the minimizer of the objective, we conclude $F_t(Z_{t+1}) \leq F_t(Z^*)$.
This completes the proof of \cref{eq:comp_common_singleIteration_result_e}.
\end{proof}

\subsection*{Feasibility of the constraints}
Our guarantees for the matrix completion setting hold for a constrained version of \GNMR, \cref{alg:reg_GNMR}. In the following claim, we show that starting from the initialization described in \cref{alg:comp_initialization} (see also \cref{lem:comp_initialization}), then w.h.p.~the constraints are feasible at all iterations. Note we do not directly use this claim in our proofs.
\begin{claim}\label{claim:feasible_constraints}
Starting from an initialization $Z_0 \in \mathcal B(0)$ where $\mathcal B(t)$ is defined in \cref{eq:comp_linearConvergence_singleIteration_assumption_B}, $\mathcal B_\mu \cap \mathcal C^{(t)} \neq \emptyset$ for all $t$ w.p.~at least $1 - {3}/{n^3}$.
\end{claim}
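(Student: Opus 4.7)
The plan is to exhibit, at every iteration $t$, a concrete element of $\mathcal B_\mu \cap \mathcal C^{(t)}$, namely a particular decomposition $Z^*$ of the ground truth $X^*$ that is close enough (in Frobenius norm) to the current iterate $Z_t$. Essentially the same argument already appears inside the proof of \cref{lem:comp_linearConvergence_singleIteration}, and the task here is to make the uniform-in-$t$ statement explicit.

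First, I would invoke \cref{lem:comp_linearConvergence_singleIteration} with the assumption $Z_0 \in \mathcal B(0)$ to conclude that, on a single high-probability event $\mathcal E$ of probability at least $1 - 3/n^3$, every iterate satisfies $Z_t \in \mathcal B(t)$. In particular, on $\mathcal E$, we have $Z_t \in \mathcal B_{\mathrm{err}}(\epsilon_t) \cap \mathcal B_{\mathrm{bln}}(\delta_t) \cap \mathcal B_\mu$ with $\epsilon_t \leq 1/(c_e\sqrt\kappa)$, so for $c_e, c_l$ large enough the hypotheses of \cref{lem:delta_bounds} (with, say, $\epsilon=1/4$) are satisfied at each $Z_t$. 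This yields a $Z_t^* = \begin{psmallmatrix} U_t^* \\ V_t^* \end{psmallmatrix} \in \mathcal B^* \cap \mathcal B_\mu$ with the deterministic Frobenius bound
\begin{equation*}
\|U_t^* - U_t\|_F^2 + \|V_t^* - V_t\|_F^2 \leq \frac{25}{4\sigma_r^*} \|X_t - X^*\|_F^2,
\end{equation*}
which comes out of \cref{eq:deterministic_nearby_delta_bound} inside the proof of \cref{lem:delta_bounds}.

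Next, I would convert the Frobenius bound to a $F(\Omega)$ bound by applying \cref{thm:RIP} at $Z_t$ with $\epsilon = 1/8$, which (again on the event $\mathcal E$, after enlarging the constants if necessary so that the two lemmas use the same probabilistic event) gives the lower RIP bound
\begin{equation*}
\tfrac{7}{8}\|X_t - X^*\|_F^2 \leq \tfrac{1}{p}\|X_t - X^*\|_{F(\Omega)}^2 .
\end{equation*}
Chaining these two inequalities,
\begin{equation*}
\|U_t^* - U_t\|_F^2 + \|V_t^* - V_t\|_F^2 \leq \frac{25}{4\sigma_r^*}\cdot \frac{8}{7p}\|X_t - X^*\|_{F(\Omega)}^2 = \frac{50}{7 p \sigma_r^*}\|X_t - X^*\|_{F(\Omega)}^2 \leq \frac{8}{p \sigma_r^*}\|X_t - X^*\|_{F(\Omega)}^2,
\end{equation*}
so $Z_t^* \in \mathcal C^{(t)}$. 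Combined with $Z_t^* \in \mathcal B_\mu$ this proves $\mathcal B_\mu \cap \mathcal C^{(t)} \neq \emptyset$ for every $t$.

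The only subtlety, and the point that deserves a careful sentence in the actual write-up, is that we need all three high-probability statements (the conclusion of \cref{lem:comp_linearConvergence_singleIteration}, the conclusion of \cref{lem:delta_bounds} at every $Z_t$, and the uniform RIP of \cref{thm:RIP}) to hold simultaneously on a common event of probability $\geq 1 - 3/n^3$. This is not an issue because both \cref{lem:delta_bounds} and \cref{thm:RIP} are \emph{uniform} over the entire set $\mathcal B_{\mathrm{err}}(\,\cdot\,)\cap \mathcal B_{\mathrm{bln}}(\,\cdot\,)\cap \mathcal B_\mu$, so once they hold they apply to every $Z_t$ deterministically; moreover, these are precisely the same random events already underlying \cref{lem:comp_linearConvergence_singleIteration}. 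Hence after absorbing constants, everything happens on one event whose complement has probability at most $3/n^3$, completing the proof.
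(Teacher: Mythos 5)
Your proposal is correct and follows essentially the same route as the paper's proof: find a balanced, incoherent decomposition $Z^*_t$ of $X^*$ via Lemma~\ref{lem:deterministic_nearby_delta_bound} (the Frobenius bound you cite is exactly \cref{eq:deterministic_nearby_delta_bound}), then convert the Frobenius bound into the $F(\Omega)$ bound defining $\mathcal C^{(t)}$ via the uniform RIP of Theorem~\ref{thm:RIP}, and handle the probability by observing that all the events are uniform and coincide with those underlying Lemma~\ref{lem:comp_linearConvergence_singleIteration}. The only cosmetic difference is organizational: the paper runs a single explicit induction, proving feasibility of $\mathcal B_\mu\cap\mathcal C^{(t)}$ and the containment $Z_{t+1}\in\mathcal B(t+1)$ together in each step, whereas you first invoke Lemma~\ref{lem:comp_linearConvergence_singleIteration} to obtain $Z_t\in\mathcal B(t)$ for all $t$ and then verify feasibility at each $t$ separately — both rely on the same underlying facts.
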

\begin{proof}
Suppose that $Z_t \in \mathcal B(t)$. We show that if a certain random event occurs, then this implies $\mathcal B_\mu \cap \mathcal C^{(t)} \neq \emptyset$ and $Z_{t+1} \in \mathcal B(t+1)$.
Furthermore, since this event does not depend on $t$ and occurs w.p.~at least $1 - {3}/{n^3}$, the remark follows by induction.

Specifically, assume that the random event of \cref{thm:RIP} occurs.
By \cref{lem:deterministic_nearby_delta_bound}, the assumption $Z_t \in \mathcal B(t)$ guarantees factor matrices $Z^* \in \mathcal B_\mu$ that satisfies \cref{eq:deterministic_nearby_delta_bound} w.r.t.~$\begin{psmallmatrix} U \\ V \end{psmallmatrix} \to Z_t$. In addition, by \cref{thm:RIP} with $\epsilon = 7/8$ we have that $\|X_t - X^*\|_F \leq \frac{8}{7\sqrt p} \|X_t - X^*\|_{F(\Omega)}$.
Combining this with \cref{eq:deterministic_nearby_delta_bound} implies that $Z^* \in \mathcal C^{(t)}$.
Putting together, we obtain $\mathcal B_\mu \cap \mathcal C^{(t)} \neq \emptyset$.
Finally, \cref{lem:comp_linearConvergence_singleIteration} guarantees that $Z_{t+1} \in \mathcal B(t+1)$.
\end{proof}

%%%%%%%%%%%%%%%%%%%%%%%%%%%%%%%%%%%%%%%%%%%%%%%%%%%%%%%%%%%%%%%%%%%%%
%%%%%%%%%%%%%%%%%%%%%%%%%%%%%%%%%%%%%%%%%%%%%%%%%%%%%%%%%%%%%%%%%%%%%
\section{Proof of \cref{rem:comp_initialization}\label{sec:comp_initialization_proof} (matrix completion, initialization)}
Several previous works used a spectral-based initialization accompanied by some normalization procedure on the rows of the factor matrices \cite{keshavan2010matrix,jain2013low,sun2016guaranteed,yi2016fast, zheng2016convergence}.
In this work, we use the same initialization as in \cite{sun2016guaranteed,yi2016fast,zheng2016convergence}, and clip the rows with large $\ell_2$-norm. The full procedure is described in \cref{alg:comp_initialization}.
\cite{yi2016fast,zheng2016convergence} proved that the resulting initialization $Z_0$ is in $\mathcal B_\textnormal{err}({1}/{c_e}) \cap \mathcal B_\mu$.
%\cite{sun2016guaranteed} proved an additional bound on the total Frobenius norm $\|Z_0\|_F$.
In the following analysis we show that $Z_0$ is also approximately balanced, as required by our \cref{thm:comp_linearConvergence}.
Note that \cref{alg:comp_initialization} is given as input the parameter $\mu$.
If this quantity is unknown, it can be estimated from the observed data as discussed in \cref{rem:comp_parametersInput}.

\begin{algorithm}[t]
\caption{Initialization procedure for matrix completion}
\label{alg:comp_initialization}
\SetKwInOut{Return}{return}
\SetKwInOut{Input}{input}
\SetKwInOut{Output}{output}
\Input{$X \in \mathbb R^{n_1\times n_2}$ - observed matrix ($X_{ij} = X^*_{ij} \,\, \forall (i,j)\in \Omega$ and $X_{ij} = 0$ elsewhere) \\
$r, \mu$ - rank and incoherence parameter of $X^*$}
\Output{$Z_0$ - initialization in $\mathbb R^{(n_1+n_2)\times r}$} % Output
set $Z = \text{b-SVD}(X)$ \\
set $Z_0^{(i)} = Z^{(i)} / \max\{1, \sqrt\frac{n}{2\mu r}\cdot \frac{\|Z^{(i)}\|_F}{\|Z\|_2} \} \text{ for all rows } i=1,\ldots,(n_1+n_2)$ \\
\Return{$Z_0$}
\end{algorithm}

\begin{lemma}\label{lem:comp_initialization}
Let $c_e, c_l > 0$.
There exist constants $c_1, c_2$ and a constant $C = C(c_e, c_l)$ such that the following holds.
Let $X^* \in \mathcal M(n_1, n_2, r, \mu, \kappa)$.
Assume $\Omega \subseteq [n_1]\times [n_2]$ is randomly sampled with $np \geq C \mu r^2 \kappa^4 \log n$.
Then w.p.~at least $1 - c_1 n^{-c_2}$, the output of \cref{alg:comp_initialization} is in $\mathcal B_\textnormal{err}({1}/{(c_e\sqrt\kappa)}) \cap \mathcal B_\textnormal{bln}({1}/{c_l}) \cap \mathcal B_\mu$.
\end{lemma}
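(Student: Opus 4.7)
The plan is to establish each of the three containments $Z_0 \in \mathcal{B}_\textnormal{err}(1/(c_e\sqrt\kappa))$, $Z_0 \in \mathcal{B}_\textnormal{bln}(1/c_l)$, and $Z_0 \in \mathcal{B}_\mu$, by first analysing the unclipped b-SVD and then controlling the perturbation introduced by the row-clipping step. Let $X_p$ denote the matrix fed to \cref{alg:comp_initialization} (appropriately rescaled so that $\mathbb{E}[X_p]=X^*$), let $Z = \text{b-SVD}(X_p)$ be the intermediate factors, and let $Z_0$ denote the final output. Write $Z^* = \text{b-SVD}(X^*)$; by \cref{lem:bSVD_properties}, $Z^*$ is exactly balanced, has $\sigma_r^2(Z^*)=2\sigma_r^*$, and lies in $\mathcal{B}_\mu$ by the $\mu$-incoherence of $X^*$.

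First, I would invoke a standard matrix concentration bound (e.g.~matrix Bernstein as used in \cite{chen2020nonconvex,keshavan2010matrix}) to obtain, under $np \geq C\mu r^2 \kappa^4 \log n$ with $C=C(c_e,c_l)$ sufficiently large, an operator-norm bound of the form $\|X_p - X^*\|_2 \leq \sigma_r^*/(c'\sqrt\kappa\, r)$ w.p.~at least $1 - c_1 n^{-c_2}$. Via Weyl's inequality \cref{eq:Weyl} this controls the singular values of $X_p$, and via Davis--Kahan it gives closeness of the rank-$r$ left and right singular subspaces of $X_p$ to those of $X^*$. This translates into a Frobenius-norm bound $\|\text{SVD}_r(X_p) - X^*\|_F \leq \sigma_r^*/(2 c_e\sqrt\kappa)$ and an imbalance bound $\|U^\top U - V^\top V\|_F = 0$ (by \cref{eq:bSVD_is_balanced}); in particular \cref{lem:dP_e_bound} yields a Procrustes-distance bound $d_P^2(Z, Z^*) \leq \|UV^\top - X^*\|_F^2 / ((\sqrt 2 - 1)\sigma_r^*)$, which I would refine to a row-wise bound using a leave-one-out analysis or the entrywise guarantees already available in the spectral-initialization literature.

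Second, I would quantify the clipping step. After aligning $Z$ to $Z^*$ by the Procrustes-optimal orthogonal $P$, the row-wise closeness from Step~1 together with $Z^* \in \mathcal{B}_\mu$ (\cref{eq:bSVD_row_norms}) shows that all but a controllably small collection of rows of $Z$ already satisfy the clipping threshold. Hence the clipping can be written as $U_0 = D_1 U$, $V_0 = D_2 V$ for diagonal matrices $D_1,D_2$ with entries in $[0,1]$, where $\|I - D_1\|_F$ and $\|I - D_2\|_F$ are small. A standard argument (as in \cite{yi2016fast,zheng2016convergence}) shows that clipping can only decrease the distance to the incoherent $Z^*$, giving $Z_0 \in \mathcal{B}_\textnormal{err}(1/(c_e\sqrt\kappa))$. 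The bound $Z_0 \in \mathcal{B}_\mu$ follows directly from the clipping rule together with $\|Z\|_2 \leq \sqrt{2\sigma_1^*}+\|Z-Z^*P\|_2 \leq 2\sqrt{\sigma_1^*}$.

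The main obstacle is the balance condition $Z_0 \in \mathcal{B}_\textnormal{bln}(1/c_l)$, which is not established in the prior works \cite{yi2016fast,zheng2016convergence}. The key identity is
\begin{align*}
U_0^\top U_0 - V_0^\top V_0 \;=\; U^\top (D_1^2 - I) U \;-\; V^\top (D_2^2 - I) V,
\end{align*}
since $U^\top U = V^\top V = \Sigma$ exactly (\cref{eq:bSVD_is_balanced}). Bounding the right-hand side in Frobenius norm reduces to controlling $\|(I-D_1^2) U\|_F$ and $\|(I-D_2^2) V\|_F$, each of which sums only over clipped rows. Using the row-wise perturbation bound from Step~1 (which is where the $\kappa^4$ sample complexity is spent: each individual row of $Z$ must agree with the corresponding row of $Z^*P$ up to a factor $\ll \sqrt{\mu r\sigma_1^*/n}$, so that the set of rows requiring non-trivial clipping is small) and $\|U\|_2,\|V\|_2\leq \sqrt{2\sigma_1^*}$, the resulting imbalance is bounded by $\sigma_r^*/c_l$ for any prescribed $c_l$, provided $C=C(c_e,c_l)$ is taken large enough. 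Combining the three containments closes the proof.
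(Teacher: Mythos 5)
Your overall outline — control the unclipped b\nobreakdash-SVD by concentration, then handle the clipping, then verify the three containments — is reasonable, and the error and incoherence bounds you sketch are essentially how the paper handles them (via \cref{lem:ZL16_lemma1}, a sharpened version of \cite[Lemma~1]{zheng2016convergence} giving $d_P(Z_0,Z^*)\leq d_P(Z,Z^*)\lesssim \sqrt{\sigma_r^*}/\kappa$ under $np\gtrsim \mu r^2\kappa^4\log n$). However, your treatment of the balance condition $Z_0\in\mathcal B_\textnormal{bln}(1/c_l)$ has a genuine gap, and it is precisely the part you flag as ``the main obstacle.'' You propose the identity $U_0^\top U_0 - V_0^\top V_0 = U^\top(D_1^2-I)U - V^\top(D_2^2-I)V$ and then try to bound the right-hand side by controlling the clipping matrices $D_1,D_2$. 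This requires knowing, row by row, how many rows are clipped and by how much; a Procrustes/Frobenius bound alone does not give this, which is why you reach for a leave-one-out or entrywise analysis. That analysis is not carried out, and even with it, the sum $\sum_{i\,\text{clipped}}(\|U^{(i)}\|^2-\tau^2)$ is not straightforwardly controlled by $d_P(Z,Z^*)$ without additional argument (a Cauchy--Schwarz step produces a second factor that is not obviously small). Moreover, your explanation of where the $\kappa^4$ sample complexity ``is spent'' (on a row-wise agreement bound) is not correct: the $\kappa^4$ is spent in \cref{lem:ZL16_lemma1} to sharpen the \emph{Frobenius} Procrustes bound from $\mathcal O(\sqrt{\sigma_r^*})$ to $\mathcal O(\sqrt{\sigma_r^*}/\kappa)$, precisely so that the resulting error lands in $\mathcal B_\textnormal{err}(1/(c_e\sqrt\kappa))$.

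The missing idea that makes the balance bound trivial is that you already have all the ingredients: $Z^*=\text{b-SVD}(X^*)$ is \emph{exactly} balanced, and you have a bound on $d_P(Z_0,Z^*)$ (the clipping is a row-wise projection onto balls containing the incoherent rows of $Z^*$, hence a contraction of $d_P$ to $Z^*$). The paper's \cref{lem:update_balance_distance_bounds} says: for any two factor pairs $Z,Z'$ with $d=d_P(Z,Z')$, the imbalance of $Z'$ exceeds that of $Z$ by at most $2a$ where $a=\sqrt 2\max\{\sigma_1(U),\sigma_1(V)\}d+\tfrac12 d^2$, and also $\|U'V'^\top-UV^\top\|_F\leq a$. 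Applying this with $Z\to Z^*$, $Z'\to Z_0$ gives both the balance bound and the error bound simultaneously from the single Procrustes estimate, with no row-wise control of $D_1,D_2$ needed at all. In short: rather than tracking the clipping operator explicitly, compare $Z_0$ directly to the perfectly balanced $Z^*$ in Procrustes distance and expand $(Z^*P+\Delta)^\top(Z^*P+\Delta)$ — the cross terms are controlled by $\sigma_1(Z^*)\cdot d$ and the quadratic term by $d^2$.
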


For the proof we need the following auxiliary lemma, a variant of \cite[Lemma~1]{zheng2016convergence}, which provides bounds on the Procrustes distance between $\text{b-SVD}(X^*)$ and the matrices $Z, Z_0$ of \cref{alg:comp_initialization}.

\begin{lemma}\label{lem:ZL16_lemma1}
Let $c > 0$. There exist constants $c_1, c_2$ and a constant $C = C(c)$ such that the following holds.
Let $X^* \in \mathcal M(n_1, n_2, r, \mu, \kappa)$ and $Z^* = \text{b-SVD}(X^*)$.
In addition, let $Z = \text{b-SVD}(X)$ be the output of the first step of \cref{alg:comp_initialization}, and $Z_0$ be its final output.
Assume $\Omega \subseteq [n_1]\times [n_2]$ is randomly sampled with $np \geq C \mu r^2 \kappa^4 \log n$.
Then w.p.~at least $1 - c_1 n^{-c_2}$,
%the Procrustes distance between $Z$ and $Z^* = \text{b-SVD}(X^*)$ satisfies
\begin{align}\label{eq:ZL16_lemma1}
d_P\left(Z_0, Z^*\right) \leq d_P(Z, Z^*) \leq \frac{\sqrt{\sigma_r^*}}{c \kappa} .
\end{align}
\end{lemma}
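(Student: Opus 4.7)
The statement has two inequalities, and I would prove them separately, handling the right (perturbation) inequality by invoking a spectral-norm concentration result and the Davis--Kahan / Weyl machinery, and the left (contraction under clipping) inequality by a short convex-projection argument.

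\textbf{Step 1: Non-expansiveness of row clipping.} The second line of \cref{alg:comp_initialization} acts row-by-row as the Euclidean projection of $Z^{(i)}$ onto the closed ball in $\mathbb R^r$ of radius $\tau := \sqrt{2\mu r/n}\,\|Z\|_2$. I will show that $Z^{*(i)}$ lies in this ball for every $i$, so that projection toward the ball is a non-expansion with respect to $Z^*$. By \cref{eq:bSVD_row_norms} of \cref{lem:bSVD_properties}, $\|Z^{*(i)}\| \leq \sqrt{\mu r \sigma_1^*/\min\{n_1,n_2\}} \leq \sqrt{\mu r \sigma_1^*/n}$ (using $n_1\sim n_2$), and from Step~2 below I will get $\|Z\|_2 \geq \sqrt{2\sigma_1^*}\cdot(1-o(1))$, so $\tau \geq \|Z^{*(i)}\|$. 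Crucially, for any orthogonal $P\in\mathbb R^{r\times r}$ the right-multiplication $Z \mapsto ZP$ preserves every row norm, so clipping commutes with right-multiplication by $P$. Applying non-expansiveness row by row gives $\|Z_0 - Z^* P\|_F^2 \leq \|Z - Z^* P\|_F^2$ for each orthogonal $P$, and taking the minimum over $P$ yields $d_P(Z_0, Z^*) \leq d_P(Z, Z^*)$.

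\textbf{Step 2: Perturbation bound on $d_P(Z, Z^*)$.} The bridge between the observed matrix and $X^*$ is the standard spectral-norm concentration for Bernoulli-sampled incoherent matrices: with probability $\geq 1 - c_1 n^{-c_2}$,
\begin{equation*}
\left\|\tfrac{1}{p}\mathcal P_\Omega(X^*) - X^*\right\|_2 \;\leq\; C\sigma_1^*\sqrt{\tfrac{\mu r \log n}{np}},
\end{equation*}
which follows from matrix Bernstein applied to $\sum_{(i,j)}(\mathbf 1_{(i,j)\in\Omega}/p - 1)X^*_{ij}e_ie_j^\top$ under incoherence; cf.~\cite{keshavan2010matrix,chen2015completing}. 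I interpret the first line of \cref{alg:comp_initialization} as b-SVD of the best rank-$r$ approximation $\hat X$ of $\tfrac{1}{p}\mathcal P_\Omega(X^*)$, so by Weyl, $\|\hat X - X^*\|_2 \leq 2\|\tfrac{1}{p}\mathcal P_\Omega(X^*)-X^*\|_2$ and $\sigma_1(\hat X),\sigma_r(\hat X)$ are within $o(\sigma_r^*)$ of $\sigma_1^*,\sigma_r^*$ under the sample complexity $np \geq C\mu r^2\kappa^4\log n$, which also justifies $\|Z\|_2 \geq \sqrt{2\sigma_1^*}(1-o(1))$ used in Step~1. Invoking \cref{lem:lemma5.4_inTBSSR16} with $\sigma_r^2(Z^*)=2\sigma_r^*$ (\cref{eq:bSVD_sigma_r}),
\begin{equation*}
d_P^2(Z, Z^*) \;\leq\; \tfrac{1}{4(\sqrt 2-1)\sigma_r^*}\,\|ZZ^\top - Z^* Z^{*\top}\|_F^2.
\end{equation*}
Since both $ZZ^\top$ and $Z^*Z^{*\top}$ have rank at most $2r$, the Frobenius norm on the right is bounded by $\sqrt{4r}\,\|ZZ^\top - Z^*Z^{*\top}\|_2$, and a standard sin-$\Theta$ / Davis--Kahan computation (applied to the symmetric dilations) controls the latter by $C\,\|\hat X - X^*\|_2\,\sqrt{\sigma_1^*}/\sigma_r^* \cdot \sigma_1^*$, which after simplification yields
\begin{equation*}
d_P^2(Z, Z^*) \;\lesssim\; \frac{\mu r^2 \kappa \, \sigma_1^* \log n}{np}.
\end{equation*}
Requiring the right side to be at most $\sigma_r^*/(c\kappa)^2$ is equivalent to $np \geq C(c)\,\mu r^2 \kappa^4 \log n$, which is precisely the assumption.

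\textbf{Main obstacle.} The delicate part is Step~2: turning a spectral-norm bound on $\tfrac{1}{p}\mathcal P_\Omega(X^*) - X^*$ into a Frobenius-norm bound on $ZZ^\top - Z^*Z^{*\top}$ with the right dependence on $r,\mu,\kappa$. A naive bound $\|\cdot\|_F \leq \sqrt{n}\|\cdot\|_2$ would give a sample complexity scaling polynomially in $n$, which is too weak; one must exploit the rank-$2r$ structure of the dilation and the fact that $\sigma_r(\hat X)$ is not too small (a consequence of Weyl and the spectral-norm concentration bound). Concretely, I would follow the Zheng--Lafferty template: write $Z - Z^*P$ as a sum of a subspace-rotation term (controlled by Davis--Kahan) and a singular-value-gap term (controlled by Weyl), both driven by the same spectral-norm bound. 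This gives the claimed $\kappa^4$ dependence; any loss in the $\sqrt{r}$ rank conversion cleanly shows up as the exponent on $r$ in the sample complexity, matching the assumption $np \geq C\mu r^2\kappa^4\log n$.
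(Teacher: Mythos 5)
First, note that the paper does not prove this lemma from scratch: it defers to the proof of \cite[Lemma~1]{zheng2016convergence} and observes that a one-line modification of \cite[Eq.~(48)]{zheng2016convergence} trades a $\kappa^2$ increase in sample complexity for a $1/\kappa$ improvement in the bound, consistent with $d_P^2 \propto 1/(np)$. Your Step~1 (the left inequality) is correct and is the standard argument: the clipping step is the row-wise Euclidean projection onto a ball whose radius exceeds the row norms of $Z^*P$ for any orthogonal $P$ (since right-multiplication by $P$ preserves row norms), so it is a non-expansion and $d_P(Z_0,Z^*)\leq d_P(Z,Z^*)$ follows by minimizing over $P$. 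One loose end: \cref{eq:bSVD_row_norms} gives $\|Z^{*(i)}\|\leq\sqrt{\mu r\sigma_1^*/\min\{n_1,n_2\}}$, not $\sqrt{\mu r\sigma_1^*/n}$ with $n=\max\{n_1,n_2\}$, so membership of $Z^*P$ in the ball of radius $\sqrt{2\mu r/n}\,\|Z\|_2\approx 2\sqrt{\mu r\sigma_1^*/n}$ needs the aspect ratio $n/\min\{n_1,n_2\}$ to be at most $4$ (or a re-examination of the threshold); this is a constant-level issue under the paper's standing assumption $n_1\sim n_2$, but it should be stated.

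The genuine gap is in Step~2. Your displayed bound $\|ZZ^\top-Z^*Z^{*\top}\|_2\leq C\|\hat X-X^*\|_2\sqrt{\sigma_1^*}/\sigma_r^*\cdot\sigma_1^*$ is dimensionally inconsistent ($Z$ scales like $\sqrt\sigma$, so the left side scales like $\sigma$ while the right side scales like $\sigma^{3/2}$), and the natural repair of a Davis--Kahan/Weyl argument gives $\|ZZ^\top-Z^*Z^{*\top}\|_2\lesssim\kappa\|\hat X-X^*\|_2$. Chasing that through \cref{lem:lemma5.4_inTBSSR16} and the rank-$2r$ conversion yields $d_P^2(Z,Z^*)\lesssim \mu r^2\kappa^3\sigma_1^*\log n/(np)$, which forces $np\gtrsim\mu r^2\kappa^6\log n$ --- two powers of $\kappa$ worse than the lemma's hypothesis. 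Your stated intermediate bound $d_P^2\lesssim\mu r^2\kappa\,\sigma_1^*\log n/(np)$ is the correct target, but it does not follow from the sin-$\Theta$ route you sketch. The clean way to get it is to avoid spectral-norm/subspace perturbation entirely and exploit that \emph{both} $Z$ and $Z^*$ are b-SVDs, hence perfectly balanced: the paper's own \cref{lem:dP_e_bound} (specifically the inequality \cref{eq:ZZ_XX_bound}, whose balance term vanishes here) gives $d_P^2(Z,Z^*)\leq\|\hat X-X^*\|_F^2/((\sqrt2-1)\sigma_r^*)\leq 2r\|\hat X-X^*\|_2^2/((\sqrt2-1)\sigma_r^*)$ with no $\kappa$ loss, and combining with the spectral concentration bound then yields exactly $np\geq C(c)\mu r^2\kappa^4\log n$. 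So the architecture of your proof is right, but the key perturbation step must be replaced by a Frobenius-level bound on $\|ZZ^\top-Z^*Z^{*\top}\|_F$ in terms of $\|\hat X-X^*\|_F$ (an Araki--Yamagami-type inequality, which is what \cref{eq:ZZ_XX_bound} encodes), not a spectral-norm Davis--Kahan bound.
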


\cref{lem:ZL16_lemma1} is similar to \cite[Lemma~1]{zheng2016convergence}, but differs from it in two aspects. First, \cite[Lemma~1]{zheng2016convergence} is stated only for $c=4$, and correspondingly, $C$ is a constant.
Second, instead of \cref{eq:ZL16_lemma1}, they only guarantee a looser bound $d_P(Z, Z^*) \leq \sqrt{\sigma_r^*}/4$. For this bound, however, they require a smaller set $|\Omega|$ by a factor of $\kappa^2$, namely $np \geq C\mu r^2 \kappa^2 \log n$.
It is easy to check that a minor modification in their proof makes it valid for our variant, see \cite[Eq.~(48)]{zheng2016convergence}.

\begin{proof}[Proof of \cref{lem:comp_initialization}]
Let $Z_0 = \begin{psmallmatrix} U_0 \\ V_0 \end{psmallmatrix}$  be the output of \cref{alg:comp_initialization}.
Since $Z^* = \text{b-SVD}(X^*)$ it is perfectly balanced.
\cref{lem:update_balance_distance_bounds} thus implies
\begin{align}\label{eq:init_balance_distance_temp_bounds}
\|U_0^\top U_0 - V_0^\top V_0\|_F &\leq 2a, \quad
\|U_0V_0^\top - X^*\|_F \leq a,
\end{align}
where
\begin{align*}
a = \left( \sqrt{2 \sigma_1^*} + \tfrac 12 d_P(Z_0, Z^*) \right) d_P(Z_0, Z^*).
\end{align*}
Invoking \cref{lem:ZL16_lemma1} with $c = \max\{1, 2c_e, 4c_l\}$ yields
\begin{align*}
a \leq \left( \sqrt{2 \sigma_1^*} + \frac 12 \sqrt{\sigma_r^*} \right) \frac{\sqrt{\sigma_r^*}}{\kappa\cdot \max\{2c_e, 4c_l\}} 
\leq \frac{\sigma_r^*}{\sqrt \kappa \cdot \max\{c_e, 2c_l\}} .
\end{align*}
Inserting this into \cref{eq:init_balance_distance_temp_bounds} gives $Z_0 \in B_\textnormal{err}({1}/{(c_e\sqrt\kappa)}) \cap \mathcal B_\textnormal{bln}({1}/{c_l})$.

Next, we show that $Z_0 \in \mathcal B_\mu$.
The second step in \cref{alg:comp_initialization} guarantees $\|U_0^{(i)}\| \leq \sqrt{2\mu r/n} \|U\|_2$. By \cref{prop:sigmaMin_U}, $\|U\|_2 - \sqrt{\sigma_1^*} \leq d_P(Z, Z^*)$.
Invoking \cref{lem:ZL16_lemma1} with $c = 5$ thus yields
\begin{align*}
\|U_0^{(i)}\| \leq \sqrt\frac{2\mu r}{n} \left[\sqrt{\sigma_1^*} + d_P(Z, Z^*)\right] \leq \sqrt\frac{2\mu r}{n} \cdot \frac 65 \sqrt{\sigma_1^*} \leq \sqrt\frac{3\mu r \sigma_1^*}{n} ,
\end{align*}
and similarly $\|V_0^{(i)}\| \leq \sqrt{3\mu r \sigma_1^*/n}$. This completes the proof.
\end{proof}

%%%%%%%%%%%%%%%%%%%%%%%%%%%%%%%%%%%%%%%%%%%%%%%%%%%%%%%%%%%%%%%%%%%%%
%%%%%%%%%%%%%%%%%%%%%%%%%%%%%%%%%%%%%%%%%%%%%%%%%%%%%%%%%%%%%%%%%%%%%
\section{Proof of \cref{thm:comp_quadConvergence}\label{sec:comp_quadConvergence_proof} (matrix completion, quadratic convergence)}
Recall the definition \cref{eq:B_s_def} of $\mathcal B_\textnormal{lsv}(\nu)$.
Denote the current and next iterates of \cref{alg:reg_GNMR} by $Z_t = \begin{psmallmatrix} U_t \\ V_t \end{psmallmatrix}$ and $Z_{t+1} = \begin{psmallmatrix} U_{t+1} \\ V_{t+1} \end{psmallmatrix}$, respectively.
Let $X_t = U_tV_t^\top$ and $X_{t+1} = U_{t+1}V_{t+1}^\top$ be the corresponding estimates.
%The following lemma considers a single iteration of the algorithm.
The following lemma is analogous to \cref{lem:comp_linearConvergence_singleIteration}, but here the error contracts with a quadratic rate.
%It is proved in \cref{sec:comp_quadConvergence_singleIteration_proof}.

\begin{lemma}\label{lem:comp_quadConvergence_singleIteration}
There exist constants $C, c_e, c_l$ such that the following holds.
Let $X^* \in \mathcal M(n_1, n_2, r, \mu, \kappa)$. Denote $\gamma = {c_e}/{(2\sigma_r^* \sqrt p)}$ and
\begin{align*}
\tilde{\mathcal B}(t) = \mathcal B_\textnormal{err}(\sqrt p \epsilon_t) \cap \mathcal B_\textnormal{err}(\delta_t) \cap \mathcal B_\mu \cap \mathcal B_\textnormal{lsv}(\nu_t) ,
\end{align*}
where $\epsilon_t, \delta_t$ and $\nu_t$ are as in \cref{eq:epsilon_delta_xi_t}.
Assume $\Omega \subseteq [n_1]\times [n_2]$ is randomly sampled with $np \geq C \mu r \log n$.
Further assume that at some iteration $t$,
\begin{align}
Z_t \in \tilde{\mathcal B}(t). \label{eq:comp_quadConvergence_singleIteration_assumption_B}
\end{align}
Then w.p.~at least $1 - {3}/{n^3}$, for all iterates $t' \geq t$,
\begin{subequations}\label{eq:comp_quadConvergence_singleIteration_results}\begin{align}
Z_{t'+1} &\in \tilde{\mathcal B}(t'+1), \label{eq:comp_quadConvergence_singleIteration_result_B} \\
\|X_{t'+1} - X^*\|_F &\leq \gamma \|X_{t'} - X^*\|_F^2 . \label{eq:comp_quadConvergence_singleIteration_result_e}
\end{align}\end{subequations}
\end{lemma}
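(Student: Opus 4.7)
The plan is to prove the lemma by induction on the iteration index, following the overall strategy of Lemma~\ref{lem:comp_linearConvergence_singleIteration} but exploiting the sharper initial error bound $\|X_t-X^*\|_F \le \sqrt{p}\,\sigma_r^*/(2^t c_e\sqrt\kappa)$ to upgrade the linear contraction into a quadratic one. Since the random event we require is the same for all iterates, it suffices to establish \cref{eq:comp_quadConvergence_singleIteration_results} for $t'=t$, assuming $Z_t \in \tilde{\mathcal B}(t)$, and then close the induction by showing $Z_{t+1} \in \tilde{\mathcal B}(t+1)$.

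First I would apply the RIP at $X_t-X^*$. Because $\|X_t-X^*\|_F$ is so small here (of order $\sqrt{p/(4^t\kappa)}\,\sigma_r^*$), the quadratic-regime RIP (referenced as \cref{lem:quad_RIP} in the text just after \cref{thm:RIP}) yields $\tfrac78 \,p\|X_t-X^*\|_F^2 \le \|X_t-X^*\|_{F(\Omega)}^2 \le \tfrac98 \,p\|X_t-X^*\|_F^2$ without requiring additional balance/incoherence of $Z_t$ beyond what is assumed. Next I exhibit a feasible reference decomposition: by \cref{lem:deterministic_nearby_delta_bound}, from $Z_t \in \mathcal B_\text{err}(\sqrt p\epsilon_t)\cap \mathcal B_\text{bln}(\delta_t)$ there exists $Z^*\in \mathcal B^*\cap \mathcal B_\mu$ with $\|U^*-U_t\|_F^2+\|V^*-V_t\|_F^2 \le \tfrac{25}{4\sigma_r^*}\|X_t-X^*\|_F^2$; combined with the RIP lower bound this places $Z^*\in \mathcal C^{(t)}$. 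With this in hand, \cref{lem:comp_common_singleIteration} immediately delivers both the set-membership claim $Z_{t+1}\in \mathcal B_\text{err}(10/(2^tc_e))\cap \mathcal B_\text{bln}(\delta_{t+1})\cap \mathcal B_\mu \cap \mathcal B_\text{lsv}(\nu_{t+1})$ and the error identity
\[
\|X_{t+1}-X^*\|_{F(\Omega)} \le \|(U^*-U_t)(V^*-V_t)^\top\|_{F(\Omega)} + \|(U_{t+1}-U_t)(V_{t+1}-V_t)^\top\|_{F(\Omega)}.
\]

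The quadratic rate then comes from bounding the two cross-product terms. Both $(U^*-U_t)$ and $(U_{t+1}-U_t)$ have row norms $O(\sqrt{\mu r\sigma_1^*/n_1})$ since $Z^*, Z_t, Z_{t+1} \in \mathcal B_\mu$, so I can invoke \cref{lem:KMO10_lemma71_consequence} with $t\to \sigma_1^*$ and with $\epsilon$ taken small enough (e.g.\ $\epsilon^2 \sim 1/\kappa$) that the $t\epsilon^2$ term becomes negligible against the leading term. This yields
\[
\tfrac{1}{p}\|(U^*-U_t)(V^*-V_t)^\top\|_{F(\Omega)}^2 \lesssim \big(\|U^*-U_t\|_F^2+\|V^*-V_t\|_F^2\big)^2 \lesssim \tfrac{1}{(\sigma_r^*)^2}\|X_t-X^*\|_F^4,
\]
using the Frobenius bound from \cref{lem:deterministic_nearby_delta_bound}, and an identical bound on $\|(U_{t+1}-U_t)(V_{t+1}-V_t)^\top\|_{F(\Omega)}$ via the step-size estimate \cref{eq:comp_common_singleItertion_result_Delta}. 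Finally, applying the quadratic-regime RIP once more on the left (at $X_{t+1}-X^*$, after verifying its Frobenius norm is small enough) converts $F(\Omega)$ back to Frobenius norm and gives $\|X_{t+1}-X^*\|_F \lesssim \|X_t-X^*\|_F^2/(\sqrt p \sigma_r^*)$, which is the desired $\gamma \|X_t-X^*\|_F^2$.

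The main technical obstacle is that the final step of the previous paragraph requires $\|X_{t+1}-X^*\|_F$ itself to be small enough for the quadratic-regime RIP to apply, but a priori this is what we are trying to establish. I would handle this by a short bootstrap: first invoke the linear-convergence argument of \cref{lem:comp_linearConvergence_singleIteration} (applicable under a weaker regime) to certify the coarser bound $\|X_{t+1}-X^*\|_F \le \tfrac12\|X_t-X^*\|_F$, thereby ensuring the applicability of the quadratic RIP at $X_{t+1}$, and only then refine to the quadratic contraction. The bookkeeping to verify $Z_{t+1}\in \tilde{\mathcal B}(t+1)$ with the specific constants $\epsilon_{t+1}, \delta_{t+1}, \nu_{t+1}$ reduces to the same computations performed in the proof of \cref{lem:comp_linearConvergence_singleIteration}, and in particular the strengthened $\epsilon_{t+1}=\epsilon_t/2$ follows from the quadratic bound itself plus the induction hypothesis $\|X_t-X^*\|_F \le \sqrt p\sigma_r^*/(2^t c_e\sqrt\kappa)$, which gives $\gamma \|X_t-X^*\|_F \le 1/(2\sqrt\kappa 2^t) \le 1/2$.
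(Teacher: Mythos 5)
The high-level scaffolding of your argument (induct, apply RIP at $X_t$, produce $Z^*\in\mathcal B^*\cap\mathcal B_\mu\cap\mathcal C^{(t)}$ via \cref{lem:deterministic_nearby_delta_bound}, feed into \cref{lem:comp_common_singleIteration}, then apply quadratic-regime RIP at $X_{t+1}$) matches the paper. However, the crucial step — bounding the two cross-product terms — is carried out incorrectly, and the fix is not a small one.

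You propose to bound $\tfrac 1p\|(U^*-U_t)(V^*-V_t)^\top\|_{F(\Omega)}^2$ by invoking \cref{lem:KMO10_lemma71_consequence} with $\epsilon^2\sim 1/\kappa$, claiming the $t\epsilon^2$ term is then negligible and the bound becomes $\lesssim(\|U^*-U_t\|_F^2+\|V^*-V_t\|_F^2)^2$. This is the wrong regime. With $\epsilon^2\sim 1/\kappa$ and $t\to\sigma_1^*$ the additive term is $t\epsilon^2\sim\sigma_r^*$, whereas the "leading" term $c\,(\|U^*-U_t\|_F^2+\|V^*-V_t\|_F^2)\lesssim e_t^2/\sigma_r^*\lesssim p\sigma_r^*/(4^t\kappa)$ is far \emph{smaller} in this quadratic regime, not larger. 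The dominant term is therefore $\sigma_r^*$, which yields $\tfrac 1p\|\cdots\|_{F(\Omega)}^2\lesssim e_t^2$, i.e.\ a linear rate, not $e_t^4/\sigma_r^{*2}$. To make the $t\epsilon^2$ term genuinely negligible you would need $\epsilon^2\lesssim e_t^2/(\sigma_r^*\sigma_1^*)\lesssim p/(4^t\kappa^2)$, but \cref{lem:KMO10_lemma71_consequence} then requires $np\gtrsim\mu^2 r^2/\epsilon^4$, which is incompatible with the assumed $np\geq C\mu r\log n$ (and worsens with each iteration $t$). So the intended route fails.

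The paper sidesteps this entirely: in the quadratic regime the contraction factor $\gamma = c_e/(2\sigma_r^*\sqrt p)$ has a $1/\sqrt p$ built in precisely so that the \emph{trivial deterministic} bound suffices. Concretely, $\|(U^*-U_t)(V^*-V_t)^\top\|_{F(\Omega)}\leq\|(U^*-U_t)(V^*-V_t)^\top\|_F\leq\|U^*-U_t\|_F\|V^*-V_t\|_F\leq\tfrac 12(\|U^*-U_t\|_F^2+\|V^*-V_t\|_F^2)\lesssim e_t^2/\sigma_r^*$, without any random event beyond those already in force, and likewise for the step-size term using \cref{eq:comp_common_singleItertion_result_Delta}. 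After dividing by $\sqrt p$ and passing through \cref{lem:quad_RIP} on the left-hand side, this gives $e_{t+1}\leq\gamma e_t^2$ cleanly. A secondary issue: your proposed bootstrap through the linear-convergence lemma to certify that $X_{t+1}$ satisfies the quad-RIP hypothesis cannot be invoked here, since \cref{lem:comp_linearConvergence_singleIteration} requires $np\gtrsim\mu r\max\{\log n,\mu r\kappa^2\}$, which is not available under the present assumption. The paper instead reads off the coarser membership $Z_{t+1}\in\mathcal B_\text{err}(10\sqrt p/(2^tc_e))$ directly from \cref{lem:comp_common_singleIteration} (tracing its proof under the sharper $\sqrt p\,\epsilon_t$ error bound), which is already small enough for \cref{lem:quad_RIP} to apply at $X_{t+1}$ — no bootstrap needed.
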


\begin{proof}[Proof of \cref{thm:comp_quadConvergence}]
Let $Z_0 = \begin{psmallmatrix} U_0 \\ V_0 \end{psmallmatrix}$ be an initial guess which satisfies the conditions of the theorem.
Let us show that it satisfies assumptions \cref{eq:comp_quadConvergence_singleIteration_assumption_B} of \cref{lem:comp_quadConvergence_singleIteration} at $t=0$.
Since $Z_0 \in \mathcal B_\textnormal{err}({\sqrt p}/{(c_e\sqrt\kappa)}) \cap \mathcal B_\textnormal{bln}({1}/{c_l}) \cap \mathcal B_\mu$, we only need to show that $Z_0 \in \mathcal B_\textnormal{lsv}(\nu_0)$ with $\nu_0 = 2$.
Since $p \leq 1$, we have $\mathcal B_\textnormal{err}({\sqrt p}/{(c_e\sqrt\kappa)}) \subseteq \mathcal B_\textnormal{err}({1}/{(c_e\sqrt\kappa)})$, and the proof of $Z_0 \in \mathcal B_\textnormal{lsv}(\nu_0)$ is as in the proof of \cref{thm:comp_linearConvergence}.
The theorem then follows by applying \cref{lem:comp_quadConvergence_singleIteration} at $t=0$.
The fact that $\gamma \|X_0 - X^*\|_F \leq {1}/{(2\sqrt\kappa)}$ follows by the assumption $Z_0 \in \mathcal B_\textnormal{err}({\sqrt p}/{(c_e\sqrt\kappa)})$.
\end{proof}

\subsection*{Proof of \cref{lem:comp_quadConvergence_singleIteration}}%\label{sec:comp_quadConvergence_singleIteration_proof}
To prove the lemma, we derive a different RIP from the one of \cref{thm:RIP}.
This RIP applies to a much smaller neighborhood of $X^*$, $\|X-X^*\| \lesssim \sigma_r^* \sqrt p$; on the other hand, it holds with fewer number of observations $|\Omega|$, and does not require bounded row norms or balanced factor matrices.
%As a result, we can formulate it w.r.t.~matrices in $\mathbb R^{n_1\times n_2}$ rather than to factor matrices in $\mathbb R^{(n_1+n_2)\times r}$.
%The lemma is proved in \cref{sec:comp_quadConvergence_singleIteration_auxiliary_proof}.
\begin{lemma}\label{lem:quad_RIP}
There exist constants $C, c_e$ such that the following holds.
Let $X^* \in \mathcal M(n_1, n_2, r, \mu, \kappa)$.
Let $\epsilon \in (0,1)$, and assume $\Omega \subseteq [n_1]\times [n_2]$ is randomly sampled with $np \geq \frac{C}{\epsilon^2} \mu r \log n$.
Then w.p.~at least $1 - {3}/{n^3}$, the RIP \cref{eq:RIP} holds for any $X \in \mathbb R^{n_1\times n_2}$ that satisfies
\begin{align}\label{eq:quad_RIP_assumption}
\|X - X^*\|_F &\leq \frac{\epsilon \sigma_r^* \sqrt{p}}{c_e} .
\end{align}
\end{lemma}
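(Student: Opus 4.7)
The plan is to reduce \cref{lem:quad_RIP} directly to \cref{lem:RIP_fromDeltaProductBounds}, whose probability bound $1-3/n^3$ and sampling requirement $np \geq (C/\epsilon^2)\mu r\log n$ already match exactly what the lemma claims, by exhibiting for each admissible $X$ a specific factorization $Z=\begin{psmallmatrix} U\\V\end{psmallmatrix}$ and a companion $Z^* \in \mathcal B^*$ that satisfy the two inequalities in \cref{eq:delta_bounds} \emph{deterministically}. The conceptual point is that the hypothesis here shrinks the neighborhood by an extra factor of $\sqrt p$ compared with \cref{thm:RIP}, and that factor is precisely what makes the trivial majorization $\|\cdot\|_{F(\Omega)}\leq\|\cdot\|_F$ good enough for the second condition of \cref{eq:delta_bounds}, so that the row-norm and balance constraints of \cref{thm:RIP}, together with the $\mu r\kappa^2$ term in its sample complexity, are no longer needed.

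Given such $X$ (which in the applications of interest is rank $r$), set $Z = \text{b-SVD}(X)$. By \cref{lem:bSVD_properties} we have $U^\top U = V^\top V$, hence $Z \in \mathcal B_\textnormal{bln}(1/c_l)$ for every $c_l>0$; and since $\epsilon,\sqrt p \leq 1$, the hypothesis yields $\|X-X^*\|_F \leq \sigma_r^*/c_e$, i.e.\ $Z \in \mathcal B_\textnormal{err}(1/c_e)$. Thus the deterministic \cref{lem:deterministic_nearby_delta_bound} applies and produces $Z^* \in \mathcal B^*\cap \mathcal B_\mu$ satisfying
\[
\|U-U^*\|_F^2 + \|V-V^*\|_F^2 \;\leq\; \frac{25}{4\sigma_r^*}\|X-X^*\|_F^2.
\]
Factoring one copy of $\|X-X^*\|_F$ out of the right-hand side and bounding the remaining copy by $\epsilon\sigma_r^*\sqrt p/c_e \leq \epsilon\sigma_r^*/c_e$ yields the first inequality of \cref{eq:delta_bounds} with slack as long as $c_e$ is taken sufficiently large.

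For the second inequality, Cauchy--Schwarz and AM--GM combine with the previous display to give $\|(U-U^*)(V-V^*)^\top\|_F \leq \tfrac12\bigl(\|U-U^*\|_F^2+\|V-V^*\|_F^2\bigr)\leq \tfrac{25}{8\sigma_r^*}\|X-X^*\|_F^2$. Using $\|\cdot\|_{F(\Omega)}\leq\|\cdot\|_F$, dividing by $\sqrt p$, and invoking the full-strength hypothesis once more,
\[
\frac{1}{\sqrt p}\,\|(U-U^*)(V-V^*)^\top\|_{F(\Omega)} \;\leq\; \frac{25}{8\sigma_r^*\sqrt p}\|X-X^*\|_F^2 \;\leq\; \frac{25\epsilon}{8c_e}\|X-X^*\|_F \;\leq\; \tfrac\epsilon6\|X-X^*\|_F
\]
for $c_e$ large enough; this is the step where the $\sqrt p$ from the hypothesis cancels the $1/\sqrt p$ built into the RIP normalization. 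Applying \cref{lem:RIP_fromDeltaProductBounds} to the pair $(Z,Z^*)$ then gives the RIP uniformly in $X$ with the claimed probability and sampling threshold, because the map $X\mapsto(Z,Z^*)$ is entirely deterministic. I do not expect any real obstacle: the argument is a careful repackaging of earlier results, and the only content beyond tracking constants is the observation that the tighter $\sqrt p$ neighborhood lets a crude pointwise-to-Frobenius bound suffice for the mixed term, which is precisely what allows the extra structural constraints and the $\mu r\kappa^2$ scaling of \cref{thm:RIP} to be dispensed with here.
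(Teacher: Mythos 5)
Your proof is correct and follows essentially the same route as the paper's: take $Z=\text{b-SVD}(X)$, verify $Z\in\mathcal B_\textnormal{err}(1/c_e)\cap\mathcal B_\textnormal{bln}(1/c_l)$ so that \cref{lem:deterministic_nearby_delta_bound} supplies $Z^*\in\mathcal B^*$, then check both inequalities of \cref{eq:delta_bounds} by plugging the strengthened $\sqrt p$-hypothesis and majorizing $\|\cdot\|_{F(\Omega)}$ by $\|\cdot\|_F$, and conclude via \cref{lem:RIP_fromDeltaProductBounds}. The only cosmetic difference is that you discard the extra $\sqrt p$ factor when verifying \cref{eq:delta_F_bound} (the paper keeps it and uses $\sqrt p\leq 1$ at the end), which changes nothing.
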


\begin{proof}[Proof of \cref{lem:comp_quadConvergence_singleIteration}]
In the following, we prove that if a certain random event occurs, then \cref{eq:comp_quadConvergence_singleIteration_results} holds for $t'=t$.
Since this event does not depend on $t$ and occurs w.p.~at least $1 - {3}/{n^3}$, the lemma follows for any $t'\geq t$ by induction.

Let us begin by showing that the conditions of \cref{lem:comp_common_singleIteration} hold.
First, we prove the RIP condition \cref{eq:comp_common_Xt_RIP}.
By assumption \cref{eq:comp_quadConvergence_singleIteration_assumption_B} for large enough $c_e$, $Z_t$ satisfies the assumptions of \cref{lem:quad_RIP} with $\epsilon = 1/8$. \Cref{lem:quad_RIP} thus guarantees \cref{eq:comp_common_Xt_RIP}.
Next, the second condition of \cref{lem:comp_common_singleIteration} is \cref{eq:comp_linearConvergence_singleIteration_assumption_B}.
Assumption \cref{eq:comp_quadConvergence_singleIteration_assumption_B} is in fact a stronger version of \cref{eq:comp_linearConvergence_singleIteration_assumption_B}, with $\sqrt p \epsilon_t$ replacing $\epsilon_t$ in \cref{eq:epsilon_delta_xi_t}.
As a result, rather than \cref{eq:comp_common_singleIteration_result_B}, \cref{lem:comp_common_singleIteration} now guarantees
\begin{align}\label{eq:comp_quadConvergence_singleIteration_tempResult_B}
Z_{t+1} \in \mathcal B_\textnormal{err}\left({10\sqrt p}/{(2^t c_e)}\right) \cap \mathcal B_\textnormal{bln}(\delta_{t+1}) \cap \mathcal B_\mu \cap \mathcal B_\textnormal{lsv}(\nu_{t+1}),
\end{align}
as can be easily verified by tracing its proof.
It thus remains to show that $Z_{t+1} \in \mathcal B_\textnormal{err}(\sqrt p \epsilon_{t+1})$ and that \cref{eq:comp_quadConvergence_singleIteration_result_e} holds.

Assume for the moment that \cref{eq:comp_quadConvergence_singleIteration_result_e} holds.
Together with the assumption $Z_t \in \mathcal B(\sqrt p \epsilon_t)$ \cref{eq:comp_quadConvergence_singleIteration_assumption_B}, this implies
\begin{align*}
\|X_{t+1} - X^*\|_F \leq \frac{c_e}{2\sigma_r^* \sqrt p} \left(\frac{\sqrt p}{2^t c_e \sqrt\kappa}\right)^2 \leq \frac{\sqrt p}{2^{t+1} \sigma_r^* c_e \sqrt \kappa},
\end{align*}
namely $Z_{t+1} \in \mathcal B_\textnormal{err}(\sqrt p \epsilon_{t+1})$.
Hence, it is sufficient to prove \cref{eq:comp_quadConvergence_singleIteration_result_e}.

To use \cref{eq:comp_common_singleIteration_result_e} of \cref{lem:comp_common_singleIteration}, we need to find some $Z^* \in \mathcal B^* \cap \mathcal B_\mu \cap \mathcal C^{(t)}$.
Assumption \cref{eq:comp_quadConvergence_singleIteration_assumption_B} implies that \cref{lem:deterministic_nearby_delta_bound} holds w.r.t.~$\begin{psmallmatrix} U \\ V \end{psmallmatrix} \to Z_t$. 
Let $Z^* = \begin{psmallmatrix} U^* \\ V^* \end{psmallmatrix} \in \mathcal B^* \cap \mathcal B_\mu$ be the corresponding matrix given by \cref{lem:deterministic_nearby_delta_bound}.
In light of \cref{eq:deterministic_nearby_delta_bound}, $Z^*$ satisfies
\begin{align}\label{eq:Delta*_temp_bound}
\|U^* - U_t\|_F^2 &+ \|V^*-V_t\|_F^2 \leq \frac{25}{4\sigma_r^*} e^2_t.
\end{align}
Combining \cref{eq:Delta*_temp_bound} with the RIP lower bound of the current estimate \cref{eq:comp_common_Xt_RIP} yields that $Z_{t+1} \in \mathcal B^* \cap \mathcal B_\mu \cap \mathcal C^{(t)}$.
\Cref{lem:comp_common_singleIteration} thus guarantees that $Z^*$ satisfies \cref{eq:comp_common_singleIteration_result_e}.
In the following, we shall prove that the LHS of \cref{eq:comp_common_singleIteration_result_e} is lower bounded by $\tfrac 23 \sqrt p e_{t+1}$, and that its RHS is upper bounded by $\tfrac 23 \sqrt p \gamma e_t^2$. Together, these bounds yield the required \cref{eq:comp_quadConvergence_singleIteration_result_e}.

Let us begin with the RHS of \cref{eq:comp_common_singleIteration_result_e}.
By the Cauchy-Schwarz inequality and the fact that $ab \leq (a^2+b^2)/2$, the RHS of \cref{eq:comp_common_singleIteration_result_e} reads
\begin{align*}
&\frac{3}{2\sqrt p} \left[\|(U^*-U_t)(V^*-V_t)^\top\|_{F(\Omega)} + \|(U_{t+1} - U_t)(V_{t+1} - V_t)^\top\|_{F(\Omega)} \right] \\
&\leq \frac{3}{4\sqrt p} \left(\|U^*-U_t\|_F^2 + \|V^*-V_t\|_F^2 + \|U_{t+1} - U_t\|_F^2 + \|V_{t+1} - V_t\|_F^2 \right) .
\end{align*}
First, \cref{eq:Delta*_temp_bound} implies that for large enough $c_e$,
\begin{align*}
\|U^* - U_t\|_F^2 + \|V^* - V_t\|_F^2 \leq \frac{c_e}{6\sigma_r^*}e_t^2 = \frac{\gamma \sqrt p}{3} e_t^2 .
\end{align*}
Second, $Z_{t+1} \in \mathcal C^{(t)}$ with the RIP upper bound \cref{eq:comp_common_Xt_RIP} give that for large enough $c_e$,
\begin{align*}
\|U_{t+1} - U_t\|_F^2 + \|V_{t+1} - V_t\|_F^2 \leq \frac{8}{p\sigma_r^*} e_{\Omega,t}^2 \leq \frac{9}{\sigma_r^*} e_t^2 \leq \frac{c_e}{6\sigma_r^*}e_t^2 = \frac{\gamma \sqrt p}{3} e_t^2 .
\end{align*}
Together, these two bounds show that the RHS of \cref{eq:comp_common_singleIteration_result_e} is upper bounded by $\tfrac 23 \gamma \sqrt p e_t^2$.

Finally, we prove that the LHS of \cref{eq:comp_common_singleIteration_result_e} is lower bounded by $\tfrac 23 \sqrt p e_{t+1}$.
In light of \cref{eq:comp_quadConvergence_singleIteration_tempResult_B}, $Z_{t+1}$ satisfies the conditions of \cref{lem:quad_RIP} with $\epsilon = 1/3$ for large enough $c_e$. The required lower bound thus follows by \cref{lem:quad_RIP}.
\end{proof}
\begin{proof}[Proof of \cref{lem:quad_RIP}]
Let $\begin{psmallmatrix} U \\ V \end{psmallmatrix} = \text{b-SVD}(X)$. Then $UV^\top = X$.
In view of \cref{lem:RIP_fromDeltaProductBounds}, it is sufficient to find $\begin{psmallmatrix} U^* \\ V^* \end{psmallmatrix} \in \mathcal B^*$ that satisfies \cref{eq:delta_bounds}.

As $p \leq 1$, by assumption \cref{eq:quad_RIP_assumption} we have $\begin{psmallmatrix} U \\ V \end{psmallmatrix} \in \mathcal B_\textnormal{err}({\epsilon}/{c_e})$, and by \cref{eq:bSVD_is_balanced} of \cref{lem:bSVD_properties} we have $\begin{psmallmatrix} U \\ V \end{psmallmatrix} \in \mathcal B_\textnormal{bln}({1}/{c_l})$ for any $c_l>0$.
Invoking \cref{lem:deterministic_nearby_delta_bound} thus implies the existence of $\begin{psmallmatrix} U^* \\ V^* \end{psmallmatrix} \in \mathcal B^*$ that satisfies \cref{eq:deterministic_nearby_delta_bound}.
We shall now show that $\begin{psmallmatrix} U^* \\ V^* \end{psmallmatrix}$ satisfies \cref{eq:delta_bounds} of \cref{lem:RIP_fromDeltaProductBounds}.

By assumption \cref{eq:quad_RIP_assumption} we have
\begin{align*}
\|UV^\top - X^*\|_F^2 \leq \frac{\epsilon \sigma_r^* \sqrt p}{c_e} \|UV^\top - X^*\|_F.
\end{align*}
Plugging this into \cref{eq:deterministic_nearby_delta_bound} yields
\begin{align*}
\|U-U^*\|^2_F + \|V-V^*\|^2_F \leq \frac{25 \epsilon \sqrt p}{4c_e} \|UV^\top - X^*\|_F,
\end{align*}
from which \cref{eq:delta_F_bound} follows for large enough $c_e$.
In addition, by combining this equation with the Cauchy-Schwarz inequality and the fact that $ab \leq (a^2+b^2)/2$ we obtain
\begin{align*}
\|(U-U^*)(V-V^*)^\top\|_{F(\Omega)} &\leq
\|U-U^*\|_F \|V-V^*\|_F \\
&\leq \frac 12 \left(\|U-U^*\|_F^2 + \|V-V^*\|_F^2\right) \\
&\leq \frac{25 \epsilon \sqrt p}{8c_e} \|X - X^*\|_F ,
\end{align*}
from which \cref{eq:deltaProduct_FOmega_bound} follows for large enough $c_e$.
This completes the proof.
\end{proof}

%%%%%%%%%%%%%%%%%%%%%%%%%%%%%%%%%%%%%%%%%%%%%%%%%%%%%%%%%%%%%%%%%%%%%
%%%%%%%%%%%%%%%%%%%%%%%%%%%%%%%%%%%%%%%%%%%%%%%%%%%%%%%%%%%%%%%%%%%%%
\section{Proof of \cref{rem:comp_parametersInput}\label{sec:comp_estimating_proof} (matrix completion, estimating $\sigma_r^*$)}
In \cref{rem:comp_parametersInput} we claimed it is possible to estimate $\sigma_r^*$ to high accuracy with high probability.
To prove this claim we shall use the following lemma \cite[Lemma~2]{chen2015incoherence}.
\begin{lemma}\label{lem:che2015_lemma2}
There exists constants $c, c_1, c_2$ such that the following holds.
Let $X^* \in \mathbb R^{n_1\times n_2}$.
Assume $\Omega \subseteq [n_1]\times [n_2]$ is randomly sampled with $|\Omega| = pn_1n_2$, and let $X \in \mathbb R^{n_1\times n_2}$ be such that $X_{ij} = X^*_{ij}$ for any $(i,j)\in \Omega$ and $X_{ij} = 0$ otherwise.
Then w.p.~at least $1 - c_1 n^{-c_2}$,
\begin{align*}
\left\|\frac 1p X - X^*\right\|_2 \leq c \left(\frac{\log n}{p} \|X^*\|_\infty + \sqrt\frac{\log n}{p} \|X^*\|_{\infty,2}\right),
\end{align*}
where $\|A\|_\infty = \max_{ij} |A_{ij}|$ and $\|A\|_{\infty,2} = \max\{\|A\|_{2,\infty}, \|A^\top\|_{2,\infty}\}$.
\end{lemma}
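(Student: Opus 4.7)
The plan is to reduce to the Bernoulli sampling model via Remark \ref{rem:probability_model} and then apply the (rectangular) matrix Bernstein inequality. Under the Bernoulli model, let $\delta_{ij} \in \{0,1\}$ be independent Bernoulli$(p)$ random variables, so that $X_{ij} = \delta_{ij} X^*_{ij}$. Then
\begin{align*}
\frac{1}{p} X - X^* = \sum_{i=1}^{n_1}\sum_{j=1}^{n_2} S_{ij}, \qquad S_{ij} = \left(\frac{\delta_{ij}}{p} - 1\right) X^*_{ij}\, e_i e_j^\top,
\end{align*}
where the $S_{ij}$ are independent, mean-zero, and of rank one.

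First, I would bound the almost-sure operator norm of each summand. Since $|\delta_{ij}/p - 1| \leq 1/p$ and $e_i e_j^\top$ has unit spectral norm, we get $\|S_{ij}\|_2 \leq \|X^*\|_\infty/p =: R$. Next I would compute the matrix variance. Using $\mathbb{E}[(\delta_{ij}/p - 1)^2] = (1-p)/p \leq 1/p$, one obtains
\begin{align*}
\sum_{ij} \mathbb{E}\left[S_{ij}^\top S_{ij}\right] &\preceq \frac{1}{p} \sum_{ij} (X^*_{ij})^2\, e_j e_j^\top = \frac{1}{p}\, \mathrm{diag}\!\left(\|X^*_{\cdot,j}\|^2\right)_{j=1}^{n_2},
\end{align*}
so its operator norm is at most $\|X^{*\top}\|_{2,\infty}^2/p \leq \|X^*\|_{\infty,2}^2/p$; an analogous bound for $\sum_{ij}\mathbb{E}[S_{ij} S_{ij}^\top]$ gives the same estimate. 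Hence the matrix variance parameter satisfies $\sigma^2 \leq \|X^*\|_{\infty,2}^2/p$.

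Applying the matrix Bernstein inequality (e.g., Tropp's version for rectangular matrices) yields, for any $t \geq 0$,
\begin{align*}
\Pr\left(\left\|\tfrac{1}{p} X - X^*\right\|_2 \geq t\right) \leq (n_1 + n_2) \exp\!\left(-\frac{t^2/2}{\sigma^2 + Rt/3}\right).
\end{align*}
Choosing $t = c\bigl(\frac{\log n}{p}\|X^*\|_\infty + \sqrt{\frac{\log n}{p}} \|X^*\|_{\infty,2}\bigr)$ with a sufficiently large absolute constant $c$ makes the exponent at most $-(c_2+1)\log n$, giving the stated bound with probability at least $1 - c_1 n^{-c_2}$.

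I do not expect a serious obstacle here: the argument is a textbook matrix Bernstein application. The only minor technicalities are (i) justifying the passage from the uniform sampling model with $|\Omega| = pn_1n_2$ to the independent Bernoulli model (handled by Remark \ref{rem:probability_model} at the cost of at most constants in $c_1, c_2$), and (ii) tuning the constant $c$ to handle both regimes of Bernstein's inequality (the sub-Gaussian regime where $\sqrt{\sigma^2 \log n}$ dominates and the Poisson regime where $R \log n$ dominates), which is why both terms appear additively in the final bound.
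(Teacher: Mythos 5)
Your proposal is correct, but note that the paper does not prove this statement at all: it is imported verbatim as \cite[Lemma~2]{chen2015incoherence}, so there is no internal proof to compare against. Your matrix-Bernstein argument is the standard route to such spectral-norm deviation bounds and all the ingredients check out: the decomposition of $\frac{1}{p}X - X^*$ into independent mean-zero rank-one summands under the Bernoulli model, the almost-sure bound $\|S_{ij}\|_2 \leq \|X^*\|_\infty/p$ (valid for both values of $\delta_{ij}$ since $\max\{|1/p-1|,1\}\leq 1/p$), the variance bound $\sigma^2 \leq \|X^*\|_{\infty,2}^2/p$ via the diagonal structure of $\sum_{ij}\mathbb{E}[S_{ij}^\top S_{ij}]$ and $\sum_{ij}\mathbb{E}[S_{ij}S_{ij}^\top]$, and the choice of $t$ dominating both $\sqrt{\sigma^2\log n}$ and $R\log n$ so that the Bernstein exponent is a large multiple of $\log n$. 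The one place where I would tighten the write-up is the model transfer: \cref{rem:probability_model} as stated goes in the direction uniform $\Rightarrow$ Bernoulli, whereas here you prove the bound under Bernoulli and want it for uniform sampling with $|\Omega|=pn_1n_2$. Since the failure event is not obviously monotone in $\Omega$, the clean fix is the conditioning argument: Bernoulli sampling conditioned on $|\Omega|=m$ is uniform, and $\Pr_{\mathrm{Bern}(p)}(|\Omega|=m)\gtrsim 1/n$, so $\Pr_{\mathrm{unif},m}(\text{fail}) \leq n\cdot \Pr_{\mathrm{Bern}(p)}(\text{fail})$ up to constants, which only shifts $c_2$ by one and is absorbed by enlarging $c$. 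With that caveat handled, your proof is a complete, self-contained substitute for the external citation (and is essentially the argument underlying the cited result).
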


\begin{proof}[Proof of \cref{rem:comp_parametersInput}]
We assume here $n_1 = n_2$ for convenience, but the proof applies to the rectangular case as well.
Let $X$ be the observed matrix as defined in \cref{lem:che2015_lemma2}, and assume $np \geq C \mu r \kappa^2 \log n$.
Combining Weyl's inequality \cref{eq:Weyl} with \cref{lem:che2015_lemma2} gives that for a suitable constant $c$,
\begin{align}
\left| \sigma_r(X/p) - \sigma_r^* \right| \leq \|X/p - X^*\|_2 \leq c \left(\frac{\log n}{p} \|X^*\|_\infty + \sqrt\frac{\log n}{p} \|X^*\|_{\infty,2}\right). \label{eq:sigmar_diff_tempBound}
\end{align}
Let us now bound the quantities $\|X^*\|_\infty$ and $\|X^*\|_{\infty,2}$.
Let $\begin{psmallmatrix} U \\ V \end{psmallmatrix} = \text{b-SVD}(X^*)$.
Since $X^*$ is $\mu$-incoherent, \cref{eq:bSVD_row_norms} of \cref{lem:bSVD_properties} implies
\begin{align*}
\|X^*\|_\infty \leq \|U\|_{2,\infty} \|V\|_{2,\infty} \leq {\mu r \sigma_1^*}/{n}.
\end{align*}
In addition, using $\|AB\|_{2,\infty} \leq \|A\|_{2,\infty} \|B\|_2$, \cref{eq:bSVD_row_norms} and the definition of b-SVD,
\begin{align*}
\|X^*\|_{2,\infty} \leq \|U\|_{2,\infty} \|V\|_2 \leq \sqrt{\mu r/n} \sigma_1^*,
\end{align*}
and similarly $\|X^{*\top}\|_{2,\infty} \leq \sqrt{\mu r/n} \sigma_1^*$. This implies $\|X^*\|_{\infty,2} \leq \sqrt{\mu r/n} \sigma_1^*$.
Plugging these bounds back into \cref{eq:sigmar_diff_tempBound} and using the assumption $np \geq C \mu r \kappa^2 \log n$ yields $| \sigma_r(X/p) - \sigma_r^* | \leq \tfrac{c}{C} \sigma_r^*$. Assuming $C \geq 10c$ thus yields the required result.
\end{proof}

%%%%%%%%%%%%%%%%%%%%%%%%%%%%%%%%%%%%%%%%%%%%%%%%%%%%%%%%%%%%%%%%%%%%%
%%%%%%%%%%%%%%%%%%%%%%%%%%%%%%%%%%%%%%%%%%%%%%%%%%%%%%%%%%%%%%%%%%%%%
\section{Proof of \cref{thm:stationaryPoints} (stationary points)}\label{sec:stationaryPoints_proof}
For the following lemmas,
%proved in sub\cref{sec:stationaryPoints_auxiliary_proof},
let $X^* \in \mathbb R^{n_1\times n_2}$ be a matrix of rank $r$, $\mathcal A\in \mathbb R^{n_1\times n_2}\to \mathbb R^m$ be a linear operator, and $Z = \begin{psmallmatrix} U \\ V \end{psmallmatrix} \in \mathbb R^{(n_1+n_2)r}$ be a pair of factor matrices.
In addition, in this section we use the following definitions for the operators $\mathcal L, \mathcal L_A$, which are similar to \cref{eq:L_LA_operators}: for any $Z' = \begin{psmallmatrix} U' \\ V' \end{psmallmatrix}$,
\begin{subequations}\label{eq:L_LA_operators_modified}
\begin{align*}
\mathcal L^{(Z)}\left( Z' \right) &= UV'^\top + U'V^\top, \\
\mathcal L_A^{(Z)}(Z') &= \mathcal A \mathcal L^{(Z)}\left(Z'\right) = \mathcal A\left(UV'^\top + U'V^\top\right) .
\end{align*}
\end{subequations}

\begin{lemma}\label{lem:stationaryPoints_unbalanced}
Denote $X = UV^\top$, and recall the definition of $\mathcal F$ from \cref{thm:stationaryPoints}.
Then
\begin{align*}
Z \in \mathcal F \quad\text{ if and only if }\quad \mathcal A(X^* - X) \perp \text{range } \mathcal L_A^{(Z)} .
\end{align*}
\end{lemma}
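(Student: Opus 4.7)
The plan is to translate both conditions into statements about the matrix $G := \mathcal{A}^*\mathcal{A}(X^*-X)$, where $\mathcal{A}^*$ is the adjoint of the sensing operator, and verify they coincide by a direct inner product computation.

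First I would write $f(X)=\|\mathcal{A}(X)-b\|^2$ with $b=\mathcal{A}(X^*)$ so that $\nabla f(X)=2\mathcal{A}^*(\mathcal{A}(X)-b)=-2G$. Hence the defining conditions of $\mathcal{F}$, namely $\nabla f(UV^\top)V=0$ and $\nabla f(UV^\top)^\top U=0$, are equivalent to
\[
GV=0 \qquad \text{and} \qquad G^\top U=0.
\]

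Next I would unfold the orthogonality condition. By definition of the adjoint, for any $Z'=\begin{psmallmatrix}U'\\ V'\end{psmallmatrix}$,
\[
\bigl\langle \mathcal{A}(X^*-X),\,\mathcal{L}_A^{(Z)}(Z')\bigr\rangle
=\bigl\langle G,\,UV'^\top+U'V^\top\bigr\rangle_F
=\bigl\langle G^\top U,\,V'\bigr\rangle_F+\bigl\langle GV,\,U'\bigr\rangle_F,
\]
where I used the trace identity $\operatorname{Tr}(G^\top UV'^\top)=\operatorname{Tr}(V'^\top G^\top U)$ and similarly for the second term. Since $U'$ and $V'$ range independently over all of $\mathbb{R}^{n_1\times r}$ and $\mathbb{R}^{n_2\times r}$, the right-hand side vanishes for every $Z'$ if and only if both $G^\top U=0$ and $GV=0$. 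Combining with the previous paragraph yields the claimed equivalence.

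There is essentially no obstacle here; the lemma is a purely formal reformulation of the first-order conditions using the adjoint of $\mathcal{L}_A^{(Z)}$. The only thing to be careful about is the sign/scaling in $\nabla f$ (which cancels in the orthogonality statement) and making sure the two trace manipulations are done correctly. No RIP or incoherence assumptions are needed.
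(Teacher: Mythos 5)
Your proof is correct and takes essentially the same approach as the paper: both reduce the stationarity condition to $\mathcal A^*(\mathcal A(X-X^*))^\top U = 0$ and $\mathcal A^*(\mathcal A(X-X^*))V = 0$, and then unfold the orthogonality to the range of $\mathcal L_A^{(Z)}$ via the adjoint and a trace-cyclicity computation. The only cosmetic difference is that you name $G = \mathcal A^*\mathcal A(X^*-X)$ while the paper works with $\mathcal A^*(e)$ where $e = \mathcal A(X - X^*) = -\mathcal A^*{}^{-}G$-free version of the same object; the sign is immaterial, as you note.
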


\begin{lemma}\label{lem:stationaryPoints_feasibleSolutions}
Let $\alpha \in \mathbb R$ and $\tilde Z = \frac{1+\alpha}{2} Z$.
Then $Z$ is a stationary point of the updating variant \cref{eq:updatingVariant}, $Z \in \mathcal S_\text{updt-GNMR}$, if and only if $\tilde Z$ is a feasible solution to the least squares problem \cref{eq:generalVariant_LSQR}.
\end{lemma}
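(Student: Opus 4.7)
The plan is to reduce both sides of the equivalence to the same condition, namely that $(0,0)$ is a minimizer of the $(\Delta U,\Delta V)$-parametrized least squares objective, by using the change of variables that relates \cref{eq:updatingVariant_LSQR} and \cref{eq:generalVariant_LSQR}.

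First I would unfold the definition of a stationary point of the updating variant. Setting $Z_0=Z$ in \cref{eq:updatingVariant}, the next iterate is $Z_1 = Z + \Delta Z_0$, where $\Delta Z_0=\begin{psmallmatrix}\Delta U_0\\ \Delta V_0\end{psmallmatrix}$ is the minimal norm minimizer of
\[
F(\Delta U,\Delta V)\;=\;\bigl\|\mathcal A\!\left(UV^\top + U\,\Delta V^\top + \Delta U\,V^\top\right)-b\bigr\|^2.
\]
Hence $Z\in\mathcal S_\text{updt-GNMR}$ iff $\Delta Z_0=0$. Since the minimal norm solution has the smallest Frobenius norm among all minimizers of $F$, and since $\|(0,0)\|_F=0$, we have $\Delta Z_0=0$ iff $(0,0)$ itself minimizes $F$, i.e.~iff $(0,0)$ is a feasible solution of \cref{eq:updatingVariant_LSQR}.

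Next I would apply the change of variables $U=\Delta U+\tfrac{1+\alpha}{2}U$, $V=\Delta V+\tfrac{1+\alpha}{2}V$ (with the same $Z=(U,V)$ acting as the current iterate on the right-hand side), already carried out in the main text to pass from \cref{eq:updatingVariant_LSQR} to \cref{eq:generalVariant_LSQR}. This identification is a bijection between $(\Delta U,\Delta V)$ and $(U',V')$ under which the objectives coincide pointwise, so feasible solutions of \cref{eq:updatingVariant_LSQR} are mapped exactly to feasible solutions of \cref{eq:generalVariant_LSQR}. Tracking the origin of $(\Delta U,\Delta V)$-space through this bijection, the point $(0,0)$ is sent to $\bigl(\tfrac{1+\alpha}{2}U,\tfrac{1+\alpha}{2}V\bigr)=\tilde Z$. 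Combining with the previous step, $Z\in\mathcal S_\text{updt-GNMR}$ iff $\tilde Z$ is a feasible solution of \cref{eq:generalVariant_LSQR}, which is the claim.

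The only delicate point is the meaning of ``feasible solution''; throughout we interpret it, consistently with the usage in the rest of the paper, as a minimizer of the (convex) least squares objective, not merely a point where the objective is defined. Once this is fixed, the argument is essentially a one-line change of variables, and I do not expect any substantive obstacle beyond bookkeeping.
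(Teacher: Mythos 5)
Your proposal is correct and takes essentially the same route as the paper's proof: characterize a stationary point of the updating variant as $\Delta Z=0$ being the minimal-norm (hence a feasible) minimizer of \cref{eq:updatingVariant_LSQR}, then push $(0,0)$ through the affine change of variables $\Delta Z = \tilde Z - \tfrac{1+\alpha}{2}Z$ that identifies \cref{eq:updatingVariant_LSQR} with \cref{eq:generalVariant_LSQR}. The only difference is a small notational slip in your write-up (reusing $U$ for both the current iterate and the new optimization variable), which does not affect the substance of the argument.
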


\begin{proof}[Proof of \cref{thm:stationaryPoints}]
The stationary points of \texttt{GD} have been studied in multiple works \cite{ge2016matrix,ge2017no,zhu2018global,li2019symmetry}. The equalities $\mathcal S_\text{GD} = \mathcal F$ and $\mathcal S_\text{reg-GD} = \mathcal F \cap \mathcal G$ follow, for example, by the proof of \cite[Theorem~3]{zhu2018global}, see Eq.~(14-18) there.

Next, we prove $\mathcal S_\text{ALS} = \mathcal F$.
A point $Z = \begin{psmallmatrix} U \\ V \end{psmallmatrix} \in \mathbb R^{(n_1+n_2)\times r}$ is a stationary point of \texttt{ALS}, $Z\in \mathcal S_\text{ALS}$, if and only if it satisfies $U = \arg\min_{U'} f(U'V^\top)$ and $V = \arg\min_{V'} f(UV'^\top)$. Equivalently,
\begin{align*}
0 &= \arg\min_{\Delta U} \|\mathcal A\left(U V^\top + \Delta U V^\top - X^*\right)\|^2 = \arg\min_{\Delta U} \|\mathcal A\left(\Delta U V^\top\right) - e\|^2, \\
0 &= \arg\min_{\Delta V} \|\mathcal A\left(UV^\top + U\Delta V^\top - X^*\right)\|^2 = \arg\min_{\Delta V} \|\mathcal A\left(U\Delta V^\top\right) - e\|^2,
\end{align*}
where $e = \mathcal A(X - X^*)$.
The above equalities hold if and only if $e \perp \{\mathcal A(U V'^\top) \,\mid\, U'\in \mathbb R^{n_1\times r}\} \cup \{\mathcal A(U'V^\top) \,\mid\, V'\in \mathbb R^{n_2\times r}\}$, which is equivalent to $Z \in \mathcal F$ according to \cref{lem:stationaryPoints_unbalanced}.

Next, we analyze the stationary points of \GNMR.
We begin with the updating variant \cref{eq:updatingVariant}, $\alpha = -1$. Given the current iterate $Z = \begin{psmallmatrix} U \\ V \end{psmallmatrix}$, in its first step the updating variant calculates the minimal norm solution to
\begin{align}\label{eq:updatingVariant_forStationaryPoints}
\arg\min_{\Delta Z} \|\mathcal L_A^{(Z)}(\Delta Z) - e\|^2 .
\end{align}
In order to complete the proof of \cref{eq:stationaryPoints_GD}, we need to show that $\Delta Z = 0$ is the minimal norm solution to \cref{eq:updatingVariant_forStationaryPoints} if and only if $Z \in \mathcal F$.
Similar to the argument for \texttt{ALS}, $\Delta Z = 0$ is a feasible solution to \cref{eq:updatingVariant_forStationaryPoints} if and only if $e \perp \text{range } \mathcal L_A^{(Z)}$.
Combined with \cref{lem:stationaryPoints_unbalanced} we obtain that $\Delta Z = 0$ is a feasible solution to \cref{eq:updatingVariant_forStationaryPoints} if and only if $Z \in \mathcal F$. But $\Delta Z = 0$ is a feasible solution if and only if it is the minimal norm one, and thus \cref{eq:stationaryPoints_GD} follows.

Next, consider a stationary point $Z = \begin{psmallmatrix} U \\ V \end{psmallmatrix} \in \mathcal S_\text{GNMR}$ of the other variants of \GNMR, $\alpha\neq -1$.
%Denote by $\tilde Z_0 = \frac{1+\alpha}{2} Z_0 + \Delta Z_0$ the minimal norm solution to the least squares problem in the stationary point, which can be written as
%\begin{align}\label{eq:generalVariant_forStationaryPoints}
%\tilde Z = \arg\min_{\tilde Z} \|\mathcal L_A^{(0)}(\tilde Z) - b_0^{(\alpha)}\|^2
%\end{align}
%where $b_0^{(\alpha)} = \mathcal A(X^* + \alpha U_0V_0)$.
%By definition, a stationary point corresponds to $\Delta Z_0 = \begin{psmallmatrix} 0 \\ 0 \end{psmallmatrix}$. 
Since all the variants of \GNMR solve the same least squares problem up to a linear transformation of the variables, the set of feasible solutions is independent of the specific variant of \GNMR. Hence $Z \in \mathcal F$ as we proved for the updating variant.
In order to complete the proof of \cref{eq:stationaryPoints_regGD} it thus remains to show that $\alpha\neq -1$ enforces stationary points to be balanced, $Z \in \mathcal G$.

By the first part of \cref{lem:kernel_minNormSol}, the minimal norm solution $\tilde Z = \begin{psmallmatrix} \tilde U \\ \tilde V \end{psmallmatrix}$ to the least squares problem \cref{eq:generalVariant_LSQR} of \GNMR satisfies
%we first prove a general property of the minimal norm solution $\tilde Z = \begin{psmallmatrix} \tilde U \\ \tilde V \end{psmallmatrix}$ to the least-squares problem of \cref{eq:generalVariant} (not necessarily in a stationary point). By definition, $\tilde Z$ is perpendicular to $\ker \mathcal L_A^{(0)}$. Let
%\begin{align} \label{eq:K0_def}
%\mathcal K_0 = \left\{\begin{psmallmatrix} U_0 R \\ -V_0 R^{\top} \end{psmallmatrix} \,\mid\, R\in \mathbb R^{r\times r}\right\} .
%\end{align}
%It is easy to see that $\mathcal K_0 \subseteq \ker \mathcal L_A^{(0)}$, and thus in particular $\tilde Z \perp \mathcal K_0$, which means
%\begin{align*}
%0 &= \Tr \left[\tilde U^\top U_0 R - \tilde V^\top V_0R^\top\right] 
%= \Tr \left[\tilde U^\top U_0 R - R V_0^\top \tilde V\right] \\
%&= \Tr \left[\left(\tilde U^\top U_0 - V_0^\top \tilde V\right) R \right], \quad \forall R\in \mathbb R^{r\times r},
%\end{align*}
%where in the second equality we used the trace property $\Tr[AB] = \Tr[BA] = \Tr\left[A^\top B^\top\right]$ for square matrices $A,B$.
%The last equation implies
\begin{align}\label{eq:minNormSol_stationaryPoints}
\tilde U^\top U = V^\top \tilde V.
\end{align}
In its second step \cref{eq:generalVariant_update}, \GNMR updates $Z_\text{new} = \frac{1-\alpha}{2} Z + \tilde Z$.
In a stationary point, $Z_\text{new} = Z$, or equivalently $\tilde Z = \frac{1 + \alpha}{2} Z $.
Plugging this back into \cref{eq:minNormSol_stationaryPoints} yields $Z\in \mathcal G$ for any $\alpha\neq -1$.
This proves \cref{eq:stationaryPoints_regGD}.

Finally, we specialize our results to the matrix sensing and matrix completion settings.
Let $Z = \begin{psmallmatrix} U \\ V \end{psmallmatrix}$ and assume $Z \in (\mathcal B^* \cap \mathcal G)$.
We need to show that $Z$ is a stationary point of \GNMR, $Z \in \mathcal S_\text{GNMR}$, or equivalently, that $\tilde Z \equiv \frac{1+\alpha}{2} Z$ is the minimal norm solution to the least squares problem \cref{eq:generalVariant_LSQR}.

Let us first show that $\tilde Z$ is a feasible solution to \cref{eq:generalVariant_LSQR}.
Since any global minimum of \cref{eq:matrixRecovery_factorizedObjective} is in particular a local one, we have $\mathcal B^* \subseteq \mathcal F$, so that $Z \in \mathcal F = \mathcal S_\text{updt-GNMR}$. % both in the matrix sensing and in the matrix completion cases.
Invoking \cref{lem:stationaryPoints_feasibleSolutions} then implies that $\tilde Z$ is a feasible solution to \cref{eq:generalVariant_LSQR}.

%By combining this with \cref{eq:stationaryPoints_GD} we obtain $Z \in \mathcal S_\text{updt-GNMR}$. In other words, $\Delta Z = 0$ is the minimal norm solution to the least-squares problem of the updating variant \cref{eq:updatingVariant_forStationaryPoints}, and in particular a feasible one. Since the set of feasible solutions is independent of the specific variant, $\tilde Z$ is a feasible solution to \cref{eq:generalVariant_LSQR}.

In order to prove that $\tilde Z$ is the minimal norm solution, it remains to show that $\tilde Z \perp \ker \mathcal L_A^{(Z)}$.
Since $X^*$ is of rank exactly $r$ and $Z \in \mathcal B^*$, the factor matrices $U,V$ have full column rank, and the second part of \cref{lem:kernel_minNormSol} holds.
By combining \cref{eq:minNormSol_stationaryPoints} and \cref{eq:L_perp} of \cref{lem:kernel_minNormSol}, we have $\tilde Z \perp \ker \mathcal L^{(Z)}$.
In the rest of the proof, we show that
\begin{align}\label{eq:kerLA_in_kerL}
\ker \mathcal L_A^{(Z)} \subseteq \ker \mathcal L^{(Z)}
\end{align}
both in matrix sensing and matrix completion, so that $\tilde Z \perp \ker \mathcal L_A^{(Z)}$ as required.

Let us begin with the matrix sensing case.
Let $Z' \in \ker \mathcal L_A^{(Z)}$. Then, by the $2r$-RIP of $\mathcal A$,
\begin{align*}
\left\|\mathcal L^{(Z)}(Z')\right\|^2 \leq \frac{1}{1-\delta_{2r}} \left\|\mathcal L^{(Z)}_A(Z')\right\|^2 = 0,
\end{align*}
which implies $Z' \in \ker \mathcal L^{(t)}$. This proves \cref{eq:kerLA_in_kerL} in matrix sensing.

Finally, we prove \cref{eq:kerLA_in_kerL} in the matrix completion setting.
By $Z \in \mathcal B^*$ we have $UV^\top = X^*$. Denote by $U^* \Sigma^* V^{*\top}$ the SVD of $X^*$. Then $U = U^* Q$ and $V = V^* Q^{-\top}$ for some invertible $Q \in \mathbb R^{r\times r}$.
Hence for all $Z' = \begin{psmallmatrix} U' \\ V' \end{psmallmatrix} \in \ker \mathcal L_A^{(Z)}$ we have $\mathcal L^{(Z)}( Z' ) = U^* Q V'^\top + U' Q^{-1} V^{*\top}$. By \cref{lem:YPCC16_lemma9}, this implies
\begin{align*}
\left\| \mathcal L^{(Z)} (Z') \right\|^2 \leq \frac 2p \left\| \mathcal L_A^{(Z)} \left(Z'\right) \right\|^2 = 0
\end{align*}
w.p.~at least $1 - {3}/{n^3}$ uniformly for all $Z' \in \ker \mathcal L_A^{(Z)}$.
Hence $Z' \in \ker \mathcal L^{(Z)}$ as required.
\end{proof}

\subsection*{Proofs of \cref{lem:stationaryPoints_unbalanced,lem:stationaryPoints_feasibleSolutions}\nopunct}%\label{sec:stationaryPoints_auxiliary_proof}
\begin{proof}[Proof of \cref{lem:stationaryPoints_unbalanced}]
Since $\|\mathcal A(X - X^*)\|^2 = \braket{ \mathcal A(X - X^*), \mathcal A(X - X^*)} = \braket{ \mathcal A^* \mathcal A(X - X^*),  X - X^*}$, we have
\begin{align*}
\nabla f(X) = \nabla \|\mathcal A(X - X^*)\|^2 = 2\mathcal A^* \left(\mathcal A(X - X^*)\right) = 2\mathcal A^*(e)
\end{align*}
where $e = \mathcal A(X - X^*)$.
Hence $Z \in \mathcal F$ is equivalent to
\begin{align}\label{eq:A*_stationary_conditions}
\mathcal A^*(e)^\top U = 0, \quad
\mathcal A^*(e)V = 0 .
\end{align}
In order to complete the proof, we shall now show that \cref{eq:A*_stationary_conditions} is equivalent to $e \perp \text{range } \mathcal L_A^{(Z)}$.
By construction, $e \perp \text{range } \mathcal L_A^{(Z)}$ is equivalent to $\mathcal A^*(e) \perp \text{range } \mathcal L^{(Z)}$. This, in turn, is equivalent to
\begin{align*}
0 &= \Tr \left[ \mathcal A^*(e)^\top \left(UV'^\top + U'V^\top\right) \right] \\
&= \Tr \left[ \mathcal A^*(e)^\top UV'^\top \right] + \Tr \left[ U'^\top \mathcal A^*(e)V \right], \quad \forall\, U'\in \mathbb R^{n_1\times r}, V'\in \mathbb R^{n_2\times r} ,
\end{align*}
where in the second equality we used the trace property $\Tr[AB] = \Tr[BA] = \Tr[A^\top B^\top]$ for $A \in \mathbb R^{n\times r}$, $B \in \mathbb R^{r\times n}$.
The lemma follows since the last equation is equivalent to \cref{eq:A*_stationary_conditions}.
\end{proof}

\begin{proof}[Proof of \cref{lem:stationaryPoints_feasibleSolutions}]
By construction, $Z$ is a stationary point of the updating variant \cref{eq:updatingVariant} if and only if $\Delta Z = 0$ is the minimal norm solution to the least squares problem \cref{eq:updatingVariant_LSQR}.
This, in turn, holds if and only if $\Delta Z = 0$ is a feasible solution to \cref{eq:updatingVariant_LSQR}.
As discussed in \cref{sec:GNMR_description}, the least squares problems \cref{eq:updatingVariant_LSQR} and \cref{eq:generalVariant_LSQR} are equivalent up to the transformation of variables $\Delta Z = \tilde Z - \frac{1+\alpha}{2} Z$. By this transformation, $\Delta Z = 0$ is a feasible solution to \cref{eq:updatingVariant_LSQR} if and only if $\tilde Z$ is a feasible solution to \cref{eq:generalVariant_LSQR}.
\end{proof}

%%%%%%%%%%%%%%%%%%%%%%%%%%%%%%%%%%%%%%%%%%%%%%%%%%%%%%%%%%%%%%%%%%%%%
%%%%%%%%%%%%%%%%%%%%%%%%%%%%%%%%%%%%%%%%%%%%%%%%%%%%%%%%%%%%%%%%%%%%%
\section{Additional experimental details}\label{sec:experimental_details}
To simplify notations, let us divide the algorithms into two groups.
The first group consists of methods which employ simple operations at each iteration, such as gradient descent: \texttt{LRGeomCG} and \texttt{ScaledASD}. Each iteration of these methods is in general extremely fast. For these methods we thus allow a relatively large value for the maximal number of iterations, which we denote by $N^{(1)}$. 
The second group contains \texttt{RTRMC}, \texttt{R2RILS}, \texttt{MatrixIRLS} and \GNMR.
These methods are more complicated, in the sense that at each outer iteration they solve an inner optimization sub-problem, which by itself is solved iteratively.
Hence, these methods have two parameters: $N^{(2)}_\text{outer}$ and $N^{(2)}_\text{inner}$ for the maximal number of outer and inner iterations, respectively.
However, since one iteration of \texttt{R2RILS} and \GNMR is significantly slower than that of \texttt{RTRMC} and \texttt{MatrixIRLS}, we give them a smaller value of $N^{(2)}_\text{slow-outer} < N^{(2)}_\text{outer}$ outer iterations.

In addition to maximal number of iterations, we used the following three early stopping criteria: (1) Small observed relative RMSE, $\frac{\|\mathcal P_\Omega (X^* - \hat X_t)\|}{\|\mathcal P_\Omega (X^*)\|} \leq \epsilon_\text{rmse}$; (2) Small relative change, $\frac{\|\hat X_{t+1} - \hat X_t\|_F}{\|\hat X_t\|_F} \leq \epsilon_\text{diff}$; (3) For some integer $t_\text{min-rmse}$, let $x_i = \min\{\frac{\|\mathcal P_\Omega (X^* - \hat X_t)\|}{\|\mathcal P_\Omega (X^*)\|} \,\mid\, t=i\cdot t_\text{min-rmse}, \ldots, (i+1)\cdot t_\text{min-rmse}\}$.
%be the minimal relative RMSE achieved by the algorithm during iterations $i\cdot t_\text{min-rmse}, ..., (i+1)\cdot t_\text{min-rmse}$.
The algorithm stops if the relative RMSE does not change by a factor of $r_\text{min-rmse}$ in each $t_\text{min-rmse}$ iterations, namely if $\frac{x_{i+1}}{x_i} > r_\text{min-rmse}$ for some $i$.
All other stopping criteria defined by the algorithms were disabled.

In the first experiment (\cref{fig:highOversampling}), we set $N^{(1)} = 5000$, $N^{(2)}_\text{inner} = 1500$, $N^{(2)}_\text{outer} = 500$, and $N^{(2)}_\text{outer,GNMR} = 100$.
To allow the algorithms to either converge or fully exploit their maximal number of iterations, we set the thresholds of the first two stopping criteria to $\epsilon_\text{rmse} = \epsilon_\text{diff} = 10^{-16}$, and did not use the third criterion.
In the second experiment (\cref{fig:lowOversampling_median}), we set $N^{(2)}_\text{inner} = 7000$, $N^{(2)}_\text{outer} = 25000$, and $N^{(2)}_\text{outer,GNMR} = 700$. The stopping criteria were set as in the previous experiment.
In the next two experiments (\cref{fig:varyingDim_median,fig:OSR_vs_CN}), we set $N^{(1)} = 10^6$ and $N^{(2)}_\text{inner} = N^{(2)}_\text{outer} = N^{(2)}_\text{outer,GNMR} = 10^5$. The thresholds of the stopping criteria were set to (1) $\epsilon_\text{rmse} = 10^{-14}$; (2) $\epsilon_\text{diff} = 10^{-15}$ for algorithms in the first group and $\epsilon_\text{diff} = 10^{-14}$ for algorithms in the second group; and (3) $r_\text{min-rmse} = 1/2, t_\text{min-rmse} = 200$. In addition, in the third experiment (\cref{fig:varyingDim_median}), for each dimension $n$ and oversampling ratio $\rho$ we ran $150$ attempts to generate a sampling pattern $\Omega$ with $r$ observed entries in each row and column. Each attempt lasted at most $3$ hours. In \cref{fig:varyingDim_median} we presented only the oversampling ratios for which at least $50$ attempts succeeded.
In the next experiment (\cref{fig:setavg_timeVsCN}), we set $N^{(2)}_\text{inner} = 200$, $N^{(2)}_\text{outer} = 300$, and stopping criteria thresholds as in the previous experiment. Since the focus in this experiment was comparing the runtime of \GNMR as function of the condition number rather than measuring the runtime itself, no effort was made to optimize \GNMR's runtime beyond assigning $N^{(2)}_\text{inner}$ with a relatively small value. Note, however, that the qualitative behavior demonstrated in \cref{fig:setavg_timeVsCN} is not sensitive to the value of $N^{(2)}_\text{inner}$, see \cref{fig:setavg_timeVsCN_slow}. The runtimes were measured on a Windows 10 laptop with Intel i7-10510U CPU and 16GB RAM using MATLAB 2020a. 
Finally, the parameter specifications in the last experiment (\cref{fig:setavg_noise_median}) are similar to those of the first experiment (\cref{fig:highOversampling}).

%%%%%%%%%%%%%%%%%%%%%%%%%%%%%%%%%%%%%%%%%%%%%%%%%%%%%%%%%%%%%%%%%%%%%
%%%%%%%%%%%%%%%%%%%%%%%%%%%%%%%%%%%%%%%%%%%%%%%%%%%%%%%%%%%%%%%%%%%%%
\section{Additional experimental results}\label{sec:experimental_additional_results}
\Cref{fig:lowOversampling_varyingDim_prob} complements \cref{fig:lowOversampling_varyingDim_median} from the main text by showing the recovery probability instead of the median error for the same experiments.
\Cref{fig:setavg_timeVsCN_slow} complements the left panel of \cref{fig:OSR_vs_CN} by showing that qualitatively, the performance of \GNMR is not sensitive to the number of inner least squares iterations.

\begin{figure}[h]
	\centering
	\subfloat{
		\includegraphics[width=0.495\linewidth]{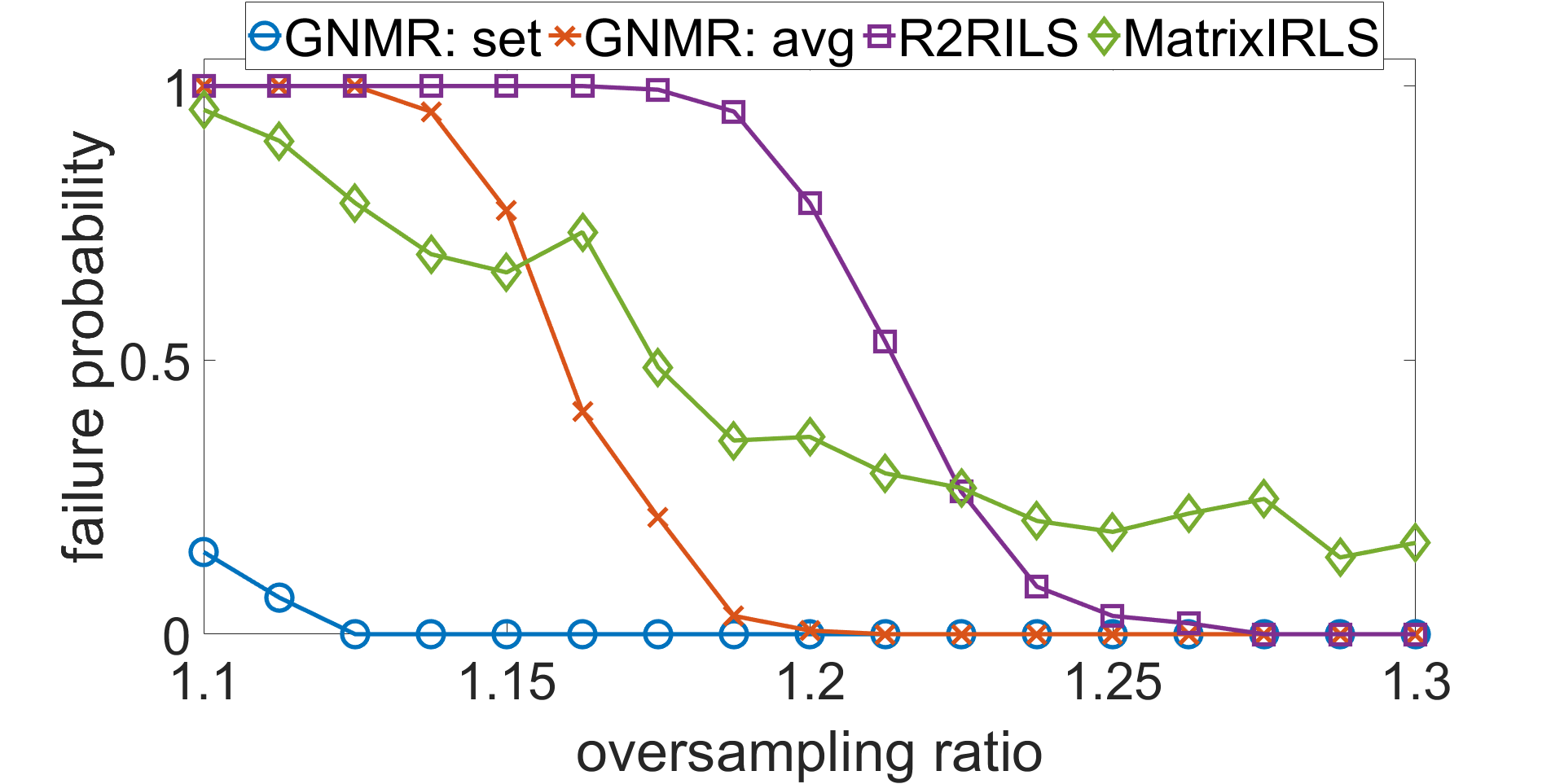}
	}
	\subfloat{
		\includegraphics[width=0.495\linewidth]{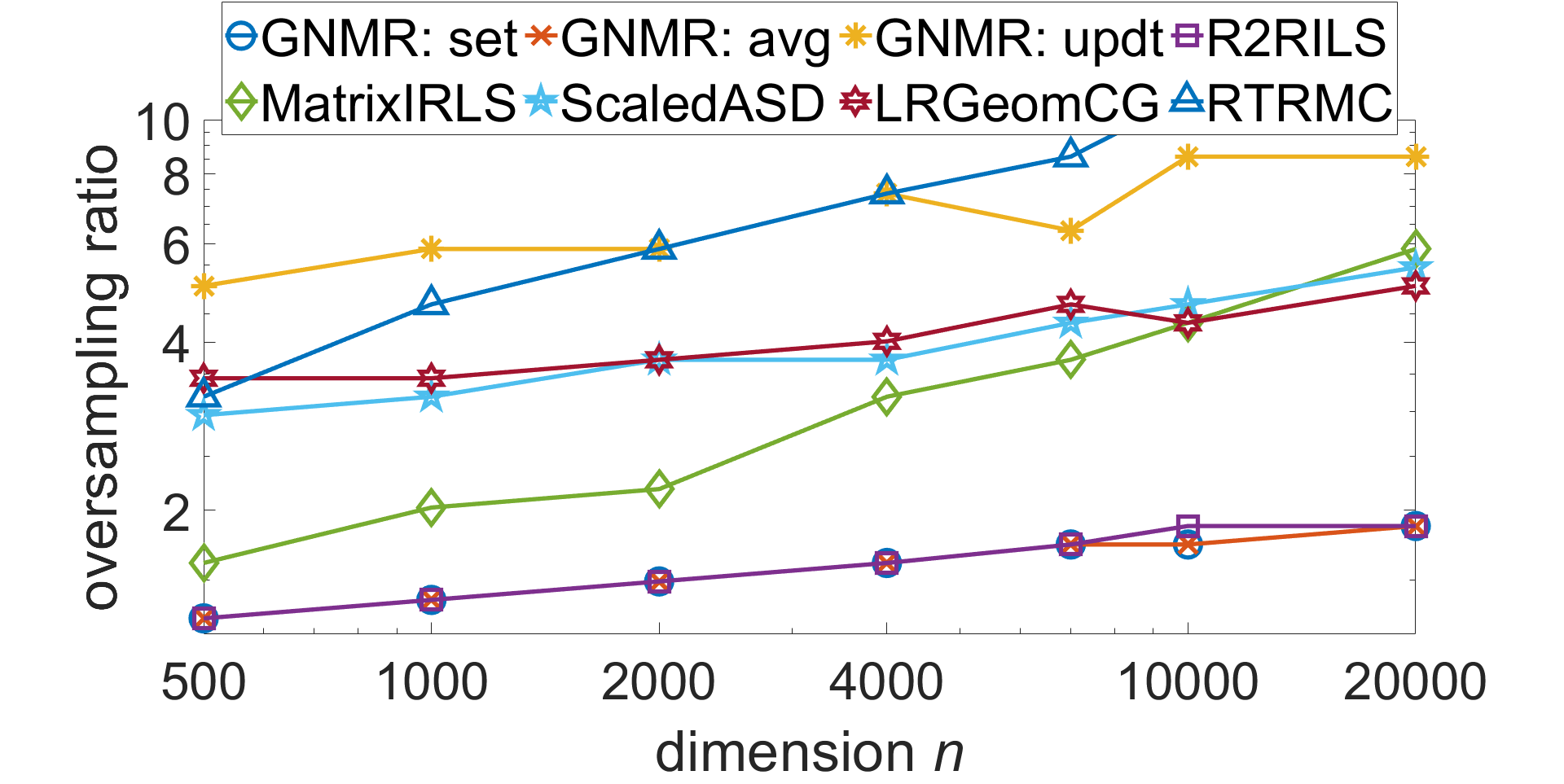}
	}
	\caption{Same as \cref{fig:lowOversampling_varyingDim_median}, but with Y-axes correspond to failure probability, defined as  $\text{Pr}[\texttt{rel-RMSE} >10^{-4}]$ (left panel), and lowest oversampling ratio from which the failure probability is smaller than $0.1$ (right panel).}
	 \label{fig:lowOversampling_varyingDim_prob}
\end{figure}

\begin{figure}[h]
	\centering
	\includegraphics[width=0.495\linewidth]{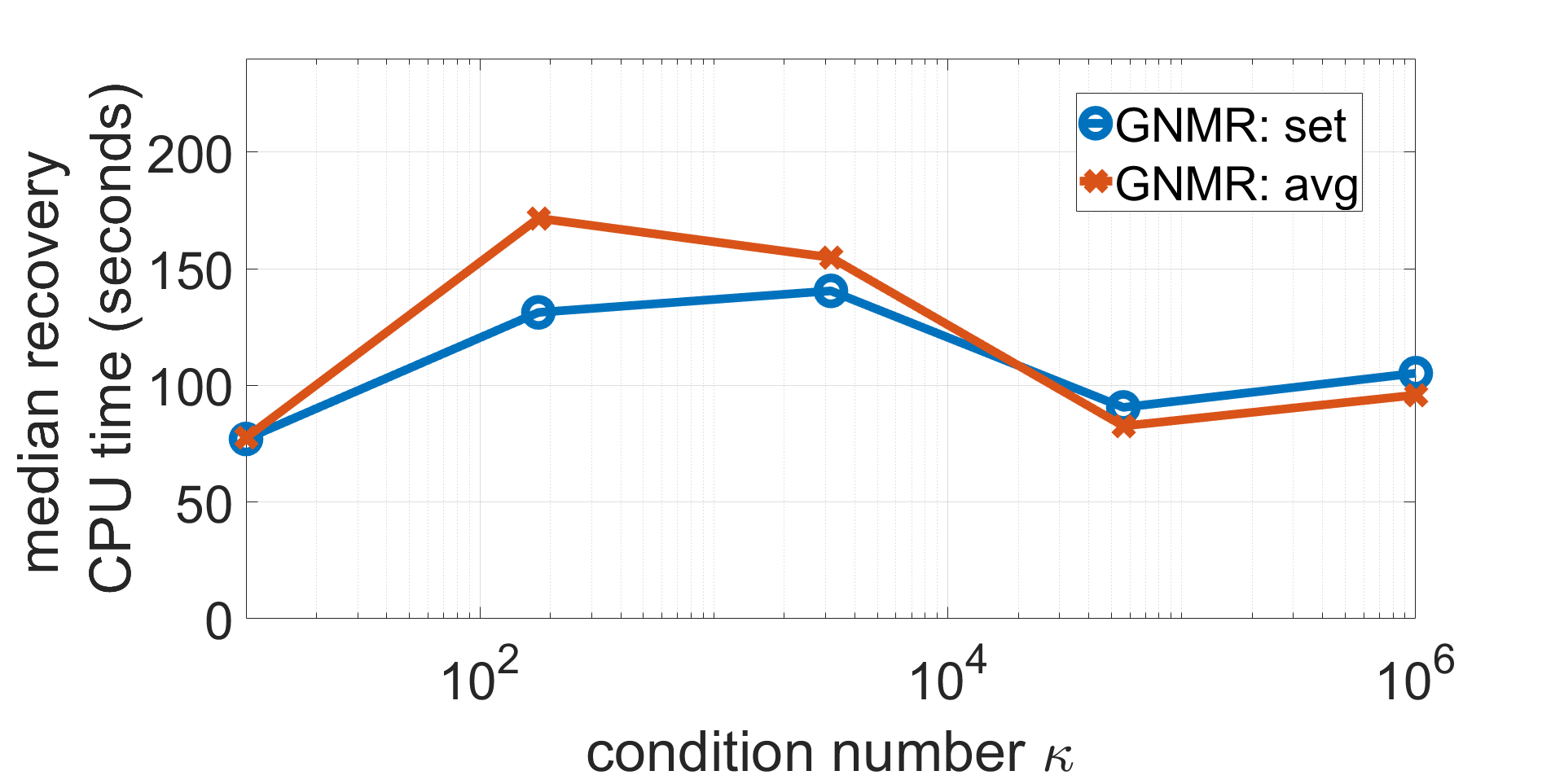}
	\caption{Same as \cref{fig:OSR_vs_CN}, but with $N^{(2)}_\text{inner} = 1500$ instead of $200$.}
	\label{fig:setavg_timeVsCN_slow}
\end{figure}

\end{document}